\newcommand{\issue}[1]{{\color{red}#1}}
\newcommand{\remind}[1]{{\color{blue}#1}}
\title{\Large 
Policy Optimization for $\cH_2$ Linear Control 
with $\cH_\infty$ Robustness Guarantee:
%, and Linear Quadratic Games: 
Implicit Regularization and Global Convergence}
\begin{document}
\author{Kaiqing Zhang \and    Bin Hu \and  Tamer Ba\c{s}ar\thanks{The authors are with the Department of Electrical and Computer Engineering \&  Coordinated Science Laboratory, University of Illinois at Urbana-Champaign. Email: \{kzhang66,~binhu7,~basar1\}@illinois.edu.}}
\date{{\normalsize Oct. 17, 2019\qquad Revised: Feb., 2021}}
%}
%\normalsize
\maketitle

\vspace{-7pt}
\begin{abstract}

Policy optimization (PO) is a key ingredient for modern reinforcement learning (RL). 
For control design, certain \emph{constraints} are usually enforced on the policies to  optimize, accounting for either the stability, robustness, or safety concerns on the system.  
Hence, PO is by nature a \emph{constrained (nonconvex) optimization} in most cases, whose global convergence is challenging to analyze in general. 
More importantly, some constraints that are safety-critical, e.g., the closed-loop stability, or the $\cH_\infty$-norm constraint that guarantees the system  robustness, can  be difficult to enforce on the controller being learned as the PO methods proceed. Recently, policy gradient   methods have been shown to converge to the global optimum of  linear quadratic regulator (LQR), a classical optimal control problem, without regularizing/projecting the control iterates onto the stabilizing set  \citep{fazel2018global,bu2019LQR}, the (implicit) feasible set of the problem.  
This striking result is built upon the  property that the cost function is \emph{coercive}, ensuring that the iterates remain feasible {and strictly separated from the infeasible set as the cost decreases}. In this paper, we study  the convergence theory of PO for $\cH_2$ linear control with $\cH_\infty$-norm \emph{robustness guarantee}, for both discrete- and continuous-time settings.   This general framework  includes \emph{risk-sensitive} linear control as a special case. One significant new feature of this problem is the \emph{lack of coercivity}, i.e., the cost may have \emph{finite} value around the boundary of the robustness constraint set,  breaking the existing analysis for LQR. 
Interestingly, among the three proposed PO methods,  two of them enjoy the \emph{implicit regularization} property, i.e., the iterates preserve the $\cH_\infty$ robustness  constraint automatically, as if they are regularized by the algorithms. Furthermore, despite the nonconvexity  of the problem, we show that these algorithms converge to the \emph{globally optimal} policies with \emph{globally sublinear} rates, {avoiding all suboptimal stationary points/local minima,} and with \emph{locally (super-)linear} rates under certain conditions.  
To the best of our knowledge, our work offers   the first results on {the implicit regularization property and global convergence of} PO methods for robust/risk-sensitive control. 
\end{abstract} 

\begin{comment}
\noindent\remind{TO DO LIST:
\begin{itemize}  
%	\item Add discussions of ``model-free'' and LQ games
	\item Add simulations
\end{itemize}
}

\remind{One possible flow:
\begin{itemize}
	\item We start with LEQR as an \emph{motivating example}, to study \emph{risk-sensitive} optimal control/optimal control with some robustness or uncertainty concern. We will showcase all our results on LEQR here (also answer Doyle' question).
	\item We then make connection of this problem to mixed-design, introducing a bigger picture: this LEQR is one example of them (one of the objectives in  Mustafa's paper \citep{mustafa1991lqg}). Also, both continuous and discrete-time settings are introduced. 
	\item Under this ``mixed-design perspective'', we prove the nice properties of algorithms for both settings, and establish global convergence.
	\item In order to do model-free, we point out the challenge in sampling, and connect it to the game. Then gives a pseudo-code for model-free algorithms. \issue{Note that we should mention that the technique here (perturbation theory) regarding the feasibility of ``next-step'' should be applicable to the maxmin game in \citep{zhang2019policy} as well.}
\end{itemize} 
}
\end{comment}

\section{Introduction}\label{sec:intro}

%{\color{red} I haven't added any references.

%This paragraph needs to be rewritten.
%Recently, reinforcement learning has achieved impressive successes on various tasks. (Kaiqing, you should take care of this paragraph.)
%A popular class of algorithms are policy based. This motivates the recent trend in studying linear control problems as benchmark for RL.
%}

Recent years have witnessed  tremendous  success of reinforcement learning (RL) in various sequential decision-making applications \citep{silver2016mastering,OpenAI_dota,alphastarblog} and continuous  control tasks   \citep{lillicrap2015continuous,schulman2015high,levine2016end,recht2019tour}. Interestingly, most  successes hinge on the algorithmic framework of \emph{policy optimization} (PO), umbrellaing  policy gradient (PG) methods \citep{sutton2000policy,kakade2002natural}, actor-critic methods \citep{konda2000actor,bhatnagar2009natural}, trust-region \citep{schulman2015trust} and proximal PO \citep{schulman2017proximal} methods, etc. 
This inspires an increasing  interest  in studying the convergence theory, especially global convergence to optimal policies, of PO methods; see recent progresses in  both classical RL contexts \citep{bhandari2019global,zhang2019global,wang2019neural,agarwal2019optimality,shani2019adaptive}, and continuous control benchmarks \citep{fazel2018global,bu2019LQR,malik2019derivative,tu2018gap,zhang2019policy,matni2019self}.

%\issue{XXXXXX add more control RL examples XXXXXX} 

Indeed, PO provides a general framework for control design.\footnote{Hereafter, we will mostly adhere to the terminologies and notational convention in the control literature, which are equivalent to, and can be easily translated to those in the RL literature, e.g., {cost} v.s. {reward}, {control} v.s. {action}, etc.} Consider a general control design problem for  the following  discrete-time  nonlinear dynamical system  
\[ 
x_{k+1}=f(x_k, u_k, w_k),
\]
where $x_k$ is the state, $u_k$ is the control input,  and $w_k$ is the process noise. Formally, PO is  a constrained optimization problem $\min_{K\in \mathcal{K}} \cJ(K)$, where the decision variable $K$ is determined by the controller parameterization, the cost function $\cJ(K)$ is a pre-specified control performance measure, and the feasible set $\mathcal{K}$ carries the information of the constraints on the controller $K$.  
These concepts are briefly reviewed as follows. 

\begin{itemize}
\item Optimization variable $K$: The control input $u_k$ is typically determined by a feedback law $K$ which is also termed as  a \emph{controller}.  In the simplest case where a LTI state-feedback controller is used, $K$ is parameterized as a static matrix and $u_k$  is given as $u_k=-K x_k$. Then this matrix $K$ becomes the decision variable of the PO problem. For the so-called linear output feedback case where the state $x_k$ is not directly measured, the controller can be either a memoryless mapping or an LTI dynamical system.  Hence,  $K$ can be parameterized by either a static matrix \citep{rautert1997computational} or some state/input/output matrices $(A_K, B_K, C_K, D_K)$ \citep{apkarian2008mixed}. It is also possible to deploy nonlinear controllers and parameterize $K$ as either polynomials, kernels, or deep neural networks \citep{topcu2008local, levine2016end}.   
\item Objective function $\cJ(K)$:  $\cJ(K)$ is specified to assess the performance of a given controller $K$. The cost function design is more of an art than a science. Popular choices of such cost functions include $\mathcal{H}_2$ or $\mathcal{H}_\infty$-norm (or some related upper bounds) for the resultant feedback systems \citep{zhou1996robust, skogestad2007multivariable, dullerud2013course}. For standard RL models that are based on 
 Markov decision processes (MDPs), the cost $\cJ(K)$ usually has an additive structure over time. For instance, in the  classical linear quadratic regulator (LQR) or state-feedback LQ Gaussian (LQG)  problems, the cost is $\cJ(K):=\sum_{t=0}^{\infty} \EE[x_t^\top Q x_t+u_t^\top R u_t ]$, which also has an $\cH_2$-norm interpretation \citep{zhou1996robust}. Nevertheless, $\cJ(K)$ does not necessarily have an additive structure. We will further discuss the specification of $\cJ(K)$ in \S\ref{sec:formulation}. 
\item Feasible set $\mathcal{K}$:  Constraints on the decision variable $K$ are posed to account for either the stability, robustness, or safety concerns on the system. A common, though sometimes implicit, example in continuous control tasks is the stability constraint, i.e. $K$ is required to stabilize the closed-loop dynamics \citep{makila1987computational,bu2019LQR}. 
 There are also other constraints related to robustness or safety concerns in control design \citep{skogestad2007multivariable, dullerud2013course,apkarian2008mixed}.
The constraints will naturally confine the policy search to a feasible set $\mathcal{K}$. The cost function $\cJ(K)$ is either $\infty$ or just undefined for $K\notin \mathcal{K}$.
\end{itemize}

%It  seems that a projection operator for the set $\mathcal{K}$ is required when applying policy gradient methods to solve the above constrained optimization problem  $\min_{K\in \mathcal{K}} \cJ(K)$. Unfortunately, efficient solvers for such projection are unavailable for many applications (especially in the context of learning-enable control).

To ensure the feasibility of $K$ on the fly as the PO methods proceed, projection of the iterates onto the set $\cK$ seems to be the first natural approach that comes to mind.   However, such a projection  may not be computationally efficient or even tractable. For example, projection onto the stability constraint in LQR problems can hardly be computed, as the set $\cK$ therein is well known to be nonconvex \citep{fazel2018global,bu2019topological}.
Fortunately,    such a projection is not needed to preserve the feasibility of the iterates in PG-based methods, as recently reported by  \cite{fazel2018global,bu2019LQR}. In particular, \cite{bu2019LQR} has identified that the cost of LQR has the \emph{coercive} property, such that it diverges to infinity as the controller $K$ approaches the boundary of the feasible set $\cK$. In other words, the cost of LQR serves as a barrier function on $\cK$. 
This way, the level set of the cost becomes compact, and the decrease of the cost ensures the next iterate to stay inside the level set, which further implies the stay inside $\cK$.    
This desired property is further illustrated in Figure \ref{fig:illust_hardness}(a),  where $K$ and $K'$ are two consecutive iterates, and the level set $\{\tilde{K}\given \cJ(\tilde{K})\leq \cJ(K)\}$ is always   separated from the set  $\cK^c$ by  some distance $\delta>0$. Hence, as long as $\norm{K-K'}<\delta$, the next iterate $K'$ still stays in the set $\mathcal{K}$. More importantly,  such a separation distance $\delta$ can be re-used for the next iterate, as the  next level set is at least $\delta$ away from $\cK^c$. By induction, this allows the existence of a constant stepsize that can guarantee the controllers' stability along the iterations. It is worth emphasizing that such a property is \emph{algorithm-agnostic}, in the sense that it is dictated by the cost, and independent of the algorithms adopted, as long as they  follow any descent directions of the cost.

Besides the stability constraint, 
another  commonly used one in the control  literature is the so-called \emph{$\cH_\infty$ constraints}. This type of constraints plays a fundamental role in 
robust control  \citep{zhou1996robust, skogestad2007multivariable, dullerud2013course,apkarian2008mixed} and risk-sensitive control \citep{whittle1990risk,glover1988state}. 
Based on the well-known small gain theorem \citep{zames1966input,zhou1996robust}, such constraints can be used to guarantee robust stability/performance of the closed-loop systems when model uncertainty is at presence. 
Compared with  LQR  under the stability constraint, control synthesis under the 
$\cH_\infty$ constraint leads to a fundamentally different optimization  landscape, over which the behaviors of PO methods have not been fully investigated yet. 
 In this paper, we take an initial step towards understanding the theoretical aspects of policy-based RL methods on robust/risk-sensitive control problems. 
%and the behaviors of policy gradient methods subject to such constraints are largely unknown. There lacks a thorough understanding of the impacts of $\cH_\infty$ constraints on policy optimization, and this is an important open issue for robust/risk sensitive RL.

Specifically, we establish a convergence theory for PO methods  on $\cH_2$ linear control problems with $\cH_\infty$ constraints, referred to as \emph{mixed $\cH_2/\cH_\infty$ state-feedback control design} in the robust control literature \citep{glover1988state,khargonekar1991mixed,kaminer1993mixed,mustafa1990minimum,mustafa1991lqg,mustafa1989relations,apkarian2008mixed}. As the name suggests, the goal of mixed design is to find a robust stabilizing controller that
minimizes an upper bound for the $\cH_2$-norm,  under the restriction that the $\cH_\infty$-norm on a certain input-output channel is less than a pre-specified value.  
% and the more general mixed $\mathcal{H}_2$/$\mathcal{H}_\infty$ state-feedback design .
% XXXX
%The basic idea of the mixed $\cH_2/\cH_\infty$ control is to  
% find a robust stabilizing controller that
%minimizes an upper bound for the $\cH_2$-norm under the restriction that the $\cH_\infty$-norm on a certain input-output channel is less than a pre-specified value. 
The $\mathcal{H}_\infty$ constraint is explicitly posed here to guarantee the robustness of the closed-loop system to some extent. 
This general framework also  includes  risk-sensitive linear control, modeled as 
linear exponential quadratic Gaussian (LEQG)  \citep{jacobson1973optimal, whittle1990risk} problems as a special case, when a certain upper bound of $\cH_2$-norm is used.  
Detailed formulation for such $\cH_2$ linear control with $\cH_\infty$ constraint is provided in \S\ref{sec:formulation0}. 
% while the cost function here is an upper bound on the $\cH_2$-norm and quantifies the average performance of the controller.  
%Essentially, the policy optimization for such linear control problems can be formulated as $\min_{K\in \mathcal{K}} \cJ(K)$ where $K$ parameterizes the linear state-feedback controller, $\cJ$ is some performance cost, and $\mathcal{K}$ is the set of stabilizing controllers satisfying a certain $\cH_\infty$ constraint. 
%In Section \ref{sec:formulation0}, we will present a detailed PO formulation for such linear control with $\cH_\infty$ constraints.
%The linear control with $\cH_\infty$ robustness guarantee 
%The mixed $\cH_2/\cH_\infty$ design problem 
%provides a useful benchmark for robust/risk-sensitive RL in general, since it  examines  how well PO methods handle the $\mathcal{H}_\infty$ constraint. 
In contrast to LQR, two challenges exist in the analysis of PO methods for mixed design problems.
First, by definition of  $\mathcal{H}_\infty$-norm \citep{zhou1996robust}, the constraint is defined in the frequency domain, and is hard to impose, for instance, by directly projecting the iterates in that domain, especially in the context of RL when the system model is unknown. Note that preserving the constraint bound of $\cH_\infty$-norm as the controller updates is critical in practice, since violation of it can cause catastrophic consequences on the system.   
Second, more importantly, the coercive property of LQR fails to hold for  mixed design problems, as illustrated in Figure \ref{fig:illust_hardness}(b) (and formally established later). Particularly, the cost value, though undefined outside the set $\cK$, remains \emph{finite} around the boundary of $\cK$. Hence, the decrease of cost from $K$ to $K'$ cannot guarantee that the iterate does not travel towards, and even beyond the feasibility boundary. With no strict separation between the cost level set and $\cK^c$, there may not exist a constant stepsize that induces global convergence to the optimal policy.

These two challenges naturally raise the question: does there exist any computationally tractable  PO method, which preserves the robustness constraint  along the iterations, and enjoys (hopefully global) convergence guarantees? We provide a positive answer to this  question in the present work. Our key contribution is three-fold: First, we  study  the landscape of mixed $\cH_2/\cH_\infty$  design problems for both discrete- and continuous-time settings, and propose three policy-gradient based methods, inspired by those for LQR  \citep{fazel2018global,bu2019LQR}. Second, we prove that two of them (the Gauss-Newton method and the natural PG method) enjoy the \emph{implicit regularization} property,  such that the iterates  are automatically biased to satisfy the required $\cH_\infty$ constraint. Third, we establish the global convergence of those two PO methods to the \emph{globally optimal} policy  with \emph{globally sublinear} and \emph{locally (super-)linear} rates under certain conditions, despite the nonconvexity of the problem.   In particular, the two policy search directions  always lead to convergence to the global optimum, without getting stuck at any spurious stationary point/local optima. 
Along the way, we also derive new results on   linear risk-sensitive control, i.e., LEQG problems, and discuss the connection of mixed design to zero-sum LQ dynamic games, for designing model-free versions  of the algorithms. We expect our work to help   pave the way for  rigorous understanding of PO methods for general optimal control with $\cH_\infty$ robustness guarantees.

%Unlike the stability constraint, the $\mathcal{H}_\infty$ constraint leads to more complicated optimization landscape, i.e.,  
% the cost values on the boundary of the $\cH_\infty$ feasible set $\mathcal{K}$ can be finite such that the coercivity property used in the LQR case does not hold. \issue{XXXXXXX} 
%\issue{XXXXXXX}
%However, this is not the case for the mixed $\cH_2/\cH_\infty$ control. Notice that the cost for the mixed $\cH_2/\cH_\infty$ control and the LEQR is not defined outside $\mathcal{K}$, and it is crucial to ensure the entire trajectories of the policy optimization methods to stay in $\mathcal{K}$. As shown in Figure \ref{fig:illust_hardness}(b), the cost is not coercive any more, and the cost value on the boundary of $\mathcal{K}$ can be finite. There is no strict separation between the level set and the set $\mathcal{K}^c$. Hence it is possible for a policy optimization method to travel towards the boundary of $\mathcal{K}$ and even go outside $\mathcal{K}$ while decreasing the cost value. Can policy optimization methods (without projection steps) be guaranteed to stay in $\mathcal{K}$ and eventually find the global optimal solution for linear control with $\cH_\infty$ constraints? 
%This is the key question that we want to answer in this paper.

\begin{figure*}[!t]
	\centering
	\begin{tabular}{cc}
		\hskip0pt\includegraphics[width=0.4\textwidth]{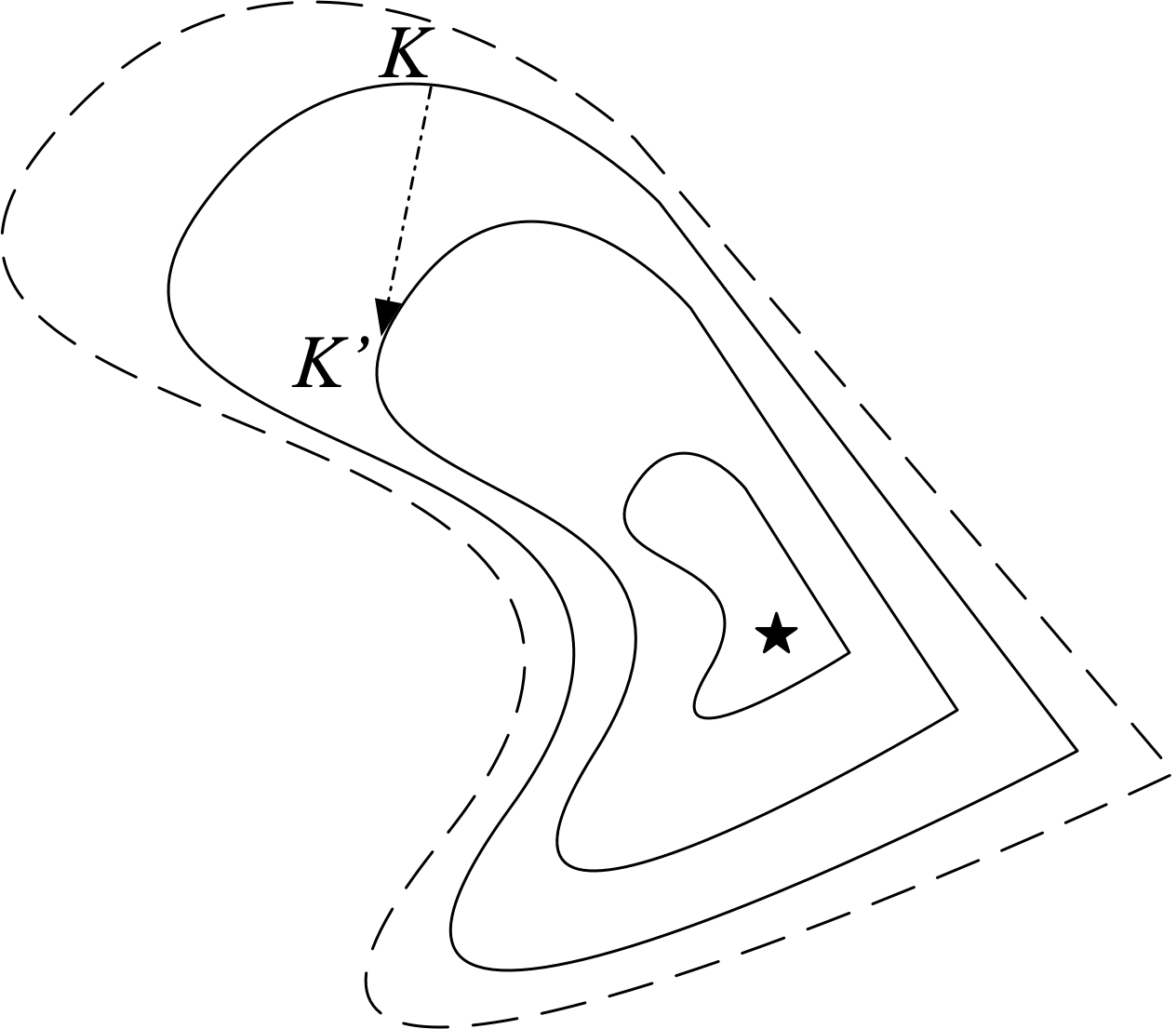}
		&
		\hskip16pt\includegraphics[width=0.4\textwidth]{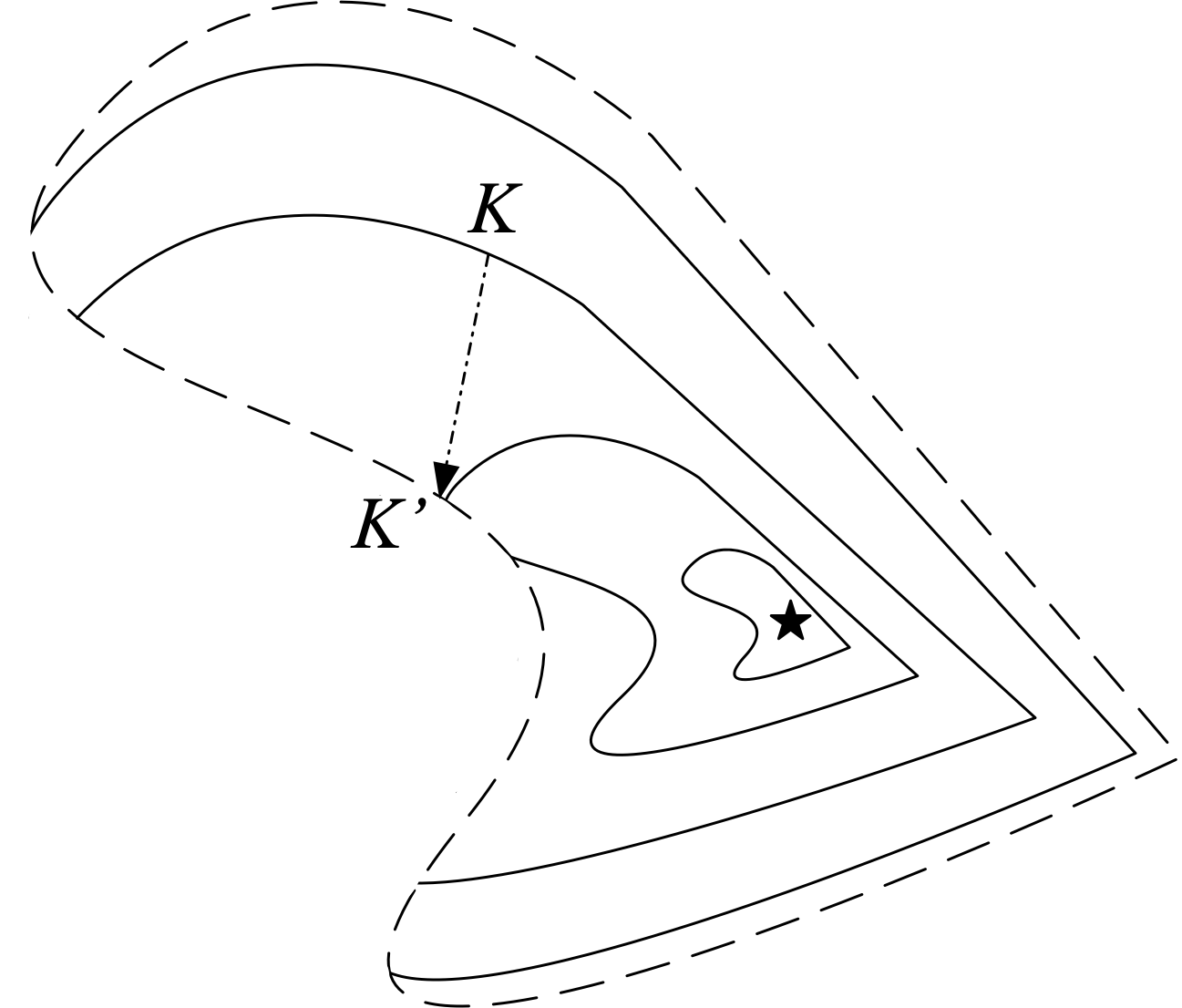}
		\\
		\hskip-8pt(a) Landscape of LQR & \hskip 50pt(b) Landscape of Mixed $\cH_2/\cH_\infty$  Control
	\end{tabular}
	\caption{Comparison of the landscapes of LQR and mixed $\cH_2/\cH_\infty$ control design that illustrates the hardness of showing convergence of the latter. The dashed lines  represent the boundaries of the constraint sets $\mathcal{K}$. For (a) LQR, $\mathcal{K}$ is the set of all linear stabilizing state-feedback controllers; for (b) mixed $\cH_2/\cH_\infty$ control,  $\mathcal{K}$ is set of all linear stabilizing state-feedback controllers satisfying an extra $\cH_\infty$ constraint on some input-output channel. The solid lines represent the contour lines of the cost $
	\cJ(K)$. $K$ and $K'$ denote the control gain of two consecutive iterates; {\scriptsize $\bigstar$} denotes the global optimizer.}
	\label{fig:illust_hardness}
\end{figure*}

%The key of our contribution is that we prove that two proposed PO methods motivated by  \citep{fazel2018global,bu2019LQR} (the Gauss-Newton method and the natural policy gradient method) have an \emph{implicit regularization} property,  such that the iterates of these two methods are automatically biased towards the points satisfying the required $\cH_\infty$ constraint. 
%Building upon this property, we further prove that these two methods are guaranteed to converge to the global optimal controller,  despite  the nonconvexity of the problem.

Note that the concept of {(implicit) regularization} has been adopted in many recent works on nonconvex optimization, including training  neural networks \citep{allen2018learning,kubo2019implicit},  phase retrieval \citep{chen2015solving,ma2017implicit}, matrix completion \citep{chen2015fast,zheng2016convergence}, and blind deconvolution \citep{li2019rapid}, referring to any scheme that biases the search direction of the optimization algorithms. The term \emph{implicit} emphasizes that the  algorithms without regularization may behave as if they are regularized. This property 
  has been advocated as an important feature of  gradient-based methods for solving  aforementioned nonconvex problems.    We emphasize that it is a feature of both the \emph{problem} and the \emph{algorithm}, i.e., it holds for  certain algorithms that solve certain problems.  This is precisely  the case in the present work. The specific search  directions of the Gauss-Newton and the natural PG methods bias the iterates  towards the set of the stabilizing controllers satisfying  the $\cH_\infty$ constraint, although no explicit  regularization, e.g., projection, is adopted, which contrasts to that the stability-preserving  of PO methods for LQR problems  is algorithm-agnostic.   
% In addition, we also discuss the model-free implementations of policy optimization methods and their connections to linear quadratic games.
To the best of our knowledge, our work appears to be the first studying the implicit regularization properties of PO methods for \emph{learning-based control} in general.  
%, motivated by the  linear control problem with $\cH_\infty$ robustness guarantee. 
%It is our hope that this serves as an important first step towards rigorous understanding of PO for robust/risk-sensitive control in general. 

%\issue{ISSUE: Can we write the ``contributions'' more explicitly, in a separate paragraph?}

%\paragraph{Related Work.} A lot of work has been done on LQR. Policy optimization is an old idea. Global convergence for LQR is new. For implicit regularization, a lot of work in the machine learning field. A lot of work on LEQR (risk-sensitive linear control) and $\mathcal{H}_2$/$\mathcal{H}_\infty$ mixed design.

%\paragraph{Future work.}
%
%\begin{itemize}
%\item Implicit bias of other optimization methods
%\item Model-based v.s. model free for robust control
%\item Optimization landscape of $\mathcal{H}_\infty$ control
%\end{itemize}

\paragraph{Related Work.}~\\ 

\vspace{-8pt}
\noindent \textbf{Mixed $\cH_2/\cH_\infty$   \& Risk-Sensitive Control.} 
The history of mixed $\cH_2/\cH_\infty$ control design dates back to the seminal works \cite{bernstein1989lqg,khargonekar1991mixed} for continuous-time settings, built upon the Riccati equation approach and the convex optimization approach, respectively. Such a formula can be viewed as a surrogate/sub-problem of the more challenging $\cH_\infty$-control problem,  where the goal is to find the optimal controller that minimizes the $\cH_\infty$-norm \citep{doyle1988state}. These formulation and approaches were then investigated for  discrete-time systems  in \cite{mustafa1991lqg,kaminer1993mixed}. 
A non-smooth constrained optimization perspective for solving mixed design problems was adopted in \cite{apkarian2008mixed}, with proximity control algorithm designed to handle the constraints explicitly,  and convergence guarantees to stationary-point controllers. 
Numerically, there also exist other packages for multi-objective $\cH_2/\cH_\infty$  control \citep{gumussoy2009multiobjective,arzelier2011h2} that are based on non-smooth nonconvex optimization. However, in spite of  achieving impressive numerical performance, these methods have no theoretical guarantees for either the global convergence or the $\cH_\infty$-norm constraint violation. It is also not clear yet how these methods can be made \emph{model-free}. 
%A non-smooth constrained optimization perspective for the mixed $\cH_2/\cH_\infty$ design problem has been presented in  \cite{apkarian2008mixed}, and the proposed proximity control algorithm was designed to handle the constraints explicitly. 
On the other hand, risk-sensitive control with exponential performance measure was  originally proposed by \cite{jacobson1973optimal} for the linear quadratic case, and then generalized in \cite{whittle1981risk,fleming1997risk,borkar2002risk,jaskiewicz2007average}.
{Under certain conditions on the cost, noise, and risk factor, convex optimization perspectives on   linear risk sensitive control have been reported in \cite{dvijotham2014universal,dvijotham2014convex}}. Recently, first-order optimization methods have also been applied to finite-horizon risk sensitive nonlinear control, but the control inputs (instead of the policy) are treated as decision variables \citep{roulet2019convergence}. Convergence to stationary points was shown therein for the iterative LEQG algorithm.  
 Interestingly, there is a relationship between mixed design and risk-sensitive control, as established in \cite{glover1988state,whittle1990risk}. These two classes of problems can also be unified with maximum-entropy  $\cH_\infty$ control \citep{glover1988state,mustafa1989relations} and zero-sum  dynamic games \citep{jacobson1973optimal,bacsar1995h}.  
 {Besides these direct controller/policy search methods, general mixed $\cH_2/\cH_\infty$ control can also be tackled via Youla-parameterization based approaches \citep{boyd1988new,scherer1995multiobjective,chen1995linear,hindi1998multiobjective,rotstein1998exact}, which lead to convex programming problems that can be solved numerically. However,  actual implementation of these approaches   either require a finite-horizon \emph{truncation} of system impulse responses, which  loses optimality guarantees \citep{boyd1988new}, or require solution of (a large enough sequence of  \citep{chen1995linear,rotstein1998exact}) semi-definite programs   or linear matrix inequalities with lifted dimensions \citep{scherer1995multiobjective,chen1995linear,rotstein1998exact,hindi1998multiobjective}, which may not be computationally efficient for  large-scale dynamical systems.  More importantly, it is not clear yet how to implement these approaches in the data-driven regime, without identifying the model. In contrast, the  direct search methods can easily be made {model-free}, see e.g., our recent attempt \cite{zhang2021derivative} for robust control design.}  
% \issue{Many of the above papers covered the general output feedback case while our paper focuses on the first-order optimization perspective for the state feedback case.  

\vspace{7pt}
\noindent \textbf{Constrained MDP \& Safe RL.}
The mixed design formulation is also pertinent to constrained dynamic control  problems,   usually modeled as constrained 
MDPs (CMDPs) \citep{altman1999constrained}.  However,  the constraint in CMDPs is generally imposed on either the expected long-term cost \citep{borkar2005actor,achiam2017constrained,chow2018lyapunov}, which shares the additive-in-time structure as the objective, or some risk-related constraint \citep{di2012policy,chow2015risk,chow2017risk}. Those constraints are in contrast to the $\cH_\infty$ robustness constraint considered here.  
Under the CMDP model,   various \emph{safe RL} algorithms, especially PO-based ones, have been developed \citep{borkar2005actor,geibel2005risk,di2012policy,chow2015risk,achiam2017constrained,chow2018lyapunov,yu2019convergent}. It is worth mentioning that except \cite{achiam2017constrained,chow2018lyapunov}, other   algorithms cannot guarantee the constraint to be satisfied during the learning  iterations, as opposed to our  on-the-fly implicit regularization property. 
Recently, safety constraint has also been incorporated into the LQR model for \emph{model-based} learning control \citep{dean2019safely}. Several recent   model-based safe RL algorithms include \cite{garcia2012safe,aswani2013provably,akametalu2014reachability,berkenkamp2017safe}.

%\cite{ding2019aggressive} considers distributed structure constraints, and study the convergence using first-order methods.
%
%Global convergence of PG methods can also be established in  \cite{gravell2019learning}, with analysis similar to the deterministic case in \cite{fazel2018global}.  
%Moreover,
\vspace{7pt}
\noindent \textbf{PO for LQR.}
PO for LQR stemmed from the adaptive policy iteration algorithm in \cite{bradtke1994adaptive}. Lately, studying the global convergence of   policy-gradient based methods for  LQR \citep{fazel2018global,tu2018gap,malik2019derivative,bu2019LQR,gravell2019learning, mohammadi2019global, mohammadi2019convergence, jansch2020convergence, venkataraman2019recovering} has   drawn increasing   attention. 
Specifically, \cite{fazel2018global} first identified the landscape of PO for LQR problems that stationary point implies global optimum, which motivated the development of first-order methods for solving this nonconvex problem. A more comprehensive landscape characterization was then reinforced in \cite{bu2019LQR}, where the \emph{coercive} property of LQR cost was explicitly mentioned.    
Based on these, \cite{malik2019derivative} advocated two-point zeroth-order methods  to improve the sample complexity of model-free PG; \cite{yang2019global} proposed actor-critic algorithms with non-asymptotic   convergence guarantees; \cite{tu2018gap} compared the  asymptotic behavior of model-based and model-free PG methods, with focus on \emph{finite-horizon} LQR problems. 
Recently, the continuous-time setup has been considered in \cite{mohammadi2019global, mohammadi2019convergence}, and the extensions to Markov jump linear quadratic control have been presented in \cite{jansch2020convergence}.
Moreover,
\cite{gravell2019learning} considered  LQR  with multiplicative noises, in order to improve the controller's robustness.   The robustness issue for the output feedback case
has been further discussed in \cite{venkataraman2019recovering}. 

\vspace{7pt}
\noindent \textbf{Robust RL.}
Robustness with respect to the model uncertainty/misspecification has long been a significant concern in RL. Indeed, the early attempt for robust RL  was based on the notion of $\cH_\infty$ robustness considered here  \citep{morimoto2005robust}, where the uncertainty was modeled as the control of an adversarial agent playing against the nominal controller. This game-theoretic perspective, as we will also discuss in \S\ref{subsec:connection_to_games}, enabled the development of actor-critic based algorithms therein, though without theoretical analysis. Such an idea has recently been carried forward in the empirical work \cite{pinto2017robust}, which proposed PO methods alternating between the two agents. Another line of work follows the \emph{robust MDP} framework \citep{nilim2005robust,iyengar2005robust}, with RL algorithms developed in \cite{lim2013reinforcement,lim2019kernel,chen2019action,mankowitz2019robust}. However, these algorithms apply to only tabular/small-scale MDPs (not continuous control tasks) and/or do not belong to PO methods that guarantee robustness during learning. 
More recently, linear control design against adversarial disturbances has also been placed in the online learning context \citep{cohen2018online,agarwal2019online,agarwal2019logarithmic} to achieve nearly-optimal regret,  where  either the dynamics or the cost functions are adversarially changing.  Model-based  methods also exist for  continuous control tasks \citep{berkenkamp2015safe,dean2019robust}.

\paragraph{Notation.} For two  matrices $A$ and $B$ of proper dimensions, we use $\tr{(AB)}$  to denote the trace of $AB$. For any  $X\in\RR^{d\times m}$, we use $\vect(X)\in\RR^{dm},~\rho(X),~\|X\|,~\|X\|_F,~\sigma_{\max}(X),~\sigma_{\min}(X)$ to denote the vectorization, the spectral radius, the operator norm, the Frobenius norm, the largest and smallest singular values of   $X$,  respectively.  
If $X\in\RR^{m\times m}$ is  square, we use $X>0$ (resp. $X\geq 0$) to denote that $X$ is positive definite  (resp. nonnegative definite), i.e., for any nonzero vector $v\in\RR^n$, $v^\top {(X^\top+X)}v>0$ (resp. $v^\top {(X^\top+X)}v\geq 0$). 
%If $X$ is additionally symmetric, we use $X>0$, $X\geq 0$ to denote that $X$ is positive definite and nonnegative definite, respectively.
 For a symmetric matrix $X$, we use $\lambda_{\max}(X)$ and $\lambda_{\min}(X)$ to denote, respectively,  its largest and smallest eigenvalues. 
We use $\otimes$ to denote the Kronecker product. We use $I$ and $0$ to denote the identity matrix and all-zero matrix of proper dimensions. We use $\cN(\mu,\Sigma)$ to denote the Gaussian distribution with mean $\mu$ and covariance matrix $\Sigma$.   
 For any  integer $m>0$, we use $[m]$ to denote the set of integers $\{1,\cdots,m\}$. 
%s For $f,g\geq 0$, 
% we use $f=o(g)$ to represent that 
%% $f$ is negligible compared to $g$  
% as $g\to 0$, i.e., $\lim\limits_{g\to 0} f/g=0$.  
For any complex number $c\in\mathfrak{C}$, we use $\Re c$ to denote the real part of $c$. We use 
 $G:=\mleft[
\begin{array}{c|c}
  A & B \\
  \hline
  C& D
\end{array}
\mright]$ to denote the input-output transfer function of the following state-space   linear dynamical systems:
\begin{flalign*}
 &{\rm \textbf{Discrete-Time:}}~~~~~~\qquad\quad\qquad x_{t+1}=   A x_t + B u_t,\qquad\qquad~ z_t= C x_t+ D u_t,&\\
 &{\rm \textbf{Continuous-Time:}}~~~~~\qquad \qquad\quad\dot{x}=Ax+Bu,\qquad\qquad\quad z=Cx+D u, &
\end{flalign*}
%The transfer function $\cT_{G}$ of $G$ is thus defined 
which can also be written as $G(z)=C(zI-A)^{-1}B+D$ and $G(s)=C(sI-A)^{-1}B+D$ for discrete- and continuous-time systems, respectively. The  $\cH_\infty$-norm $\|G\|_{\infty}$ is then defined as  
\begin{flalign}
 &{\rm \textbf{Discrete-Time:}}~~~~~~~\qquad\qquad\qquad \|G\|_\infty:=\sup_{\theta\in[0,2\pi)}~\lambda_{\max}^{1/2}[G(e^{-j\theta})^\top G(e^{j\theta})],\label{equ:def_dis_Hinf_norm}&\\
 &{\rm \textbf{Continuous-Time:}}~~~~~\qquad \qquad\quad\|G\|_\infty:=\sup_{\omega}~\sigma_{\max}[G(j\omega)]. \label{equ:def_cont_Hinf_norm} &
\end{flalign}
%\#
%  \|\cT_{G}(z)\|_\infty:=\sup_{\theta}~\lambda_{\max}^{1/2}[\cT(K)^\top (e^{-j\theta})\cT(K)(e^{j\theta})]\label{equ:def_dis_Hinf_norm}\\
%  \|\cT_{G}(s)\|_\infty:=\sup_{\omega}~\sigma_{\max}[\cT(K)(j\omega)]\label{equ:def_cont_Hinf_norm}.
%\#

%\remind{To Be Changed/Added After Meeting:
%\begin{itemize}
%	\item \newlytyped{Compress Sec. 2, 3, into one section with three subsections: motivating example (emphasize the Hinf bound more, since this now becomes the ``motivating'' part); formulation; bounded real lemma (or change the name to sth regarding ``imposability'').} 
%	\item \newlytyped{Take out continuous-time part and make it in the appendix.}
%	\item \newlytyped{Landscape and algorithms, only discrete-time. Talk about all three algorithms, including Gradient.}
%	\item \newlytyped{Theory: emphasize the challenging, and give a proof sketch on ``implicit regularization''}
%	\item Future directions: prima-dual for LQ-game, using this implicit regularization idea; minimization of Hinf norm directly, though non-smooth.
%\end{itemize} 
%}
 
\section{Preliminaries}\label{sec:formulation0}
We  first provide some preliminary results on $\cH_2$ linear control with $\cH_\infty$ robustness guarantees. Throughout this section, and the following sections in the main text, we will focus on systems in discrete time.  
Counterparts of these results for continuous-time systems are included in Appendix \S\ref{sec:aux_cont_res}.

\subsection{Motivating Example: LEQG}\label{sec:mot_example}

We start with an example of \emph{risk-sensitive} control, the \emph{infinite-horizon state-feedback linear exponential quadratic Gaussian}  problem\footnote{Unless otherwise noted, we will just refer to this problem as LEQG hereafter.}   \citep{jacobson1973optimal},  which  is motivating in that:  
%In this section, we introduce an  example of optimal control synthesis  with robustness guarantees that is motivating, in the sense that: 
i) it  
%To better motivate the consideration of robustness, we start with a motivating example 
%on the discrete-time linear exponential quadratic regulator problem \cite{jacobson1973optimal}, which 
%that 
is closely related to the   well-known linear optimal control  problems, e.g., LQR and state-feedback LQG; ii) it illustrates the idea of mixed control   design, especially introducing the $\cH_\infty$-norm constraint, though implicit, that  guarantees robustness. The latter manifests the challenge in  the convergence analysis  of  PO methods for this problem.  
% challenging. 

%\issue{XXXXXX 2019.09.26}

%\subsubsection{Formulation}  
%Specifically, 
%one standard approach to  account for the uncertainty in optimal control synthesis is by designing \emph{risk-sensitive} control.  
%As a baseline, the so-termed linear exponential quadratic regulator  problem \cite{jacobson1973optimal} has been advocated in the linear-quadratic setting. 
%\issue{2019.09.28.} 
Specifically, at time $t\geq 0$, the agent takes an action $u_t\in\RR^d$ at  state $x_t\in\RR^m$, which leads the system to a new state $x_{t+1}$ by a linear dynamical  system 
\$
x_{t+1}=   A x_t + B u_t+w_t, \qquad x_0 \sim \cN(\bm{0},X_0),\quad w_t\sim\cN(\bm{0},W),
\$
where $A$ and $B$ are matrices of proper dimensions, $x_0\in\RR^{m}$ and $w_t\in\RR^{m},\forall t\geq 0$ are independent zero-mean Gaussian random variables with positive-definite covariance matrices $X_0$ and $W$, respectively. 
%Moreover, t
The one-stage cost of applying  control $u$ at state $x$  is  given  by $
c(x, u) =  x^\top Q x +u^\top R u$, 
where $Q$ and $R$ are positive-definite matrices. 
%Now, consider the following cost $\cJ$ for  risk-sensitive control design:
Then, the long-term cost to minimize is 
\#\label{equ:def_obj} 
\cJ:=\limsup_{T\to\infty}~~\frac{1}{T}\frac{2}{\beta}\log\EE\exp\bigg[\frac{\beta}{2} \sum_{t=0}^{T-1}c(x_t, u_t) \bigg],
\#
where $\beta$ is the parameter that describes the intensity of risk-sensitivity, and the expectation  is taken over the randomness of both $x_0$ and $w_t$ for all $t\geq 0$. The intuition behind the objective \eqref{equ:def_obj} is that by Taylor series  expansion around $\beta=0$, 
\$
\cJ\approx \limsup_{T\to\infty}~~\frac{1}{T}\bigg\{\EE\bigg[\sum_{t=0}^{T-1}c(x_t, u_t) \bigg]+\frac{\beta}{4}\Var\bigg[\sum_{t=0}^{T-1}c(x_t, u_t) \bigg]\bigg\}+O(\beta^2). 
\$ 
Hence, if $\beta>0$, the control is \emph{risk-averse} since minimization also places positive weight on the variance, in addition to the expectation, of the cost; in contrast, if $\beta<0$, the control is referred to as \emph{risk-seeking}, which encourages the variance to be large. As $\beta\to 0$, the objective  \eqref{equ:def_obj} reduces to the \emph{risk-neutral}  objective of  LQR/state-feedback LQG. Usually LEQG problems consider the case of $\beta>0$. In this sense, LEQG can be viewed as a generalization of LQR/state-feedback LQG  problems.

The goal of LEQG	 is to find the optimal control policy $\mu_t:(\RR^m\times \RR^d)^{t}\times\RR^m\to \RR^d$, which in general is a mapping  from the history of state-action pairs till time $t$ and current state $x_t$, to the action $u_t$ in $\RR^d$, that  minimizes the cost in \eqref{equ:def_obj}. 
By assuming that such an  optimal policy exists, the $\limsup$ in \eqref{equ:def_obj} can be  replaced by $\lim$. Moreover, we can show, see a formal statement    in Lemma \ref{lemma:optimal} in \S\ref{sec:aux_res}, that the optimal control  has a desired property of being {\it memoryless} and \emph{stationary}, i.e.,  \emph{linear time-invariant} (LTI), and current \emph{state-feedback}, i.e., $
\mu_t(x_{0:t},u_{0:t-1})=\mu(x_t)=-Kx_t,
$ 
for some $K\in\RR^{d\times m}$. 
Hence, it suffices to optimize over the control gain $K$, without loss of optimality, i.e.,     
\#\label{eq:obj_K}
\min_{K}\quad \cJ(K):=\lim_{T\to\infty}~~\frac{1}{T}\frac{2}{\beta}\log\EE\exp\bigg[\frac{\beta}{2} \sum_{t=0}^{T-1}c(x_t, -Kx_t) \bigg]. 
\#

\subsubsection{Cost Closed-Form} 
%\vspace{6pt}

To solve  \eqref{eq:obj_K} with PO methods, it is necessary to establish the closed-form of the objective with respect to  $K$. 
To this end, we  introduce the following algebraic  Riccati equation 
\#\label{equ:def_mod_Bellman_ori} 
P_K=Q+K^\top RK+(A-BK)^\top\big[P_K- P_K(-1/\beta\cdot  W^{-1}+ P_K)^{-1}P_K\big](A-BK), 
\#
for given control gain $K$. 
If $\beta\rightarrow 0$, \eqref{equ:def_mod_Bellman_ori} reduces to the Lyapunov equation of policy evaluation for given $K$ in LQR problems. 
For notational simplicity, we also define $\tP_K$ as
\#
%P_K&=Q+K^\top RK+(A-BK)^\top \tP_K(A-BK),\qquad\quad 
\tP_K =P_K+\beta P_K(W^{-1}-\beta P_K)^{-1}P_K.\label{equ:def_tPK}
\#
%\emph{modified Bellman equation}\footnote{It is \emph{modified} since if $\beta=0$,  \eqref{equ:def_tPK} reduces to the Bellman equation of LQR for given $K$.} corresponding to $K$:
%\#
%P_K&=Q+K^\top RK+(A-BK)^\top \tP_K(A-BK),\qquad\quad 
%\tP_K =P_K+\beta P_K(W^{-1}-\beta P_K)^{-1}P_K,\label{equ:def_tPK}
%\#	
%which can equivalently written as a Riccati equation
%\#\label{equ:def_mod_Bellman_ori} 
%P_K=Q+K^\top RK+(A-BK)^\top\big[P_K- P_K(-1/\beta\cdot  W^{-1}+ P_K)^{-1}P_K\big](A-BK). 
%\#
Then, the objective $\cJ(K)$ can be expressed by the solution to \eqref{equ:def_mod_Bellman_ori}, $P_K$, as follows.

\begin{lemma}\label{lemma:LEQR_obj_form_for_K}
	For any stabilizing LTI state-feedback controller  $u_t=-Kx_t$, such that  the Riccati equation  \eqref{equ:def_mod_Bellman_ori}  admits a solution $P_K\geq 0$ that: i) is stabilizing, i.e., $\rho\big((A-BK)^\top(I-\beta P_KW)^{-1}\big)<1$, and  ii) satisfies $W^{-1}-\beta P_K>0$, 
%	that induces a finite objective value, suppose the \emph{modified Bellman equations} defined as
%	\#
%P_K&=Q+K^\top RK+(A-BK)^\top \tP_K(A-BK)\label{equ:def_PK}\\
%\tP_K&=P_K+\beta P_K(W^{-1}-\beta P_K)^{-1}P_K,\label{equ:def_tPK}
%\#	
%admits a stabilizing fixed-point solution such that: i) $P_K\geq 0$; ii) $W^{-1}-\beta P_K>0$; iii) $(A-BK)^\top(I-\beta P_KW)^{-1}$ is stable. 
%Then,  	 
$\cJ(K)$ has the form of 
	\#\label{equ:obj_logdet_form}
\cJ(K)=-\frac{1}{\beta}\log\det (I-\beta P_KW).
\#
%where $P_K$ is the solution to the modified Bellman equation that satisfies the second condition in Proposition \ref{prop:discrete_equiva_set_cK}. 
\end{lemma}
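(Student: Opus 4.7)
The strategy is to evaluate the moment generating function in the definition of $\cJ(K)$ by backward dynamic programming over a finite horizon $T$, then carefully pass to the limit $T\to\infty$. Substituting $u_t = -Kx_t$ yields the closed-loop system $x_{t+1} = A_K x_t + w_t$ with $A_K := A-BK$ and stage cost $c(x_t,-Kx_t) = x_t^\top Q_K x_t$, where $Q_K := Q + K^\top R K$. I introduce the exponential cost-to-go
\begin{equation*}
G_t(x) := \EE\!\left[\exp\!\left(\tfrac{\beta}{2}\sum_{s=t}^{T-1} x_s^\top Q_K x_s\right)\,\bigg|\,x_t = x\right],\qquad G_T \equiv 1,
\end{equation*}
so that $\cJ(K) = \lim_{T\to\infty}(2/\beta T)\log \EE_{x_0}[G_0(x_0)]$.

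The central tool is the Gaussian moment-generating identity: for $w\sim\cN(\bm{0},W)$, symmetric $M$, and $W^{-1}-\beta M > 0$, completing the square in the Gaussian integral gives
\begin{equation*}
\EE_w \exp\!\left(\tfrac{\beta}{2}(y+w)^\top M(y+w)\right) = \det\!\big(I-\beta W M\big)^{-1/2}\exp\!\left(\tfrac{\beta}{2}\,y^\top\big[M+\beta M(W^{-1}-\beta M)^{-1}M\big]y\right).
\end{equation*}
The bracketed matrix is exactly the expression defining $\widetilde P_K$ in \eqref{equ:def_tPK} when $M=P_K$. Applying this identity with $y = A_K x$, I prove by backward induction on $t$ that $G_t(x) = \alpha_t \exp(\tfrac{\beta}{2} x^\top P_t x)$ with
\begin{equation*}
P_t = Q + K^\top R K + A_K^\top \widetilde{P}_{t+1} A_K,\qquad \alpha_t = \prod_{s=t}^{T-1}\det\!\big(I-\beta W P_{s+1}\big)^{-1/2},
\end{equation*}
initialized by $P_T = 0$, where $\widetilde{P}_{s+1} := P_{s+1}+\beta P_{s+1}(W^{-1}-\beta P_{s+1})^{-1}P_{s+1}$. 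A short algebraic check---using $(P-\beta^{-1}W^{-1})^{-1} = -\beta(W^{-1}-\beta P)^{-1}$---shows that \eqref{equ:def_mod_Bellman_ori} is exactly the fixed-point equation $P = Q + K^\top R K + A_K^\top \widetilde P\, A_K$ for this recursion, so $P_K$ is its stabilizing fixed point.

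Integrating over $x_0\sim\cN(\bm{0},X_0)$ by one more application of the same Gaussian identity yields $\EE_{x_0}G_0(x_0) = \alpha_0\det(I-\beta X_0 P_0)^{-1/2}$. Taking $(2/\beta T)\log$ and letting $T\to\infty$, the initial-state factor contributes $O(1/T)$ and vanishes, while after the change of indices $Q_t := P_{T-t}$ (so that $Q_0=0$ and $Q_t$ follows the forward Riccati iteration with $Q_t\to P_K$), Cesaro averaging gives
\begin{equation*}
-\frac{1}{\beta T}\sum_{s=0}^{T-1}\log\det\!\big(I-\beta W P_{s+1}\big) = -\frac{1}{\beta T}\sum_{t=0}^{T-1}\log\det\!\big(I-\beta W Q_t\big) \;\longrightarrow\; -\frac{1}{\beta}\log\det\!\big(I-\beta W P_K\big).
\end{equation*}
The identity $\det(I-\beta W P_K) = \det(I-\beta P_K W)$ (from $\det(I-XY)=\det(I-YX)$) then produces \eqref{equ:obj_logdet_form}.

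The main obstacle is justifying that the backward Riccati iterates $\{P_t\}$ stay in the admissible region $\{P : W^{-1}-\beta P > 0\}$ throughout and converge to $P_K$ as $T-t\to\infty$. This is precisely where the two hypotheses of the lemma---the stabilizing property $\rho\big((A-BK)^\top(I-\beta P_K W)^{-1}\big) < 1$ and $W^{-1}-\beta P_K > 0$---are used essentially; they play the role of the standard stabilizing-solution condition for LQR, combined with the positivity constraint specific to the exponential cost. The convergence can be established either by a monotonicity/invariant-set argument for the Riccati map linearized around $P_K$, or by invoking known convergence results for the indefinite Riccati equation arising in discrete-time risk-sensitive control.
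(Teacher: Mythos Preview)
Your proposal is correct and follows essentially the same approach as the paper: both evaluate the finite-horizon exponential cost by backward recursion using the Gaussian moment identity (the paper's Lemma~\ref{lemma:integral_Gaussian}), obtain the Riccati recursion with accumulating $\log\det$ terms, and pass to the limit via convergence of the forward iterates to the stabilizing solution $P_K$. The paper packages the recursion through the operator $\cF_{u_t}$ introduced in the proof of Lemma~\ref{lemma:optimal} and handles the convergence step explicitly in Lemma~\ref{lemma:recursion_MBE} via the monotonicity argument you allude to at the end.
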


%The proof of Lemma \ref{lemma:LEQR_obj_form_for_K} is deferred to \S\ref{sec:aux_res}.  
% for the first time. 
Note that when $\beta\to 0$, the objective \eqref{equ:obj_logdet_form} reduces to $\tr (P_KW)$, the cost function  for LQG  problems. 

\begin{remark}[New Results on LEQG]
	To the best of our knowledge,  our results on that the optimal controller is LTI  state-feedback in Lemma \ref{lemma:optimal}, and on the form of the objective $\cJ(K)$ in 	Lemma \ref{lemma:LEQR_obj_form_for_K}, though expected, have not been rigorously established for  LEQG  problems in the literature.   For completeness,  we  present a self-contained proof in \S\ref{sec:aux_res}.  
	Interestingly, the former argument has been hypothesized in Section $3$ of \cite{glover1988state}; while the  form of $\cJ(K)$ in \eqref{equ:obj_logdet_form}  connects to the performance criterion   for more general optimal control problems  with robustness guarantees, as to be shown shortly.
\end{remark}

%{\color{red} BinComment: I am a little bit worried that Glover and other experts will think the above ``new" result is already known. So I change the tone a little bit here as follows.

%For completeness,  we will present a self-contained proof for the fact the optimal controller is LTI  state-feedback in Lemma \ref{lemma:optimal}, and for the form of the objective $\cJ(K)$ in 	Lemma \ref{lemma:LEQR_obj_form_for_K}. Although these results are as expected \cite[Section $3$]{glover1988state}, we are not able to find their proofs in existing literatures.  It is also worth mentioning that the  form of $\cJ(K)$ in \eqref{equ:obj_logdet_form}  connects to the performance criterion   for more general optimal control problems  with robustness guarantees, as to be shown shortly.
%}

%\subsubsection
%\vspace{8pt}
%\noindent\textbf
\subsubsection{Implicit Constraint on $\cH_\infty$-Norm} 
%\vspace{6pt}

Seemingly, 
\eqref{eq:obj_K}  is an unconstrained optimization over $K$. However, as identified by  \cite{glover1988state}, there is an implicit constraint set for this problem, which corresponds to the lower-level set of the $\cH_{\infty}$-norm of the closed-loop transfer function under the linear stabilizing controller $u=-Kx$. We reiterate the result as follows.

\begin{lemma}[\cite{glover1988state}]\label{lemma:LEQR_feasible_set}
	Consider the LEQG problem in  \eqref{eq:obj_K} that finds the optimal stationary state-feedback control gain $K$, and a closed-loop transfer function from the noise $\{w_t\}$ to the output, $\cT(K)$, as
	\$
\renewcommand\arraystretch{1.3}
\cT(K):=\mleft[
\begin{array}{c|c}
  A-BK & W^{1/2} \\
  \hline
  (Q+K^\top R K)^{1/2}& 0
\end{array}
\mright]. 
\$
Then, the feasible set of  $\cJ(K)$ is the intersection of the set of linear stabilizing feedback controllers and the $1/\sqrt{\beta}$-lower-level set of the $\cH_{\infty}$-norm of $\cT(K)$, i.e., $\big\{K\biggiven \rho(A-BK)<1, \,\,\mbox{and}\,\,\|\cT(K)\|_{\infty}<1/\sqrt{\beta}\big\}$. 
\end{lemma}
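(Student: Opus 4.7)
The plan is to identify the feasibility set of $\cJ(K)$ with the standard characterization from the discrete-time bounded real lemma (KYP) applied to the closed-loop transfer function $\cT(K)$ at bound $\gamma=1/\sqrt{\beta}$. By Lemma~\ref{lemma:LEQR_obj_form_for_K}, $\cJ(K)$ is finite precisely when the Riccati equation \eqref{equ:def_mod_Bellman_ori} admits a stabilizing solution $P_K\geq 0$ with $W^{-1}-\beta P_K>0$; outside this set either the closed-loop diverges or the exponential expectation in \eqref{eq:obj_K} blows up. So it suffices to match this feasibility condition with $\{K:\rho(A-BK)<1,\ \|\cT(K)\|_\infty<1/\sqrt{\beta}\}$.

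First I would recast \eqref{equ:def_mod_Bellman_ori} in canonical bounded real form. Writing $A_{cl}=A-BK$, $B_w=W^{1/2}$, $C_z=(Q+K^\top RK)^{1/2}$, and $\gamma^2=1/\beta$, the sign manipulation $P_K - P_K(-\beta^{-1}W^{-1}+P_K)^{-1}P_K = P_K + P_K(\beta^{-1}W^{-1}-P_K)^{-1}P_K$ together with the similarity identity $W^{1/2}(\gamma^2 I - W^{1/2}P_K W^{1/2})^{-1}W^{1/2} = (\gamma^2 W^{-1}-P_K)^{-1}$ turn \eqref{equ:def_mod_Bellman_ori} into
\begin{equation*}
P_K = A_{cl}^\top P_K A_{cl} + A_{cl}^\top P_K B_w(\gamma^2 I - B_w^\top P_K B_w)^{-1}B_w^\top P_K A_{cl} + C_z^\top C_z,
\end{equation*}
which is exactly the discrete-time Riccati equation that appears in the KYP characterization of $\|\cT(K)\|_\infty<\gamma$. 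At the same time, a short Woodbury calculation gives $I + (\beta^{-1}W^{-1}-P_K)^{-1}P_K = (I-\beta W P_K)^{-1}$, so the closed-loop matrix $A_{cl} + B_w(\gamma^2 I - B_w^\top P_K B_w)^{-1}B_w^\top P_K A_{cl}$ used in the KYP stability clause equals $(I-\beta W P_K)^{-1}(A-BK)$, whose spectrum coincides with that of $(A-BK)^\top(I-\beta P_K W)^{-1}$ appearing in Lemma~\ref{lemma:LEQR_obj_form_for_K}.

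With these two translations in hand, I would invoke the discrete-time bounded real lemma: $\|\cT(K)\|_\infty<\gamma$ if and only if $A-BK$ is Schur and the above Riccati equation admits a stabilizing $P_K\geq 0$ with $\gamma^2 I - B_w^\top P_K B_w>0$, which unpacks to $W^{-1}-\beta P_K>0$. Setting $\gamma=1/\sqrt{\beta}$ exactly reproduces the feasibility conditions of Lemma~\ref{lemma:LEQR_obj_form_for_K}, and the claim follows. The main obstacle I anticipate is the algebra of the second paragraph---unwinding the $(-\beta^{-1}W^{-1}+P_K)^{-1}$ term into the KYP form and showing the two ``stabilizing solution'' notions agree; once this bridge is built, the lemma reduces to a direct appeal to a classical frequency-domain result.
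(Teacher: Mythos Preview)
Your proposal is correct and, in fact, more self-contained than the paper's own proof, which simply cites Section~3 of \cite{glover1988state} (replacing their $\theta$ by $-\beta$) and writes out the transfer function. You instead recast the LEQG Riccati equation \eqref{equ:def_mod_Bellman_ori} into the canonical discrete-time bounded real form and invoke the KYP characterization directly; this is exactly the machinery the paper develops later as Lemma~\ref{lemma:discrete_bounded_real_lemma} and uses in Remark~\ref{remark:necess_lemma_LEQR_obj} to close the loop, so your route effectively anticipates and collapses that later development into the proof of Lemma~\ref{lemma:LEQR_feasible_set}. The algebraic bridge you flagged as the main obstacle (turning $P_K-P_K(-\beta^{-1}W^{-1}+P_K)^{-1}P_K$ into the KYP form and matching the stabilizing closed-loop matrices) is correct as written.

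One point to tighten: you assert that ``$\cJ(K)$ is finite \emph{precisely when}'' the Riccati conditions of Lemma~\ref{lemma:LEQR_obj_form_for_K} hold, but that lemma only states sufficiency. The necessity direction---finite $\cJ(K)$ forces the existence of the stabilizing Riccati solution with $W^{-1}-\beta P_K>0$---is what you need to close the equivalence. Your parenthetical ``outside this set \ldots\ the exponential expectation blows up'' gestures at the classical LEQG breakdown phenomenon but does not prove it. To make the argument rigorous you should either cite this (e.g., Whittle or Jacobson) or argue it directly: if $K$ is stabilizing yet $\|\cT(K)\|_\infty\ge 1/\sqrt{\beta}$, the finite-horizon Riccati recursion underlying \eqref{equ:def_obj} eventually violates $W^{-1}-\beta P^t>0$, and the Gaussian integral in the exponential cost ceases to be finite, so $\cJ(K)=+\infty$. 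With that step filled in, your argument stands on its own.
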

\begin{proof}
	The result follows  by applying the  results in Section $3$ in \cite{glover1988state},  writing out the transfer function, and 
	replacing the $\theta$ therein by the $-\beta$ here. 
\end{proof}

We note that  the feasible set for LEQG  in Lemma \ref{lemma:LEQR_feasible_set} may not necessarily be bounded. This   feasible set, though quite concise to characterize, is hard to enforce directly onto the control gain $K$, since it is a frequency-domain characterization using the $\cH_\infty$-norm. 
To develop PO algorithms for  finding $K$, the time-domain characterization in Lemma \ref{lemma:LEQR_obj_form_for_K} is more useful. Interestingly, as we will show shortly, 
the conditions that lead to the form of $\cJ(K)$ in Lemma \ref{lemma:LEQR_obj_form_for_K} are indeed equivalent to the feasible set given by 
$\cH_\infty$-norm constraint in Lemma \ref{lemma:LEQR_feasible_set};   see Remark \ref{remark:necess_lemma_LEQR_obj}. 

In fact,  this reformulation of LEQG as a constrained optimization problem,  belongs to a general class problems, \emph{mixed $\cH_{2}/\cH_{\infty}$ control design} with state-feedback.

%\section{Bigger Picture: Mixed $\cH_2/\cH_{\infty}$ Control Synthesis}\label{sec:formulation}

\subsection{Bigger Picture: Mixed $\cH_2/\cH_{\infty}$ Control Synthesis}\label{sec:formulation}

%The LEQR example  introduces a bigger picture for optimal control with robustness guarantees: mixed $\cH_{2}/\cH_{\infty}$ control design. 
%, which  includes  risk-sensitive control  as an example.  
%For notational simplicity, we focus on  discrete-time settings in the main text, and defer the  continuous-time results, to Appendix  \S\ref{sec:aux_cont_res}.    
Consider the following discrete-time linear dynamical system with a single input-output channel 
\#
x_{t+1}=Ax_t+Bu_t+Dw_t,\quad z_t=Cx_t+E u_t,\label{equ:def_mixed_discret}
\#
where $x_t\in\RR^{m},u_t\in\RR^d$ denote the states and controls, respectively,   $w_t\in\RR^n$ is the disturbance, $z_t\in\RR^l$ is the controlled output, and $A,B,C,D,E$ are  matrices of proper dimensions. 
Consider the \emph{admissible} control policy $\mu_t$ to be a mapping from the history of state-action pairs till time $t$ and the current state $x_t$ to action $u_t$. It has been shown in \cite{kaminer1993mixed} that, \emph{LTI} state-feedback controller (without memory)  suffices to achieve the optimal performance of mixed $\cH_2/\cH_\infty$ design under this \emph{state-feedback} information structure\footnote{For discrete-time settings, if both the (exogenous) disturbance $w_t$ and the state $x_t$ are available, i.e., under the \emph{full-information} feedback  case, LTI controllers may not be optimal \citep{kaminer1993mixed}. Interestingly, for continuous-time settings, LTI controllers are indeed optimal \citep{khargonekar1991mixed}.  }. As a consequence, it suffices to consider only stationary, current state-feedback  controller parametrized as $u_t=-Kx_t$. 

\begin{remark}
%\remark
[Justification of LTI Control for LEQG]\label{remark:LTI_control}
	As to be shown shortly, LEQG is a special case of mixed $\cH_2/\cH_\infty$  design. Hence, the result we derived in Lemma \ref{lemma:optimal}, i.e., the optimal controller of LEQG is indeed LTI, is consistent with this earlier result on mixed  design from \cite{khargonekar1991mixed,kaminer1993mixed}. 
%\vspace{6pt}
\end{remark}

In accordance with this parametrization, the  transfer function from the disturbance $w_t$   to the output $z_t$  can be represented as 
\#\label{equ:mixed_design_transfer}
\renewcommand\arraystretch{1.3}
\mleft[
\begin{array}{c|c}
  A-BK & D \\
  \hline
  C-EK& 0
\end{array}
\mright]. 
\#   
In common with  \cite{glover1988state,khargonekar1991mixed,bacsar1995h}, we make the following  assumption on the matrices $A,B,C,D$ and $E$. 

\begin{assumption}\label{assum:coeff_matrices}
	The matrices $A,B,C,D,E$ in \eqref{equ:mixed_design_transfer} satisfy  
	 $E^\top [C ~~ E]=[0~~ R]$ for some  $R>0$. 
%	\begin{itemize}
%		\item $(A,B)$ and $(A,D)$ are both  stabilizable, $(A,C)$ is detectable;
%		\item $E^\top [C \quad E]=[\bm{0}\quad R]$ for some positive definite matrix $R>0$. 
%	\end{itemize}
\end{assumption}

Assumption \ref{assum:coeff_matrices} is fairly standard, which 
%. The second condition, especially, 
%which  
clarifies the exposition substantially by normalising the control weighting and eliminating cross-weightings between control signal and state \citep{bacsar1995h}. 
 Hence, the transfer  function in  \eqref{equ:mixed_design_transfer}  has the  equivalent form\footnote{Strictly speaking, the transfer functions for \eqref{equ:mixed_design_transfer} and \eqref{equ:mixed_design_transfer2} are equivalent in the sense that  the values of $\cT^{\sim}(K) \cT(K)$ are the same for all the points on the unit circle.} of
% \footnote{By a slight abuse of notation, we use   $\cT(K)$ again as the transfer function here, in contrast to \eqref{equ:dynamic_sys} for the discrete-time setting.} 
\#\label{equ:mixed_design_transfer2}
\renewcommand\arraystretch{1.3}
\cT(K):=\mleft[
\begin{array}{c|c}
  A-BK & D \\
  \hline
  (C^\top C+K^\top R K)^{1/2}& 0
\end{array}
\mright]. 
\#
%Note that the transfer function $\cT(K)$ can also be written as  $\cT(K)(z)=(C^\top C+K^\top RK)^{1/2}[zI-(A-BK)]^{-1}D$.  
Hence, robustness of the designed controller  can be guaranteed by the  constraint on the $\cH_\infty$-norm, i.e., $\|\cT(K)\|_{\infty}<\gamma$ for some $\gamma>0$. The intuition behind the constraint, which follows from small gain theorem \citep{zames1966input}, is that the constraint on $\|\cT(K)\|_{\infty}$  implies that the closed-loop system is \emph{robustly stable} in that any stable transfer function $\Delta$ satisfying $\|\Delta\|_{\ell_2\to\ell_2}<1/\gamma$ may be connected from $z_t$ back to $w_t$ without destablizing the system. For more background on $\cH_\infty$ control, see \cite{bacsar1995h,zhou1996robust}. For notational convenience, we define the feasible set of mixed $\cH_{2}/\cH_{\infty}$ control design  as
\#\label{equ:define_cK} 
\cK:=\big\{K\biggiven \rho(A-BK)<1, \,\,\mbox{and}\,\, \|\cT(K)\|_{\infty}<{\gamma}\big\}.
\#
We note that the set $\cK$ may be unbounded.

In addition to the constraint, the objective of mixed $\cH_{2}/\cH_{\infty}$  design  is usually an upper bound of the $\cH_2$ norm of the closed-loop system. By a slight abuse of notation, let $\cJ(K)$ be the cost function of mixed design. Then the common forms of $\cJ(K)$ include \cite{mustafa1989relations,mustafa1991lqg} 
\#
  \cJ(K)&=\tr(P_KDD^\top),\label{equ:form_J1}\\
%  \quad\text{or}\quad 
  \cJ(K)&=-\gamma^2\log\det (I-\gamma^{-2}P_KDD^\top),\label{equ:form_J2}\\
%  \quad \text{or}\quad 
\cJ(K)&=\tr\big[D^\top P_K(I-\gamma^{-2}DD^\top P_K)^{-1}D\big],\label{equ:form_J3}
\#
where $P_K$ is the solution  to the following Riccati equation 
%\small
\#
(A-BK)^\top \tP_K (A-BK)+C^\top C+K^\top RK-P_K=0,\label{equ:discret_riccati}
\#
\normalsize
with $\tP_K$ defined as 
\#\label{equ:def_tP_K}
\tP_K:=P_K+P_KD(\gamma^2I-D^\top P_K D)^{-1}D^\top P_K. 
\#
%for notational convenience. 

\begin{remark}[LEQG as a Special Case of Mixed-Design]\label{remark:special_case}
	By  Lemma \ref{lemma:LEQR_feasible_set}, replacing $\beta$, $W$, and $Q$ in LEQG by $\gamma^{-2}$,  $DD^\top$ and $C^\top C$, respectively, yields the formulation of  mixed $\cH_{2}/\cH_{\infty}$ design. 
	In particular, the closed-form cost of LEQG that we derived for the first time, see Lemma \ref{lemma:LEQR_obj_form_for_K}, 
	is identical to the cost in \eqref{equ:form_J2}; and the implicit constraint of LEQG in Lemma \ref{lemma:LEQR_feasible_set} is exactly the $\cH_\infty$-norm constraint in \eqref{equ:define_cK}. 
	Thus,  LEQG is a mixed-design problem with $D=W^{1/2}$ and $\cJ(K)$ being \eqref{equ:form_J2}. Note that  $DD^\top =W>0$ for LEQG.
\end{remark}

%By   Remark \ref{remark:special_case},  
%we note that \eqref{equ:discret_riccati} is the same as the Riccati equation \eqref{equ:def_mod_Bellman_ori}, with certain change of variables. 

%which was also referred to as {modified Bellman equation}  for LEQR. Thus, to be consistent, we also refer to   \eqref{equ:discret_riccati} as the \emph{modified Bellman equation} for mixed $\cH_2/\cH_\infty$ design. 

All three objectives in \eqref{equ:form_J1}-\eqref{equ:form_J3} are upper bounds of the $\cH_2$-norm \citep{mustafa1989relations,mustafa1991lqg}. In particular, cost \eqref{equ:form_J1} has been adopted in \cite{bernstein1989lqg,haddad1991mixed}, which resembles the standard $\cH_2$ control/LQG control objective, but with $P_K$ satisfying a Riccati equation instead of a Lyapunov equation.  
Cost \eqref{equ:form_J2} is closely related to maximum entropy $\cH_\infty$-control, see the detailed relationship between the two in \cite{mustafa1990minimum}. In addition, cost \eqref{equ:form_J3} can also be connected to the cost of LQG using a different Riccati equation \cite[Remark $2.7$]{mustafa1991lqg}. 
As $\gamma\to \infty$, the costs in all \eqref{equ:form_J1}-\eqref{equ:form_J3}  reduce to the cost for LQG, i.e., $\cH_2$ control design problems. 

In sum, the mixed $\cH_2/\cH_\infty$ control design can be formulated as
\#\label{equ:def_mixed_formulation}
\min_K \quad \cJ(K),\qquad s.t.\quad K\in\cK,
\#
with $\cJ(K)$ and $\cK$ defined in \eqref{equ:form_J1}-\eqref{equ:form_J3} and \eqref{equ:define_cK}, respectively.

%\issue{2019.08.21. }
%
%\issue{In the introduction of the ``bigger picture'' on mixed design, for both continuous \cite{khargonekar1991mixed} and discrete \cite{kaminer1993mixed} time, if we consider the optimal control design in the  ``state-feedback'' case, then the ``dynamic full information controller'' has the same optimal performance as just ``stationary state-feedback controllers''. If we also have access to the noise, then the ``dynamic full information controller'' performance is still achievable by ``stationary state-feedback controllers'' for ``continuous-time'', but, interestingly, not for the ``discrete-time'' \cite{kaminer1993mixed}. Anyways, it suffices to only consider ``stationary state-feedback controllers'' here, since we have no access to the noise/disturbance/exogenous input. Hence, we may want to connect to this discrete-time result in \cite{kaminer1993mixed}, to justify that the results for LEQR is ``expected'', since we only have state, no noise observations (write this as a remark).} 
%\\

%For notational convenience, we define the feasible set of mixed $\cH_{2}/\cH_{\infty}$ control design  as
%\#\label{equ:define_cK}
%\cK:=\big\{K\biggiven \|\cT(K)\|_{\infty}<{\gamma}\big\}. 
%\# 

%\issue{WE INTRODUCE THE NONCONVEXITY of $\cK$ HERE TOO} 

%XXXXXXXXX

%\issue{2019.08.23}

\subsection{Bounded Real Lemma}

Though the constraint \eqref{equ:define_cK} is concise, it is hard to enforce over $K$ in policy optimization, since the constraint is defined  in the frequency domain. 
Interestingly, by using a significant  result in robust control theory, i.e., \emph{Bounded Real Lemma} \citep[Chapter $1$]{bacsar1995h}, \citep{zhou1996robust,rantzer1996kalman}, constraint \eqref{equ:define_cK} can be related to the solution of a Riccati equation and a Riccati inequality. 
We formally introduce the result  
as follows, whose proof is deferred to \S\ref{sec:proof_lemma_bounded_real_lemma}.

\begin{lemma}[Discrete-Time Bounded Real Lemma]\label{lemma:discrete_bounded_real_lemma}
Consider a discrete-time  transfer function $\cT(K)$ defined in \eqref{equ:mixed_design_transfer2}, suppose $K$ is stabilizing, i.e., $\rho(A-BK)<1$,  then the  following conditions are equivalent: 
%consider a discrete-time dynamical system with transfer function $\cT(K)$ defined as
%\#\label{equ:dynamic_sys}
%\renewcommand\arraystretch{1.3}
%\cT(K):=\mleft[
%\begin{array}{c|c}
%  A-BK & W^{1/2} \\
%  \hline
%  (Q+K^\top R K)^{1/2}& \bm{0} 
%\end{array}
%\mright]. 
%\#
\begin{itemize}
		\item $\|\cT(K)\|_{\infty}<{\gamma}$, which, due to $\rho(A-BK)<1$, further implies  $K\in\cK$ with $\cK$ defined in \eqref{equ:define_cK}. 
%		The control gain $K$ lies in $\cK$ defined in \eqref{equ:define_cK}, i.e., $\rho(A-BK)<1$ and $\|\cT(K)\|_{\infty}<{\gamma}$; 
		\item The Riccati equation \eqref{equ:discret_riccati} 
%		following Riccati equation, also referred to as \emph{modified Bellman equation}\#\label{equ:bnded_real_lemma_riccati}
%  P_K=\tilde A_K^\top P_K\tilde A_K+\tilde A_K^\top P_KD(\gamma^2I-D^\top P_K D)^{-1}D^\top P_K\tilde A_K+C^\top C+K^\top RK\\
%    P_K=(A-BK)^\top \tP_K (A-BK)+C^\top C+K^\top RK
%  \#
		 admits  a unique stabilizing  solution $P_K\geq 0$ such that: i) $I-\gamma^{-2}D^\top P_K D>0$; ii) $(I-\gamma^{-2}  P_KDD^\top)^{-\top}(A-BK)$ is stable; 
%		  where $\tP_K:=P_K+P_KD(\gamma^2I-D^\top P_K D)^{-1}D^\top P_K$; 
		\item There exists some $P> 0$, such that
		\#\label{equ:discrete_equiva_set_cK_cond}
I-\gamma^{-2}D^\top P D>0,\quad (A-BK)^\top \tilde{P}(A-BK)-P+C^\top C+K^\top R K<0,
\#
where $\tP:=P+PD(\gamma^2I-D^\top P D)^{-1}D^\top P$. 
%		\item The $\cH_\infty$-norm of $\cT(K)$ satisfies that $\|\cT(K)\|_{\infty}<1/\sqrt{\gamma}$. 
	\end{itemize}
\end{lemma}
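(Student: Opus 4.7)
The plan is to prove the two nontrivial equivalences (1) $\Leftrightarrow$ (3) and (1) $\Leftrightarrow$ (2); the equivalence (2) $\Leftrightarrow$ (3) then follows by transitivity. Throughout, write $A_K := A - BK$ and $M_K := C^\top C + K^\top R K$, and use the standing assumption $\rho(A_K) < 1$.

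For (3) $\Rightarrow$ (1), given $P > 0$ satisfying the Riccati inequality together with $I - \gamma^{-2} D^\top P D > 0$, I would apply Schur complement to the $(2,2)$-block $-(\gamma^2 I - D^\top P D) < 0$ to rewrite (3) as the equivalent strict LMI
\[
\begin{bmatrix}
A_K^\top P A_K - P + M_K & A_K^\top P D \\
D^\top P A_K & D^\top P D - \gamma^2 I
\end{bmatrix} < 0.
\]
Taking $V(x_t) := x_t^\top P x_t$ as a storage function and congruence-multiplying the LMI by $[x_t^\top, w_t^\top]^\top$ yields the strict dissipation inequality $V(x_{t+1}) - V(x_t) + \|z_t\|^2 - \gamma^2 \|w_t\|^2 < 0$ along every trajectory of \eqref{equ:def_mixed_discret} driven by $u_t = -K x_t$. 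Summing over $t \geq 0$ with $x_0 = 0$ and discarding the boundary term using Schur stability of $A_K$ together with $w \in \ell_2$ gives $\|z\|_{\ell_2}^2 < \gamma^2 \|w\|_{\ell_2}^2$ for all nonzero $w$, hence $\|\cT(K)\|_\infty < \gamma$. The converse (1) $\Rightarrow$ (3) follows from the discrete-time strict KYP lemma: the frequency-domain bound $\|\cT(K)\|_\infty < \gamma$ produces a Lyapunov certificate $P > 0$ satisfying the LMI above, and reversing the Schur complement delivers (3).

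For (1) $\Leftrightarrow$ (2), I would exploit the discrete-time $\cH_\infty$ dynamic-game characterization. Fix $u_t = -K x_t$ and consider the finite-horizon soft-constrained game value $V_T(x_0) := \sup_w \sum_{t=0}^{T-1}\bigl(x_t^\top M_K x_t - \gamma^2 \|w_t\|^2\bigr)$. By dynamic programming, $V_T(x_0) = x_0^\top P_{K,T} x_0$ where $P_{K,T}$ is produced by the Riccati iteration initialized at $P_{K,0} = 0$, provided $\gamma^2 I - D^\top P_{K,t} D > 0$ at each step. The $\cH_\infty$-bound $\|\cT(K)\|_\infty < \gamma$ ensures, via a Parseval-style frequency-domain argument, that this well-posedness persists uniformly in $T$ and that $\{P_{K,T}\}$ is monotone nondecreasing and uniformly bounded, hence converges to a limit $P_K \geq 0$ that solves \eqref{equ:discret_riccati} and satisfies $I - \gamma^{-2} D^\top P_K D > 0$. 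Completing the square identifies the worst-case disturbance as $w_t^\star = \gamma^{-2}(I - \gamma^{-2} D^\top P_K D)^{-1} D^\top P_K A_K x_t$; substituting it back produces the closed-loop operator $(I - \gamma^{-2} P_K D D^\top)^{-\top} A_K$, whose Schur stability is enforced by boundedness of the infinite-horizon game value. Conversely, given (2), completion of squares in \eqref{equ:discret_riccati} with the storage function $V(x) = x^\top P_K x$ delivers a dissipation identity which, combined with $I - \gamma^{-2} D^\top P_K D > 0$ and compactness of the unit circle (applied via sinusoidal probing of $\cT(K)$), implies $\|\cT(K)\|_\infty < \gamma$.

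The main obstacle will be the (1) $\Rightarrow$ (2) direction: translating a frequency-domain bound into the existence of an algebraic Riccati solution with a precise stability certificate requires preserving positivity of $\gamma^2 I - D^\top P_{K,t} D$ along the entire Riccati recursion (otherwise the DP value would blow up in finite time) and then deducing Schur stability of $(I - \gamma^{-2} P_K D D^\top)^{-\top} A_K$ from boundedness of the infinite-horizon game value alone. Both steps rely on the standard discrete-time $\cH_\infty$ dynamic-game machinery developed by Ba\c{s}ar and Bernhard, which I would invoke at this stage.
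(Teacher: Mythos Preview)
Your proposal is correct in outline. The paper's own proof, however, takes a much shorter route: it simply invokes Theorem~21.12 of Zhou, Doyle and Glover (\emph{Robust and Optimal Control}, 1996), which already packages all three equivalences for the discrete-time bounded real lemma, and adds Proposition~1 of Ionescu--Weiss (1992) for uniqueness of the stabilizing Riccati solution. Your approach instead unpacks the underlying machinery: the KYP/dissipation argument for (1) $\Leftrightarrow$ (3), and the Ba\c{s}ar--Bernhard dynamic-game construction for (1) $\Leftrightarrow$ (2). Both routes are standard and valid; yours is more self-contained, the paper's is essentially a one-line citation. Two small points worth tightening. First, in your (2) $\Rightarrow$ (1) sketch, completion of squares against the Riccati \emph{equation} yields only a nonstrict dissipation identity, hence $\|\cT(K)\|_\infty \le \gamma$; the strict bound needs either the spectral-factorization form $\gamma^2 I - \cT^\sim\cT = W^\sim(\gamma^2 I - D^\top P_K D)W$ together with the Schur stability of $(I-\gamma^{-2}P_KDD^\top)^{-\top}A_K$ to ensure $W$ has no unit-circle zeros, or a small perturbation of $P_K$ to produce a strict Riccati inequality and pass through (3). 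Second, your monotone-recursion argument delivers existence of a solution $P_K$ but not uniqueness of the \emph{stabilizing} one; that is a separate (standard) fact you should cite or prove.
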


%\begin{lemma}[Continuous-Time Bounded Real Lemma]\label{lemma:cont_bounded_real_lemma}
%Consider a continuous-time dynamical system with transfer function $\cT(K)$, then the  following conditions are equivalent: 
%%consider a discrete-time dynamical system with transfer function $\cT(K)$ defined as
%%\#\label{equ:dynamic_sys}
%%\renewcommand\arraystretch{1.3}
%%\cT(K):=\mleft[
%%\begin{array}{c|c}
%%  A-BK & W^{1/2} \\
%%  \hline
%%  (Q+K^\top R K)^{1/2}& \bm{0} 
%%\end{array}
%%\mright]. 
%%\#
%XXXXXX
%\begin{itemize}
%		\item The control gain $K$ lies in $\cK$ defined in \eqref{equ:define_cK}, i.e., $\|\cT(K)\|_{\infty}<1/\sqrt{\gamma}$; 
%		\item The modified Bellman equations  \eqref{equ:def_PK}-\eqref{equ:def_tPK} admit  a unique stabilizing  solution $P_K> 0$ such that: i) $W^{-1}-\gamma P_K>0$; ii) $(A-BK)^\top(I-\gamma P_KW)^{-1}$ is stable; 
%		\item There exists some $P>0$, such that
%		\#\label{equ:discrete_equiva_set_cK_cond}
%W^{-1}-\gamma P>0,\quad (A-BK)^\top \tilde{P}(A-BK)-P+Q+K^\top R K<0,
%\#
%where $\tilde{P}:=P+\gamma P(W^{-1}-\gamma P)^{-1}P$. 
%%		\item The $\cH_\infty$-norm of $\cT(K)$ satisfies that $\|\cT(K)\|_{\infty}<1/\sqrt{\gamma}$. 
%	\end{itemize}
%\end{lemma}
%%}

The three equivalent conditions in Lemma    \ref{lemma:discrete_bounded_real_lemma}  
 will be frequently used in the ensuing analysis. 
%By Lemma  \ref{lemma:discrete_bounded_real_lemma}, we have the immediate corollary for discrete-time LEQR problems introduced in \S\ref{sec:mot_example}.  
%\issue{WE CAN JUST SAY THE FOLLOWING AS A REMARK.}
%
%\issue{XXXXX}
Note that the unique stabilizing  solution to \eqref{equ:discret_riccati} for any $K\in\cK$, is also \emph{minimal}, if the pair   $(A-BK,D)$ is stabilizable, see 
%\cite[Corollary $13.13$, page $339$] {zhou1996robust} and 
\cite[Theorem $3.1$]{ran1988existence}.   This holds since $K\in\cK$ is indeed stabilizing. 
%if $DD^\top >0$, which is indeed the case for   LEQG problems where $D=W^{1/2}>0$. In this case, 
Thus, the optimal control that minimizes \eqref{equ:form_J1}-\eqref{equ:form_J3}, which are all monotonically increasing  with respect to $P_K$, only involves the stabilizing solution $P_K$. Hence, it suffices to consider only stabilizing solution $P_K$ of the Riccati  equation \eqref{equ:discret_riccati} for LEQG.

\begin{remark}[Necessity of Lemma \ref{lemma:LEQR_obj_form_for_K}]\label{remark:necess_lemma_LEQR_obj}
	By Lemma \ref{lemma:discrete_bounded_real_lemma} and Remark \ref{remark:special_case},  the conditions in Lemma \ref{lemma:LEQR_obj_form_for_K}   are   equivalent to the $\cH_\infty$-norm constraint in \eqref{equ:define_cK} for LEQG. This implies that these conditions are not only \emph{sufficient} for the form of $\cJ(K)$ in  Lemma \ref{lemma:LEQR_obj_form_for_K} to hold, but also \emph{necessary}. In other words, any feasible $K\in\cK$ should lead to the form of $\cJ(K)$ in \eqref{equ:obj_logdet_form}. 
\end{remark}

Next, we 
%study the optimization landscape, and 
develop policy optimization algorithms for solving the mixed $\cH_2/\cH_\infty$ control problem 
%optimal control problem with robustness guarantee 
in  \eqref{equ:def_mixed_formulation}. 

\begin{comment}
\begin{corollary}
\label{coro:discrete_equiva_set_cK}
The following conditions are equivalent: 
%consider a discrete-time dynamical system with transfer function $\cT(K)$ defined as
%\#\label{equ:dynamic_sys}
%\renewcommand\arraystretch{1.3}
%\cT(K):=\mleft[
%\begin{array}{c|c}
%  A-BK & W^{1/2} \\
%  \hline
%  (Q+K^\top R K)^{1/2}& \bm{0} 
%\end{array}
%\mright]. 
%\#
\begin{itemize}
		\item The control gain $K$ lies in $\cK$ defined in \eqref{equ:define_cK}, i.e., $\|\cT(K)\|_{\infty}<1/\sqrt{\gamma}$; 
		\item The modified Bellman equations  \eqref{equ:def_PK}-\eqref{equ:def_tPK} admit  a unique stabilizing  solution $P_K> 0$ such that: i) $W^{-1}-\gamma P_K>0$; ii) $(A-BK)^\top(I-\gamma P_KW)^{-1}$ is stable; 
		\item There exists some $P>0$, such that
		\#\label{equ:discrete_equiva_set_cK_cond}
W^{-1}-\gamma P>0,\quad (A-BK)^\top \tilde{P}(A-BK)-P+Q+K^\top R K<0,
\#
where $\tilde{P}:=P+\gamma P(W^{-1}-\gamma P)^{-1}P$. 
%		\item The $\cH_\infty$-norm of $\cT(K)$ satisfies that $\|\cT(K)\|_{\infty}<1/\sqrt{\gamma}$. 
	\end{itemize}
\end{corollary}
\begin{proof}
	The proof follows by letting $C^\top C$ in \eqref{XXX} be $Q+K^\top RK$,  and $B$ in \eqref{XXX} be $W^{1/2}$, and applying Lemma \ref{lemma:discrete_bounded_real_lemma}. Note that both $P_K$ and $P$ are positive definite, since 
	$(A,W^{1/2})$ is observable with $W>0$. 
\end{proof}

Corollary \ref{coro:discrete_equiva_set_cK} shows that the conditions in Lemma \ref{lemma:LEQR_obj_form_for_K} are not only sufficient, but also necessary for the existence of $\cJ(K)$ for LEQR problems. 
\issue{Also, the  form of the objective $\cJ(K)$ we derived in Lemma \ref{lemma:LEQR_obj_form_for_K} coincides with one of the commonly used performance criteria $\cJ_3$ in  \eqref{XXX} for mixed design.}
\end{comment}

\section{Landscape and 
%Policy Optimization 
Algorithms}\label{sec:algorithm}

In this section, we investigate  the optimization landscape of mixed $\cH_2/\cH_\infty$ control design, and develop policy optimization algorithms with convergence  guarantees.   In particular, we study both discrete- and continuous-time  settings  focusing on  two representative example costs $\cJ(K)$ from \eqref{equ:form_J2} and \eqref{equ:form_J1}, respectively.\footnote{Although only two example settings are studied in detail, the techniques developed can also be applied to other combinations of settings, e.g., cost \eqref{equ:form_J1} in discrete-time settings.} The first  combination of settings also by chance solves the discrete-time LEQG problems  introduced in \S\ref{sec:mot_example}. The second combination for continuous-time settings is discussed  in \S\ref{sec:aux_cont_res}. 

\subsection{Optimization Landscape}

We start by showing  that, regardless of the cost  $\cJ(K)$, the mixed-design problem in \eqref{equ:def_mixed_formulation} is a \emph{nonconvex} optimization problem.

 \begin{lemma}[Nonconvexity of Discrete-Time Mixed $\cH_{2}/\cH_{\infty}$ Design]\label{lemma:nonconvex_Hinf_norm_set}
 	The discrete-time mixed $\cH_{2}/\cH_{\infty}$ design  problem \eqref{equ:def_mixed_formulation}   is nonconvex. 
% 	, for both continuous- and discrete-time settings. 
 \end{lemma}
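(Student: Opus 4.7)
I would prove nonconvexity by showing that the feasible set $\cK$ defined in \eqref{equ:define_cK} is itself nonconvex for a broad class of problem data, which immediately forces \eqref{equ:def_mixed_formulation} to be a nonconvex optimization problem regardless of whether $\cJ$ happens to be convex. Concretely, the plan is to exhibit two controllers $K_1,K_2\in\cK$ whose midpoint $\tfrac12(K_1+K_2)$ violates the stability component of $\cK$, i.e.\ $\rho\bigl(A-B\cdot\tfrac12(K_1+K_2)\bigr)\ge 1$.

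First, I would recall that the pure stabilizing set $\cS := \{K:\rho(A-BK)<1\}$ is well known to be nonconvex for multi-input discrete-time plants; explicit two- or three-dimensional $(A,B)$ pairs together with two stabilizing controllers $K_1,K_2$ whose midpoint destabilizes the closed loop have been displayed in the policy-optimization literature for LQR, in particular in \cite{fazel2018global,bu2019LQR,bu2019topological}. I would import such an $(A,B,K_1,K_2)$ essentially verbatim. The remaining data $C,D,E$ can then be chosen freely subject to Assumption \ref{assum:coeff_matrices}; for instance one can stack $C=[Q^{1/2};\,0]$, $E=[0;\,R^{1/2}]$ with any $Q\ge 0$ and $R>0$ of compatible dimensions, which immediately yields $E^\top[C~~E]=[0~~R]$, and pick any $D$ of full column rank.

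Second, I would note that for any stabilizing $K$, the quantity $\|\cT(K)\|_\infty$ is finite; hence by choosing
\[
\gamma \;>\; \max\bigl\{\,\|\cT(K_1)\|_\infty,\;\|\cT(K_2)\|_\infty\,\bigr\},
\]
both $K_1$ and $K_2$ automatically satisfy the $\cH_\infty$ bound and therefore lie in $\cK$. The midpoint $\bar K := \tfrac12(K_1+K_2)$, however, fails $\rho(A-B\bar K)<1$ by the choice of the LQR example, so $\bar K\notin\cK$ regardless of the $\cH_\infty$-norm condition. This demonstrates that $\cK$ is nonconvex, and hence \eqref{equ:def_mixed_formulation} is a nonconvex optimization problem.

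The only mild obstacle is bookkeeping: aligning the dimensions and making sure Assumption \ref{assum:coeff_matrices} is satisfiable simultaneously with the imported $(A,B,K_1,K_2)$ from the LQR nonconvexity example, and that the chosen $\gamma$ is large enough to render the $\cH_\infty$ cut inactive at both endpoints. No analytical difficulty is expected — the lemma is essentially a structural consequence of the fact that the stabilizing set alone, being a projection of the $\cH_\infty$ feasible set as $\gamma\to\infty$, is already nonconvex, and that the $\cH_\infty$ constraint only shrinks the set further. If a cleaner presentation is preferred, the same conclusion can be reached by taking $\gamma$ arbitrarily large in the example so that $\cK$ reduces to (a dense-in-$\cS$ subset of) $\cS$, and invoking nonconvexity of $\cS$ directly.
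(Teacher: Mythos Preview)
Your argument is correct, but it takes a different route from the paper's. You break convexity of $\cK$ by violating the \emph{stability} component at the midpoint: you import $(A,B,K_1,K_2)$ from the LQR nonconvexity examples in \cite{fazel2018global,bu2019LQR,bu2019topological}, where $K_1,K_2$ are stabilizing but $\tfrac12(K_1+K_2)$ is not, and then take $\gamma$ large enough that the $\cH_\infty$ cut is inactive at the endpoints. The paper instead constructs an explicit $3\times 3$ example with $A=B=Q=R=I$, $DD^\top=0.01\,I$, and specific $K_1,K_2$ for which \emph{all three} of $K_1,K_2,\tfrac12(K_1+K_2)$ are stabilizing, yet the midpoint has a strictly larger $\cH_\infty$-norm ($\|\cT(K_1)\|_\infty\approx 0.435$, $\|\cT(K_2)\|_\infty\approx 0.701$, $\|\cT(K_3)\|_\infty\approx 1.658$); choosing $\gamma$ in $(0.701,1.658)$ then places $K_1,K_2\in\cK$ and excludes the midpoint via the $\cH_\infty$ constraint alone. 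Your approach is more economical since it reuses an off-the-shelf example and requires no new numerical computation; the paper's approach is self-contained and establishes the slightly sharper point that the $\cH_\infty$ sublevel set is nonconvex even when intersected with stabilizing controllers---i.e., the nonconvexity is not merely inherited from $\cS$ but is genuinely contributed by the $\cH_\infty$ constraint, which is what distinguishes mixed design from LQR.
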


The proof of  Lemma \ref{lemma:nonconvex_Hinf_norm_set} is deferred to \S\ref{sec:proof_lemma:nonconvex}. In particular, we show by an easily-constructed example that the convex combination of two control gains $K$ and $K'$ in $\cK$ may no longer lie in $\cK$. As a result, this nonconvexity poses challenges in solving \eqref{equ:def_mixed_formulation} using standard policy gradient-based approaches. Note that similar    nonconvexity of the constraint set also exists in LQR problems \citep{fazel2018global,bu2019LQR}, and has been recognized as one of the main challenges to address. 
Still, the landscape of LQR  has some desired property of being \emph{coercive} \citep[Lemma $3.7$]{bu2019LQR}, which played a significant role in the analysis of PO methods for LQR.  However, we establish in the following lemma that such a coercivity  does not hold for mixed design problems.

\begin{lemma}[No Coercivity of Discrete-Time  Mixed $\cH_{2}/\cH_{\infty}$ Design]\label{lemma:mixed_design_no_coercivity}
	The cost functions \eqref{equ:form_J1}-\eqref{equ:form_J3} for discrete-time mixed $\cH_{2}/\cH_{\infty}$ design are not coercive. Particularly, as $K\to \partial \cK$, where $\partial \cK$ is the boundary of the constraint set $\cK$, the cost $\cJ(K)$ does not necessarily  approach  infinity. 
\end{lemma}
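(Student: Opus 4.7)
The plan is to exhibit explicit sequences $\{K_n\}\subset\cK$ approaching $\partial\cK$ along which $\cJ(K_n)$ stays bounded, thereby demonstrating the failure of coercivity. The essential new feature, not present in LQR, is that $\partial\cK$ contains an ``$\cH_\infty$-activated'' portion $\partial_{\rm rob}\cK=\{K:\rho(A-BK)<1,\,\|\cT(K)\|_\infty=\gamma\}$, along which the Riccati solution $P_K$ stays bounded --- only the factor $\gamma^{2}I-D^\top P_K D$ loses positive definiteness. This is to be contrasted with LQR, where the Lyapunov cost forces $P_K\to\infty$ uniformly on $\partial\cK$, and will be the basic mechanism behind the proof.

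Concretely, I would invoke Lemma \ref{lemma:discrete_bounded_real_lemma} to pick a target boundary point $K^\star$ with $\rho(A-BK^\star)<1$ and $\|\cT(K^\star)\|_\infty=\gamma$, and approach it from the interior of $\cK$ along a smooth curve $K_n\to K^\star$. For each $n$ the stabilizing Riccati solution $P_{K_n}$ is well-defined and satisfies $I-\gamma^{-2}D^\top P_{K_n}D>0$. A standard Riccati-continuity argument (via monotone Kleinman-type iterations seeded so that feasibility is preserved along the curve) then yields $P_{K_n}\to P^\star$ for some finite $P^\star\ge 0$ solving \eqref{equ:discret_riccati} at $K^\star$, with $I-\gamma^{-2}D^\top P^\star D$ merely positive semi-definite and singular. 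In particular $\sup_n\|P_{K_n}\|<\infty$, so the cost \eqref{equ:form_J1}, namely $\cJ(K_n)=\tr(P_{K_n}DD^\top)$, stays bounded as $K_n\to\partial\cK$, which is precisely the failure of coercivity. For the costs \eqref{equ:form_J2}--\eqref{equ:form_J3}, although the cost itself blows up as $D^\top P_K D$ hits $\gamma^2 I$, the loss of the LQR-style barrier property is already visible in the sublevel-set geometry: I would complement the argument with an explicit scalar instance, e.g.\ $A=0,\,B=1,\,D=1,\,C=0,\,R=1,\,E=1$, where \eqref{equ:discret_riccati} reduces to a quadratic in the scalar $p_k$ whose stabilizing root can be tracked in closed form, exhibiting directions in which the sublevel sets of $\cJ$ are not strictly separated from $\partial\cK$ --- precisely the geometric phenomenon depicted in Figure~\ref{fig:illust_hardness}(b).

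The main obstacle is the boundedness of $\{P_{K_n}\}$ as $K_n\to K^\star\in\partial_{\rm rob}\cK$: one must rule out the scenario in which $\|P_{K_n}\|$ blows up before $K_n$ actually reaches the boundary. This is handled by the minimality of the stabilizing $P_K$ among psd solutions of \eqref{equ:discret_riccati} inside $\cK$ (noted right after Lemma \ref{lemma:discrete_bounded_real_lemma}), combined with a monotonicity argument along the curve $\{K_n\}$ using a Kleinman-style iteration: the minimal stabilizing solution is monotone in the coefficients, and the $\cH_\infty$-norm would have to exceed $\gamma$ strictly before $P_K$ could escape to infinity. Once this boundedness is established, the remaining steps are algebraic, and the conclusion --- that cost level sets can touch $\partial\cK$ at finite values --- directly negates coercivity and eliminates the algorithm-agnostic barrier mechanism that LQR enjoys.
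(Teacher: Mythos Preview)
Your approach for cost \eqref{equ:form_J1} is sound in spirit but more involved than necessary: once $DD^\top>0$, the constraint $I-\gamma^{-2}D^\top P_K D>0$ from Lemma~\ref{lemma:discrete_bounded_real_lemma} directly yields a \emph{uniform} bound $P_K<\gamma^{2}\sigma_{\min}(DD^\top)^{-1}I$ over all of $\cK$, so no Kleinman iteration or monotonicity-along-a-curve is needed. The paper proceeds exactly this way.

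For costs \eqref{equ:form_J2}--\eqref{equ:form_J3} there is a genuine gap. Along your chosen path you assert that $I-\gamma^{-2}D^\top P^\star D$ becomes singular at the limit $K^\star\in\partial_{\rm rob}\cK$; but then $-\gamma^{2}\log\det(I-\gamma^{-2}D^\top P_{K_n}D)\to\infty$ and $\tr\bigl(D^\top P_{K_n}(I-\gamma^{-2}DD^\top P_{K_n})^{-1}D\bigr)\to\infty$, so this path does \emph{not} witness failure of coercivity for \eqref{equ:form_J2}--\eqref{equ:form_J3}. Your fallback (``sublevel sets not strictly separated from $\partial\cK$'') is just a restatement of non-coercivity, not an argument for it; you still need a path with bounded cost.

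The mechanism you are missing is that $\partial_{\rm rob}\cK$ is not always reached through singularity of $I-\gamma^{-2}D^\top P_K D$. Because \eqref{equ:discret_riccati} is a Riccati (quadratic in $P_K$) rather than a Lyapunov equation, the stabilizing solution can cease to exist by \emph{coalescing with the non-stabilizing one} --- in the scalar case, this is the discriminant of the quadratic hitting zero --- while $I-\gamma^{-2}D^\top P_K D$ remains strictly positive. At such a boundary point, $P_K$ is finite \emph{and} $\gamma^{2}I-D^\top P_K D$ is nonsingular, so all three costs \eqref{equ:form_J1}--\eqref{equ:form_J3} stay finite. The paper exhibits exactly this in a one-dimensional example with explicit numerics ($A=2.75$, $B=2$, $C^2=R=1$, $D^2=0.01$, $\gamma\approx 0.2101$): as $K$ approaches the value making the discriminant vanish, $P_K\to 3.3752$ and $1-\gamma^{-2}D^2 P_K\to 0.2354>0$. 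Your scalar instance $A=0,\,C=0$ might also work, but only if you track the discriminant rather than (as your text suggests) looking for the locus where $D^\top P_K D$ hits $\gamma^{2}I$.
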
 

The proof of Lemma \ref{lemma:mixed_design_no_coercivity} is provided in \S\ref{sec:proof_lemma_mixed_design_no_coercivity}. The key of the argument is that for given $K\in\cK$, the \emph{policy evaluation} equation for mixed design problems is a Riccati equation, see \eqref{equ:discret_riccati} (a quadratic equation of $P_K$ in $1$-dimensional case); while for LQR problems, the policy evaluation equation is a Lyapunov equation, which is essentially linear. Hence, some additional  condition on $K$ is required for the existence of the solution, which can be \emph{restricter} than the conditions on $K$ and $P_K$ that makes the cost $\cJ(K)$ finite. 
%from Bounded Real Lemma, namely,  $P_K\geq 0$, $I-\gamma^{-2}D^\top P_K D>0$, and $(I-\gamma^{-2}  P_KDD^\top)^{-\top}(A-BK)$ is stable. 
In this case, the existence condition of the solution characterizes the boundary of $\cK$, which leads to a well-defined $P_K$, and thus a 
finite value of the cost $\cJ(K)$, even when $K$ approaches the boundary $\partial \cK$. 

The lack of coercivity turns out to be the greatest challenge when  analyzing the stability/feasibility of PO methods for mixed control design,  in contrast to LQR problems.  Detailed discussion on this is provided in \S\ref{subsec:implicit_reg}.  The illustration in Figure \ref{fig:illust_hardness} in \S\ref{sec:intro}  of the landscape of mixed design problems was actually based on Lemmas \ref{lemma:nonconvex_Hinf_norm_set} and  \ref{lemma:mixed_design_no_coercivity}. 
%, which is independent of the choice of $\cJ(K)$.  Now we proceed to  study the landscape property that depends  on $\cJ(K)$. 
%Let $\cJ(K)$ have the form of  \eqref{equ:form_J2}, which is also the cost of risk-sensitive control problem, see Lemma \ref{lemma:LEQR_obj_form_for_K}.   
We then show   the differentiability of $\cJ(K)$ at each $K$ 
%for all three objectives \eqref{equ:form_J1}-\eqref{equ:form_J3}   
within the feasible set $\cK$, and provide the closed-form of the policy gradient. 
Here we focus on the most complicated objective defined in \eqref{equ:form_J2} among the three in \eqref{equ:form_J1}-\eqref{equ:form_J3}, due to its direct connection to the risk-sensitive control problem; see Remark \ref{remark:special_case}. 
We note that the proof can be used directly to establish similar results for the  other two objectives, namely,  \eqref{equ:form_J1} and \eqref{equ:form_J3}, too.

\begin{lemma}\label{lemma:differentiability_policy_grad}
The cost $\cJ(K)$ defined in \eqref{equ:form_J2} is differentiable 
%, and thus continuous, 
in $K$ for any $K\in\cK$, and the policy gradient has the following form:
\$ 
\nabla \cJ(K)=2\big[(R+B^\top \tP_K B)K-B^\top \tP_K A\big]\Delta_K,
\$	
where $\Delta_K\in\RR^{m\times m}$ is a matrix given by
\small 
\#\label{equ:def_Delta}
\Delta_K&:=\sum_{t=0}^\infty \big[(I-\gamma^{-2} P_KDD^\top)^{-\top}(A-BK)\big]^tD(I-\gamma^{-2}D^\top P_K D)^{-1}D^\top\big[(A-BK)^\top(I-\gamma^{-2} P_KDD^\top)^{-1}\big]^t,
\#
\normalsize
and $\tP_K$ is defined in \eqref{equ:def_tP_K}. 
\end{lemma}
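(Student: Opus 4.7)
The plan is to: (i) establish differentiability of $K\mapsto P_K$ on $\cK$ via the implicit function theorem; (ii) linearize the Riccati equation \eqref{equ:discret_riccati} into a Stein equation for the perturbation $\delta P$; and (iii) apply the chain rule on the $\log\det$ form of $\cJ$.

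\emph{Stage 1 (differentiability).} I fix $K^\star\in\cK$. By Lemma~\ref{lemma:discrete_bounded_real_lemma}, $P_{K^\star}\geq 0$ is the unique stabilizing solution of \eqref{equ:discret_riccati}, satisfying $I-\gamma^{-2}D^\top P_{K^\star}D>0$ and rendering $\tilde A_{K^\star}:=(I-\gamma^{-2}P_{K^\star}DD^\top)^{-\top}(A-BK^\star)$ Schur stable. Viewing \eqref{equ:discret_riccati} as an implicit equation $F(K,P)=0$, the map $F$ is smooth near $(K^\star,P_{K^\star})$ since $\gamma^2I-D^\top PD$ remains invertible there, and its partial differential in $P$ at $(K^\star,P_{K^\star})$ is the Stein operator $\delta P\mapsto\delta P-\tilde A_{K^\star}^\top\delta P\,\tilde A_{K^\star}$. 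This operator is invertible on symmetric matrices precisely because $\tilde A_{K^\star}$ is Schur stable, so the implicit function theorem yields $C^1$ dependence of $P_K$ on $K$ in a neighborhood of $K^\star$; since $K^\star$ was arbitrary, $K\mapsto P_K$ is $C^1$ on all of $\cK$.

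\emph{Stage 2 (linearizing the Riccati equation).} I set $A_K:=A-BK$ and $M_K:=(I-\gamma^{-2}P_KDD^\top)^{-1}$; by Sherman--Morrison--Woodbury, $\tP_K=M_KP_K=P_KM_K^\top$. Writing $\delta P$ for the directional derivative of $P_K$ along $\delta K$ and differentiating the definition $\tP_K=P_K+P_KD(\gamma^2I-D^\top P_KD)^{-1}D^\top P_K$, I obtain the compact identity $\delta\tP_K=M_K\,\delta P\,M_K^\top$. Differentiating \eqref{equ:discret_riccati} in $K$ along $\delta K$ (using $\delta A_K=-B\,\delta K$) and rearranging yields
\[
\delta P - A_K^\top M_K\,\delta P\,M_K^\top A_K \;=\; (\delta K)^\top E_K + E_K^\top(\delta K),
\]
where $E_K:=(R+B^\top\tP_KB)K-B^\top\tP_K A$ arises from collecting $RK-B^\top\tP_K A_K=(R+B^\top\tP_K B)K-B^\top\tP_KA$. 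With $\tilde A_K:=M_K^\top A_K=(I-\gamma^{-2}P_KDD^\top)^{-\top}A_K$, which is Schur stable by Stage~1, this Stein equation has the unique convergent solution $\delta P=\sum_{t\geq 0}(\tilde A_K^\top)^t\bigl((\delta K)^\top E_K+E_K^\top\delta K\bigr)(\tilde A_K)^t$.

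\emph{Stage 3 (chain rule and read-off).} Sylvester's determinant identity gives $\cJ(K)=-\gamma^2\log\det\!\big(I-\gamma^{-2}D^\top P_KD\big)$, so standard matrix calculus produces $D_K\cJ(K)[\delta K]=\tr\!\big[D(I-\gamma^{-2}D^\top P_KD)^{-1}D^\top\,\delta P\big]$. Substituting the Neumann series for $\delta P$, interchanging trace and summation (justified by Schur stability of $\tilde A_K$), and using cyclicity of the trace together with the symmetry of $\Delta_K$ yields
\[
D_K\cJ(K)[\delta K] \;=\; 2\,\tr\!\big(E_K\Delta_K(\delta K)^\top\big),
\]
where $\Delta_K=\sum_{t\geq 0}(\tilde A_K)^t\,D(I-\gamma^{-2}D^\top P_KD)^{-1}D^\top\,(\tilde A_K^\top)^t$ matches \eqref{equ:def_Delta}. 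Reading off the Frobenius inner product delivers $\nabla\cJ(K)=2E_K\Delta_K$, the claimed formula.

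The hard part will be deriving the compact identity $\delta\tP_K=M_K\,\delta P\,M_K^\top$: differentiating $(\gamma^2I-D^\top P_KD)^{-1}$ generates several cross terms whose recombination into the clean outer sandwich form requires careful Woodbury-style algebra, and it is this identity that allows the perturbed Riccati equation to collapse into a clean Stein equation driven by $\tilde A_K$. Once it is in place, the remaining ingredients—the implicit function theorem invocation, the Neumann-series inversion of a Schur-stable Stein operator, and the $\log\det$ differentiation—are routine.
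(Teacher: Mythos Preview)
Your proposal is correct and follows essentially the same approach as the paper: both hinge on the implicit function theorem with the Stein operator's invertibility coming from Schur stability of $(I-\gamma^{-2}P_KDD^\top)^{-\top}(A-BK)$, the key identity $\delta\tP_K=(I-\gamma^{-2}P_KDD^\top)^{-1}\,\delta P\,(I-\gamma^{-2}DD^\top P_K)^{-1}$ (the paper's \eqref{equ:trash_4}), and the Neumann-series solution of the resulting Stein equation combined with the $\log\det$ chain rule. Your coordinate-free presentation via directional derivatives is more compact than the paper's elementwise and Kronecker-product computations, but the substance is identical.
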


The proof of Lemma \ref{lemma:differentiability_policy_grad} is provided in \S\ref{sec:proof_lemma_differentiability_policy_grad}. 
Note that Lemma \ref{lemma:differentiability_policy_grad} 
%,  proved in \S\ref{sec:proof_lemma_policy_grad},  
also implies some property on the landscape of $\cJ(K)$. Specifically, if $\Delta_K>0$ is full-rank, then   $\nabla \cJ(K)=0$ admits a unique solution $K=(R+B^\top \tP_{K} B)^{-1}B^\top \tP_{K} A$, which corresponds to the unique global optimum. Otherwise, if $\Delta_K\geq 0$ is not full-rank, there can be multiple stationary points. Yet, the global optimum is still of the same form. We formally establish this in the following proposition,  which is proved in   \S\ref{proof:coro_opt_control_form}. 

%characterizes the desired  landscape  of the LEQR problems. In particular, as long as the matrix $\Delta_K$ is full-rank, the stationary-point of $\cJ(K)$ will lead to the optimal control  in \eqref{equ:def_tP}. However, unlike  the LQR setting in \cite{fazel2018global}, where the full-rankness of the  correlation of the initial states suffices to guarantee such a good landscape (cf. Corollary $4$ in    \cite{fazel2018global}), the requirement of $\Delta_K$ to be full-rank here depends on $K$. By Proposition  \ref{prop:discrete_equiva_set_cK}, this can be guaranteed if $K$ always lies in $\cK$. 
%By Lemma \ref{lemma:policy_grad_ct}, we can further obtain the following corollary that gives the formula of the optimal controller for $\cH_2/\cH_\infty$ mixed control design under certain conditions. 

\begin{proposition}\label{coro:opt_control_form_discrete}
	Suppose that the discrete-time mixed $\cH_2/\cH_\infty$ design admits a global optimal solution  $K^*\in\cK$; then,  one such  solution  has the form of $K^*=(R+B^\top \tP_{K^*} B)^{-1}B^\top \tP_{K^*} A$. Additionally, if the pair $\big((I-\gamma^{-2}  P_KDD^\top)^{-\top}(A-BK),D\big)$ is controllable at some  stationary point of $\cJ(K)$, such that $\nabla \cJ(K)=0$, then this is the unique stationary point, and corresponds to the unique  global optimizer  $K^*$.  
\end{proposition}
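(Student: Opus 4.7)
The plan is to separate the two claims of the proposition. For the first claim, I would begin by observing that $\cK$ is open: both defining inequalities $\rho(A-BK)<1$ and $\|\cT(K)\|_\infty<\gamma$ involve continuous functions of $K$ (the latter by Lemma \ref{lemma:discrete_bounded_real_lemma}), so $K^*\in\cK$ is necessarily an interior point. Combined with the differentiability from Lemma \ref{lemma:differentiability_policy_grad}, we then have $\nabla\cJ(K^*)=0$, which by the gradient formula yields
\[
\bigl[(R+B^\top\tP_{K^*}B)K^* - B^\top\tP_{K^*}A\bigr]\Delta_{K^*}=0.
\]
If $\Delta_{K^*}$ happens to be full rank, right-cancellation is immediate. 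Otherwise, I would introduce $\hat K:=(R+B^\top\tP_{K^*}B)^{-1}B^\top\tP_{K^*}A$ and perform a completion-of-squares in $K$ on the Riccati equation \eqref{equ:discret_riccati} satisfied by $(K^*,P_{K^*})$. The completion of squares rearranges the Riccati as
\[
P_{K^*}=(K^*-\hat K)^\top(R+B^\top\tP_{K^*}B)(K^*-\hat K)+A^\top\tP_{K^*}A - A^\top\tP_{K^*}B(R+B^\top\tP_{K^*}B)^{-1}B^\top\tP_{K^*}A+C^\top C,
\]
so that the matrix $\hat P := P_{K^*}$ satisfies the Riccati inequality associated with $\hat K$ with slack equal to the nonpositive quadratic term. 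A comparison principle for discrete-time Riccati equations (or, equivalently, the equivalence of conditions (i)--(iii) in Lemma \ref{lemma:discrete_bounded_real_lemma} after a vanishing perturbation) then gives $\hat K\in\cK$ and $P_{\hat K}\le P_{K^*}$, hence $\cJ(\hat K)\le\cJ(K^*)$ by monotonicity of \eqref{equ:form_J2} in $P_K$. Optimality of $K^*$ forces equality, so $\hat K$ is itself a global optimum of the required form.

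For the second claim, I would first argue that the prescribed controllability implies $\Delta_K\succ 0$ at the stationary point under consideration. Indeed, Lemma \ref{lemma:discrete_bounded_real_lemma} ensures $I-\gamma^{-2}D^\top P_KD\succ 0$, so one can factor $(I-\gamma^{-2}D^\top P_K D)^{-1}=L^\top L$ with $L$ nonsingular; $\Delta_K$ then takes the form of a standard controllability Gramian for the pair $\bigl((I-\gamma^{-2}P_K DD^\top)^{-\top}(A-BK),\, DL^\top\bigr)$, whose image coincides with that of $D$, so controllability yields positive definiteness. Cancellation in $\nabla\cJ(K)=0$ then forces the clean fixed-point identity $K=(R+B^\top\tP_K B)^{-1}B^\top\tP_K A$ at this stationary point.

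Substituting this fixed-point expression for $K$ back into the Riccati equation \eqref{equ:discret_riccati}, the quadratic cross term collapses by the same completion-of-squares and one obtains the coupled algebraic Riccati equation
\[
P = A^\top\tP A - A^\top\tP B(R+B^\top\tP B)^{-1}B^\top\tP A + C^\top C,\qquad \tP = P + PD(\gamma^2 I - D^\top PD)^{-1}D^\top P,
\]
in $P$ alone, with $P$ stabilizing in the sense required by Lemma \ref{lemma:discrete_bounded_real_lemma}. I would then invoke the classical uniqueness result for the stabilizing solution of this mixed-design CARE (e.g., from \cite{mustafa1991lqg,bacsar1995h}) to conclude that $P_K$ is uniquely determined, and hence so is $K$. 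Combined with Part~1 applied to any putative global optimum---which must also be a stationary point and, by Part~1, also satisfies the same CARE---the unique stabilizing $P$ identifies the stationary point with the unique global optimizer $K^*$.

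The main obstacle will be the rank-deficient case in Part~1: the completion-of-squares yields only a non-strict inequality, while the bounded-real-lemma characterization of $\cK$ uses strict inequalities, so verifying $\hat K\in\cK$ requires either a continuity/perturbation argument on $\hat K$ or a careful Riccati comparison theorem to transfer $P_{\hat K}\le P_{K^*}$ along with stabilization. The uniqueness step in Part~2 is less delicate, being a direct appeal to the classical CARE theory for mixed $\cH_2/\cH_\infty$ synthesis.
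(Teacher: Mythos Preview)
Your proposal is sound, but for the first claim you take a different route from the paper. The paper never touches the first-order condition $\nabla\cJ(K^*)=0$ for Part~1; instead it argues directly that $K^*=(R+B^\top\tP_{K^*}B)^{-1}B^\top\tP_{K^*}A$ is globally optimal by reading the Riccati equation \eqref{equ:discret_riccati} as the generalized algebraic Riccati equation of an auxiliary zero-sum LQ game and invoking Theorem~3.7 of \cite{bacsar1995h} to conclude $P_K\ge P_{K^*}$ for \emph{every} $K\in\cK$; monotonicity of $\cJ$ in $P_K$ then finishes. This game-theoretic argument yields something stronger than you need (a uniform matrix-wise lower bound on all $P_K$) and sidesteps the feasibility question for $\hat K$ entirely. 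Your completion-of-squares approach is more elementary and avoids game theory, but the obstacle you correctly flag---turning the non-strict Riccati inequality for $\hat K$ into $\hat K\in\cK$---is exactly the perturbation argument the paper develops later, in the proof of Theorem~\ref{thm:stability_update} for the Gauss-Newton step with $\eta=1/2$; so your Part~1 effectively leans on that machinery, whereas the paper's argument here is independent of it. For Part~2 the two proofs are essentially the same: both establish $\Delta_K\succ0$ from controllability (you via a Gramian factorization, the paper via the coarser bound $(I-\gamma^{-2}D^\top P_KD)^{-1}\ge I$ and a standard observability-Gramian lemma from \cite{zhou1996robust}) and then right-cancel. Your explicit appeal to uniqueness of the stabilizing solution of the resulting coupled Riccati equation is a reasonable way to close; the paper's own concluding sentence on this point is quite terse.
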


%Under this constraint, \issue{the stationary feedback control gain satisfying the following equation achieves the minimum value of $\cJ(K)$    (actually there is no reference I can find that explicitly gives this formula. One option is that we only say we are looking for some $K$ here. Then, by differentiability, we show that the only stationary point is the one below. Done. Thus, we put it later.). 

%Moreover, this is equivalently to require the following LMI hold, i.e., there exists a $P>0$ such that:
%\#\label{equ:LMI_cond_ct}
%\mleft[
%\begin{array}{cc}
%  (A-BK)^\top P+P(A-BK)+Q+K^\top R K  & P \\
%  P& -\frac{1}{\gamma}W^{-1} 
%\end{array}
%\mright]<0. 
%\# 
%The goal of the problem is to find the optimal $K$ such that 
%} 	

The form of the optimal control gain $K^*$ above echoes back that of the solution to \emph{finite-horizon} LEQG  \citep{jacobson1973optimal}, \citep[Chapter $7$]{whittle1990risk}. Note that for LEQG problems,  $D=W^{1/2}>0$ implies that the controllability  condition holds automatically. Thus, this $K^*$ corresponds to the \emph{unique}  global optimizer. 
We also remark that the landscape result above can also be shown for the other two objectives  \eqref{equ:form_J1} and \eqref{equ:form_J3}. In fact, the key in proving Proposition \ref{coro:opt_control_form_discrete} is to show that $P_{K^*}$ is \emph{matrix-wise} minimal in the positive semi-definite sense for all $P_K$ with  $K\in\cK$. Note that since the objectives \eqref{equ:form_J1} and \eqref{equ:form_J3} are both monotonically non-decreasing in the eigenvalues of $P_K$, one can verify that the $K^*$ is also the global optimizer. Note that $K^*$ may not be the \emph{unique} global minimizer without the controllability assumption. Finally, following the proof of Lemma \ref{lemma:differentiability_policy_grad}, one can show that the policy gradients for \eqref{equ:form_J1} and \eqref{equ:form_J3} yield an almost identical form as in Lemma \ref{lemma:differentiability_policy_grad}, except the definition of $\Delta_K$. Although the  controllability assumption has been made in the literature  \citep{mustafa1991lqg}, and is also satisfied automatically by LEQG problems,  we will show next that our PO methods can find the global optimum $K^*$ even without this assumption. 	
	
%Note that for LEQG  with $D=W^{1/2}>0$, the controllability  condition in Corollary  \ref{coro:opt_control_form_discrete} is satisfied for any $K\in\cK$. Therefore, the argument that \emph{stationary point implies global optimum} holds for LEQG  without any additional assumption.   
%Indeed, the form of the optimal control gain $K^*$ in the corollary echoes back that of the solution to \emph{finite-horizon} LEQG  \citep{jacobson1973optimal}, \citep[Chapter $7$]{whittle1990risk}.  
%Note that the controllability assumption is standard for mixed design, and has also been made in \cite{mustafa1991lqg}. We also remark that the landscape above can also be shown for the other two objectives  \eqref{equ:form_J1} and \eqref{equ:form_J3}. In fact, following the techniques in proving Lemma \ref{lemma:differentiability_policy_grad}, one can show that the policy gradient yields an almost identical form as in Lemma \ref{lemma:differentiability_policy_grad}, except the definition of $\Delta_K$. Under the controllability assumption, the new $\Delta_K$ is also invertible, leading to the optimality of the stationary point.
%
% In order to find the global optimum under the conditions of Corollary \ref{coro:opt_control_form_discrete}, it suffices to find the first-order stationary point, which can be obtained using first-order policy optimization methods.

%\issue{XXXXXXXXXXX TO BE CHANGED XXXXXXXXXXX}

\subsection{Policy Optimization Algorithms}\label{sec:PO_alg}

Consider three    policy-gradient based methods as follows. For  simplicity, we define 
\#\label{equ:def_mu_Ek}
%\mu := \sigma_{\textrm{min}}(\EE_{x_0\sim \cD } x_0 x_0^\top)\text{~~~~~and~~~~}
E_K:=(R+B^\top \tP_{K} B)K-B^\top \tP_{K} A. 
\#
We also suppress  the iteration index, and  use $K$ and $K'$ to represent the control gain before and after one-step of  the update. 
\begin{flalign}
 {\rm \textbf{Policy Gradient:}}~~~~~~\qquad\quad\qquad K'&=K-\eta \nabla\cJ(K)=K-2\eta E_{K} \Delta_K\label{eq:exact_pg}\\
 {\rm \textbf{Natural Policy Gradient:}}~~~\qquad K'&=K-\eta \nabla\cJ(K)\Delta_K^{-1}=K-2\eta E_{K} &\label{eq:exact_npg}\\
 {\rm \textbf{Gauss-Newton:}}\qquad\qquad\quad\qquad K'&=K-\eta (R+B^\top \tP_{K} B)^{-1}\nabla\cJ(K)\Delta_K^{-1}\notag\\
 \qquad\qquad\qquad\qquad\qquad\qquad\qquad\quad&=K-2\eta (R+B^\top \tP_{K} B)^{-1}E_{K}
 &\label{eq:exact_gn}
\end{flalign}
where $\eta>0$ is the stepsize. The updates are motivated by  and resemble   the policy optimization updates for LQR \citep{fazel2018global,bu2019LQR}, but with $P_K$ therein replaced by $\tP_K$. The natural PG update is related to gradient over a Riemannian manifold; while the Gauss-Newton update is one type of quasi-Newton update, see \cite{bu2019LQR} for further  justifications on the updates. In particular, with $\eta=1/2$, the Gauss-Newton update \eqref{eq:exact_gn} can be viewed as the \emph{policy iteration} update for infinite-horizon mixed $\cH_2/\cH_\infty$ design. Model-free versions of the PG  update \eqref{eq:exact_pg}  can be directly obtained, since the gradient $\nabla\cJ(K)$ can be estimated by sampled data, using for instance zeroth-order methods, as in \cite{fazel2018global,malik2019derivative}. 
A direct model-free implementation of  the natural PG update \eqref{eq:exact_npg} using zeroth-order optimization methods  requires estimating the matrix $\Delta_K$.  It is not clear yet how to estimate it  from the sampled trajectories. Instead, we propose one solution by the connections  between mixed design and  zero-sum linear quadratic games; see \S\ref{sec:discussion} for more details.  Finally,  as in LQR problems, the Gauss-Newton update \eqref{eq:exact_gn} cannot yet be estimated using zeroth-order methods  directly.

\section{Theoretical Results}\label{sec:conv_res}

In this section, we investigate the convergence  of the PO methods proposed in \S\ref{sec:algorithm}.

\subsection{Implicit Regularization}\label{subsec:implicit_reg}

The first key challenge in the convergence analysis for PO methods,  is to ensure that  the iterates remain \emph{feasible} as the algorithms proceed, hopefully without the use of \emph{projection}. 
%In fact, for \emph{online control} design, ensuring  the stability/feasibility of the control iterates has been  recognized as one of the \emph{shades of reinforcement learning} by John N. Tsitsiklis recently. 
This is especially significant in mixed design problems, as the feasibility here means \emph{robust stability}, the  violation of which  can be catastrophic in practical \emph{online} control design.     
We formally define the concept of \emph{implicit regularization} to describe this feature. 

\begin{definition}[Implicit Regularization]\label{def:implicit_reg}
	For mixed $\cH_2/\cH_\infty$ control design problem \eqref{equ:def_mixed_formulation}, suppose an iterative algorithm generates a sequence of control gains $\{K_n\}$. If $K_n\in\cK$ for all $n\geq 0$, this algorithm is called \emph{regularized}; if it is regularized without projection onto $\cK$ for any $n\geq 0$, this algorithm is called \emph{implicitly  regularized}.
\end{definition}

%We refer to this property of \emph{remaining feasible along the iterations} as the  \emph{regularization} property, and remaining so without projection as the \emph{implicit regularization} property. We note that 

\begin{remark}\label{remark:def_implicit_reg}
	The concept of \emph{(implicit)  regularization} has been adopted in many recent studies  on nonconvex optimization, including training  neural networks \citep{allen2018learning,kubo2019implicit},  phase retrieval \citep{chen2015solving,ma2017implicit}, matrix completion \citep{chen2015fast,zheng2016convergence}, and blind deconvolution \citep{li2019rapid}, referring to any scheme that biases the search direction of gradient-based algorithms. Implicit regularization  has been advocated as an important feature of  (stochastic) gradient descent methods for solving these problems, which, as the name suggests,  means that the  algorithms without regularization may behave as if  they are regularized. Note that the term  \emph{regularization} may refer to several different schemes in different problems, e.g., trimming/truncation the gradient, adding a regularization term in the objective, etc. Here we focus on the scheme of \emph{projection}, as summarized in \cite{ma2017implicit}. Also note that implicit regularization is a feature of both the \emph{problem} and the \emph{algorithm}, i.e., it holds for  certain algorithms that solve certain nonconvex problems.   
\end{remark}

One possible way for the iterates to remain feasible is to keep shrinking the stepsize, whenever the next iterate goes outside $\cK$, following for example the Armijo rule \citep{bertsekas1976goldstein}. However, as the cost $\cJ(K)$ is not necessarily smooth (see Lemma \ref{lemma:cost_diff} and its discussion later), it may not converge within a finite number of iterations  \cite[Theorem $3.2$]{hintermuller2010nonlinear}. Another option is to  project the iterate onto $\cK$. Nonetheless, it is challenging to perform projection onto the $\cH_\infty$-norm constraint set directly in the frequency domain.  

%
%For LQR problems, due to the coercivity of the cost that as $K$ approaches the boundary of the stability/feasibility region, i.e., as $\rho(A-BK)\to 1$, the cost approaches infinity, and due to  the fact that the cost is continuous w.r.t.  $K$, we have  that the lower-level set of the cost is compact \cite{makila1987computational} and is contained in the open set  $\{K\in\RR^{d\times m}\given \rho(A-BK)<1\}$.  As a consequence,  decrease of the cost  ensures that the  next iterate lies in the  lower-level set of the cost defined by the previous iterate, which further ensures its stay within the stability region. Hence, there exists a \emph{constant stepsize} such that as long as the initialization control is stabilizing, the iterates along any  path with descending cost remain stabilizing. 
%This way, implicit regularization is obtained by any descent algorithm for free. The stability proofs in \cite{fazel2018global,bu2019LQR} for LQR  are essentially built upon this idea.  
%It is worth mentioning that such an idea   had  been adopted earlier, and is known as the \emph{homotopy proof} \cite{megretski1997system} in the robust control literature.

%\issue{
For LQR problems, due to the coercivity of the cost that as $K$ approaches the boundary of the stability/feasibility region   $\{K\in\RR^{d\times m}\given \rho(A-BK)<1\}$, i.e., as $\rho(A-BK)\to 1$, the cost blows up to   infinity, and due to  the fact that the cost is continuous with respect to  $K$,   the lower-level set of the cost is compact \citep{makila1987computational} and is contained within the stability region.  As a consequence,  there is a strict separation between any lower-level set of the cost and the set $\{K\in\RR^{d\times m}\given \rho(A-BK)\ge 1\}$. Hence, as discussed in the introduction, there exists a \emph{constant stepsize} such that as long as the initialization control is stabilizing, the iterates along the  path  remain stabilizing and keep decreasing the cost. 
Such a property is \emph{algorithm-agnostic} in that it is dictated by the property of the  cost, and independent of the algorithms adopted, as long as they  follow any descent directions of the cost.  The stability proofs in \cite{fazel2018global,bu2019LQR} for LQR  are essentially built upon this idea.

In contrast, for mixed $\cH_2/\cH_\infty$ design problems,   lack of coercivity invalidates the  argument above, as the control approaching the robustness constraint boundary $\partial \cK$ may incur a \emph{finite} cost,  and the descent direction may still drive the iterates out of the feasibility region.  In addition, there may not exist a strict separation between all the lower-level sets of the cost and the complementary  set $\mathcal{K}^c$.
%Hence, the analysis in \cite{fazel2018global,bu2019LQR} does not apply here. 
This difficulty  has been  illustrated in Figure \ref{sec:intro} in the introduction, which   compares the landscapes of the two problems. Interestingly, we show in the following theorem that the natural PG and Gauss-Newton methods in \eqref{eq:exact_npg}-\eqref{eq:exact_gn} enjoy the implicit regularization feature, with certain \emph{constant} stepsize.   
We only highlight the idea of the proof here, and defer the details  to \S\ref{sec:proof_thm_stability_update}. 
%\issue{
The proof includes two main steps. First, we directly use $P_K$ to construct a Lyapunov function for $K'$ to show a non-strict Riccati inequality that guarantees $\|\cT(K')\|_\infty\le{\gamma}$. Second, we further perturb $P_K$ in a specific way to show the strict inequality $\|\cT(K')\|_\infty<{\gamma}$. The perturbation argument is inspired by the proof of the Kalman-Yakubovich-Popov (KYP) Lemma \citep{dullerud2013course}.
%}

%\remind{Hardness: Emphasize (in a remark) why naive gradient descent \eqref{eq:exact_pg}  \emph{may} not work. Current potential reason: no smoothness, so may be no ``uniform'' upper bound on the stepsize for the Armijo rule \cite[Theorem $3.2$]{hintermuller2010nonlinear}, although this is for \emph{unconstrained optimization}, but still, it only converges as $t\to\infty$, and may take forever.}
%
%\remind{Refer back to  Figure \ref{sec:intro} before.}
%
%
%{
%We first establish the following important result to show that with properly chosen stepsize $\eta$, the updates \eqref{eq:exact_npg}-\eqref{eq:exact_gn} preserve the \emph{robustness} as well as the \emph{stability} of the control gain $K$. Note that certifying the stability and robustness is   significant in online control   design, which has been recognized as  one of the \emph{shades of reinforcement learning} by John N. Tsitsiklis recently. We refer to such a result the \emph{implicit regularization} property of the updates \eqref{eq:exact_npg}-\eqref{eq:exact_gn}. 
 
%\subsubsection{Discrete-Time Mixed Design}
 
\begin{theorem}[Implicit Regularization for Discrete-Time Mixed Design]\label{thm:stability_update}
For any control gain $K\in\cK$, i.e., $\rho(A-BK)< 1$ and $\|\cT(K)\|_\infty<{\gamma}$,  with $\|K\|<\infty$, 
	suppose that  the stepsize  $\eta$ satisfies:
	\begin{itemize}
	\item Natural policy gradient \eqref{eq:exact_npg}: $\eta\leq {1}/{(2\|R+B^\top \tP_K B\|)}$,
	\item Gauss-Newton \eqref{eq:exact_gn}: $\eta\leq {1}/{2}$.  
%	\item Gradient descent \eqref{eq:exact_pg}: XXXXX, 
	\end{itemize}	 
%	and   $K$ with $\|K\|<\infty$ lies in $\cK$ defined in \eqref{equ:define_cK}, i.e., 
%	the $\cH_{\infty}$-norm of the transfer function $\cT(K)$ defined in \eqref{equ:dynamic_sys} satisfies 
%	$\|\cT(K)\|_\infty<{\gamma}$ for $\cT(K)$ defined in \eqref{equ:mixed_design_transfer2}.  Equivalently, $K$ satisfies that: i) there exists a solution $P_{K}\geq 0$  to the modified Bellman equation \eqref{equ:discret_riccati};  
%	\eqref{equ:def_PK}-\eqref{equ:def_tPK}; 
%	ii) $I-\gamma^{-2}D^\top P_K D>0$; iii) $\rho\big((I-\gamma^{-2}  P_KDD^\top)^{-\top}(A-BK)\big)<1$. 
	 Then the $K'$ obtained from \eqref{eq:exact_npg}-\eqref{eq:exact_gn} also lies in $\cK$. Equivalently, $K'$ is stabilizing, i.e., $\rho(A-BK')<1$, and satisfies that: i) there exists a solution $P_{K'}\geq 0$  to the Riccati  equation \eqref{equ:discret_riccati};  
%	\eqref{equ:def_PK}-\eqref{equ:def_tPK}; 
	ii) $I-\gamma^{-2}D^\top P_{K'} D>0$; iii) $\rho\big((I-\gamma^{-2}  P_{K'}DD^\top)^{-\top}(A-BK')\big)<1$.  
%	 satisfy 
%	 the conditions i)-iii).  
%	 Equivalently, conditions i)-iii) above also hold for 
%	 the updated $K'$. 
%	 obtained from \eqref{eq:exact_gn}-\eqref{eq:exact_npg} 
%	 also satisfies that: i) there exists a solution $P_{K'}>0$  to the modified Bellman equations \eqref{equ:def_PK}-\eqref{equ:def_tPK}; ii) $W^{-1}-\gamma P_{K'}>0$; iii) $\rho\big((A-BK')^\top(I-\gamma P_{K'}W)^{-1}\big)<1$. 
%	 the $\cH_\infty$-norm of the transfer function $\cT(K)$ defined in \eqref{equ:dynamic_sys} is smaller than $\sqrt{1/\gamma}$. 
\end{theorem}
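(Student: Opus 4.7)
My plan is to verify $K' \in \cK$ via the third characterization in Lemma~\ref{lemma:discrete_bounded_real_lemma}: I will exhibit $P \succ 0$ satisfying both $I - \gamma^{-2} D^\top P D \succ 0$ and the strict Riccati inequality $(A-BK')^\top \tilde{P}(A-BK') - P + C^\top C + K'^\top R K' \prec 0$. The witness will be a small KYP-style perturbation of $P_K$, and the descent structure of the two updates will supply the algebraic slack needed to push the non-strict inequality to a strict one.

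The first step is a purely algebraic identity. Writing $\Delta K := K' - K$, expanding $(A-BK')^\top \tilde{P}_K(A-BK')$ and $K'^\top R K'$ around $K$, and invoking the Riccati equation~\eqref{equ:discret_riccati} for the current $K$ together with the definition of $E_K$ in~\eqref{equ:def_mu_Ek}, the cross terms collapse into $E_K$ to produce
\begin{align*}
(A-BK')^\top \tilde{P}_K(A-BK') + C^\top C + K'^\top R K' - P_K = 2\Delta K^\top E_K + \Delta K^\top(R + B^\top \tilde{P}_K B)\Delta K.
\end{align*}
Substituting the Gauss-Newton step $\Delta K = -2\eta(R+B^\top \tilde{P}_K B)^{-1} E_K$ reduces the right-hand side to $-4\eta(1-\eta)\,E_K^\top(R+B^\top \tilde{P}_K B)^{-1} E_K \preceq 0$ whenever $\eta \le 1/2$, and the natural-PG step $\Delta K = -2\eta E_K$ reduces it to $-4\eta\,E_K^\top[I - \eta(R + B^\top \tilde{P}_K B)]E_K \preceq 0$ whenever $\eta \le 1/(2\|R + B^\top \tilde{P}_K B\|)$. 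In either case $P_K$ itself already satisfies the \emph{non-strict} Riccati inequality with respect to $K'$.

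Next I would deduce Schur stability of $A - BK'$ from this non-strict inequality. Since $\tilde{P}_K \succeq P_K \succeq 0$, the inequality also yields $(A-BK')^\top P_K(A-BK') - P_K \preceq -(C^\top C + K'^\top R K')$, a standard discrete Lyapunov dissipation relation with PSD slack. A routine eigenvector argument then forces $\rho(A - BK') < 1$: any unit-modulus eigenvector $v$ of $A - BK'$ would have to lie in $\ker C \cap \ker K'$, so $Av = (A-BK')v + BK'v$ would be parallel to $v$, making $v$ a unit-modulus eigenvector of $A$ unobservable through $C$, which is excluded by the detectability hypothesis implicit in the mixed-design setup.

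The hard part will be upgrading this non-strict bound to the strict inequality demanded by $\cK$ in~\eqref{equ:define_cK}. My plan is a KYP-inspired perturbation: with $A - BK'$ now Schur-stable, I will construct $P_\epsilon$ as a small positive perturbation of $P_K$ along a direction obtained by solving an auxiliary Lyapunov-type equation for $(A-BK')$, whose well-posedness is guaranteed by the Schur stability just secured. The perturbation is arranged so that the first-order change in $F(P) := (A-BK')^\top \tilde{P}(A-BK') - P + C^\top C + K'^\top R K'$ at $P = P_K$ is strictly negative definite, while the open condition $I - \gamma^{-2} D^\top P_\epsilon D \succ 0$ persists for small enough $\epsilon > 0$ by continuity. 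This $P_\epsilon$ then certifies the strict LMI of Lemma~\ref{lemma:discrete_bounded_real_lemma}, and running the equivalences in that lemma in reverse delivers $\|\cT(K')\|_\infty < \gamma$ together with the stabilizing Riccati solution $P_{K'} \succeq 0$ satisfying (i)--(iii). The delicate point throughout is the order of operations: Schur stability of $A - BK'$, from the previous step, is precisely what makes the auxiliary Lyapunov equation solvable, so the KYP-style lifting from non-strict to strict cannot be carried out before that stability has been secured.
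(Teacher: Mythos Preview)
Your high-level strategy---show the non-strict Riccati inequality holds at $P=P_K$, then perturb to get strictness---matches the paper's. The algebraic identity in your first step and the resulting non-strict bounds for both updates are exactly right. But the order of operations you propose in steps two and three creates two genuine gaps that the paper's ordering avoids.

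\textbf{Gap 1: detectability is not assumed.} Your eigenvector argument for $\rho(A-BK')<1$ ends by appealing to ``the detectability hypothesis implicit in the mixed-design setup.'' No such hypothesis is present in Theorem~\ref{thm:stability_update}: the only standing assumption is Assumption~\ref{assum:coeff_matrices}, and detectability of $(A,C)$ is invoked only later (and only in the continuous-time optimality Proposition~\ref{coro:opt_control_form_cont}), never for implicit regularization. Without it, the unit-modulus eigenvector with $Cv=0$ cannot be ruled out, and your stability step does not close.

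\textbf{Gap 2: the Lyapunov equation is for the wrong matrix.} Even granting $\rho(A-BK')<1$, the perturbation you describe will not produce a strictly negative first-order change. Because $\tilde P$ depends nonlinearly on $P$, the directional derivative of $F(P)=(A-BK')^\top\tilde P(A-BK')-P+C^\top C+K'^\top RK'$ at $P=P_K$ in direction $\bar P$ is
\[
(A-BK')^\top(I-\gamma^{-2}P_KDD^\top)^{-1}\bar P\,(I-\gamma^{-2}DD^\top P_K)^{-1}(A-BK')-\bar P,
\]
not $(A-BK')^\top\bar P(A-BK')-\bar P$. To force this to equal $-I$ you would need a Lyapunov equation for $(I-\gamma^{-2}DD^\top P_K)^{-1}(A-BK')$, and Schur stability of $A-BK'$ alone says nothing about the spectral radius of that product.

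\textbf{How the paper resolves both at once.} The paper reverses your order: it first constructs the strict LMI and only afterward reads off $\rho(A-BK')<1$ as a consequence. The key is to build the perturbation from the \emph{current} iterate $K$ rather than $K'$: take $\bar P\succ0$ solving
\[
\big[(I-\gamma^{-2}DD^\top P_K)^{-1}(A-BK)\big]^\top\bar P\,\big[(I-\gamma^{-2}DD^\top P_K)^{-1}(A-BK)\big]-\bar P=-I,
\]
which is well-posed precisely because $K\in\cK$ makes this matrix Schur-stable (Lemma~\ref{lemma:discrete_bounded_real_lemma}). With $P=P_K+\alpha\bar P$, decompose the LHS of the Riccati inequality at $K'$ as $\circled{1}+\circled{2}$, where $\circled{2}$ is the same expression evaluated at $K$ (so the Lyapunov equation gives $\circled{2}=-\alpha I+o(\alpha)$), and $\circled{1}$ is the $K\to K'$ discrepancy. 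The specific Gauss--Newton and natural-PG directions are then used to show $\circled{1}=o(\alpha)$, so $\circled{1}+\circled{2}\prec0$ for small $\alpha$. Since the resulting $P\succ0$ also satisfies the strict LMI at $K$, a convexity interpolation handles all intermediate stepsizes for Gauss--Newton. Stability of $K'$ then drops out of the strict inequality via the trivial bound $(A-BK')^\top P(A-BK')-P\prec0$, with no detectability needed.
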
  
%\begin{proof}
\noindent \textit{Proof Sketch.}
The general idea, contrary to the coercivity-based idea that works for any descent direction, is that we focus on the feasibility of $K'$ after an update along \emph{certain directions}: either  \eqref{eq:exact_npg} or \eqref{eq:exact_gn}.  
By Bounded Real Lemma, i.e., Lemma \ref{lemma:discrete_bounded_real_lemma}, the feasibility condition for $K'$, if $K'$ is stabilizing, is equivalent to the existence of $P> 0$ such that the linear matrix inequalities (LMIs) in  \eqref{equ:discrete_equiva_set_cK_cond} hold for $K'$. 
Moreover, it is straightforward to see that such a $P>0$, if exists,  satisfies $(A-BK')^\top P (A-BK')-P<0$, which can be used to show that $K'$ is stabilizing \citep{boyd1994linear}. Thus, it now suffices to find such a $P$. 

To show this, we first study the case with stepsizes being the upper bound in the theorem, i.e., $\eta=1/2$ for Gauss-Newton and $\eta= {1}/{(2\|R+B^\top \tP_K B\|)}$ for natural PG. 
As the solution to the Riccati equation \eqref{equ:discret_riccati} under $K$, $P_K\geq 0$ satisfies $I-\gamma^{-2}D^\top P_K D>0$, the first LMI in \eqref{equ:discrete_equiva_set_cK_cond}.  Hence, it may be possible  to perturb $P_K$ to obtain a $P>0$, such that the equality in \eqref{equ:discret_riccati} becomes a strict inequality of the second LMI in \eqref{equ:discrete_equiva_set_cK_cond}, while preserving the first LMI. Moreover, if $K'$ is not too far away from $K$, such a perturbed $P_K$ should also work for $K'$. Such an observation motivates the use of $P_K$ as the candidate of $P$ for the LMIs in \eqref{equ:discrete_equiva_set_cK_cond} under $K'$. 

Indeed, it can be shown that substituting $P=P_K$ makes the second LMI in \eqref{equ:discrete_equiva_set_cK_cond} under $K'$ \emph{non-strict}, namely, the left-hand side (LHS) $\leq 0$; see  \eqref{equ:before_perturb_GN} in the detailed proof. 
To make it strict, consider the perturbed $P=P_K+\alpha \bar P$ for some $\alpha>0$, where $\bar P>0$ is the solution to some Lyapunov equation
\#\label{equ:def_bar_P_sketch}
 (A-BK)^\top (I-\gamma^{-2} DD^\top  P_K)^{-\top} \bar P(I-\gamma^{-2} DD^\top  P_K)^{-1}(A-BK)-\bar P=-I. 
\#
Such a Lyapunov equation \eqref{equ:def_bar_P_sketch} always admits a solution $\bar P>0$,  since $K\in\cK$ implies that $(I-\gamma^{-2} DD^\top  P_K)^{-1}(A-BK)$ is stable. 
The intuition of choosing \eqref{equ:def_bar_P_sketch} is as follows.  First, the LHS of the second LMI in \eqref{equ:discrete_equiva_set_cK_cond} under $K'$ can be separated as
\#\label{equ:LHS_sep_sketch}
 &(A-BK')^\top \tilde{P}(A-BK')-P+C^\top C+K'^\top R K'\notag\\
 &\quad=\underbrace{[(A-BK')^\top \tilde{P}(A-BK')-(A-BK)^\top \tilde{P}(A-BK)] +K'^\top R K'-K^\top R K}_{\circled{1}}\notag\\
 &\qquad+\underbrace{(A-BK)^\top \tilde{P}(A-BK)-P+C^\top C+K^\top R K}_{\circled{2}}. 
 \#
By some algebra, the first term $\circled{1}$ is of order $o(\alpha)$. Since for small $\alpha$,  
\$
\tP=\tP_K+(I-\gamma^{-2} P_KDD^\top)^{-1}(\alpha \bar P)(I-\gamma^{-2} DD^\top P_K)^{-1}+o(\alpha),
\$
this,  combined with the Riccati equation  \eqref{equ:discret_riccati} and \eqref{equ:def_bar_P_sketch}, makes the second term $\circled{2}=-\alpha I+o(\alpha)$. Hence, there exists small enough $\alpha>0$ such that $\circled{1}+\circled{2}<0$, ensuring that the updated $K'$ is feasible. Lastly, by the linearity of LMIs, any interpolation of $K'$ with a smaller stepsize is also  feasible/robustly stable, thus completing the proof. 
\hfill$\QED$

Note that 
%the argument above does not apply to the vanilla PG update \eqref{eq:exact_pg}, i.e.,
 theoretically, it is not clear yet if vanilla PG enjoys  implicit regularization. In the worst-case, as discussed right after Remark \ref{remark:def_implicit_reg}, vanilla PG may take infinitely many iterations to converge.    
Hence,  hereafter,  we only focus on the global convergence of natural PG method \eqref{eq:exact_npg} and Gauss-Newton method \eqref{eq:exact_gn}, with constant stepsizes.

\subsection{Global Convergence}

The term \emph{global convergence}   here refers to two notions: i) the convergence performance of the algorithms starting from \emph{any feasible initialization} point $K_0\in\cK$; ii) convergence to the \emph{global optimal} policy under certain conditions.  
We formally establish the results for the natural PG \eqref{eq:exact_npg} and Gauss-Newton \eqref{eq:exact_gn} updates in the following theorem.

\begin{theorem}[Global Convergence   for Discrete-Time  Mixed Design]\label{theorem:global_exact_conv} Suppose that $K_0\in\cK$ 
% with $\cK$ defined in \eqref{equ:define_cK} 
 and $\|K_0\|<\infty$.  Then, under the 
%following 
stepsize choices\footnote{In fact, for natural PG \eqref{eq:exact_npg}, it suffices to require the stepsize $\eta\leq{1}/{(2\|R+B^\top \tP_{K_0} B\|)}$ for the initial $K_0$.}  as in Theorem \ref{thm:stability_update},  
%\begin{itemize}
%\item Gauss-Newton \eqref{eq:exact_gn_ct}: $\eta\in[0,1/2]$
%\item Natural policy gradient \eqref{eq:exact_npg_ct}: $[0,1/(2\|R\|)]$,
%\end{itemize}
both   updates  \eqref{eq:exact_npg} and  \eqref{eq:exact_gn}  converge to the global optimum $K^*=(R+B^\top \tP_{K^*} B)^{-1}B^\top \tP_{K^*} A$,  
%stationary points $K$ where $E_{K}=\bm{0}$, 
in the sense that the average of $\{\|E_{K_n}\|_F^2\}$ over iterations converges to zero with $O(1/N)$ rate. 
%In addition, if the pair $\big((I-\gamma^{-2}  P_KDD^\top)^{-\top}(A-BK),D\big)$ is controllable  at the stationary point $K$, then such a convergence is towards the unique \emph{global} optimal policy.  
%
% then under the following stepsize choices 
%%as in Theorem \ref{thm:stability_update},  
%\begin{itemize}
%\item Natural policy gradient \eqref{eq:exact_npg}: $[0,1/(2\|R+B^\top \tP_{K_0} B\|)]$
%\item Gauss-Newton \eqref{eq:exact_gn}: $\eta\in[0,1/2]$,
%\end{itemize}
%both updates \issue{XXXXXX NEED TO CHANGE THE WAY TO SAY IT XXXXXX} \eqref{eq:exact_npg} and \eqref{eq:exact_gn} converge to the optimal control gain $K^*$ with sublinear rate,  in the sense that $\{\|E_{K_n}\|_F^2\}$ converges to zero with $O(1/N)$ rate. 
\end{theorem}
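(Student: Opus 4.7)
The plan is to combine three ingredients: (i) the implicit regularization of Theorem~\ref{thm:stability_update}, which keeps every iterate inside $\cK$ under the prescribed stepsizes, (ii) a one-step descent inequality of the form $\cJ(K_n)-\cJ(K_{n+1})\geq c\,\|E_{K_n}\|_F^2$ with $c>0$ uniform in $n$, and (iii) the fact that the cost is bounded below by $\cJ(K^*)$, so telescoping over $n=0,\ldots,N-1$ yields $\tfrac{1}{N}\sum_{n=0}^{N-1}\|E_{K_n}\|_F^2 \leq (\cJ(K_0)-\cJ(K^*))/(cN)=O(1/N)$. Since $E_{K^*}=0$ characterizes the global optimizer by Proposition~\ref{coro:opt_control_form_discrete}, this is precisely the desired convergence statement; note that the proof does \emph{not} need to drive the iterates themselves to $K^*$ pointwise, only the averaged gradient-like quantity $\|E_{K_n}\|_F^2$ to zero.

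\textbf{Cost-difference lemma.} The technical core is to derive a Riccati-based cost-difference identity in the spirit of \cite{fazel2018global,bu2019LQR} for LQR, adapted to the nonlinear Riccati equation \eqref{equ:discret_riccati}. With $A_{\mathrm{cl}}(K):=(I-\gamma^{-2}P_KDD^\top)^{-\top}(A-BK)$, I expect to obtain an identity of the form
\[
P_{K'}-P_{K} \;=\; \sum_{t=0}^{\infty} \bigl(A_{\mathrm{cl}}(K')^{\top}\bigr)^{t}\,\Phi(K,K')\,\bigl(A_{\mathrm{cl}}(K')\bigr)^{t},
\]
where $\Phi(K,K')$ is a quadratic cross-term in $K'-K$ and $E_K$. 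Completing the square in \eqref{equ:discret_riccati} (analogous to the Bellman decomposition used in $\cH_\infty$-game theory) and plugging in the Gauss-Newton direction $K'-K=-2\eta(R+B^\top\tilde P_K B)^{-1}E_K$ should yield $\Phi\leq 0$ with magnitude bounded below by a multiple of $E_K^\top (R+B^\top\tilde P_K B)^{-1}E_K$ for $\eta\leq 1/2$; the natural PG direction $K'-K=-2\eta E_K$ gives the analogous bound under $\eta\leq 1/(2\|R+B^\top\tilde P_K B\|)$. Combining this with the cost formula \eqref{equ:form_J2} (monotone in $P_K$) and the driving matrix $\Delta_{K'}$ from Lemma~\ref{lemma:differentiability_policy_grad} should produce $\cJ(K)-\cJ(K')\geq c(\eta)\,\tr(E_K^\top E_K\,\Delta_{K'})$; the leading summand $D(I-\gamma^{-2}D^\top P_{K'}D)^{-1}D^\top$ in \eqref{equ:def_Delta} lower-bounds $\Delta_{K'}$ uniformly whenever $DD^\top \geq \sigma_{\min}I$, which holds trivially for the LEQG special case $DD^\top=W>0$.

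\textbf{Uniform constants and main obstacle.} For $c$ to be a positive constant independent of $n$, I need uniform upper bounds on $\tilde P_{K_n}$ and uniform lower bounds on $\Delta_{K_n}$ along the trajectory. Monotonicity $\cJ(K_{n+1})\leq \cJ(K_n)\leq \cJ(K_0)$ confines $P_{K_n}$ to the sublevel set $\{P\geq 0:-\gamma^{2}\log\det(I-\gamma^{-2}PDD^\top)\leq \cJ(K_0)\}$, which is compact and strictly inside the barrier $\{I-\gamma^{-2}D^\top PD>0\}$; this yields uniform bounds on both $P_{K_n}$ and $\tilde P_{K_n}$, and justifies the footnote that an NPG stepsize tied to $\tilde P_{K_0}$ continues to satisfy the hypothesis of Theorem~\ref{thm:stability_update} at every subsequent iterate (so the inductive feasibility argument closes with a single constant stepsize). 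The principal obstacle will be the cost-difference lemma itself: unlike LQR, where $P_K$ solves a \emph{linear} Lyapunov equation and $P_{K'}-P_K$ admits a one-line manipulation, here the self-referential $\tilde P$ term in \eqref{equ:discret_riccati} couples $P_{K'}$ and $P_K$ through a nonlinear matrix-inversion identity, and disentangling them cleanly will require either a Schur-complement/Woodbury manipulation or a game-theoretic Bellman-type reformulation. Once that identity is in hand, feasibility from Theorem~\ref{thm:stability_update}, boundedness from monotonicity, and telescoping assemble the claimed $O(1/N)$ sublinear average rate.
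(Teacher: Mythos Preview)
Your overall architecture---implicit regularization to stay in $\cK$, a cost-difference lemma producing a one-step descent proportional to $\|E_{K_n}\|_F^2$, then telescoping---matches the paper's exactly. The technical heart you identified (a Riccati-based analogue of the LQR cost-difference lemma, complicated by the nonlinear $\tilde P_K$ term) is also the paper's key Lemma~\ref{lemma:cost_diff}. Two points of divergence are worth noting.

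First, the paper does \emph{not} telescope the cost $\cJ(K)$; it telescopes $\tr(P_K)$ directly. From the upper bound in Lemma~\ref{lemma:cost_diff} it obtains the \emph{matrix-wise} monotone decrease $P_{K_{n+1}}\leq P_{K_n}$, then multiplies by $M=I$ and keeps only the $t=0$ summand to get $\tr(P_{K_n})-\tr(P_{K_{n+1}})\geq c\,\|E_{K_n}\|_F^2$. This sidesteps two issues in your plan: (a) you need no log-det conversion between $\cJ$ and $\tr(P_K DD^\top)$, and (b) you do not need $DD^\top>0$ to lower-bound $\Delta_{K'}$---the paper's choice $M=I$ makes the first summand simply the identity, so the global rate holds without any assumption on $D$. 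Your route via $\cJ$ and $\Delta_{K'}$ would only prove the theorem under the extra hypothesis $DD^\top>0$, which the paper reserves for the \emph{local} result (Theorem~\ref{theorem:local_exact_conv}). The matrix monotonicity $P_{K_n}\leq P_{K_0}$ also gives the uniform bound $\tilde P_{K_n}\leq \tilde P_{K_0}$ in one line, which is what justifies the footnote about the NPG stepsize; your sublevel-set argument via $\cJ$ would get there as well, but less directly.

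Second, the cost-difference lemma is \emph{not} an identity: because $\tilde P_{K'}$ and $\tilde P_K$ are coupled nonlinearly, the paper only obtains one-sided matrix inequalities (an upper bound with $A_{\mathrm{cl}}(K')$ in the outer sum, and a separate lower bound with a mixed closed-loop). You anticipated this obstacle in your last paragraph; just be aware that the resolution is a pair of bounds rather than an exact equality, and the upper bound alone suffices for the global argument.
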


The proof of Theorem \ref{theorem:global_exact_conv} is detailed in \S\ref{sec:proof_theorem:global_exact_conv}. We remark that, the  controllability assumption made in Proposition  \ref{coro:opt_control_form_discrete} is not required for the global convergence here. Remarkably, there might be multiple stationary points such that $\nabla \cJ(K)=0$, while the two specific policy search directions \eqref{eq:exact_npg} and  \eqref{eq:exact_gn} provably avoid the suboptimal local minima, and always converge to the global optimum $K^*$. This can be viewed as another implication of \emph{implicit regularization}, in that  \eqref{eq:exact_npg} and  \eqref{eq:exact_gn} always bias the iterates towards a certain global optimal solution, without getting stuck at spurious local minima. The key reason is that, without using the curvature information in $\Delta_K$, these two PO methods can converge to the specific and optimal  stationary point such that $E_{K}=0$, instead of any arbitrary stationary point.

% and is satisfied automatically for LEQG  problems with $D=W^{1/2}>0$. 
 Moreover,  in contrast to the results for  LQR \citep{fazel2018global},  
only globally 
\emph{sublinear} $O(1/N)$, instead of \emph{linear}, convergence rate can be obtained so far. This  $O(1/N)$    rate of the (iteration average) gradient norm square  matches the  \emph{global} convergence rate of  gradient descent and second order algorithms to  stationary points for  general nonconvex optimization,  either under the smoothness assumption
of the objective \citep{cartis2010complexity,cartis2017worst}, or for a class of non-smooth objectives \citep{khamaru2018convergence}. 

%\issue{XXXXXXXXXXX TO BE CHANGED XXXXXXXXXXX}

%\issue{XXXXXX 2019.10.01 XXXXXX}

%\issue{
%\noindent XXXXXX
%Discussion on the global convergence results. 
%\noindent XXXXXX
%}

\begin{remark}[Robust Initial Controller]\label{remark:initial}
Our global convergence requires  the initial controller to satisfy the $\cH_\infty$-norm robustness constraint, which, as the assumption on the initial controller being stabilizing for LQR  \citep{fazel2018global,bu2019LQR}, is inherent to PO methods with iterative local  search. Complementary to \cite{fazel2018global,bu2019LQR}, our iterates not only improve the performance criterion,  but also preserve the  robustness. 
%We leave the policy-based procedure of finding such an initial controller as a future work. 
\end{remark}

Though sublinear globally, much faster rates, i.e., (super-)linear rates, can be shown locally  around the optimum as below. Proof of the following theorem is deferred to \S\ref{sec:proof_theorem:local_exact_conv}.

%\issue{2019.10.01}
 
\begin{theorem}[Local (Super-)Linear   Convergence for Discrete-Time  Mixed Design]\label{theorem:local_exact_conv} 
Suppose that the  conditions  in Theorem \ref{theorem:global_exact_conv} hold, and additionally  $DD^\top>0$ holds.
Then, under the 
%following 
stepsize choices  as in Theorem \ref{theorem:global_exact_conv}, 
 both    updates  \eqref{eq:exact_npg} and \eqref{eq:exact_gn} converge to the optimal control gain $K^*$ with \emph{locally linear} rate,  in the sense that the objective $\{\cJ(K_n)\}$ defined in \eqref{equ:form_J2} converges to $\cJ(K^*)$ with a linear rate. In addition, if $\eta=1/2$,  the Gauss-Newton update \eqref{eq:exact_gn} converges to   $K^*$ with  a locally \emph{Q-quadratic}  rate. 
% \issue{IN ADDITION, WE WANT TO SHOW SUPER-LINEAR RATE FOR Gauss-Newton!!! CAN WE?? I think it is possible since: i) gradient dominance holds locally; ii) by \eqref{equ:rela_tP_tPK}, the trick for LQR still holds, i.e., $P_K-P_*$ satisfies a Lyapunov equation, locally.}
\end{theorem}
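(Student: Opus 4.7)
My plan is to upgrade the $O(1/N)$ global rate of Theorem \ref{theorem:global_exact_conv} to local linear (and, for Gauss-Newton with $\eta=1/2$, Q-quadratic) convergence by establishing a local Polyak--\L{}ojasiewicz (gradient-dominance) inequality of the form $\cJ(K)-\cJ(K^*)\le \mu^{-1}\|E_K\|_F^2$ on a neighborhood $\mathcal{N}\subset\cK$ of $K^*$, and chaining it with the per-step descent $\cJ(K_n)-\cJ(K_{n+1})\ge c\,\|E_{K_n}\|_F^2$ that is already implicit in the proof of Theorem \ref{theorem:global_exact_conv}.

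To derive the PL inequality, I would first prove a cost-difference lemma tailored to the objective \eqref{equ:form_J2}. The standard tactic is to substitute the Riccati equation \eqref{equ:discret_riccati} at $K$ into that at $K^*$, complete the square around $B^\top \tilde P_{K^*}A$, and read off a Stein-equation identity for $P_K-P_{K^*}$ whose forcing term has the form $(K-K^*)^\top (R+B^\top \tilde P_{K^*}B)(K-K^*)$ plus a cross term that vanishes when traced against the appropriate noise Gramian and combined with $E_{K^*}=0$. This yields a two-sided bound $\mu'\|K-K^*\|_F^2\le \cJ(K)-\cJ(K^*)\le \mu^{-1}\|E_K\|_F^2$ on $\mathcal{N}$. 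The hypothesis $DD^\top>0$ enters decisively here: it guarantees that $\Delta_K$ from \eqref{equ:def_Delta} is uniformly positive definite on the sub-level set $\{K\in\cK:\cJ(K)\le\cJ(K_0)\}$, so the PL constant $\mu$ stays bounded away from zero. Combined with the per-step descent, this gives $\cJ(K_{n+1})-\cJ(K^*)\le (1-c\mu)[\cJ(K_n)-\cJ(K^*)]$ on $\mathcal{N}$, i.e., the announced local linear rate for both \eqref{eq:exact_npg} and \eqref{eq:exact_gn}.

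For the Q-quadratic rate of the Gauss-Newton update \eqref{eq:exact_gn} with $\eta=1/2$, the key observation is that $K'=(R+B^\top\tilde P_K B)^{-1}B^\top\tilde P_K A$ is precisely the Kleinman/Hewer-type Newton iteration on the game Riccati equation \eqref{equ:discret_riccati}. Near the stabilizing solution $P_{K^*}$, the Fr\'echet derivative of the Riccati operator is a Stein operator whose invertibility follows from the bounded real lemma (Lemma \ref{lemma:discrete_bounded_real_lemma}) at $K^*$ combined with stability of the associated closed loop. A standard Newton-Kantorovich argument then gives $\|P_{K_{n+1}}-P_{K^*}\|\le \kappa\,\|P_{K_n}-P_{K^*}\|^2$, which transfers to $\|K_{n+1}-K^*\|\le \kappa'\|K_n-K^*\|^2$ via the identity $K_{n+1}-K^*=(R+B^\top\tilde P_{K_n}B)^{-1}B^\top(\tilde P_{K_n}-\tilde P_{K^*})A$ and the smoothness of $P\mapsto\tilde P$ on $\mathcal{N}$.

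The main obstacle is making this Newton-Kantorovich contraction rigorous for the \emph{indefinite} (game / mixed-design) Riccati equation, since Kleinman's original monotone convergence theory was developed only for the definite LQR case. The critical input is Theorem \ref{thm:stability_update} at $\eta=1/2$, which keeps the Newton iterates inside $\cK$ and therefore inside the domain of attraction of the stabilizing solution, so that the linearized Riccati operator is uniformly invertible along the trajectory; implicit regularization and local quadratic convergence thus dovetail cleanly. A secondary nuisance is verifying that $\{K\in\cK:\cJ(K)\le\cJ(K_0)\}$ is compact enough to support uniform two-sided bounds on $\Delta_K$, $R+B^\top\tilde P_K B$, and $P_K$; the lack of coercivity (Lemma \ref{lemma:mixed_design_no_coercivity}) means this compactness cannot be inferred from the cost alone and must be argued through the stability-preserving property of Theorem \ref{thm:stability_update} along the iterates.
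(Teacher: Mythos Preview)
Your high-level plan (local PL inequality $\cJ(K)-\cJ(K^*)\le \mu^{-1}\|E_K\|_F^2$ chained with the per-step descent from Theorem~\ref{theorem:global_exact_conv}) is exactly what the paper does, but the execution differs in two places. First, the paper does not work with $\cJ(K)$ directly: it proves a conversion lemma $\cJ(K)-\cJ(K')\le \|(I-\gamma^{-2}D^\top P_K D)^{-1}\|\cdot[\tr(P_K DD^\top)-\tr(P_{K'}DD^\top)]$ (monotonicity of $P_K$ makes the prefactor uniformly bounded) and then establishes linear contraction for the surrogate $\tr(P_K DD^\top)$. Second, and more importantly, you misidentify the source of \emph{locality}. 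Under $DD^\top>0$, the matrix $\Delta_K$ is positive definite for \emph{every} $K\in\cK$, not just on a sublevel set, so positivity of $\Delta_K$ cannot be what restricts the PL inequality to a neighborhood. In the paper, the PL bound comes from the \emph{lower} inequality of the Cost Difference Lemma (Lemma~\ref{lemma:cost_diff}) applied with $K'=K^*$, and that lower bound carries the side condition $\rho\big((I-\gamma^{-2}P_K DD^\top)^{-\top}(A-BK^*)\big)<1$, which mixes $P_K$ with the gain $K^*$ and is only verifiable by continuity on a ball $\cB(K^*,r)$. This is the actual mechanism forcing locality, and it is also what controls the constant $\|\cW_{K,K^*}\|$ appearing in the PL bound. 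The role of $DD^\top>0$ in the paper is different from what you write: it is used as the weight $M=DD^\top$ in the per-step descent bound so that $\sigma_{\min}(DD^\top)>0$ appears in the contraction factor, and so that $\|P_K-P_{K^*}\|_F$ is comparable to $\tr\big((P_K-P_{K^*})DD^\top\big)$.

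For the Q-quadratic rate, your Newton--Kantorovich route would work, but it is heavier than needed and forces you to confront the indefinite-Riccati issue you flagged. The paper bypasses this entirely with an elementary argument: apply the \emph{upper} bound of Lemma~\ref{lemma:cost_diff} centered at $K^*$ (where $E_{K^*}=0$) so the forcing term is purely $(K'-K^*)^\top(R+B^\top\tilde P_{K^*}B)(K'-K^*)$; trace against $DD^\top$ to get $\tr(P_{K'}DD^\top)-\tr(P_{K^*}DD^\top)\le c_1\|K'-K^*\|_F^2$; then compute $K'-K^*=(R+B^\top\tilde P_K B)^{-1}B^\top\tilde P_K A-(R+B^\top\tilde P_{K^*}B)^{-1}B^\top\tilde P_{K^*}A$ algebraically and bound $\|K'-K^*\|_F\le c_2\|\tilde P_K-\tilde P_{K^*}\|_F\le c_3\|P_K-P_{K^*}\|_F$ using the explicit identity for $\tilde P_K-\tilde P_{K^*}$. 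No Fr\'echet derivative, no Kantorovich ball. Your concern about lack of coercivity/compactness of sublevel sets is handled the same way as in the global proof: the matrix monotonicity $P_{K_{n+1}}\le P_{K_n}$ propagates uniform bounds on $P_K$, $\tilde P_K$, and $(I-\gamma^{-2}D^\top P_K D)^{-1}$ from $K_0$ to all iterates.
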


%\issue{XXXX THIS IS NOT CORRECT!! XXXX 2019.10.01}

Key to the locally linear rates is that the property of \emph{gradient dominance} \citep{polyak1963gradient,nesterov2006cubic} holds locally around the optimum for mixed design problems. Such a property   has been shown to hold globally for LQR problems  \citep{fazel2018global}, and also hold locally for zero-sum LQ  games  \citep{zhang2019policy}. 
The Q-quadratic rate  echoes back the  rate of Gauss-Newton with $\eta=1/2$ for LQR problems \citep{hewer1971iterative,bu2019LQR}.  
This globally sublinear and locally (super-)linear convergence resembles  the behavior  of (Quasi)-Newton methods for nonconvex optimization  \citep{nesterov2006cubic,ueda2010convergence}, and policy gradient  methods for zero-sum LQ games \citep{zhang2019policy}.

\begin{remark}[Comparison to  \cite{zhang2019policy}]
Due to the close relationship between mixed design and zero-sum LQ games, see \S \ref{sec:discussion}, one may compare the convergence results and find the rates here (\emph{globally sublinear and locally linear}) not improved over \cite{zhang2019policy}. However, one key difference is that an extra \emph{projection} step is required to guarantee the \emph{stability} of the system in \cite{zhang2019policy}, which is essentially to \emph{regularize} the iterates \emph{explicitly}. More importantly, such a projection can only be calculated under more restrictive assumptions (see Assumption 2.1 therein), which, though cover a class of LQ games,  are not standard in robust control. Here, similar convergence results are established, without projections or non-standard assumptions in robust control, thanks to \emph{implicit regularization}. Moreover, we  have established the local ``superlinear'' rate for the Gauss-Newton update, and whole new set of results for the ``continuous-time'' setup, which were not studied in \cite{zhang2019policy}. 
\end{remark}

%\subsubsection{Continuous-Time Mixed Design}

\section{Proofs of Main Results}

%\issue{XXXXXXX 2019.08.28.  The proof below has been cleared in notations. XXXXXXXX}

In this section, we provide detailed proofs for the main  results of the paper.

\subsection{Proof of Theorem  \ref{thm:stability_update}}\label{sec:proof_thm_stability_update}

%\newlytyped
{
%We first recall the definition of the transfer function $\cT(K)$ defined in \eqref{equ:mixed_design_transfer2}: 
%\$
%\renewcommand\arraystretch{1.3}
%\cT(K):=\mleft[
%\begin{array}{c|c}
%  A-BK & D \\
%  \hline
%  (C^\top C+K^\top R K)^{1/2}& \bm{0} 
%\end{array}
%\mright]. 
%\$

To show that $K'$ lies in $\cK$, we first argue that it suffices to find some $P>0$ such that  
\#
&I-\gamma^{-2}D^\top PD>0,\quad \text{~~and~~}\label{equ:LMI_cond_2}\\
&(A-BK')^\top \tilde{P}(A-BK')-P+C^\top C+(K')^\top R K'<0,\label{equ:LMI_cond_3}
\# 
where $\tP:=P+PD(\gamma^2I-D^\top P D)^{-1}D^\top P$. 
By Schur complement, showing \eqref{equ:LMI_cond_2}-\eqref{equ:LMI_cond_3} is also equivalent to   showing 
\#\label{equ:LMI_cond}
\mleft[
\begin{array}{cc}
  (A-BK')^\top P(A-BK')-P+C^\top C+K'^\top R K'  & (A-BK')^\top PD \\
  D^\top P(A-BK')& -(\gamma^2I-D^\top P D) 
\end{array}
\mright]<0.
\# 
Obviously, if such a $P$ exists, we denote the LHS of  \eqref{equ:LMI_cond_3} by $-M<0$. Thus, \eqref{equ:LMI_cond_2} and \eqref{equ:LMI_cond_3} imply  
\$
&(A-BK')^\top P (A-BK')-P\\
&\quad=-M-C^\top C-(K')^\top R K'-(A-BK')^\top PD(\gamma^2 I-D^\top PD)^{-1}D^\top P(A-BK')\leq -M<0,
\$
which shows that $K'$ is stabilizing, i.e., $\rho(A-BK')<1$ \citep{boyd1994linear}. 
Thus, Lemma \ref{lemma:discrete_bounded_real_lemma} can be applied to $K'$. Then, \eqref{equ:LMI_cond_2} and \eqref{equ:LMI_cond_3} are identical to \eqref{equ:discrete_equiva_set_cK_cond}, which further shows that $\|\cT(K')\|_\infty <\gamma$ and thus shows $K'\in\cK$. Hereafter we will focus on finding such a $P>0$.

We   first show that for   the Gauss-Newton update \eqref{eq:exact_gn} with stepsize $\eta=1/2$, \eqref{equ:LMI_cond_2} and \eqref{equ:LMI_cond_3} hold for some $P>0$.  Specifically, we have 
\#\label{equ:K_prime_GN}
K'=K-(R+B^\top \tilde P_K B)^{-1}E_K=(R+B^\top \tilde P_K B)^{-1}B^\top\tilde P_K A.
\#
Since  $P_{K}\geq 0$   satisfies the conditions i)-iii), and $K$ and $K'$ are close to each other, we can choose   
$P_{K}$ as a candidate of $P$. 
%Thus, \eqref{equ:LMI_cond_2} holds trivially for $P_K$ by condition ii). 
%Recall that modified Bellman equation gives that  
%\#\label{equ:mod_bellman_restate}
%P_K
%%=Q+K^\top RK+(A-BK)^\top \big[P_K+\gamma P_K(W^{-1}-\gamma P_K)^{-1}P_K\big](A-BK)
%=Q+K^\top RK+(A-BK)^\top \tilde P_K(A-BK).
%\#
 Hence, by Riccati equation \eqref{equ:discret_riccati}, the LHS of \eqref{equ:LMI_cond_3} can be written as 
 \#\label{equ:before_perturb_GN}
  &[A-B(R+B^\top \tilde P_K B)^{-1}B^\top\tilde P_K A]^\top \tilde{P}_K[A-B(R+B^\top \tilde P_K B)^{-1}B^\top\tilde P_K A]-P_K+C^\top C\notag\\
  &\quad +[(R+B^\top \tilde P_K B)^{-1}B^\top\tilde P_K A]^\top R [(R+B^\top \tilde P_K B)^{-1}B^\top\tilde P_K A]\notag\\
%  &=A^\top  \tilde P_K A-A^\top  \tilde P_K B(R+B^\top \tilde P_K B)^{-1}B^\top \tilde P_K A-P_K +Q\notag\\
%  &=A^\top  \tilde P_K A-A^\top  \tilde P_K B(R+B^\top \tilde P_K B)^{-1}B^\top \tilde P_K A-K^\top RK
%  -(A-BK)^\top \tilde P_K(A-BK)\notag\\
%  &=-A^\top  \tilde P_K B(R+B^\top \tilde P_K B)^{-1}B^\top \tilde P_K A-K^\top (R+B^\top \tilde P_K B)K+K^\top B^\top \tilde P_K A+A^\top \tilde P_K BK\notag\\
  &=-\big[(R+B^\top \tilde P_K B)^{-1}B^\top \tilde P_KA-K\big]^\top (R+B^\top \tilde P_K B)\big[(R+B^\top \tilde P_K B)^{-1}B^\top \tilde P_KA-K\big]\leq 0, 
 \#
 where we substitute $K'$ from  \eqref{equ:K_prime_GN}, 
% , and  the second equation follows from \eqref{equ:mod_bellman_restate},  
 and the last equation is due to completion of  the squares.  
 
 Now we need to perturb $P_K$ to obtain a $P$, such that \eqref{equ:before_perturb_GN} holds with a \emph{strict} inequality.  
 To this end, we define $\bar P>0$ as the   solution to the Lyapunov equation
 \#\label{equ:def_bar_P}
 (A-BK)^\top (I-\gamma^{-2} DD^\top  P_K)^{-\top} \bar P(I-\gamma^{-2} DD^\top  P_K)^{-1}(A-BK)-\bar P=-I,
 \# 
 and let $P=P_K+\alpha \bar P>0$ for some $\alpha>0$. 
 By Lemma \ref{lemma:discrete_bounded_real_lemma},   $(I-\gamma^{-2} DD^\top  P_K)^{-1}(A-BK)$ is stable, and thus the solution $\bar P>0$  exists. 
 For  \eqref{equ:LMI_cond_2} to hold, we need a small   $\alpha>0$ to  satisfy 
 \#\label{equ:alpha_cond_1}
  \alpha D^\top \bar PD<\gamma^{2}I-D^\top P_KD.
 \#
 Moreover, the LHS of \eqref{equ:LMI_cond_3} now can be written as 
 \#\label{equ:LHS_sep}
 &(A-BK')^\top \tilde{P}(A-BK')-P+C^\top C+K'^\top R K'\notag\\
 &\quad=\underbrace{[(A-BK')^\top \tilde{P}(A-BK')-(A-BK)^\top \tilde{P}(A-BK)] +K'^\top R K'-K^\top R K}_{\circled{1}}\notag\\
 &\qquad+\underbrace{(A-BK)^\top \tilde{P}(A-BK)-P+C^\top C+K^\top R K}_{\circled{2}},
 \#
 where we aim to show that there exists some $\alpha>0$ such that $\circled{1}+\circled{2}<0$. 
 Note that $\tP$ can be written as
 \#\label{equ:rela_tP_tPK} 
 \tP&=[I-\gamma^{-2} (P_K+\alpha \bar P)DD^\top]^{-1}(P_K+\alpha \bar P)\notag\\
 \quad&=\big[(I-\gamma^{-2} P_KDD^\top)^{-1}+(I-\gamma^{-2} P_KDD^\top)^{-1}(\alpha\gamma^{-2}\bar PDD^\top)(I-\gamma^{-2} P_KDD^\top)^{-1}+o(\alpha)\big](P_K+\alpha \bar P)\notag\\
% \quad&=\tP_K+(I-\gamma^{-2} P_KDD^\top)^{-1}(\alpha\gamma^{-2}\bar PDD^\top)(I-\gamma^{-2} P_KDD^\top)^{-1}P_K+\alpha (I-\gamma^{-2} P_KDD^\top)^{-1} \bar P+o(\alpha) \notag\\
 \quad&=\tP_K+(I-\gamma^{-2} P_KDD^\top)^{-1}(\alpha \bar P)(I-\gamma^{-2} DD^\top P_K)^{-1}+o(\alpha),
 \#
 where the first equation follows from  definition,  and the second one uses the fact that
 \$
 (X+Y)^{-1}=X^{-1}-X^{-1}YX^{-1}+o(\|Y\|),
 \$
 for small perturbation $Y$ around the matrix $X$. 
% , and the last  one   follows by direct calculation. 
 Thus, $\circled{1}$ can  be written as 
% \small
 \#\label{equ:upper_bnd_circle_1}
 \circled{1}
% &=[-K'^\top B^\top \tilde P A-A^\top \tP BK'+K'^\top B^\top \tP B K'+K^\top B^\top \tP A+A^\top \tP BK-K^\top B^\top \tP BK] +K'^\top R K'-K^\top R K\notag\\
 &=-K'^\top B^\top \tilde P A-A^\top \tP BK'+K'^\top (R+B^\top \tP B )K'+K^\top B^\top \tP A+A^\top \tP BK-K^\top (R+B^\top \tP B)K \notag\\
% &=-K'^\top B^\top \tilde P A-A^\top \tP BK'+K'^\top (R+B^\top \tP B )K'+A^\top \tP B (R+B^\top \tP B )^{-1}B^\top \tP A\notag\\
% &\quad -\big[(R+B^\top \tilde P B)^{-1}B^\top \tilde PA-K\big]^\top (R+B^\top \tilde P B)\big[(R+B^\top \tilde P B)^{-1}B^\top \tilde P A-K\big]\notag\\
 &\leq -K'^\top B^\top \tilde P A-A^\top \tP BK'+K'^\top (R+B^\top \tP B )K'+A^\top \tP B (R+B^\top \tP B )^{-1}B^\top \tP A \notag\\
 &= \big[(R+B^\top \tilde P B)^{-1}B^\top \tilde PA-K'\big]^\top (R+B^\top \tilde P B)\big[(R+B^\top \tilde P B)^{-1}B^\top \tilde P A-K'\big],
 \#
 \normalsize
 where the inequality  follows by completing squares. 
 By substituting in $K'$ from  \eqref{equ:K_prime_GN}, we further have
 \#\label{equ:circle_1_res}
 \circled{1}&\leq \big[(R+B^\top \tilde P B)^{-1}B^\top \tilde PA-(R+B^\top \tilde P_K B)^{-1}B^\top\tilde P_K A\big]^\top (R+B^\top \tilde P B)\notag\\
 &\qquad\cdot\big[\underbrace{(R+B^\top \tilde P B)^{-1}B^\top \tilde P A}_{\circled{3}}-(R+B^\top \tilde P_K B)^{-1}B^\top\tilde P_K A\big].
 \# 
 Note that by \eqref{equ:rela_tP_tPK}, we have
 \#\label{equ:circle_3_res}
 \circled{3}
% &=\big\{R+B^\top \big[\tP_K+(I-\gamma^{-2} P_KDD^\top)^{-1}(\alpha \bar P)(I-\gamma^{-2} DD^\top P_K)^{-1}+o(\alpha)\big] B\big\}^{-1}B^\top \tilde P A\notag\\
 &=\big[(R+B^\top\tP_K B)^{-1}-(R+B^\top\tP_K B)^{-1}B^\top(I-\gamma^{-2} P_KDD^\top)^{-1}(\alpha \bar P)(I-\gamma^{-2} DD^\top P_K)^{-1}\notag\\
 &\qquad\cdot B(R+B^\top\tP_K B)^{-1}+o(\alpha)\big]  B^\top\big[\tP_K+(I-\gamma^{-2} P_KDD^\top)^{-1}(\alpha \bar P)(I-\gamma^{-2} DD^\top P_K)^{-1}+o(\alpha)\big] A \notag\\
 &=(R+B^\top \tilde P_K B)^{-1}B^\top\tilde P_K A+O(\alpha)+o(\alpha),
 \#
 where $O(\alpha)$ denotes the quantities that have the order of $\alpha$. By plugging \eqref{equ:circle_3_res} into \eqref{equ:circle_1_res}, we obtain that $
 \circled{1}=o(\alpha)$. 
 
 Moreover, by \eqref{equ:rela_tP_tPK}, 
 $\circled{2}$ can be written as 
 \small
 \#\label{equ:circle_2_res}
 \circled{2}&=(A-BK)^\top \big[\tP_K+(I-\gamma^{-2} P_KDD^\top)^{-1}(\alpha \bar P)(I-\gamma^{-2} DD^\top P_K)^{-1}+o(\alpha)\big](A-BK)-P+C^\top C+K^\top R K\notag\\
 &=(A-BK)^\top (I-\gamma^{-2} P_KDD^\top)^{-1}(\alpha \bar P)(I-\gamma^{-2} DD^\top P_K)^{-1}(A-BK)-\alpha\bar P+o(\alpha)=-\alpha I+o(\alpha),
 \# 
 \normalsize 
 where the first equation uses \eqref{equ:rela_tP_tPK}, the second one uses the Riccati equation \eqref{equ:discret_riccati}, and the last one uses \eqref{equ:def_bar_P}. 
  Therefore, for small enough $\alpha>0$ such that $\circled{1}+\circled{2}<0$, and also satisfies \eqref{equ:alpha_cond_1}, there exists some $P>0$ such that  both \eqref{equ:LMI_cond_2} and \eqref{equ:LMI_cond_3} hold for $K'$ obtained with stepsize $\eta=1/2$. On the other hand,  
%  since  such an $\alpha$ (and thus $P$) makes \eqref{equ:LMI_cond_2} hold and $\circled{2}<0$, we know that 
  such a $P$ also makes the LMI  \eqref{equ:LMI_cond} hold for $K$, i.e., 
  \#\label{equ:LMI_cond_K}
\mleft[
\begin{array}{cc}
  (A-BK)^\top P(A-BK)-P+C^\top C+K^\top R K  & (A-BK)^\top PD \\
  D^\top P(A-BK)& -(\gamma^2I-D^\top P D) 
\end{array}
\mright]<0,
\# 
as now $\circled{1}$ in \eqref{equ:LHS_sep} is null, and the same $\alpha$ above makes $\circled{2}<0$.  
For $\eta\in[0,1/2]$, 
let  $K_{\eta}=K+2\eta(K'-K)$ be the interpolation between $K$ and $K'$. 
 Combining  \eqref{equ:LMI_cond} and \eqref{equ:LMI_cond_K} yields  
\#\label{equ:LMI_cond_4}
0>&2\eta\cdot\mleft[
\begin{array}{cc}
  (A-BK')^\top P(A-BK')-P+C^\top C+K'^\top R K'  & (A-BK')^\top P \notag\\
  P(A-BK')& -(\gamma^2I-D^\top P D)
\end{array}
\mright]\notag\\
&\quad+(1-2\eta)\cdot\mleft[
\begin{array}{cc}
  (A-BK)^\top P(A-BK)-P+C^\top C+K^\top R K  & (A-BK)^\top P \notag\\
  P(A-BK)& -(\gamma^2I-D^\top P D) 
\end{array}
\mright]\notag\\
\geq&\mleft[
\begin{array}{cc}
  (A-BK_\eta)^\top P(A-BK_\eta)-P+C^\top C+K_\eta^\top R K_\eta  & (A-BK_\eta)^\top P \\
  P(A-BK_\eta)& -(\gamma^2I-D^\top P D) 
\end{array}
\mright], 
\#
for any $\eta\in[0,1/2]$. 
The second inequality in \eqref{equ:LMI_cond_4} uses the 
%the fact that
% \$
% &2\eta[(A-BK')^\top P(A-BK')-P+Q+K'^\top R K']+(1-2\eta)[(A-BK)^\top P(A-BK)-P+C^\top C+K^\top R K]\\
% &\quad \geq A^\top PA-A^\top PBK_\eta-K_\eta^\top B^\top PA+K_\eta^\top B^\top PBK_\eta-P+C^\top C+K_\eta^\top R K_\eta\\
% &\quad = (A-BK_\eta)^\top P(A-BK_\eta)-P+C^\top C+K_\eta^\top R K_\eta,
% \$
% which is due to the 
 convexity of the quadratic form. Hence, \eqref{equ:LMI_cond_4} shows that for any stepsize $\eta\in[0,1/2]$, $K_\eta$ that lies between $K$ and $K'$ satisfies the conditions i)-iii) in the theorem.  
 
 Now we prove a similar result for the natural PG update \eqref{eq:exact_npg}. 
 Recall that 
 \#\label{equ:restate_exact_npg}
 K'=K-2\eta [(R+B^\top \tP_{K} B)K-B^\top \tP_{K} A]. 
 \#
 As before, we first choose $P=P_K$. Then,   the LHS of \eqref{equ:LMI_cond_3} under $K'$ can be written as:
 \#\label{equ:natural_trash_1}
 &(A-BK')^\top \tilde{P}_K(A-BK')-P_K+C^\top C+K'^\top R K'\notag\\
% &\quad=
% [B(K-K')]^\top \tP_K A+A^\top\tP_K[B(K-K')]+K'^\top (R+B^\top \tP_K B) K'-K^\top (R+B^\top \tP_K B) K\notag\\
% &\quad=[K'-(R+B^\top\tP_KB)^{-1}B^\top \tP_K A]^\top (R+B^\top\tP_KB)[K'-(R+B^\top\tP_KB)^{-1}B^\top \tP_K A]\notag\\ 
% &\quad\quad -[K-(R+B^\top\tP_KB)^{-1}B^\top \tP_K A]^\top (R+B^\top\tP_KB)[K-(R+B^\top\tP_KB)^{-1}B^\top \tP_K A]\notag\\
 &\quad=(K'-K)^\top (R+B^\top\tP_KB)[K'-(R+B^\top\tP_KB)^{-1}B^\top \tP_K A]\notag\\ 
 &\quad\quad +[K-(R+B^\top\tP_KB)^{-1}B^\top \tP_K A]^\top (R+B^\top\tP_KB)(K'-K), 
 \#
 where the 
% first equation uses the modified Bellman equation \eqref{equ:discret_riccati}, the second  one follows by completing the squares, and the
   equation holds by adding and subtracting $[K-(R+B^\top\tP_KB)^{-1}B^\top \tP_K A]^\top (R+B^\top\tP_KB)[K'-(R+B^\top\tP_KB)^{-1}B^\top \tP_K A]$. 
 Substituting     \eqref{equ:restate_exact_npg} into  \eqref{equ:natural_trash_1} yields 
 \#\label{equ:natural_trash_2}
 &(A-BK')^\top \tilde{P}_K(A-BK')-P_K+C^\top C+K'^\top R K'\notag\\
% &\quad=-2\eta [(R+B^\top \tP_{K} B)K-B^\top \tP_{K} A]^\top (R+B^\top\tP_KB)[K'-(R+B^\top\tP_KB)^{-1}B^\top \tP_K A]\notag\\ 
% &\quad\quad -2\eta [K-(R+B^\top\tP_KB)^{-1}B^\top \tP_K A]^\top (R+B^\top\tP_KB)[(R+B^\top \tP_{K} B)K-B^\top \tP_{K} A]\notag\\
 &\quad=-2\eta [(R+B^\top \tP_{K} B)K-B^\top \tP_{K} A]^\top[(R+B^\top \tP_{K} B)K-B^\top \tP_{K} A] \notag\\
 &\quad\quad +4\eta^2 [(R+B^\top \tP_{K} B)K-B^\top \tP_{K} A]^\top(R+B^\top \tP_{K} B)[(R+B^\top \tP_{K} B)K-B^\top \tP_{K} A]\notag\\ 
&\quad\quad -2\eta [(R+B^\top\tP_KB)K-B^\top \tP_K A]^\top [(R+B^\top \tP_{K} B)K-B^\top \tP_{K} A]. 
 \#
 By requiring the stepsize $\eta$ to satisfy
 \#\label{equ:natural_eta_cond_1}
 \eta\leq \frac{1}{2\|R+B^\top\tP_KB\|},
 \#
 we can bound \eqref{equ:natural_trash_2} as  
 \#\label{equ:natural_trash_3}
 &(A-BK')^\top \tilde{P}_K(A-BK')-P_K+C^\top C+K'^\top R K'\notag\\
 &\quad\leq  
 -2\eta\cdot[(R+B^\top\tP_KB)K-B^\top \tP_K A]^\top [(R+B^\top \tP_{K} B)K-B^\top \tP_{K} A]\leq 0,
 \#
 namely, letting $P=P_K$ leads to the desired LMI that is not strict. 
 
 Now suppose that $P=P_K+\alpha \bar P$ for some $\alpha>0$, where $\bar P>0$ is the solution to \eqref{equ:def_bar_P}. 
 Note that $\alpha$ first still needs to satisfy \eqref{equ:alpha_cond_1}.  
   Also, the LHS of \eqref{equ:LMI_cond_3} 
  can still be separated into $\circled{1}$ and $\circled{2}$ as in \eqref{equ:LHS_sep}. 
From the LHS of the inequality in \eqref{equ:upper_bnd_circle_1}, we have
\small
\#\label{equ:natural_trash_4}
\circled{1}
%&=\big[(R+B^\top \tilde P B)^{-1}B^\top \tilde PA-K'\big]^\top (R+B^\top \tilde P B)\big[(R+B^\top \tilde P B)^{-1}B^\top \tilde P A-K'\big]\notag\\
% &\quad -\big[(R+B^\top \tilde P B)^{-1}B^\top \tilde PA-K\big]^\top (R+B^\top \tilde P B)\big[(R+B^\top \tilde P B)^{-1}B^\top \tilde P A-K\big]\notag\\
 &=(K'-K)^\top (R+B^\top\tP B)[K'-(R+B^\top\tP B)^{-1}B^\top \tP A]  +[K-(R+B^\top\tP B)^{-1}B^\top \tP A]^\top (R+B^\top\tP B)(K'-K)\notag\\
 &=-2\eta [(R+B^\top \tP_{K} B)K-B^\top \tP_{K} A]^\top [(R+B^\top\tP B)K'-B^\top \tP A]\notag\\ 
 &\qquad\qquad\qquad\qquad\qquad -2\eta [(R+B^\top\tP B)K-B^\top \tP A]^\top [(R+B^\top \tP_{K} B)K-B^\top \tP_{K} A]
\#
\normalsize
where 
%the first equation is by completing the squares, 
the first equation follows by adding and subtracting $[(R+B^\top \tilde P B)^{-1}B^\top \tilde PA-K]^\top (R+B^\top \tilde P B)[(R+B^\top \tilde P B)^{-1}B^\top \tilde P A-K']$, and the second one follows from the definition of $K'$ in \eqref{equ:restate_exact_npg}. Suppose that $\eta$ satisfies \eqref{equ:natural_eta_cond_1}, then  we have 
\#
 &\circled{1}=-4\eta [(R+B^\top \tP_{K} B)K-B^\top \tP_{K} A]^\top[(R+B^\top \tP_{K} B)K-B^\top \tP_{K} A] \notag\\
 &\quad\quad +4\eta^2 [(R+B^\top \tP_{K} B)K-B^\top \tP_{K} A]^\top(R+B^\top \tP_{K} B)[(R+B^\top \tP_{K} B)K-B^\top \tP_{K} A]\notag\\
 &\quad\quad-2\alpha\eta [(R+B^\top \tP_{K} B)K-B^\top \tP_{K} A]^\top [ (B^\top\bar P B)K'-B^\top \bar P A]\notag\\ 
 &\quad\quad -2\alpha\eta [ (B^\top\bar P B)K-B^\top \bar P A]^\top [(R+B^\top \tP_{K} B)K-B^\top \tP_{K} A] \label{equ:natural_trash_5}\\
 &\quad\leq -2\eta[(R+B^\top\tP_KB)K-B^\top \tP_K A]^\top [(R+B^\top \tP_{K} B)K-B^\top \tP_{K} A]\notag\\
 &\quad\quad-2\alpha\eta [(R+B^\top \tP_{K} B)K-B^\top \tP_{K} A]^\top [ (B^\top\bar P B)K'-B^\top \bar P A]\notag\\ 
 &\quad\quad -2\alpha\eta [ (B^\top\bar P B)K-B^\top \bar P A]^\top [(R+B^\top \tP_{K} B)K-B^\top \tP_{K} A],\label{equ:natural_trash_6}
 \#
 where the equation follows by separating $\tP$ as $\tP_K+\alpha \bar P$ in \eqref{equ:natural_trash_4}.  Notice that the first two terms on the right-hand side (RHS) of \eqref{equ:natural_trash_5} are identical to the RHS of \eqref{equ:natural_trash_2}. Thus, the inequality \eqref{equ:natural_trash_6} is due to \eqref{equ:natural_trash_3}. Moreover, notice that
 \small
 \$
&-2\alpha\eta[(R+B^\top \tP_{K} B)K-B^\top \tP_{K} A]^\top [ (B^\top\bar P B)K'-B^\top \bar P A] -2\alpha\eta[ (B^\top\bar P B)K-B^\top \bar P A]^\top [(R+B^\top \tP_{K} B)K-B^\top \tP_{K} A] \\
&=-2\alpha\eta[(R+B^\top \tP_{K} B)K-B^\top \tP_{K} A]^\top [ (B^\top\bar P B)K'-B^\top \bar P A] -2\alpha\eta[ (B^\top\bar P B)K'-B^\top \bar P A]^\top [(R+B^\top \tP_{K} B)K-B^\top \tP_{K} A]\\
&\quad -4\alpha\eta^2[(R+B^\top \tP_{K} B)K-B^\top \tP_{K} A]^\top(B^\top\bar P B)[(R+B^\top \tP_{K} B)K-B^\top \tP_{K} A]\\
&\leq2\eta[(R+B^\top \tP_{K} B)K-B^\top \tP_{K} A]^\top[(R+B^\top \tP_{K} B)K-B^\top \tP_{K} A]+2\alpha^2\eta[ (B^\top\bar P B)K'-B^\top \bar P A]^\top [ (B^\top\bar P B)K'-B^\top \bar P A]\\
&\quad -4\alpha\eta^2[(R+B^\top \tP_{K} B)K-B^\top \tP_{K} A]^\top(B^\top\bar P B)[(R+B^\top \tP_{K} B)K-B^\top \tP_{K} A],
\$ 
 \normalsize
 which combined with  \eqref{equ:natural_trash_6} further yields  
 \#\label{equ:natural_trash_7}
 \circled{1}\leq&  2\alpha^2\eta[ (B^\top\bar P B)K'-B^\top \bar P A]^\top [ (B^\top\bar P B)K'-B^\top \bar P A].  
% \notag\\
% &\quad+\eta \alpha^2[ (B^\top\bar P B)K-B^\top \bar P A]^\top [ (B^\top\bar P B)K-B^\top \bar P A].
 \#
 By assumption $\|K\|<\infty$ and $P_K\geq 0$ exists, we know that $\tP_K$ is bounded, and so is  $K'$ obtained from \eqref{equ:restate_exact_npg} using a finite stepsize $\eta$. 
  Also, $\bar P$ has bounded norm. Thus, $\circled{1}$ in     \eqref{equ:natural_trash_7} is $o(\alpha)$.  
% and the inequality uses \eqref{equ:natural_trash_2} and \eqref{equ:natural_trash_3},  if $\eta$  satisfies \eqref{equ:natural_eta_cond_1}. 
%Since $K$ is stabilizing, it lies in the set $\{K\given \rho(A-BK)<1\}$ and thus has bounded norm. 
%Also, $P_K$ and $\bar P$ both have bounded norm, so do $\tP_K$ and  $K'$. Hence, 
Thus, there exists small enough $\alpha>0$ such that $\circled{1}+\circled{2}<0$, since   from \eqref{equ:circle_2_res},  $\circled{2}=-\alpha I+o(\alpha)$. In words, there exists some $P>0$ such that  \eqref{equ:LMI_cond} holds for $K'$ obtained from \eqref{equ:restate_exact_npg} with stepsize satisfying \eqref{equ:natural_eta_cond_1}. 
 This completes the proof of the first argument.

Lastly, by  Lemma \ref{lemma:discrete_bounded_real_lemma},  we equivalently have that the conditions i)-iii) in the theorem hold for $K'$, which
 completes the proof. 
\hfill$\QED$

\subsection{Proof of Theorem \ref{theorem:global_exact_conv}}\label{sec:proof_theorem:global_exact_conv}

We first introduce the following lemma that can be viewed as the counterpart of  the \emph{Cost Difference Lemma} in \cite{fazel2018global}. Unlike the equality relation given in the lemma in \cite{fazel2018global}, we establish both lower and upper bounds for the difference of two matrices  $P_{K'}$ and $P_K$. The proof of the lemma is provided in \S\ref{sec:proof_lemma_cost_diff}.
%The result is formally stated in the following lemma. 

\begin{lemma}[Discrete-Time Cost Difference Lemma]\label{lemma:cost_diff}
Suppose that both $K,K'\in\cK$. 
% with $\cK$ being defined in \eqref{equ:define_cK}.
Then, we have the  following upper bound:
\#
P_{K'}-P_K&\le \sum_{t\geq 0} [(A-BK')^\top(I-\gamma^{-2} P_{K'}DD^\top)^{-1}]^t\big[ -(K-K')^\top E_K - E_K^\top (K-K')\notag\\
&\qquad+(K-K')^\top (R+B^\top \tP_{K} B)(K-K')\big][(I-\gamma^{-2} P_{K'}DD^\top)^{-\top}(A-BK')]^t, \label{eq:CDL_upper}
\#
where  $E_K$ is defined in \eqref{equ:def_mu_Ek}.
%have finite costs and thus $P_K$ and $P_{K'}$ exist. Also, suppose $W^{-1}>\gamma P_K$, $W^{-1}>\gamma P_{K'}$, and thus both $I-\gamma P_{K} W$ and {$I-\gamma P_{K'} W$ are  invertible}. 
If additionally  $\rho\big((A-BK')^\top(I-\gamma^{-2} P_{K}  DD^\top)^{-1}\big)<1$, 
then we also have the   lower bound:
%\#\label{eq:CDL1}
%&P_{K'}-P_K \leq  \sum_{t=0} [(A-BK')^\top(I-\gamma P_{K'}W)^{-1}]^t\big[Q+(K')^\top RK' \notag\\
%&\qquad\qquad\qquad+ (A-BK')^\top \tP_K (A-BK')-P_K\big][(I-\gamma P_{K'}W)^{-\top}(A-BK')]^t. 
%\#
%In addition, we have
%\begin{align}
%\label{eq:CDL2}
%&Q+(K')^\top R K'+(A-BK')^\top \tP_K(A-BK')-P_K\notag\\
%&\quad=2(K'-K)^\top E_K+(K'-K)^\top (R+B^\top \tP_{K} B)(K'-K),
%\end{align}
%where recall $E_K$ is defined in \eqref{equ:def_mu_Ek}. 
%Combining the above two inequalities, we have 
\#
P_{K'}-P_K&\geq \sum_{t\geq 0} [(A-BK')^\top(I-\gamma^{-2} P_{K}  DD^\top)^{-1}]^t\big[ -(K-K')^\top E_K - E_K^\top (K-K') \notag\\
&\qquad+(K-K')^\top (R+B^\top \tP_{K} B)(K-K')\big][(I-\gamma^{-2} P_{K}DD^\top)^{-\top}(A-BK')]^t. \label{eq:CDL_lower}
\# 
\end{lemma}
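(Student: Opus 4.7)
My plan is to first derive, for the ``off--policy'' Bellman residual, the identity
\[
(A-BK')^\top \tP_K (A-BK') - P_K + C^\top C + K'^\top R K' \;=\; \mathcal{D}_K(K'),
\]
where $\mathcal{D}_K(K')$ denotes the bracketed quantity in \eqref{eq:CDL_upper}. This follows by plugging $(A-BK') = (A-BK) + B(K-K')$ into the Riccati equation \eqref{equ:discret_riccati} at $K$ and completing the square using the identity $B^\top \tP_K(A-BK) - RK = -E_K$; the calculation is identical in spirit to the manipulation leading to \eqref{equ:before_perturb_GN} in the proof of Theorem \ref{thm:stability_update}. Subtracting this identity from the Riccati equation \eqref{equ:discret_riccati} at $K'$ yields the key ``Sylvester-like'' recursion for $\Delta P := P_{K'}-P_K$,
\begin{equation}\label{eq:CDL_sketch_syl}
\Delta P \;=\; (A-BK')^\top \bigl(\tP_{K'}-\tP_K\bigr)(A-BK') \;+\; \mathcal{D}_K(K').
\end{equation}

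The crux of the proof is to control $\tP_{K'}-\tP_K$ on both sides without resorting to the explicit but asymmetric Woodbury expansion $\tP_{K'}-\tP_K=(I-\gamma^{-2}P_{K'}DD^\top)^{-1}\Delta P(I-\gamma^{-2}DD^\top P_K)^{-1}$, which does not iterate cleanly. The device I would use is the game-theoretic/variational characterization
\[
(A-BK')^\top \tP (A-BK') \;=\; \max_{L}\Bigl\{(A-BK'+DL)^\top P(A-BK'+DL) - \gamma^2 L^\top L\Bigr\},
\]
valid whenever $\gamma^2 I - D^\top P D>0$, with maximizer $L^\star(P) = (\gamma^2 I - D^\top PD)^{-1}D^\top P(A-BK')$ and closed-loop matrix $A-BK'+DL^\star(P)=(I-\gamma^{-2}DD^\top P)^{-1}(A-BK')$ (by push-through). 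Lemma \ref{lemma:discrete_bounded_real_lemma} guarantees both $\gamma^2 I - D^\top P_KD>0$ and $\gamma^2 I - D^\top P_{K'}D>0$, so the characterization applies at both $P_K$ and $P_{K'}$.

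For the \emph{upper} bound I would evaluate the max defining $(A-BK')^\top \tP_{K'}(A-BK')$ exactly at $L^\star(P_{K'})$, then lower-bound $(A-BK')^\top \tP_K(A-BK')$ by plugging the same $L^\star(P_{K'})$ into its variational formula (as a feasible but generally suboptimal choice). Subtracting collapses the $\gamma^2 L^\top L$ terms and leaves
\[
(A-BK')^\top \bigl(\tP_{K'}-\tP_K\bigr)(A-BK') \;\le\; \widehat A_{K'}^\top\,\Delta P\,\widehat A_{K'},\qquad \widehat A_{K'}:=(I-\gamma^{-2}DD^\top P_{K'})^{-1}(A-BK').
\]
Combining with \eqref{eq:CDL_sketch_syl} gives $\Delta P \le \widehat A_{K'}^\top \Delta P \widehat A_{K'} + \mathcal{D}_K(K')$, and since the monotone substitution $X\le Y \Rightarrow F^\top XF\le F^\top YF$ preserves the inequality, I can iterate this recursion. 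The Bounded Real Lemma applied to $K'\in\cK$ yields $\rho(\widehat A_{K'})<1$, so the residual term $(\widehat A_{K'}^\top)^{N+1}\Delta P\,\widehat A_{K'}^{N+1}\to 0$, and the partial sums converge to \eqref{eq:CDL_upper}, noting $\widehat A_{K'}^\top = (A-BK')^\top(I-\gamma^{-2}P_{K'}DD^\top)^{-1}$ by symmetry of $P_{K'}$.

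The \emph{lower} bound is the dual: evaluate the max for $\tP_K$ exactly at $L^\star(P_K)$ and lower-bound the max for $\tP_{K'}$ by plugging in the same $L^\star(P_K)$. This reverses the inequality and produces
\[
(A-BK')^\top\bigl(\tP_{K'}-\tP_K\bigr)(A-BK') \;\ge\; \widehat A'^{\top}\,\Delta P\,\widehat A',\qquad \widehat A' := (I-\gamma^{-2}DD^\top P_K)^{-1}(A-BK'),
\]
so $\Delta P \ge \widehat A'^{\top}\Delta P\,\widehat A' + \mathcal{D}_K(K')$. To iterate and conclude \eqref{eq:CDL_lower} I need exactly $\rho(\widehat A')<1$, which is the extra hypothesis stated in the lemma (the condition on $(A-BK')^\top(I-\gamma^{-2}P_K DD^\top)^{-1}$ is the same by transpose). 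The main obstacle is precisely this asymmetry: while $\rho(\widehat A_{K'})<1$ comes automatically from $K'\in\cK$ via Lemma \ref{lemma:discrete_bounded_real_lemma}, the ``cross'' closed-loop $\widehat A'$ mixes $P_K$ with $K'$ and need not be stable, which is why the lower bound requires an additional hypothesis. Once the minimax device is recognized, the remaining algebra (verifying $A-BK'+DL^\star(P)=(I-\gamma^{-2}DD^\top P)^{-1}(A-BK')$ and passing to the geometric-series limit) is routine.
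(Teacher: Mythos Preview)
Your proof is correct and follows the same overall scheme as the paper's: establish the identity $(A-BK')^\top \tP_K(A-BK') - P_K + C^\top C + K'^\top RK' = \mathcal{D}_K(K')$, combine it with the Riccati equation at $K'$ to obtain a one-step relation $\Delta P \lessgtr F^\top \Delta P\, F + \mathcal{D}_K(K')$, and unroll using stability of $F$. The difference is in how the one-step inequality is proved. The paper isolates an algebraic helper lemma, namely
\[
(I-\gamma^{-2}P_{K'}DD^\top)^{-1}\bigl(P_K-\gamma^{-2}P_{K'}DD^\top P_{K'}\bigr)(I-\gamma^{-2}DD^\top P_{K'})^{-1}\;\le\;\tP_K,
\]
established by a direct matrix-inversion-lemma computation, and then sandwiches it by $(A-BK')^\top(\cdot)(A-BK')$; swapping $K\leftrightarrow K'$ in the helper lemma gives the reversed inequality for the lower bound. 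Your route derives the very same sandwiched inequality from the variational identity $(A-BK')^\top \tP(A-BK')=\max_L\{(A-BK'+DL)^\top P(A-BK'+DL)-\gamma^2 L^\top L\}$, evaluating the max at $L^\star(P_{K'})$ (resp.\ $L^\star(P_K)$) and plugging the same $L$ suboptimally into the other term. This is precisely the inner maximization in the zero-sum game reading of the Riccati equation that the paper discusses in \S\ref{subsec:connection_to_games}, so your argument makes the adversarial-disturbance structure explicit and replaces the somewhat opaque algebra of the helper lemma by a two-line minimax comparison. The trade-off is that the paper's algebraic lemma is a slightly stronger standalone statement (it holds before sandwiching by $A-BK'$), while your argument only yields what is actually needed. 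All remaining ingredients---the $\mathcal{D}_K(K')$ identity, the stability of $\widehat A_{K'}$ via Lemma~\ref{lemma:discrete_bounded_real_lemma}, and the need for the extra hypothesis $\rho(\widehat A')<1$ in the lower bound---coincide with the paper's.
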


Notice that Lemma \ref{lemma:cost_diff} also resembles the ``Almost Smoothness Condition"  (Lemma $9$) in \cite{fazel2018global}, which characterizes how the difference between  $P_K$ and $P_{K'}$ relies on the difference between $K$ and $K'$.   Due to the difference between $\tP_K$ and $P_K$, we have to establish lower and upper bounds of $P_{K'}-P_K$ separately.   One can still identify that the leading terms in both bounds depend on $\|K'-K\|$, with the remaining terms being in the order of $o(\|K'-K\|)$ if $K'$ is close to $K$.  
%But still, as $\tP_K$, and thus $E_K$, may be unbounded as $K$ approaches the boundary $\partial \cK$, $P_K$ is 

%. We do have new technical difficulty here in LEQR since we have to play with the relationship between $P_K$ and $\tP_K$. This causes a lot of trouble and eventually we fix this technical issue using \eqref{eq:linA1}.

\vspace{10pt}
\noindent{\textbf{Gauss-Newton:}}
\vspace{4pt} 
  
Recall that  for the Gauss-Newton update, $K'=K-2\eta(R+B^\top \tP_K B)^{-1} E_K$.  
By Theorem \ref{thm:stability_update}, $K'$ also lies in $\cK$ if $\eta\leq 1/2$.
% and $\|K\|<\infty$.   
Then, by the upper bound in \eqref{eq:CDL_upper}, we know  that if $\eta\in[0,1/2]$, 
\begin{align}\label{equ:monotone_p}
&P_{K'}-P_K\le(-4\eta+4\eta^2)\sum_{t\ge 0} [(A-BK')^\top(I-\gamma^{-2} P_{K'}DD^\top)^{-1}]^t\left[ E_K^\top(R+B^\top \tP_K B)^{-1}E_K \right]\notag\\
&\qquad\qquad\qquad\qquad\qquad\qquad\cdot[(I-\gamma^{-2} P_{K'}DD^\top)^{-\top}(A-BK')]^t\le 0, 
\end{align} 
which implies the monotonic decrease of $P_K$ (matrix-wise) along the update. 
Since $P_K$ is lower-bounded, such a monotonic sequence of $\{P_{K_n}\}$ along the iterations must converges to some $P_{K_{\infty}}\in\cK$. Now we show that this $P_{K_{\infty}}$ is indeed $P_{K^*}$.  
By multiplying both sides of \eqref{equ:monotone_p} with 
any matrix  $M> 0$, and then taking the trace, we have  that if $\eta\in[0,1/2]$ 
\begin{align}\label{equ:p_trace_upper_bnd_gn}
&\tr(P_{K'}M)-\tr(P_KM)
\le(-4\eta+4\eta^2)\tr\bigg\{\sum_{t\ge 0} [(A-BK')^\top(I-\gamma^{-2} P_{K'}M)^{-1}]^t\notag\\
&\qquad\qquad\qquad\qquad\qquad\qquad\cdot\left[ E_K^\top(R+B^\top \tP_K B)^{-1}E_K \right][(I-\gamma^{-2} P_{K'}M)^{-\top}(A-BK')]^t M\bigg\}\notag\\
%&\quad \leq -2\eta \tr\bigg\{\left[ E_K^\top(R+B^\top \tP_K B)^{-1}E_K \right]\cdot\notag\\
%&\qquad\qquad\qquad\qquad
%%\underbrace{
%\sum_{t\ge 0} [(I-\gamma P_{K'}W)^{-\top}(A-BK')]^t W[(A-BK')^\top(I-\gamma P_{K'}W)^{-1}]^t
%%}_{\cW_{K',K'}}
%\bigg\}\notag\\
&\quad\leq-2\eta\tr\big[E_K^\top(R+B^\top \tP_K B)^{-1}E_K M\big]
%\leq-2\eta\sigma_{\min}(W)\tr\big[E_K^\top(R+B^\top \tP_K B)^{-1}E_K \big]
\leq \frac{-2\eta\sigma_{\min}(M)}{\sigma_{\max}(R+B^\top \tP_K B)}\tr(E_K^\top E_K)\notag\\
&\quad \leq \frac{-2\eta\sigma_{\min}(M)}{\sigma_{\max}(R+B^\top \tP_{K_0} B)}\tr(E_K^\top E_K)
,
\end{align}
where 
%$\cW_{K',K'}\geq 0$ is a nonnegative definite matrix depending on $K'$, and 
the second inequality follows by keeping only the first term in the infinite summation of positive definite matrices, the third one uses that $\tr(PA)\geq \sigma_{\min}(A)\tr(P)$, and the last one  is due to the monotonic decrease of $P_K$,  and the monotonicity of  $\tP_{K}$ with respect to $P_{K}$,  with  $K_0\in\cK$ being the initialization of $K$ at iteration $0$.  
From iterations $n=0$ to $N-1$, replacing $M$ by $I$, summing over both sides of \eqref{equ:p_trace_upper_bnd_gn},  and dividing by $N$, we further  have
\$
\frac{1}{N}\sum_{n=0}^{N-1}\tr(E_{K_n}^\top E_{K_n})\leq \frac{\sigma_{\max}(R+B^\top \tP_{K_0} B)\cdot \big[\tr(P_{K_0})-\tr(P_{K_{\infty}})\big]}{2\eta\cdot N},
\$
namely, the sequence $\{K_n\}$ converges to the stationary point $K$ such that $E_K=0$ with $O(1/N)$ rate. By Proposition  \ref{coro:opt_control_form_discrete}, this is towards the global optimal control gain $K^*$.  
%if $\big((I-\gamma^{-2} P_{K}DD^\top)^{-\top}(A-BK),D\big)$ is controllable, then such a stationary point is unique, and is indeed the \emph{global} optimal policy dictated by $K^*$. 

%Since $E_K=\bm{0}$ gives the unique optimal solution $K^*=(R+B^\top \tP_{K^*} B)^{-1}B^\top \tP_{K^*} A$, this shows that the sequence $\{\|E_{K_n}\|_F^2\}$ converges to zero with sublinear rate $O(1/N)$, namely, 
%$\{K_n\}$ converges to the optimal control gain $K^*$ with sublinear rate. 

\vspace{10pt}
\noindent{\textbf{Natural  Policy Gradient:}}
\vspace{4pt} 
 
Recall that the natural PG update follows $K'=K-2\eta E_K$. 
By Theorem \ref{thm:stability_update}, $K'$ also lies in $\cK$ if $\eta\leq 1/(2\|R+B^\top \tP_K B\|)$.  
By  the upper bound  \eqref{eq:CDL_upper}, this stepsize  yields that 
\small
\begin{align}\label{equ:monotone_p_2}
&P_{K'}-P_K
%\le\sum_{t\ge 0} [(A-BK')^\top(I-\gamma P_{K'}W)^{-1}]^t\left[-4\eta E_K^\top E_K+4\eta^2E_K^\top(R+B^\top \tP_K B)^{-1}E_K \right]\notag\\
%&\qquad\qquad\qquad\qquad\qquad\qquad\cdot[(I-\gamma P_{K'}W)^{-\top}(A-BK')]^t\notag\\
\le\sum_{t\ge 0} [(A-BK')^\top(I-\gamma^{-2} P_{K'}DD^\top)^{-1}]^t\left[-4\eta E_K^\top E_K+2\eta E_K^\top E_K \right][(I-\gamma^{-2} P_{K'}DD^\top)^{-\top}(A-BK')]^t\notag\\
&\quad\le 0, 
\end{align} 
\normalsize
which also implies the matrix-wise monotonic decrease of $P_{K}$ along the update. Suppose the convergent matrix is $P_{K_{\infty}}$.   
As before, multiplying both sides of \eqref{equ:monotone_p_2} by $M>0$, and taking the trace, yields
\begin{align}\label{equ:p_trace_upper_bnd_ng}
&\tr(P_{K'}M)-\tr(P_KM)
%\le(-4\eta+4\eta^2)\tr\bigg\{\sum_{t\ge 0} [(A-BK')^\top(I-\gamma P_{K'}W)^{-1}]^t\left[ E_K^\top(R+B^\top \tP_K B)^{-1}E_K \right]\notag\\
%&\qquad\qquad\qquad\qquad\qquad\qquad\cdot[(I-\gamma P_{K'}W)^{-\top}(A-BK')]^t W\bigg\}\notag\\
%&\quad \leq -2\eta \tr\bigg\{\left[ E_K^\top(R+B^\top \tP_K B)^{-1}E_K \right]\cdot\notag\\
%&\qquad\qquad\qquad\qquad
%%\underbrace{
%\sum_{t\ge 0} [(I-\gamma P_{K'}W)^{-\top}(A-BK')]^t W[(A-BK')^\top(I-\gamma P_{K'}W)^{-1}]^t
%%}_{\cW_{K',K'}}
%\bigg\}\notag\\
\leq-2\eta\tr\big(E_K^\top E_K M\big)
%\leq-2\eta\sigma_{\min}(W)\tr\big[E_K^\top(R+B^\top \tP_K B)^{-1}E_K \big]
%\leq {-2\eta\sigma_{\min}(W)}\tr(E_K^\top E_K)\notag\\
%&\quad \leq \frac{-2\eta\sigma_{\min}(W)}{\sigma_{\max}(R+B^\top \tP_{K_0} B)}\tr(E_K^\top E_K)
,
\end{align} 
for any $M>0$, 
where the inequality follows by only keeping the first term in the infinite summation. Letting $M=I$, summing up \eqref{equ:p_trace_upper_bnd_ng} from $n=0$ to $n=N-1$, and dividing by $N$, we conclude that 
\$
\frac{1}{N}\sum_{n=0}^{N-1}\tr(E_{K_n}^\top E_{K_n})\leq \frac{\tr(P_{K_0})-\tr(P_{K_{\infty}})}{2\eta\cdot N},
\$ 
namely,  $\{K_n\}$ converges to the stationary point $K$ such that $E_K=$ with $O(1/N)$ rate, which is also the global optimum. 
In addition, since $\{P_{K_n}\}$ is monotonically decreasing, it suffices to require the stepsize 
\$
\eta\in[0,1/(2\|R+B^\top \tP_{K_0} B\|)],
\$
 which completes the proof.  
\hfill$\QED$

\subsection{Proof of Theorem \ref{theorem:local_exact_conv}}\label{sec:proof_theorem:local_exact_conv}

To ease the analysis, we show the convergence rate of a surrogate value $\tr(P_K DD^\top )$. This is built upon the following  relationship between the objective value $\cJ(K)$ and $\tr(P_K DD^\top )$.

\begin{lemma}\label{lemma:logdet_to_trace}
	Suppose that both $K,K'\in\cK$ and  $P_K\geq P_{K'}$. Then, it follows that 
\$
	\cJ(K)-\cJ(K')
%	=-\frac{1}{\gamma}\log\det (I-\gamma P_{K'}W)+\frac{1}{\gamma}\log\det (I-\gamma P_{K}W)
\le \|(I-\gamma^{-2}  D^\top P_{K} D)^{-1}\|\cdot [\tr(P_{K} DD^\top)-\tr(P_{K'} DD^\top)].
	\$
\end{lemma}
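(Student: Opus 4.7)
The plan is to exploit the closed form $\cJ(K)=-\gamma^2\log\det(I-\gamma^{-2}P_K DD^\top)$ from \eqref{equ:form_J2} together with the concavity of $\log\det$, reducing the entire statement to a one-line application of the gradient inequality for concave functions followed by the elementary trace bound $\tr(AB)\le\|A\|\tr(B)$ for PSD matrices.

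First I would rewrite the objective using the Sylvester-type identity $\det(I+XY)=\det(I+YX)$, obtaining
\[
\cJ(K)=-\gamma^2\log\det(I-\gamma^{-2} D^\top P_K D),
\]
so that the arguments of $\log\det$ become genuinely square matrices of the same dimension as $D^\top P_K D$. Setting $X:=I-\gamma^{-2}D^\top P_K D$ and $Y:=I-\gamma^{-2}D^\top P_{K'} D$, the hypothesis $P_K\ge P_{K'}$ gives $Y\ge X$, and the feasibility $K,K'\in\cK$ together with Lemma \ref{lemma:discrete_bounded_real_lemma} gives $X,Y\succ 0$.

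Next, I would invoke the concavity of $\log\det$ on the PSD cone, i.e., the first-order inequality $\log\det(Y)\le \log\det(X)+\tr\bigl(X^{-1}(Y-X)\bigr)$, to obtain
\[
\log\det(I-\gamma^{-2} D^\top P_{K'} D)-\log\det(I-\gamma^{-2} D^\top P_K D)\le \gamma^{-2}\tr\!\left[(I-\gamma^{-2}D^\top P_K D)^{-1}D^\top(P_K-P_{K'})D\right].
\]
Multiplying through by $-\gamma^2$ flips the inequality and reproduces $\cJ(K)-\cJ(K')$ on the left, yielding
\[
\cJ(K)-\cJ(K')\le \tr\!\left[(I-\gamma^{-2}D^\top P_K D)^{-1}D^\top(P_K-P_{K'})D\right].
\]

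Finally, since $(I-\gamma^{-2}D^\top P_K D)^{-1}\succ 0$ and $D^\top(P_K-P_{K'})D\succeq 0$, the standard trace bound $\tr(AB)\le\|A\|\tr(B)$ for PSD $A,B$ together with the cyclic property $\tr(D^\top MD)=\tr(M\,DD^\top)$ yields exactly
\[
\cJ(K)-\cJ(K')\le\|(I-\gamma^{-2}D^\top P_K D)^{-1}\|\cdot\bigl[\tr(P_K DD^\top)-\tr(P_{K'}DD^\top)\bigr],
\]
which is the claim. There is no serious obstacle here: the only thing one needs to be careful about is verifying the sign/ordering chain (that $P_K\ge P_{K'}$ propagates to $X\le Y$ and that multiplying by $-\gamma^2$ flips the inequality in the right direction), and confirming strict positive-definiteness of the relevant $I-\gamma^{-2}D^\top P_K D$ so that concavity applies on an open set containing both endpoints — both of which follow immediately from $K,K'\in\cK$ via Lemma \ref{lemma:discrete_bounded_real_lemma}.
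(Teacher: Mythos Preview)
Your argument is correct and is essentially the same idea as the paper's, executed with a slightly different tool. The paper applies the mean value theorem to $\log\det$, obtaining an \emph{equality} $\cJ(K)-\cJ(K')=\tr\bigl[X_\tau\,D^\top(P_K-P_{K'})D\bigr]$ with $X_\tau=\bigl(I-\gamma^{-2}\tau D^\top P_{K'}D-\gamma^{-2}(1-\tau)D^\top P_K D\bigr)^{-1}$ at some intermediate $\tau\in[0,1]$, and then uses $P_K\ge P_{K'}$ to bound $\|X_\tau\|\le\|(I-\gamma^{-2}D^\top P_K D)^{-1}\|$ before applying the trace inequality. Your concavity (first-order) inequality for $\log\det$ lands directly on the endpoint $X=(I-\gamma^{-2}D^\top P_K D)^{-1}$, skipping the intermediate-point bound; this is a small but genuine streamlining. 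One cosmetic remark: multiplying your displayed inequality by $+\gamma^2$ (not $-\gamma^2$) already yields $\cJ(K)-\cJ(K')$ on the left without flipping; your final displayed conclusion is correct either way.
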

\begin{proof}
First, by Sylvester's determinant theorem, 
%,   $\det (I+AB)=\det(I+BA)$. Thus, 
$\cJ(K)$ can be re-written as 
\$
\cJ(K)=-\gamma^2\log\det (I-\gamma^{-2}P_KDD^\top)=-\gamma^2\log\det (I-\gamma^{-2}D^\top P_KD). 
\$
By the mean value theorem, for any matrices $A$ and $B$ with $\det(A),\det(B)>0$, we have
\$
\log\det (A)=\log\det(B)+\tr[(B+\tau (A-B))^{-1}(A-B)]
\$
for some $0\le\tau\le 1$. This leads to  
\small
\$
&\cJ(K)-\cJ(K')=-{\gamma^2}\log\det (I-\gamma^{-2} D^\top P_{K}D )+{\gamma^2}\log\det (I-\gamma^{-2} D^\top P_{K'}D )=\tr[XD^\top(P_{K}-P_{K'})D ]\notag\\
&\qquad\leq \|X\|\cdot [\tr(D^\top P_K D)-\tr(D^\top P_{K'} D)]=\|X\|\cdot [\tr( P_K DD^\top)-\tr( P_{K'} DD^\top)],
\$
\normalsize
where $X=(I-\gamma^{-2} \tau D^\top P_{K'} D -\gamma^{-2} (1-\tau) D^\top P_K D )^{-1}$, and the inequality uses the facts $P_{K}\geq P_{K'}$ and $\tr(PA)\leq \|A\|\cdot \tr(P)$ for any real symmetric $P\geq 0$.  
 Note that by $P_{K}\geq P_{K'}$, 
 \$
  X\leq (I-\gamma^{-2}  D^\top P_{K} D)^{-1}\Longrightarrow \|X\|\leq \|(I-\gamma^{-2}  D^\top P_{K} D)^{-1}\|. 
 \$
% . 
% Moreover, $K\in\cK$ implies that $\|(W^{-1}-\gamma P_{K} )^{-1}\|\leq \|W\|$ uniformly, which combined with \eqref{equ:trash_thm_22_1}  
 This completes   the proof. 
\end{proof}

Lemma \ref{lemma:logdet_to_trace} implies  that 
in order to show the convergence of $\cJ(K)$,    
it suffices to study the convergence  of $\tr(P_{K} DD^\top)$, as long as $\|(I-\gamma^{-2}  D^\top P_{K} D)^{-1}\|$ is bounded along the iterations. 
This is indeed the case since by  \eqref{equ:monotone_p} and  \eqref{equ:monotone_p_2}, $P_K$ is monotone along both   updates \eqref{eq:exact_npg} and \eqref{eq:exact_gn}. 
By induction, if $K_0\in\cK$, i.e., {$I-\gamma^{-2}  D^\top P_{K_0} D>0$, then   $I-\gamma^{-2}  D^\top P_{K_n} D\geq I-\gamma^{-2}  D^\top P_{K_0} D>0$ holds for  all iterations  $n\geq 1$. 
This further yields that   for all $n\geq 1$, $
\|(I-\gamma^{-2}  D^\top P_{K_n} D)^{-1}\|\leq\|(I-\gamma^{-2}  D^\top P_{K_0} D)^{-1}\|$, 
namely,  $\|(I-\gamma^{-2}  D^\top P_{K} D)^{-1}\|$ is uniformly bounded. 
}

Now we show the local linear convergence rate of $\tr(P_K DD^\top)$. By   \eqref{eq:CDL_lower},  for any $K'$ such that $(I-\gamma^{-2} P_{K}DD^\top)^{-\top}(A-BK')$ is stabilizing, we have

\#\label{equ:p_diff_lower_bnd}
&P_{K'}-P_K\geq \sum_{t\ge 0} [(A-BK')^\top(I-\gamma^{-2} P_{K}DD^\top)^{-1}]^t\big[ -(K-K')^\top E_K - E_K^\top (K-K')  \notag\\
&\qquad\qquad\qquad+(K-K')^\top (R+B^\top \tP_{K} B)(K-K')\big]\cdot[(I-\gamma^{-2} P_{K}DD^\top)^{-\top}(A-BK')]^t\\
&\quad\geq \sum_{t\ge 0} [(A-BK')^\top(I-\gamma^{-2} P_{K}DD^\top)^{-1}]^t\big[-E_K^\top (R+B^\top \tP_{K} B)^{-1}E_K\big]\cdot[(I-\gamma^{-2} P_{K}DD^\top)^{-\top}(A-BK')]^t,\notag
\#
where the second inequality follows from completion of   squares. 
%
%By completing the squares, we have
%\#\label{equ:p_diff_lower_bnd_trash}
%&-2(K-K')^\top E_K+(K'-K)^\top (R+B^\top \tP_{K} B)(K'-K)\notag\\
%&=[K'-K+(R+B^\top \tP_{K} B)^{-1}E_K]^\top (R+B^\top \tP_{K} B) [K'-K+(R+B^\top \tP_{K} B)^{-1}E_K]\notag\\
%&\qquad-E_K^\top (R+B^\top \tP_{K} B)^{-1}E_K\notag\\ 
%&\geq -E_K^\top (R+B^\top \tP_{K} B)^{-1}E_K,
%\#
%which can be plugged into 
%\eqref{equ:p_diff_lower_bnd} to yield
%\$
%&P_{K'}-P_K\geq \sum_{t\ge 0} [(A-BK')^\top(I-\gamma P_{K}W)^{-1}]^t\big[ -E_K^\top (R+B^\top \tP_{K} B)^{-1}E_K\big]\cdot[(I-\gamma P_{K}W)^{-\top}(A-BK')]^t.
%\$
By taking traces on both sides of \eqref{equ:p_diff_lower_bnd}, and letting  $K'=K^*$, we have
\small 
\#\label{equ:trace_diff_lower_bnd_2}
\tr(P_{K}DD^\top)-\tr(P_{K^*}DD^\top)&\leq \tr\left[ E_K^\top(R+B^\top \tP_K B)^{-1}E_K \right]\cdot\|\cW_{K,K^*}\|\leq \frac{\tr\left( E_K^\top E_K \right)}{\sigma_{\min}(R)}\cdot\|\cW_{K,K^*}\|,
\#
\normalsize
where $\cW_{K,K^*}$ is defined as  
\$
\cW_{K,K^*}:={\sum_{t\ge 0} [(I-\gamma^{-2} P_{K}DD^\top)^{-\top}(A-BK^*)]^t DD^\top[(A-BK^*)^\top(I-\gamma^{-2} P_{K}DD^\top)^{-1}]^t}. 
\$
%i.e., the solution to the  Lyapunov equation
%\$
%(I-\gamma P_{K}W)^{-\top}(A-BK^*)\cW_{K,K^*}(A-BK^*)^\top(I-\gamma P_{K}W)^{-1}+W=\cW_{K,K^*},
%\$
Note that $K^*\in\cK$ and thus $(I-\gamma^{-2} P_{K^*}DD^\top)^{-\top}(A-BK^*)$ is stabilizing. 
Let $\epsilon:=1-\rho\big((I-\gamma^{-2} P_{K^*}DD^\top)^{-\top}(A-BK^*)\big)$, and note that  $\epsilon>0$.  
By the continuity of $P_K$, 
% from  Lemma \ref{lemma:differentiability_policy_grad} 
 and that of $\rho(\cdot)$ \citep{tyrtyshnikov2012brief}, there exists a ball $\cB(K^*,r)\subseteq\cK$,  centered at  $K^*$ with radius $r>0$,  such that  for any $K\in \cB(K^*,r)$,  
\#\label{equ:linear_rate_trash_0}
\rho\big((I-\gamma^{-2} P_{K}DD^\top)^{-\top}(A-BK^*)\big)\leq 1-\epsilon/2<1. 
\#

\vspace{10pt}
\noindent{\textbf{Gauss-Newton:}}
\vspace{4pt} 

By Theorem \ref{theorem:global_exact_conv}, $\{K_n\}$ approaches $K^*$. Thus, there exists some $K_n\in\cB(K^*,r)$. Let $K=K_n$ and thus $K'=K_{n+1}$. 
Replacing $M$ in \eqref{equ:p_trace_upper_bnd_gn}  by $DD^\top>0$ and combining \eqref{equ:trace_diff_lower_bnd_2}, we have 
\begin{align*}
\tr(P_{K'}DD^\top)-\tr(P_KDD^\top)\leq \frac{-2\eta\sigma_{\min}(DD^\top)\sigma_{\min}(R)}{\sigma_{\max}(R+B^\top \tP_{K_0} B)\|\cW_{K,K^*}\|}[\tr(P_{K}DD^\top)-\tr(P_{K^*}DD^\top)],
\end{align*}
which further   implies   that
\small
\begin{align}\label{equ:linear_rate_trash_1}
\tr(P_{K'}DD^\top)-\tr(P_{K^*}DD^\top)\leq \bigg(1-\frac{2\eta\sigma_{\min}(DD^\top)\sigma_{\min}(R)}{\sigma_{\max}(R+B^\top \tP_{K_0} B)\|\cW_{K,K^*}\|}\bigg)\cdot[\tr(P_{K}DD^\top)-\tr(P_{K^*}DD^\top)]. 
\end{align}
\normalsize
\eqref{equ:linear_rate_trash_1} shows that the sequence $\{\tr(P_{K_{n+p}}DD^\top)\}$ decreases to $\tr(P_{K^*}DD^\top)$ starting from some $K_n\in \cB(K^*,r)$. By continuity, there must exists a close enough $K_{n+p}$, such that the lower-level set $\{K\given \tr(P_KDD^\top)\leq \tr(K_{n+p}DD^\top)\}\subseteq \cB(K^*,r)$. Hence, starting from $K_{n+p}$, the iterates  will never leave $\cB(K^*,r)$. By \eqref{equ:linear_rate_trash_0}, $\cW_{K,K^*}$, as the unique solution to the  Lyapunov equation
\$
[(I-\gamma^{-2} P_{K}DD^\top )^{-\top}(A-BK^*)]\cW_{K,K^*}[(I-\gamma^{-2} P_{K}DD^\top)^{-\top}(A-BK^*)]+DD^\top=\cW_{K,K^*}, 
\$
 must have its norm  bounded by   some constant $\overline{\cW}_{r}>\|DD^\top\|$   for all $K\in\cB(K^*,r)$. Replacing the term $\|\cW_{K,K^*}\|$ in \eqref{equ:linear_rate_trash_1} by $\overline{\cW}_{r}$ gives the uniform local linear contraction of $\{\tr(P_{K_n}DD^\top)\}$, which further leads to the local linear rate of $\{\cJ(K_n)\}$ by Lemma \ref{lemma:logdet_to_trace}. 
 
In addition,  by the upper bound \eqref{eq:CDL_upper} and $E_{K^*}=0$, we have
\#\label{equ:trash_1_dis_q_quad}
\tr(P_{K'}DD^\top)-\tr(P_{K^*}DD^\top)&\leq \tr\Big\{\sum_{t\geq 0} [(A-BK')^\top(I-\gamma^{-2} P_{K'}DD^\top)^{-1}]^t\big[(K'-K^*)^\top (R+B^\top \tP_{K^*} B)\notag\\
&\qquad\quad\cdot (K'-K^*)\big][(I-\gamma^{-2} P_{K'}DD^\top)^{-\top}(A-BK')]^tDD^\top \Big\}. 
\#
For $\eta=1/2$, suppose that some $K=K_n\in\cB(K^*,r)$. Then, $K'=K_{n+1}=(R+B^\top \tilde P_{K} B)^{-1}B^\top\tilde P_{K} A$  yields that
\#\label{equ:trash_2_dis_q_quad}
 &K'-K^*=(R+B^\top \tilde P_{K} B)^{-1}B^\top\tilde P_{K} A-(R+B^\top \tilde P_{K^*} B)^{-1}B^\top\tilde P_{K^*} A\\
% &\quad=[(R+B^\top \tilde P_{K} B)^{-1}-(R+B^\top \tilde P_{K^*} B)^{-1}]B^\top\tilde P_{K} A+[(R+B^\top \tilde P_{K^*} B)^{-1}B^\top(\tP_{K}-\tP_{K^*})A]\notag\\
 &\quad=(R+B^\top \tilde P_{K} B)^{-1}B^\top (\tP_{K}-\tP_{K^*}) B (R+B^\top \tilde P_{K^*} B)^{-1} B^\top\tilde P_{K} A+[(R+B^\top \tilde P_{K^*} B)^{-1}B^\top(\tP_{K}-\tP_{K^*})A]\notag. 
\#
Moreover, notice that 
\#\label{equ:trash_4_dis_q_quad}
&\tP_{K}-\tP_{K^*}=(I-\gamma^{-2} P_K DD^\top)^{-1}P_K-(I-\gamma^{-2} P_{K^*} DD^\top)^{-1}P_{K^*}\\
&\quad=(I-\gamma^{-2} P_{K^*} DD^\top)^{-1}\gamma^{-2}(P_{K}-P_{K^*})DD^\top(I-\gamma^{-2} P_{K} DD^\top)^{-1}+(I-\gamma^{-2} P_{K^*} DD^\top)^{-1}(P_{K}-P_{K^*}),\notag
\#
which, combined with \eqref{equ:trash_2_dis_q_quad}, gives  
\#\label{equ:trash_3_dis_q_quad}
\|K'-K^*\|_F\leq c\cdot\|P_{K}-P_{K^*}\|_F,
\#
for some constant $c>0$. 
Combining    \eqref{equ:trash_1_dis_q_quad} and \eqref{equ:trash_3_dis_q_quad} yields 
\$
\tr(P_{K'}DD^\top)-\tr(P_{K^*}DD^\top)&\leq c'\cdot [\tr(P_{K}DD^\top)-\tr(P_{K^*}DD^\top)]^2,
\$
for some constant $c'$. Note that from some $p\geq 0$ such that  $K_{n+p}$  onwards never leaves $\cB(K^*,r)$, the constant $c'$ is uniformly bounded, which proves the Q-quadratic convergence rate of $\{\tr(P_{K_n}DD^\top)\}$, and thus the rate of $\{\cJ(K_n)\}$, around $K^*$. 

% let $A_{K_n}:=A-BK_n$, then by \eqref{equ:K_prime_GN} we know   $K_{n+1}=(R+B^\top \tilde P_{K_n} B)^{-1}B^\top\tilde P_{K_n} A$, which  combined with  the modified Bellman equation \eqref{equ:discret_riccati} yields
%\#\label{equ:rubbish1_dis_q_quad}
%P_{K_n}-P_{K_{n+1}}-A_{K_{n+1}}^\top (\tP_{K_n}-\tP_{K_{n+1}})A_{K_{n+1}}&=A_{K_n}^\top\tP_{K_n} A_{K_n}+ K_n^\top RK_n-A_{K_{n+1}}^\top\tP_{K_n} A_{K_{n+1}}-K_{n+1}^\top RK_{n+1}\notag\\
%& =(K_{n}-K_{n+1})^\top(R+B^\top \tP_{K_n}B)(K_{n}-K_{n+1}),
%\#
%a Lyapunov equation. As we know that $\{K_n\}$ converges to  $K^*$, so does $\{P_{K_n}\}$ to $P_{K^*}$, letting $n+1$
%Hence, as the solution, $P_{K_n}-P_{K_{n+1}}$ 
%
%
%
%\#\label{equ:rubbish2_dis_q_quad}
%\tr(P_{K'}DD^\top)-\tr(P_{K^*}DD^\top)\leq 
%\#
%\\
%\\
%\issue{2019.08.28}
%\noindent\remind{WE NEED TO ESTABLISH THE Q-QUADRATIC RATE FOR $\eta=1/2$!!} 

\vspace{10pt}
\noindent{\textbf{Natural  Policy Gradient:}}
\vspace{4pt} 

Replacing $M$ in \eqref{equ:p_trace_upper_bnd_ng}   by $DD^\top>0$ and 
combining \eqref{equ:p_trace_upper_bnd_ng} and \eqref{equ:trace_diff_lower_bnd_2} yield  
\begin{align*}
\tr(P_{K'}DD^\top)-\tr(P_{K^*}DD^\top)\leq \bigg(1-\frac{2\eta\sigma_{\min}(R)}{\|\cW_{K,K^*}\|}\bigg)\cdot[\tr(P_{K}DD^\top)-\tr(P_{K^*}DD^\top)]. 
\end{align*} 
Using similar argument as above, one can establish the local linear rate of $\{\cJ(K_n)\}$ with a different contracting factor. 
This concludes the proof. 
\hfill$\QED$
}

%\newpage

\section{Discussions} \label{sec:discussion}

We  now provide additional discussions on the mixed $\cH_2/\cH_\infty$ control design problem.

\subsection{Connection to Zero-Sum LQ Games}\label{subsec:connection_to_games}

It is well known that minimizing the risk-sensitive cost as \eqref{equ:def_obj}, which is the logarithm of the  expected values of exponential functions with  quadratic forms,  can be equivalent to solving a zero-sum dynamic game, for both general settings \citep{whittle1990risk,fleming1992risk,fleming1997risk}, and in particular for LQ settings \citep{jacobson1973optimal}. 
Due to the connection between LEQG and mixed design, as discussed in \S\ref{sec:formulation}, the latter can be related to a zero-sum LQ  game as well. 

Specifically, consider  
%a zero-sum LQ game where 
the system  that follows linear dynamics\footnote{The notation in this section might be slightly abused, considering the notations used in the main text, but shall be self-evident by the context.}  
\$
x_{t+1}=Ax_t+Bu_t+Dv_t, 
\$
with the system state being  $x_t\in\RR^d$, the control inputs  of players $1$ and $2$ being    
$u_t\in\RR^{m_1}$ and $v_t\in\RR^{m_2}$, respectively. The matrices $A,B$, and $D$ all have proper dimensions. The objective of  player $1$ (player $2$) is to minimize (maximize) the infinite-horizon value function,
\#\label{equ:minimax_def}
\inf_{\{u_t\}}\sup_{\{v_t\}}\quad\EE_{x_0\sim\cD}\bigg[\sum_{t=0}^\infty c_t(x_t,u_t,v_t)\bigg]=\EE_{x_0\sim\cD}\bigg[\sum_{t=0}^\infty (x_t^\top Qx_t+u_t^\top R^uu_t-v_t^\top R^vv_t)\bigg],
\#
where the initial state $x_0\sim \cD$ for some  distribution $\cD$, the matrices  $Q\in\RR^{d\times d}$, $R^u\in\RR^{m_1\times m_1}$, and $R^v\in\RR^{m_2\times m_2}$ are all positive definite. 
\emph{Value} of the game, i.e., the value of \eqref{equ:minimax_def} when the $\inf$ and $\sup$  can interchange, is characterized by $\EE_{x_0\sim\cD}(x_0^\top P^*x_0)$, where $P^*$ is the solution to the generalized algebraic Riccati equation (GARE) \citep{bacsar1995h} 
%\small
%\#\label{equ:P_GARE}
\$
P^*=A^\top P^*A +Q-
\begin{bmatrix}
A^\top P^*B & A^\top P^*D
\end{bmatrix}
\begin{bmatrix}
R^u+B^\top P^*B & B^\top P^*D\\
D^\top P^*B & -R^v+D^\top P^*D  
\end{bmatrix}^{-1}
\begin{bmatrix}
B^\top P^*A \\
D^\top P^*A 
\end{bmatrix}.
%\#
\$
Moreover, under the standard assumption that $R^v-D^\top P^*D>0$, the solution policies, i.e., the \emph{Nash equilibrium} (NE) policies that are stabilizing, of the two players have forms of LTI state-feedback, namely, $u_t^*=-K^*x_t$ and $v_t^*=-L^*x_t$ for some matrices $K^*\in\RR^{m_1\times d}$ and $L^*\in\RR^{m_2\times d}$. The corresponding  values of $(K^*,L^*)$ are  given by
\small 
\#
K^*&=\big\{R^u+B^\top [P^*- P^*D(-R^v+D^\top P^*D)^{-1}D^\top P^*]B\big\}^{-1} B^\top P^*[A-D(-R^v+D^\top P^*D)^{-1}D^\top P^*A],\label{equ:closed_form_Ks}\\
L^*&=\big\{-R^v+D^\top [P^*- P^*B(R^u+B^\top P^*B)^{-1}B^\top P^*]D\big\}^{-1} D^\top P^* [A-B(R^u+B^\top P^*B)^{-1}B^\top P^*A]. \label{equ:closed_form_Ls}
\#
\normalsize
As a consequence, it suffices to search over all stabilizing control gain pairs $(K,L)$ that solves
\#\label{equ:nonconvex_concave_def}  
\min_K\max_L ~~\cC(K,L):=\EE_{x_0\sim\cD}\bigg\{\sum_{t=0}^\infty \big[x_t^\top Qx_t+(Kx_t)^\top R^u(Kx_t)-(Lx_t)^\top R^v(Lx_t)\big]\bigg\}. 
\# 
In fact, for any stabilizing   $(K,L)$ that makes $\rho(A-BK-DL)<1$, $\cC(K,L)=\tr(P_{K,L}\Sigma_0)$,  where $\Sigma_0=\EE_{x_0\sim\cD}(x_0^\top x_0)$, and $P_{K,L}$ is the unique solution to the Lyapunov equation
\#\label{equ:P_KL_Sol}
P_{K,L}=Q+K^\top R^u K-L^\top R^v L+(A-BK-DL)^\top P_{K,L}(A-BK-DL). 
\#

Given $K$ that makes\footnote{This condition on $K$ is necessary for finding the equilibrium policy since otherwise, the maximizer can drive the cost to infinity by choosing $L$. See Chapter $3$ of \cite{bacsar1995h} for more discussions.}  $R^v-D^\top P_{K,L} D>0$, maximizing over $L$ on the RHS of \eqref{equ:P_KL_Sol} yields 
\#\label{equ:P_KL_Sol_max_L}
P_{K}^*&=Q+K^\top R^u K+(A-BK)^\top [\underbrace{P_K^*+P_K^*D(R^v-D^\top P_K^*D)^{-1}D^\top P_K^*}_{\tP_K^*}](A-BK),
\#
where $P_{K}^*=P_{K,L(K)}$ with  $L(K)$ being the maximizer that satisfies 
\#\label{equ:minimizer_L_given_K}
L(K)=(-R^v+D^\top P_{K}^*D)^{-1}D^\top P_{K}^*(A-BK). 
\#
Notice that \eqref{equ:P_KL_Sol_max_L} is in fact a Riccati equation  identical to   \eqref{equ:discret_riccati}, with $R^v$ replaced by $\gamma^2 I$, $R^u$ replaced by $R$, and $Q$ replaced by $C^\top C$. 
Hence, the problem \eqref{equ:nonconvex_concave_def} 
 is equivalent to minimizing $\cC(K,L(K))=\tr(P_K^*\Sigma_0)$, subject to \eqref{equ:P_KL_Sol_max_L}, which coincides with the   mixed design problem \eqref{equ:def_mixed_formulation}, where  $\cJ(K)$ takes the form of \eqref{equ:form_J1} with $DD^\top$ replaced by $\Sigma_0$.  
Furthermore, the minimizer of   the RHS on \eqref{equ:P_KL_Sol_max_L} is 
\#\label{equ:Ks_game_NE}
K^*=\big(R^u+B^\top \tP_K^* B\big)^{-1}B^\top \tP_K^*A,
\#
which equals the   global optimum 
%under the controllability condition  in Corollary \ref{coro:opt_control_form_discrete}) \issue{(TO BE CHANGED!!)} 
for the mixed design problems. 

%In the $\cH$-infinity control problem, for example,  we can choose  $R^v=\bar{\gamma}^2\Ib$ with $\bar{\gamma}$ being the upper bound on the desired $\ell_2$ gain disturbance attenuation. 
%In addition, we have the following inequality by definition
%\#\label{equ:weak_duality}
%\inf_{\{u_t\}}\sup_{\{v_t\}}~~\EE_{x_0\sim\cD}\bigg[\sum_{t=0}^\infty c_t(x_t,u_t,v_t)\bigg]\geq \sup_{\{v_t\}}\inf_{\{u_t\}}~~\EE_{x_0\sim\cD}\bigg[\sum_{t=0}^\infty c_t(x_t,u_t,v_t)\bigg],
%\#
%where we refer the left and right-hand  sides of \eqref{equ:weak_duality} as the \emph{upper-value} and \emph{lower-value} of the game \eqref{equ:minimax_def}, respectively. If the upper and  lower values are equal, we refer to it as the \emph{value} of the game. 
%If the solution to \eqref{equ:minimax_def} exists and the infimum and supremum in  \eqref{equ:minimax_def} can interchange, we refer to the solution value in \eqref{equ:minimax_def} as the \emph{value} of the game.

\subsection{Model-Free Algorithms}\label{subsec:model_free}

The connection above provides one  angle  to develop \emph{model-free} RL algorithms for solving mixed design problems. Indeed,    the natural PG in    \eqref{eq:exact_npg}  cannot be sampled using  trajectory data, due to the form of the matrix $\Delta_K$ in \eqref{equ:def_Delta}.
 Fortunately, solution  of the game \eqref{equ:nonconvex_concave_def} can be obtained by model-free PG-based methods, see \cite{zhang2019policy}, and the more recent work \cite{bu2019global}, which, by \eqref{equ:Ks_game_NE} and Proposition    \ref{coro:opt_control_form_discrete},   is equivalent to the 
% stationary point, and also  the 
 global optimum of mixed design problems. 
%  under certain conditions. 
Hence, model-free algorithms that solve the LQ game  \eqref{equ:nonconvex_concave_def} can also be used to solve the mixed design problem  \eqref{equ:def_mixed_formulation}.

Specifically, 
\begin{comment}
the PG of $\cC(K,L)$ w.r.t. $K$ and $L$ can be written as   \citep[Lemma 3.2]{zhang2019policy}
\#
\nabla_K \cC(K,L)=&~2[(R^u+B^\top P_{K,L}B)K-B^\top P_{K,L}(A-DL)]\Sigma_{K,L}\label{equ:policy_grad_K_form}\\
\nabla_L \cC(K,L)=&~2[(-R^v+D^\top P_{K,L}D)L-D^\top P_{K,L}(A-BK)]\Sigma_{K,L},\label{equ:policy_grad_L_form}
\#
where $
\Sigma_{K,L}:=\EE_{x_0\sim\cD}\sum_{t=0}^\infty x_tx_t^\top$ is the correlation matrix. By 
\end{comment}
by Lemma 3.3 of \cite{zhang2019policy}, under certain conditions,   the stationary point $(K,L)$ where $\nabla_K \cC(K,L)=0$ and $\nabla_L \cC(K,L)=0$ coincides with the NE.  
\begin{comment}
Indeed, $\nabla_L \cC(K,L)=0$ with the invertibility of $\Sigma_{K,L}$ yields the maximizer $L(K)$ in \eqref{equ:minimizer_L_given_K}, which combined with $\nabla_K \cC(K,L(K))=0$ in \eqref{equ:policy_grad_K_form} gives the solution \eqref{equ:Ks_game_NE}. 
\end{comment}
Therefore, it is straightforward to develop  PG-based updates  to find the minimizer $L(K)$ for some $K$, and then perform PG-based algorithms to update $K$, which can both be implemented in a model-free fashion, using zeroth-order methods \citep{nesterov2017random,fazel2018global}.  Note that the PO methods in \cite{zhang2019policy} are essentially  also based on this idea, but with the order of $\max$ and $\min$ interchanged, and require a projection step for updating $L$. 
More recently, \cite{bu2019global} has developed double-loop PO methods that remove this  projection. 
Two examples of PG-based methods can be written as  
\#
{\rm \textbf{Policy Gradient:}}~~~~\qquad L'&=L+\alpha \nabla_L\cC(K,L)\approx L+\alpha \widehat{\nabla}_L\cC(K,L)\label{equ:model_free_game_PG_L}\\
  K'&=K-\eta \nabla_K\cC(K,{L(K)})\approx K-\eta \widehat{\nabla}_K\cC(K,\widehat{L(K)})\label{equ:model_free_game_PG_K}\\
 {\rm \textbf{Natural PG:}}~~~~\qquad\qquad L'&=L+\alpha \nabla_L\cC(K,L)\Sigma_{K,L}^{-1}\approx L+\alpha \widehat{\nabla}_L\cC(K,L)\widehat{\Sigma}_{K,L}^{-1} &\label{equ:model_free_game_NPG_L}\\
 K'&=K-\eta \nabla_K\cC(K,{L(K)})\Sigma_{K,{L(K)}}^{-1}\approx K-\eta \widehat{\nabla}_K\cC(K,\widehat{L(K)}) \widehat{\Sigma}_{K,\widehat{L(K)}}^{-1}\label{equ:model_free_game_NPG_K}
\# 
where $\alpha,\eta>0$ are stepsizes, $
\Sigma_{K,L}:=\EE_{x_0\sim\cD}\sum_{t=0}^\infty x_tx_t^\top$ with $u_t=-Kx_t$ and $v_t=-Lx_t$ is the correlation matrix under control pair $(K,L)$, $\widehat{L(K)}$ is the estimate of $L(K)$ obtained by iterating either  \eqref{equ:model_free_game_PG_L} or \eqref{equ:model_free_game_NPG_L},  $\widehat{\nabla}_L\cC(K,L)$, $\widehat{\nabla}_K\cC(K,L)$, and $\widehat{\Sigma}_{K,L}$ are the estimates of $\nabla_L\cC(K,L)$, $\nabla_K\cC(K,L)$, and $\Sigma_{K,L}$ using sampled data, respectively. 
 
Note that the simulator for the game \eqref{equ:nonconvex_concave_def} that generates the data samples can be obtained by the simulator for the mixed design problem \eqref{equ:def_mixed_formulation}, with the disturbance $w_t$ modeled as $w_t=-Lx_t$.   This way, the updates of $L$ in \eqref{equ:model_free_game_PG_L} and \eqref{equ:model_free_game_NPG_L} can be understood as improving the disturbance to  find the \emph{worst-case} one, which manifests the idea of $\cH_\infty$ norm.   
Also, \cite{bu2019global} has verified that in zero-sum LQ games, given a fixed $K$, such an update of $L$ converges to the best-response disturbance $L(K)$ given in \eqref{equ:minimizer_L_given_K}. This justifies the feasibility of our algorithms \eqref{equ:model_free_game_PG_L}-\eqref{equ:model_free_game_NPG_K}. 

In addition, by the form of the  policy gradients for  the game, see Lemma 3.2 in \cite{zhang2019policy},  the exact natural PG update on the LHS of  \eqref{equ:model_free_game_NPG_K} is identical to that for mixed design problems in \eqref{eq:exact_npg}. In other words, the natural PG update   \eqref{eq:exact_npg} can be implemented in a model-free way by virtue of that outer-loop  update of $K$ in a zero-sum LQ game. As shown in  \cite{bu2019global}, such an outer-loop update over $K$  converges to the NE of the game. 
Details of the model-free algorithms are deferred to Algorithms \ref{alg:est_grad_corre}, \ref{alg:model_free_inner_NPG}, and \ref{alg:model_free_outer_NPG} in \S\ref{sec:pseudo_code}.

\section{Simulations}\label{sec:simulations}

In this section, we present some simulation results to corroborate our theory. We mainly focus on the convergence properties for the discrete-time settings. We have also included extensive numerical comparisons with existing packages for solving $\cH_2/\cH_\infty$ mixed design, which can only handle the continuous-time settings \citep{arzelier2011h2,mahmoud1996h}. The problem setup, PO algorithms, and their analyses, for the continuous-time settings can be found in \S\ref{sec:aux_cont_res}. We  show that our PO methods outperform these existing packages in many aspects, though the latter ones can handle more general setups.   

\subsection{Implicit Regularization \& Global/Local Convergence}

\begin{figure*}[!t]
	\centering
	\includegraphics[width = 1.0\textwidth]{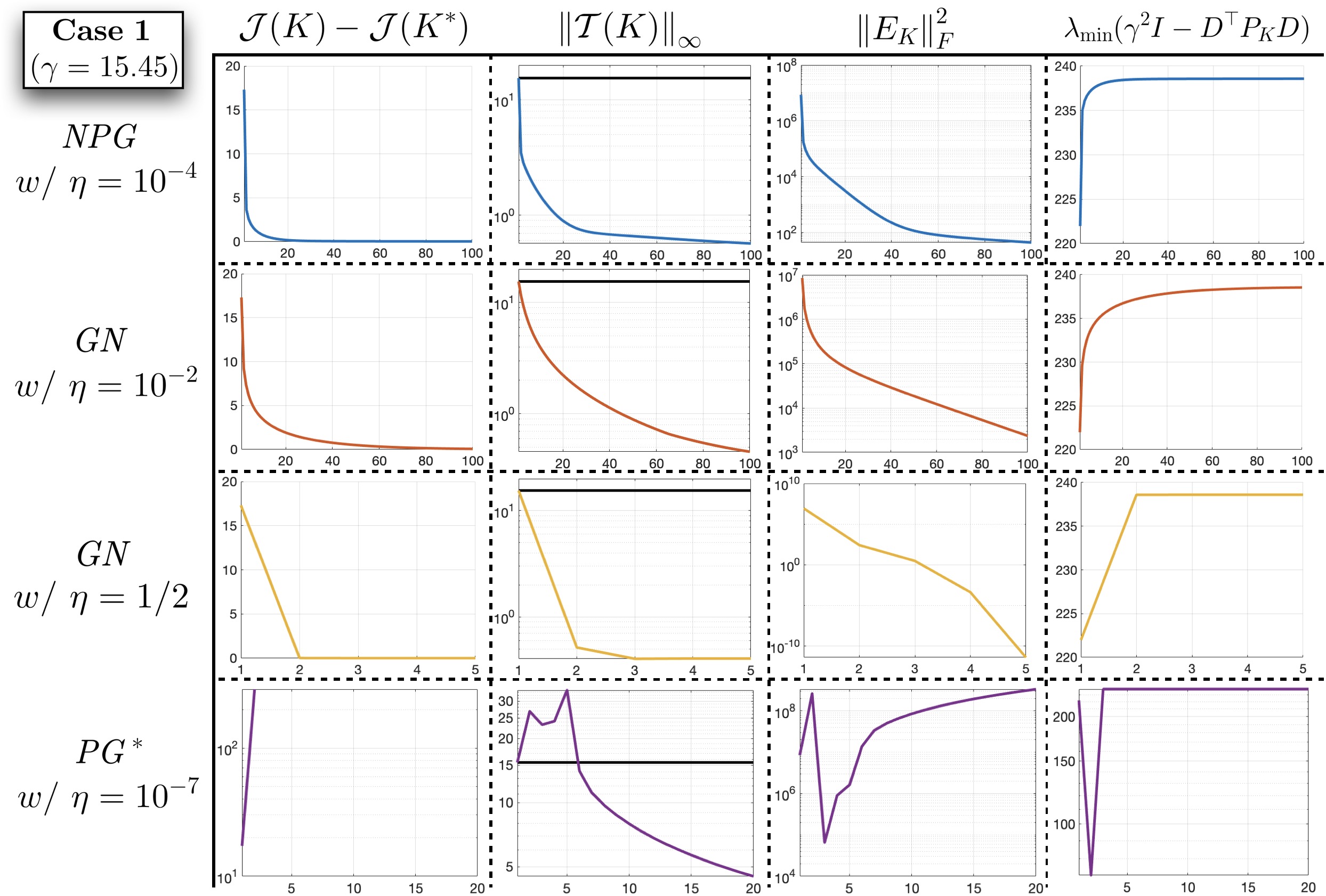}
	\caption{Convergence of the gradient norm square $\|E_{K}\|_F^2$ and the objective $\{\cJ(K)\}$, behaviors of the $\cH_\infty$-norm $\|\cT(K)\|_\infty$,  and the smallest eigenvalue of $\gamma^2 I-D^\top P_K D$, for PG, NPG, and Gauss-Newtons with stepsizes $1\times 10^{-7}$, $1\times 10^{-4}$, $0.01$, and $0.5$, respectively.}
	\label{fig:conv_main_result_Case_2}
\end{figure*}

\begin{figure}[!t]
\centering
\includegraphics[width = 1.0\textwidth]{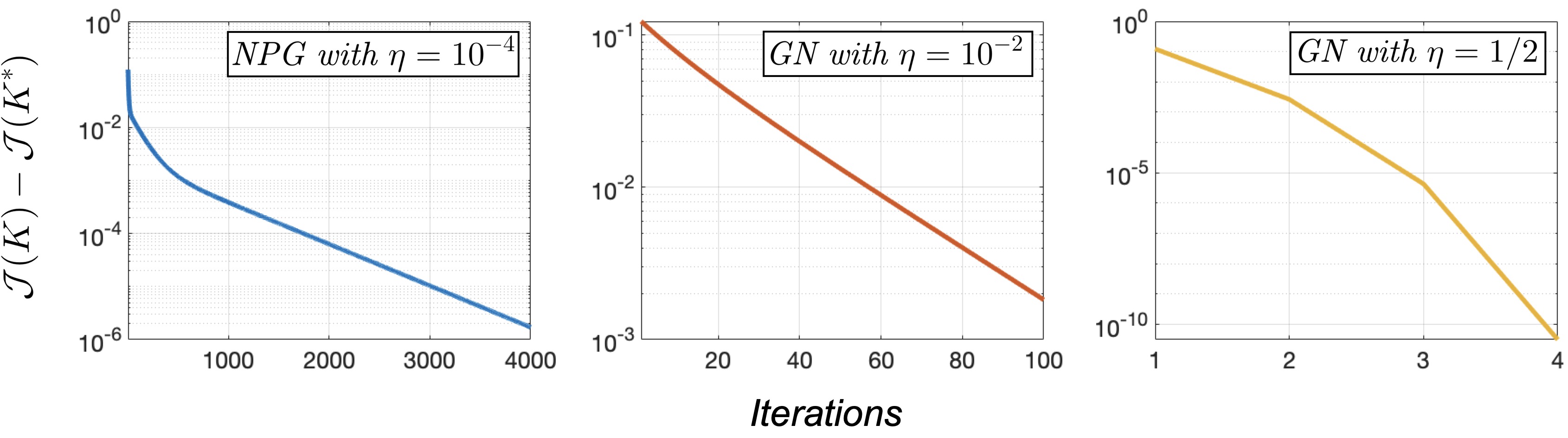}
\caption{Linear, linear and super-linear convergence rates for the NPG update with $\eta = 10^{-4}$, the GN update with $\eta = 10^{-2}$, and the GN update with $\eta = 0.5$, respectively.}
\label{fig:local}	
\end{figure}

We first consider the following example, denoted by \textbf{Case 1}, whose parameters are:
\$
  &A=\left[\begin{matrix}
  	1 & 0 & -10\\
  	-1 & 1 & 0
  	\\
  	0 & 0 & 1
  \end{matrix}
  \right],\quad 
  B=\left[\begin{matrix}
1 & -10 & 0\\
0 & 1 & 0\\
-1 & 0 & 1
\end{matrix}
\right],\quad C^\top C=\left[\begin{matrix}
2 & -1 & 0\\
-1 & 2 & -1\\
0 & -1 & 2
\end{matrix}
\right],\quad E^\top E=\left[\begin{matrix}
5 & -3 & 0\\
-3 & 5 & -2\\
0 & -2 & 5
\end{matrix}
\right],
 \$
and $DD^\top= I$. Note that all  matrices $C^\top C$, $E^\top E$, and $DD^\top$ are positive definite. 
This $DD^\top$ satisfies both the controllability assumption in Proposition  \ref{coro:opt_control_form_discrete}, and the assumption $DD^\top>0$ in Theorem \ref{theorem:local_exact_conv}. We first randomly generate $K_0$ with each element uniformly generated from $[-0.25,0.25]$, such that $K_0$ is stabilizing (i.e., $\rho(A-BK_0)<1$). Then, the  $\cH_\infty$-norm $\|\cT(K_0)\|_\infty$ under $K_0$ is calculated. The value of $\gamma$ is then chosen as $1.00001\cdot \|\cT(K_0)\|_\infty$, making sure that $K_0\in\cK$. We then perform all three algorithms \eqref{eq:exact_pg}-\eqref{eq:exact_gn} in \S\ref{sec:PO_alg} on the above problem setting, and illustrate the convergence of both the gradient norm square $\|E_{K}\|_F^2$, and the objective difference $\{\cJ(K_n)-\cJ(K^*)\}$. The stepsizes $\eta$ for the PG, NPG, and Gauss-Newton updates  are $1\times 10^{-7}$, $1\times 10^{-4}$, and $0.01$, respectively. We have also used $\eta=1/2$ for the Gauss-Newton update. 
 
% \textcolor{red}{THE FOLLOWING TEXTS IN CYAN ARE UNCHANGED.}
%\textcolor{cyan}
{As shown in Figure \ref{fig:conv_main_result_Case_2}, for both performance criteria, all four update rules converge successfully. At the beginning of the iterations, NPG and Gauss-Newton with $\eta=10^{-2}$ indeed yield sublinear convergence of the gradient norm square; as the iterations  proceed, linear convergence rate appears. Moreover, for Gauss-Newton with $\eta=1/2$, super-linear convergence rate has also been observed. These observations corroborate our theory in both Theorems \ref{theorem:global_exact_conv} and \ref{theorem:local_exact_conv}. 

Moreover, we have also illustrated the behaviors of the $\cH_\infty$-norm $\|\cT(K)\|_\infty$ and the smallest eigenvalue of $\gamma^2 I-D^\top P_K D$, denoted by $\lambda_{\min}(\gamma^2 I-D^\top P_K D)$, in Figure \ref{fig:conv_main_result_Case_2}. It is seen that along the iterations, with the stepsizes that guarantee convergence, the $\cH_\infty$-norm is below the bound $\gamma=15.45$ for all four update rules, which validates the implicit regularization result we have in Theorem \ref{thm:stability_update}. As another evidence for implicit regularization in accordance to Lemma \ref{lemma:discrete_bounded_real_lemma}, it is shown  that the matrix $\gamma^2 I-D^\top P_K D>0$ along iterations. 
%For \textbf{Case 2}, the stepsizes for PG, NPG, and Gauss-Newton are $1\times 10^{-8}$, $5\times 10^{-4}$, and $0.1$, respectively. We have also tested Gauss-Newton with $\eta=1/2$. 
%Note that the stepsizes for \textbf{Case 2} are all smaller than those for \textbf{Case 1}. 

Notice that the initialization $K_0$ is very close to the boundary, as $\gamma=1.00001\cdot \|\cT(K_0)\|_\infty$. It is shown that the vanilla PG update, even though with infinitesimal stepsize ($10^{-7}$), still violates the $\cH_\infty$-norm constraint and fails to converge. We do observe, however, in several other numerical examples, that vanilla PG update converges successfully.  It is thus not clear whether there exists a \emph{constant} stepsize choice for the \emph{global convergence} and \emph{robustness perservation} of the vanilla PG iterates, which is left for future investigation.  

%the  stepsize for PG has been chosen infinitesimal ($10^{-8}$), compared to other stepsizes, in order to prevent the iterates from going beyond $\cK$. As shown in Figure \ref{fig:conv_main_result_Case_2}, all three update rules converge to the global optimal solution, although the convergence of PG is not monotone. We note that Gauss-Newton with $\eta=1/2$ still enjoys the super-linear rate, and is not plotted in Figure \ref{fig:conv_main_result_Case_2}, so that other curves (with much slower convergence rates) can be seen clearly. Compared to Figure \ref{fig:conv_main_result_Case_1}, the sublinear convergence rate is more obvious in Figure \ref{fig:conv_main_result_Case_2}. However, although such an infinitesimal stepsize enables convergence for PG, as shown in Figure \ref{fig:conv_aux_result_Case_2} (a), the $\cH_\infty$-norm constraint is not guaranteed to be satisfied during iterations, namely, the property of implicit regularization is not ensured for PG. Deciding whether there exists a constant stepsize choice for PG that preserves the $\cH_\infty$-norm is still open, and is left as our future work. }

To further verify our local convergence rates, we have also initialized our algorithms by randomly searching over $\RR^{3\times 3}$ to find a $K_0\in\cK$ such that $\|K_0-K^*\|_F \leq 0.3$. The convergence patterns are presented in Figure \ref{fig:local}, which clearly demonstrates the faster local rates.

\subsection{Escaping  Suboptimal   Stationary Points}

\begin{figure}[!t]
\centering
\includegraphics[width = 1.0\textwidth]{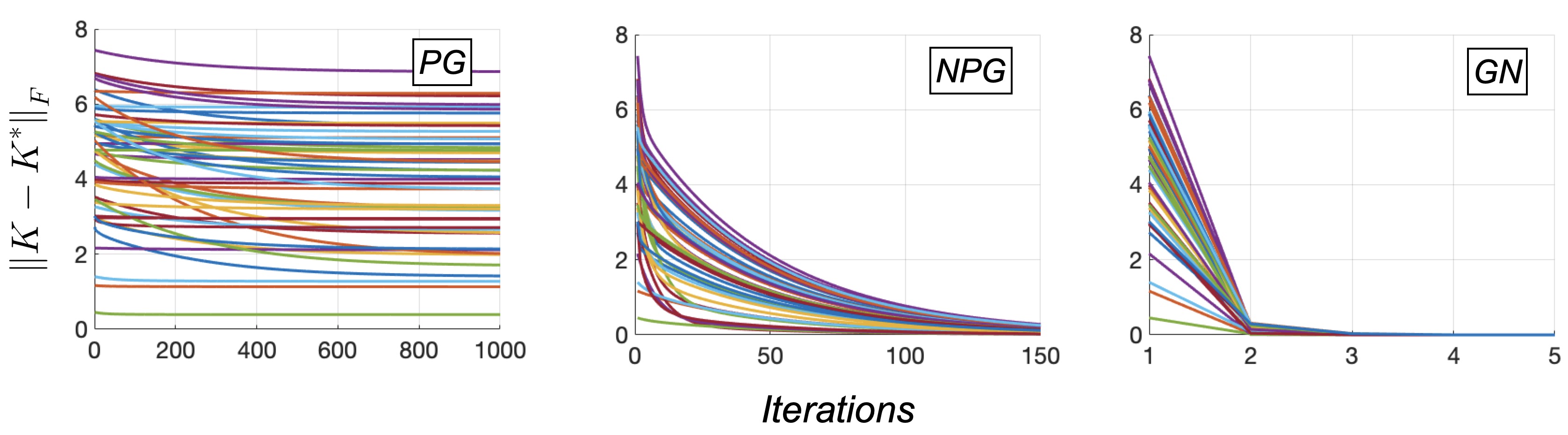}
\caption{Convergence of the NPG and GN updates to $K^*$ , when there exists an infinite number of stationary points (i.e. $K$ s.t. $\nabla\cJ(K) = 0$). The stepsizes for the PG, NPG, and GN updates are $10^{-3}$, $10^{-2}$, and $0.5$, respectively. }
\label{fig:star}	
\end{figure}

We also investigate the setting where the controllability assumption in Proposition  \ref{coro:opt_control_form_discrete}, and the assumption $DD^\top>0$ in Theorem \ref{theorem:local_exact_conv} do not hold. In this case, there might exist multiple stationary points, many of which are suboptimal. 
%Validating Theorem \ref{theorem:global_exact_conv}, we show next that the PNG and GN  updates can avoid these suboptimal stationary points, and converge to the global optimum successfully. In start contrast, the vanilla PG can easily get stuck at these suboptimal stationary points, depending on its initialization. 

Specifically, consider the following problem  parameter, which is denoted by \textbf{Case 2}:
\begin{align*}
	A = \begin{bmatrix} 2 & 0\\ 0 & 0\end{bmatrix}, \quad
	B = D = \begin{bmatrix} 1 & 0\\ 0 & 0\end{bmatrix}, \quad 
	C = \begin{bmatrix} 0 & 0\\ 0 & 0 \\ 1 & 2\end{bmatrix}, \quad 
	E =\begin{bmatrix} 1 & 0\\ 0 & 1 \\ 0 & 0\end{bmatrix}, \quad
	\begin{bmatrix} C^{\top} \\ E^{\top} \end{bmatrix}\cdot\begin{bmatrix} C & E \end{bmatrix} =  \begin{bmatrix} Q & \bm{0}_{2\times 2} \\ \bm{0}_{2\times 2} & R\end{bmatrix}.
\end{align*}
Note that the system is open-loop unstable, as $\rho(A)>1$. 
We choose $\gamma = 10$. Then, one can verify that the above mixed design problem admits an optimum  $K^* =(R+B^\top \tP_{K^*} B)^{-1}B^\top \tP_{K^*} A= \begin{bmatrix}1.6186 & 0 \\ 0 & 0\end{bmatrix}$. Moreover, there exist an infinite number of stationary points, which share the form of $K=\begin{bmatrix}1.6186 & 0 \\ 0 & c\end{bmatrix}$ for any  $c\in \RR$, and make $\nabla\cJ(K) = 0$. Despite this, following Theorem \ref{theorem:global_exact_conv}, the NPG and GN updates provably converge to $K^*$, which automatically escape other suboptimal stationary points. We numerically evaluate the convergence to $K^*$ in Figure \ref{fig:star}, for both the NPG and GN updates. For each of the 50 trails, we fix a random seed and initialize the algorithm by randomly searching a $K_0 \in \RR^{2\times 2}$ that satisfies $K_0 \in \cK$. It can be observed that two PG methods converge to $K^*$ in all trails. In start contrast, the vanilla PG update can easily get stuck at these suboptimal stationary points, depending on its initialization.

This can be understood as another  meaning of \emph{implicit regularization}: for this specific nonconvex problem, two certain search directions automatically bias the iterates to avoid bad local minima, and always towards  the global optimal one.

\subsection{Comparison with Existing $\cH_2/\cH_\infty$ Control Solvers}

To better justify the superiority of our PO methods, we  numerically compare their convergence properties with other numerical $\cH_2/\cH_\infty$ mixed control design packages, including the HIFOO method \citep{arzelier2011h2} and the \texttt{h2hinfsyn} method implemented in Matlab \citep{mahmoud1996h}, which is based on linear matrix inequalities.  Note that these full-fledged packages can only handle continuous-time settings. To make the comparison fair, we also implement our PO methods for the continuous-time settings as studied  in \S\ref{sec:aux_cont_res}. 

We mainly compare them in terms of: 1) the $\cH_2$ and $\cH_\infty$ norms of the controller that the algorithms converge to; 2) the \emph{computation complexity}  (runtime); 3) the \emph{$\cH_\infty$-norm constraint violation}. We will validate  that our PO methods indeed outperform HIFOO in these aspects. The larger-scale the dynamical system is, the more pronounced our advantages are, with provable robustness preserving guarantees.

\vspace{7pt}
\noindent{\textbf{Simulation Setup.}}~~ All the experiments are executed on a MacBook Pro 2019 with a 2.8 GHz Quad-Core Intel Core i7 processor with Matlab R2020b. The device also has a 16GB 2133MHz LPDDR3 memory and an Intel Iris Plus 655 Graphics. To make the runtime comparison fair (or even in favor of HIFOO), we set the following parameters of HIFOO (version 3.501 with Hanso version 2.01): \texttt{options.fast $=1$} for using a \emph{fast} optimization method; \texttt{options.prtlevel $=0$} to suppress unnecessary printing statements. Other parameters of HIFOO are set to be default. {For Matlab's \texttt{h2hinfsyn} function, \texttt{tol} has been set to $10^{-6}$. }For our PO methods, we set the stepsizes of the NPG and GN updates to be $1/(2\|R\|)$ and $1/2$, respectively, and solve the following mixed design problems \textbf{Cases 3-6}  {until $\cJ(K) - \cJ(K^*) < 10^{-6}$}. 
% as follows, . 
%to find a $K \in \cK$ such that $\cJ(K) - \cJ(K^*) \leq 1e^{-6}$. 

%of the two proposed policy gradient methods with that of the open-sourced package HIFOO [Arzelier et al., 2011], for solving the mixed $\cH_2/\cH_{\infty}$ design control problems. Due to that HIFOO supports only the continuous-time setting, all the experiments are carried out in the continuous-time, while our proposed policy gradient methods easily extend to discrete-time setting.  

%\section{A Simple Example} \label{sec:simple}  
\vspace{7pt}
\noindent\textbf{A Simple Example.}~~
We first consider a simple setting, denoted by \textbf{Case 3}. The time-invariant system dynamics are characterized by $\dot{x} = Ax+Bu+Dw$, $z = Cx+Eu$, where
\begin{align*}
	&A = \begin{bmatrix} 1 & 0 & -10\\ -1 &1 &0\\ 0 &0 &1\end{bmatrix}, \quad
	B = \begin{bmatrix} 1 & -10 & 0\\ 0 & 1 &0\\ -1 &0 &1\end{bmatrix}, \quad 
	C = \begin{bmatrix} 0 & 0 & 0\\ 0 &0 &0\\ 0 &0 &0 \\ 1 & 0 &2\end{bmatrix}, \quad \\
	&\qquad\quad~~ D = \begin{bmatrix} 0.5 & 0 & 0\\ 0 & 0.5 &0\\ 0 &0 &0.5\end{bmatrix}, \quad
	E = \begin{bmatrix} 1 & 0 & 0\\ 0 &1 &0\\ 0 &0 &1 \\ 0 & 0 & 0\end{bmatrix}.
\end{align*}
One can verify that $E^{\top}[C \ E] = [{0}, \ {I}]$, satisfying our assumption. Then, we solve
\$
	\min_{K}~ \cJ(K) = \tr(P_KDD^{\top}), \qquad s.t. \quad K \in \cK. 
\$
%where we use Matlab's \texttt{icare} function to compute $P_K$ for a given $K \in\cK$. 
The simulations are run over 100 trails with the random seed being fixed at $1, \cdots, 100$, respectively.  The optimal (minimax) disturbance attenuation level of \textbf{Case 3} is $\gamma^* \approx 0.53$, as computed/verified both by Matlab's \texttt{hinfsyn} function and HIFOO's \texttt{hifoo(P, 'h')} function. We summarize the following interesting findings based on the  {comparisons between HIFOO and our PO methods} in Table \ref{table1}: 
\begin{enumerate}
	\item \textit{(PO methods achieve  lower $\cH_2$ and $\cH_\infty$ norms, faster)}. When $\gamma = 5$, both HIFOO and PO methods preserve the $\|\cT(K)\|_{\infty} < \gamma$ constraint during the optimization process, and output some convergent gain matrix $K$. However, HIFOO converges to a \emph{local} minimum  due to that it is directly optimizing the $\cH_2$ norm of the closed-loop transfer function, and the landscape for such an optimization problem is unclear. In contrast, our PO  methods, by definition,  optimize an \emph{upper bound} of the $\cH_2$-norm, and as proved theoretically, converge  to the \emph{global} minimum of the problem. The solution yields lower values for  $\|\cT(K)\|_{2}$ and $\|\cT(K)\|_{\infty}$ compared to those of the HIFOO  output. Indeed, this shows that minimizing the $\cH_2$-norm upper bound as in \cite{bernstein1989lqg,mustafa1991lqg} and our paper can obtain reasonably  good solutions. 	
	 More importantly, the average runtimes of our PG methods average over 100 trails are around $5.93\times$ faster than HIFOO. 
	\item \textit{(PO methods always preserve $\cH_\infty$-norm constraint)}. When $\gamma = 3$ (which is still far from $\gamma^* \approx 0.53$), our   methods consistently preserve the $\|\cT(K)\|_{\infty} < \gamma$ constraint during the optimization process, validating our theoretical findings. However, the  HIFOO iterates have reached $\|\cT(K)\|_{\infty} = 3.4353>3$ along the way.  
%	\item In safety-critical control systems with model uncertainty, such failure to preserve robustness can cause catastrophic effects, e.g., destabilizing the system in face of disturbances. 
	Further, PO methods also find solutions that have  lower  $\|\cT(K)\|_{2}$ and $\|\cT(K)\|_{\infty}$ norms, with a $5.85\times$ faster runtime.
	\item \textit{(Smaller $\gamma$ leads to worse performance for HIFOO)}. When $\gamma = 1$, our PO methods still preserve the $\|\cT(K)\|_{\infty} < \gamma$ constraint during the optimization process. 
%	\item , though requiring more time to generate a feasible initial point.  
	Our PO methods also converge to the optimum point with both small $\cH_2$ and $\cH_\infty$ norms, while HIFOO's performance is degraded much more. We remark that in this case, the runtime of our methods is longer than that of HIFOO, however, the time was mostly consumed in finding the initialization that is robustly stable, by randomly generating $K_0$ in a certain region. This becomes harder to find for a smaller $\gamma$. Our simple  initialization method takes more than 90\% of the time, which might be less efficient than the advanced initialization technique used in HIFOO.     
\end{enumerate}
\begin{table}[t]
\small
\centering
\begin{tabular}{|c|c|c|c|c|c|c|}
\hline
\textbf{Case 3 w/ {$\gamma = 5$}} & \textit{HIFOO} & \textit{NPG} & \textit{GN} & \textit{$\|\cT(K)\|_2$ Diff.} & \textit{$\|\cT(K)\|_{\infty}$ Diff.} & \textit{\textcolor{red}{Speedup}}\\ \hline
\textit{Runtime} & $0.4569s$        & $0.0768s$      & $0.0779s$ & / & / & $\textcolor{red}{\sim 5.93\times}$    \\ \hline
\textit{$\|\cT(K)\|_2$ reached} & $1.1149$        & $0.9811$      & $0.9811$ &  $0.1338$ & / &  / \\ \hline
\textit{$\|\cT(K)\|_{\infty}$ reached} & $1.3372$        & $1.0124$      & $1.0124$ & / & $ 0.3248$ &  /  \\ \hline
\textit{$\|\cT(K)\|_{\infty}$-constraint violated} & $0$        & $0$      & $0$ & / & 0 &  / \\ \hline
\end{tabular}

\begin{tabular}{|c|c|c|c|c|c|c|}
\hline
\textbf{Case 3 w/ {$\gamma = 3$}} & \textit{HIFOO} & \textit{NPG} & \textit{GN} & \textit{$\|\cT(K)\|_2$ Diff.} & \textit{$\|\cT(K)\|_{\infty}$ Diff.} & \textit{\textcolor{red}{Speedup}}\\ \hline
\textit{Runtime} & $0.6727s$        & $0.1198s$      & $0.1130s$ & / & / & $\textcolor{red}{\sim 5.85\times}$    \\ \hline
\textit{$\|\cT(K)\|_2$ reached} & $1.4310$        & $0.9813$      & $0.9813$ &  $0.4497$ & / &  / \\ \hline
\textit{$\|\cT(K)\|_{\infty}$ reached} & $1.8940$        & $0.9947$      & $0.9947$ & / & $0.8994$ &  /  \\ \hline
\textit{$\|\cT(K)\|_{\infty}$-constraint violated} & \textcolor{red}{$0.4353$}        & \textcolor{red}{$0$}      & \textcolor{red}{$0$} & / & \textcolor{red}{$0.4353$} &  / \\ \hline
\end{tabular}

\begin{tabular}{|c|c|c|c|c|c|c|}
\hline
\textbf{Case 3  w/ {$\gamma = 1$}} & \textit{HIFOO} & \textit{NPG} & \textit{GN} & \textit{$\|\cT(K)\|_2$ Diff.} & \textit{$\|\cT(K)\|_{\infty}$ Diff.} & \textit{\textcolor{red}{Speedup}}\\ \hline
\textit{Runtime} & $1.7885s$        & $4.7219s$      & $4.8845s$ & / & / & $\textcolor{red}{\sim 0.37\times}$    \\ \hline
\textit{$\|\cT(K)\|_2$ reached} & $2.7237$        & $1.0038$      & $1.0038$ &  $1.7199$ & / &  / \\ \hline
\textit{$\|\cT(K)\|_{\infty}$ reached} & $2.8695$        & $0.8143$      & $0.8143$ & / & $2.0552$ &  /  \\ \hline
\textit{$\|\cT(K)\|_{\infty}$-constraint violated} & \textcolor{red}{$1.8695$}        & \textcolor{red}{$0$}      & \textcolor{red}{$0$} & / & \textcolor{red}{$1.8695$} &  / \\ \hline
\end{tabular}
\normalsize

%\begin{tabular}{|c|c|c|c|}
%\hline
%\textbf{Case 1 w/ random $K_0$} & \textit{HIFOO} & \textit{NPG} & \textit{GN} \\ \hline
%\textit{Average Runtime}       & 0.8925s        & 0.0091s      & 0.0090s     \\ \hline
%\end{tabular} 
\caption{Comparison average over 100 trails between HIFOO and two proposed PO methods, for solving the mixed design \textbf{Case 3}. All three methods initialize $K_0$ on their own. For two PO methods, a $K_0 \in \cK$ is found by randomly search over $[-1,1]^{3\times 3}$, which takes up $>90\%$ of the total runtime. In contrast, HIFOO uses an in-house method to find initial points. \emph{$\|\cT(K)\|_2$ Diff.} and \emph{$\|\cT(K)\|_\infty$ Diff.} represent the difference of $\cH_2$ and $\cH_\infty$ norms achieved by HIFOO and our PO methods (which are identical for NPG and GN, as our methods have guarantees for finding the global optimum of our mixed design problem).} 
\label{table1}
\end{table}

Regarding the comparison with the \texttt{h2hinfsyn} function, we 
%@article{mahmoud1996h,
%  title={$H_{\infty}$ design with pole placement constraints: an LMI approach},
%  author={Mahmoud, Chilali and Pascal, Gahinet},
%  journal={IEEE Transactions on Automatic Control},
%  volume={41},
%  number={3},
%  pages={358--367},
%  year={1996}
%}
%\textcolor{brown}{
present the results in Table \ref{matlab_comparison}. 
%Note that  the computation time of our PO methods do not include the time for finding an initial point. 
It is shown that in this simple $3\times 3$ problem, \texttt{h2hinfsyn} and our PG methods converge to nearly the same solution, while our computation time is around $4\times$ faster. Next, we will show that $\texttt{h2hinfsyn}$ scales poorly with respect to the problem dimensions, while our PO methods converge efficiently in high-dimensional problems.   }

\begin{table}[t]
\small
\centering
\begin{tabular}{|c|c|c|c|c|c|c|}
\hline
\textbf{Case 3 w/ {$\gamma = 5$}} & \textit{Matlab} & \textit{NPG} & \textit{GN}  & \textit{\textcolor{red}{Speedup}}\\ \hline
\textit{Runtime} & $0.0337s$        & $0.0088s$      & $0.0086s$ & $\textcolor{red}{\sim 3.87\times}$    \\ \hline
\textit{$\|\cT(K)\|_2$ reached} & $0.9811$        & $0.9811$      & $0.9811$ &  n.a. \\ \hline
\textit{$\|\cT(K)\|_{\infty}$ reached} & $1.0126$        & $1.0124$      & $1.0124$ &  n.a.  \\ \hline
\end{tabular}

\begin{tabular}{|c|c|c|c|c|c|c|}
\hline
\textbf{Case 3 w/ {$\gamma = 3$}} & \textit{Matlab} & \textit{NPG} & \textit{GN} & \textit{\textcolor{red}{Speedup}}\\ \hline
\textit{Runtime} & $0.0334s$        & $0.0077s$      & $0.0081s$ & $\textcolor{red}{\sim 4.23\times}$    \\ \hline
\textit{$\|\cT(K)\|_2$ reached} & $0.9813$        & $0.9813$      & $0.9813$ &  n.a. \\ \hline
\textit{$\|\cT(K)\|_{\infty}$ reached} & $0.9949$        & $0.9947$      & $0.9947$ &  n.a.  \\ \hline
\end{tabular}

\begin{tabular}{|c|c|c|c|c|c|c|}
\hline
\textbf{Case 3 w/ {$\gamma = 1$}} & \textit{Matlab} & \textit{NPG} & \textit{GN}  & \textit{\textcolor{red}{Speedup}}\\ \hline
\textit{Runtime} & $0.0283s$        & $0.0066s$      & $0.0063s$ & $\textcolor{red}{\sim 4.35\times}$    \\ \hline
\textit{$\|\cT(K)\|_2$ reached} & $1.0037$        & $1.0038$      & $1.0038$ &  n.a. \\ \hline
\textit{$\|\cT(K)\|_{\infty}$ reached} & $0.8145$        & $0.8143$      & $0.8143$ &  n.a.  \\ \hline
\end{tabular}
\normalsize
%\begin{tabular}{|c|c|c|c|}
%\hline
%\textbf{Case 1 w/ random $K_0$} & \textit{HIFOO} & \textit{NPG} & \textit{GN} \\ \hline
%\textit{Average Runtime}       & 0.8925s        & 0.0091s      & 0.0090s     \\ \hline
%\end{tabular}
\caption{{Comparison average over 100 trails between Matlab's \texttt{h2hinfsyn} and two proposed PO methods, for solving the mixed design \textbf{Case 3}. For the two PO methods, a $K_0 \in \cK$ is found by randomly search over $[-1,1]^{3\times 3}$, and the computation time for finding such an initial point is not taken into account for fair comparison. In contrast, \texttt{h2hinfsyn} implements a LMI-based synthesis procedure \citep{mahmoud1996h}.}}
\label{matlab_comparison}
\end{table}

\vspace{7pt}
\noindent\textbf{More Challenging Cases.}~~
We test some more challenging cases with higher dimensions to further  demonstrate the efficiency of our PO methods. In \textbf{Cases 4-6}, the dimensions of the control gain matrices are $15 \times 15, 60\times 60, 90\times 90$, respectively, corresponding to the number of decision variables being $225$, $3600$, $8100$, respectively.   Problem parameters are too long to enumerate here, and are provided at \href{https://www.dropbox.com/sh/mfrenjttwkidmbv/AACAWnmjL4NWgDa76Atc6DUya?dl=0}{\textbf{here}}, together with all the code and data.   

%In high-dimensional systems, finding a desired initialization point is difficult for both HIFOO and our PG methods. Thus, we designed $A$ to be a Hurwitz matrix and initialize $K_0= \bm{0}$. 
The simulations are run over 10 trails with fixed random seeds. Both HIFOO{, \texttt{h2hinfsyn}, } and our PO methods converge to almost the same control gain matrices in these cases, without constraint violation. This again implies that minimizing the $\cH_2$-norm upper bound (instead of $\cH_2$-norm directly) can usually  achieve quite competitive  solutions.  Notably, our PO methods are around $8\times$, $47 \times$, $295\times$ faster than HIFOO, respectively, in \textbf{Cases 4-6}, as reported in Table \ref{table2}.  {Our PO methods are also much faster than \texttt{h2hinfsyn}, as it can hardly solve \textbf{Cases 5-6} and fails to return a solution even after very long runtime.}. This verifies that our PO algorithms indeed enjoy better scalability, and the higher the dimension is, the more pronounced our advantage is. These observations have justified that our PO methods are not only theoretically sound, but also numerically competitive. 

\begin{table}[!t]
\centering
\begin{tabular}{|c|l|l|l|l|c|}
\hline
\textbf{Average runtime} & \multicolumn{1}{c|}{\textit{HIFOO}} & \multicolumn{1}{c|}{\textit{Matlab}} &\multicolumn{1}{c|}{\textit{NPG}} & \multicolumn{1}{c|}{\textit{GN}} & \textit{\textcolor{red}{Speedup}} \\ \hline
\textit{\textbf{Case 4}}  & \multicolumn{1}{c|}{$0.3742s$}   &  \multicolumn{1}{c|}{$95.2663s$}   & \multicolumn{1}{c|}{$0.0481s$}      & \multicolumn{1}{c|}{$0.0420s$}     & \textcolor{red}{$\sim 8(>2117)\times$}        \\ \hline
\textit{\textbf{Case 5}}              &\multicolumn{1}{c|}{$18.4380s$}             &   \multicolumn{1}{c|}{fail,  $>7200s$}            & $0.3906s$                           & $0.3902s$                          & \textcolor{red}{$\sim 47(>18461)\times$}       \\ \hline
\textit{\textbf{Case 6}}            & \multicolumn{1}{c|}{$241.4416s$}         &    \multicolumn{1}{c|}{fail,  $>14400s$}              & $0.8167s$                           & $0.8103s$                          & \textcolor{red}{$\sim 295(>36922)\times$}      \\ \hline
\end{tabular}
\caption{Average runtime comparison over 10 trails between HIFOO{, Matlab's \texttt{h2hinfsyn} function,} and two proposed PO methods for solving \textbf{Cases 4-6}. The speedup times outside and inside the parenthesis denote the ones of our PO methods compared to HIFOO and Matlab, respectively.}
\label{table2}
\end{table}

\begin{table}[!t]
\centering
\begin{tabular}{|c|l|l|l|c|}
\hline
\textbf{Average $\|\cT(K)\|_2$ reached w/ $K_0 = \bm{0}$} & \multicolumn{1}{c|}{\textit{HIFOO}} & \multicolumn{1}{c|}{\textit{Matlab}} & \multicolumn{1}{c|}{\textit{NPG}} & \multicolumn{1}{c|}{\textit{GN}} \\ \hline
\textit{\textbf{Case 4} w/ $\|\cT(K_0)\|_2 = 2.3979$ and $\gamma = 10$}  & \multicolumn{1}{c|}{$0.4713$}        &\multicolumn{1}{c|}{$0.4713$} & \multicolumn{1}{c|}{$0.4713$}   & \multicolumn{1}{c|}{$0.4713$}    \\ \hline
\textit{\textbf{Case 5} w/ $\|\cT(K_0)\|_2 = 9.2195$ and $\gamma = 15$}           & \multicolumn{1}{c|}{$1.1239$}                            &\multicolumn{1}{c|}{fail} & \multicolumn{1}{c|}{$1.1239$}                           & \multicolumn{1}{c|}{$1.1239$}                              \\ \hline
\textit{\textbf{Case 6} w/ $\|\cT(K_0)\|_2 = 10.9716$ and $\gamma = 20$}           & \multicolumn{1}{c|}{$1.2178$}        &\multicolumn{1}{c|}{fail}                   & \multicolumn{1}{c|}{$1.2178$}                           & \multicolumn{1}{c|}{$1.2178$}                       \\ \hline
\end{tabular}
\caption{Average $\|\cT(K)\|_2$ reached over 10 trails between HIFOO{, Matlab's \texttt{h2hinfsyn} function,} and two proposed PO methods for solving  \textbf{Cases 4-6}.}
\end{table}

\begin{table}[!t]
\centering
\begin{tabular}{|c|l|l|l|l|c|}
\hline
\textbf{Average $\|\cT(K)\|_{\infty}$ reached w/ $K_0 = \bm{0}$} & \multicolumn{1}{c|}{\textit{HIFOO}} &  \multicolumn{1}{c|}{\textit{Matlab}}& \multicolumn{1}{c|}{\textit{NPG}} & \multicolumn{1}{c|}{\textit{GN}} \\ \hline
\textit{\textbf{Case 4} w/ $\|\cT(K_0)\|_{\infty} = 3.3912$ and $\gamma = 10$}  & \multicolumn{1}{c|}{$0.0935$}     &\multicolumn{1}{c|}{$0.0935$}   & \multicolumn{1}{c|}{$0.0935$}      & \multicolumn{1}{c|}{$0.0935$}        \\ \hline
\textit{\textbf{Case 5} w/ $\|\cT(K_0)\|_{\infty} = 13.0384$ and $\gamma =15$}           & \multicolumn{1}{c|}{$0.1376$}     &\multicolumn{1}{c|}{fail}                       & \multicolumn{1}{c|}{$0.1375$}                           & \multicolumn{1}{c|}{$0.1375$}                                \\ \hline
\textit{\textbf{Case 6} w/ $\|\cT(K_0)\|_{\infty} =  15.5161$ and $\gamma = 20$}           & \multicolumn{1}{c|}{$0.1357$}         &\multicolumn{1}{c|}{fail}                  & \multicolumn{1}{c|}{$0.1356$}                           & \multicolumn{1}{c|}{$0.1356$}                                 \\ \hline
\end{tabular}
\caption{Average $\|\cT(K)\|_{\infty}$ reached over 10 trails between HIFOO{, Matlab's \texttt{h2hinfsyn} function,} and two proposed PO methods for solving \textbf{Cases 4-6}.}
\end{table}

\section{Concluding Remarks}\label{sec:conclusions}

In this paper, we have investigated the convergence theory of policy optimization methods for $\cH_2$ linear control with $\cH_\infty$-norm robustness guarantees. Viewed as a constrained nonconvex optimization, this problem was addressed by PO methods with provable convergence to the global optimal policy. More importantly, we showed that the proposed PO methods enjoy the implicit regularization property, despite the lack of  coercivity of the cost function. 
We expect the present work to serve as an initial step toward further understanding of RL algorithms on robust/risk-sensitive control tasks. We conclude this main part of the  paper with several ongoing/potential research directions.

\paragraph{Implicit regularization of other PO  methods:} It is of particular interests to investigate whether other PO methods  enjoy similar implicit regularization properties. One important example that has not been analyzed in this paper is the PG method. Notice that the model-free implementation of the PG  method, see update \eqref{eq:exact_pg}, does not require the connection between mixed design and zero-sum LQ games, as the gradient can be sampled via zeroth-order methods directly. Among other  examples are quasi-Newton methods with pre-conditioning matrices other than that in \eqref{eq:exact_gn}, accelerated PG using the idea from \cite{nesterov1983method}, and variance reduced PG methods \citep{papini2018stochastic,xu2019improved}. 

%Hence a rigorous analysis of such methods may lead to more efficient data-driven algorithms for LEQR and other risk-sensitive control problems.

\paragraph{Linear quadratic games:} Thanks to the connection discussed in \S\ref{subsec:connection_to_games},  our  LMI-based techniques for showing implicit regularization in Theorem \ref{thm:stability_update} may be of  independent interest to improve the convergence of nested policy gradient methods in \cite{zhang2019policy}, and even simultaneously-moving policy-gradient methods, for solving zero-sum LQ games using PO methods. This will place PO  methods for multi-agent RL (MARL) under a more solid theoretical footing, as LQ games have served as a significant benchmark for MARL \citep{chasnov2019convergence,mazumdar2019policy}. 
Rigorous analysis for this setting have been partially addressed in our ongoing work, and in a more recent work \cite{bu2019global}. 

\paragraph{Model-based v.s. model-free methods for robust control:} There is an increasing literature in \emph{model-based} learning-based control with robustness concerns \citep{aswani2013provably,berkenkamp2015safe,berkenkamp2017safe,dean2017sample,dean2019safely}.  
On the other hand, our work serves as an intermediate step toward  establishing the sample complexity of model-free PO methods for this setting.  
Hence, it is natural and interesting  to compare the data efficiency (sample complexity) and  computational scalability of the two lines of work. Note that such a comparison has been made in  \cite{tu2018gap} for LQR problems without addressing  the issue of robustness.

\paragraph{Beyond LTI systems and state-feedback controllers:}   It is possible to extend our  analysis to the  mixed $\cH_2/\cH_\infty$ control of other types of dynamical systems such as  periodic systems \citep{bittanti1996analysis}, Markov jump linear systems \citep{costa2006discrete}, and switching systems \citep{liberzon2003switching}. These more general system models are widely adopted in control applications, and extensions to these cases will significantly expand  the utility of our theory.  
%Intuitively, the PO  landscape for these  cases are similar to that considered here, and certain PO methods should have similar implicit regularization property. This needs to be verified rigorously. 
On the other hand, it is interesting while challenging to study PO for \emph{output-feedback}  mixed design, where a dynamic controller parameterized by $(A_K,B_K,C_K,D_K)$ is synthesized \citep{apkarian2008mixed}. This way, the PO landscape depends on the order of the parameterization, making the analysis  more involved. 
%A rigorous study is worth pursuing in the future.

\paragraph{PO landscape and algorithms for $\cH_\infty$ control synthesis:}  Our algorithms   are based on the condition that   an initial policy satisfying the specified $\cH_\infty$-norm constraint is available. To efficiently find such an initialization,  it is natural  to study the PO landscape of $\cH_\infty$ control synthesis \citep{doyle1988state,gahinet1994linear,apkarian2006nonsmooth}, where the goal is to find the controller that not only satisfies certain $\cH_\infty$-norm bound, but also minimizes it. It seems that the cost function for $\cH_\infty$ control is still coercive. However, the main challenge of  PO for $\cH_\infty$ control is that the cost function is non-smooth \citep{apkarian2006nonsmooth}, which necessitates the use of subgradient methods. The LMI arguments developed here may shed new lights on the convergence analysis of these methods.

%In this paper, we investigated the convergence theory of policy optimization methods for linear control with $\cH_\infty$-norm robustness guarantees. Viewed as a constrained nonconvex optimization problem, this problem was addressed by PO methods with provably convergence to the global optimal policy. More importantly, we showed that the proposed PO methods enjoy the implicit regularization property, in absence of the coercivity of the cost function. 
%Thanks to the connection discussed in \S\ref{subsec:connection_to_games},  the techniques developed for showing implicit regularization may be of  independent interest to improve the convergence of nested-gradient methods \cite{zhang2019policy}, and even simultaneously-moving policy-gradient methods, for solving zero-sum LQ games using RL  \cite{bacsar1995h}, in our ongoing work. Another future  direction is to study the policy optimization landscape of $\cH_\infty$ control synthesis \cite{doyle1988state,gahinet1994linear,apkarian2006nonsmooth}, where the goal is to find the controller that not only satisfies certain $\cH_\infty$-norm bound, but also minimizes it. 

%\begin{itemize}
%\item Implicit bias of other optimization methods
%\item Model-based v.s. model free for robust control
%\item Optimization landscape of $\mathcal{H}_\infty$ control
%\item  Future directions: prima-dual for LQ-game, using this implicit regularization idea; minimization of Hinf norm directly, though non-smooth.
%\end{itemize}

\section*{Acknowledgements} 
K. Zhang and T. Ba\c{s}ar were supported  in part by the US Army Research Laboratory (ARL) Cooperative Agreement W911NF-17-2-0196, and in part by the Office of Naval Research (ONR) MURI Grant N00014-16-1-2710.  The authors would like to thank Peter Seiler, Geir  Dullerud, Na Li, and Mihailo Jovanovic for the valuable  comments and feedback on our manuscript, as well as the helpful   suggestions from the  anonymous reviewers of L4DC and SICON.  The authors would also like to thank Xiangyuan (Rocker) Zhang for helping with the simulations. 
 
%\newpage
\small 
\bibliographystyle{ims}
\bibliography{main}
\normalsize

\appendix
\clearpage

\section{Results for Continuous-Time Setting}\label{sec:aux_cont_res}

In this appendix, we   present the counterparts of the formulation and  results in the main part of the paper for the continuous-time mixed $\cH_2/\cH_{\infty}$ design problem. 

\subsection{Formulation}\label{subsec:cont_prob_form}

Consider the  linear dynamics
\$
\dot{x}=Ax+Bu+Dw,\qquad z=Cx+E u,
\$
where $x\in\RR^{m}$ is the state, $u\in\RR^d$ is the control,  $w\in\RR^n$ is the disturbance, $z\in\RR^l$ is the controlled output, and $A,B,C,D,E$ are the matrices of proper dimensions. 
%Consider the \emph{admissible} control policy $\mu_t$ to be a mapping from the history of state-action pairs till time $t$ to action $u_t$. 
The performance measure of the mixed $\cH_2/\cH_\infty$ design problem is usually some upper bound of the $\cH_2$-norm of the system \citep{khargonekar1991mixed}.  
With the  \emph{state-feedback} information structure, it has been shown in 
%for both continuous-time 
\cite{khargonekar1991mixed}
% and discrete-time \cite{kaminer1993mixed} settings 
 that, LTI state-feedback controller suffices to achieve the optimal performance measure. As a consequence, it suffices to consider only stationary control policies parametrized as  $u=-Kx$.  
Note that   Assumption \ref{assum:coeff_matrices} is also standard for continuous-time settings. Hence, the transfer function $\cT(K)$ from the disturbance  $w$ to the output $z$ also has the form of  \eqref{equ:mixed_design_transfer2}. The robustness of the designed controller  can thus be guaranteed by the  constraint on the $\cH_\infty$-norm; see definition in \eqref{equ:def_cont_Hinf_norm}. In particular, the constraint is $\|\cT(K)\|_{\infty}<\gamma$ for some $\gamma>0$. Define $\cK$ to be the feasible set  as 
\$\cK:=\big\{K\biggiven (A-BK)\, \mbox{being Hurwitz, and}\,\, \|\cT(K)\|_{\infty}<{\gamma}\big\}.
\$ 
For continuous-time setting, 
the   cost  $\cJ(K)$ usually only takes the form of  \eqref{equ:form_J1} \citep{mustafa1989relations,mustafa1991lqg}, with the $P_K$ replaced by the solution to the continuous-time Riccati equation  
\#
 (A-BK)^\top P_K+P_K(A-BK)+C^\top C+K^\top R K+\gamma^{-2} P_K  DD^\top P_K=0.\label{equ:cont_riccati} 
\#
\normalsize
%To be consistent, we also refer to  \eqref{equ:cont_riccati}  as the \emph{modified Bellman equation}. 
In sum, the continuous-time mixed  design can thus be formulated as 
\#\label{equ:def_mixed_formulation_cont}
\min_K \quad \cJ(K)=\tr(P_KDD^\top),\qquad s.t.\quad K\in\cK,
\#
a constrained nonconvex optimization problem.

%\issue{2019.08.21. }
%
%\issue{In the introduction of the ``bigger picture'' on mixed design, for both continuous \cite{khargonekar1991mixed} and discrete \cite{kaminer1993mixed} time, if we consider the optimal control design in the  ``state-feedback'' case, then the ``dynamic full information controller'' has the same optimal performance as just ``stationary state-feedback controllers''. If we also have access to the noise, then the ``dynamic full information controller'' performance is still achievable by ``stationary state-feedback controllers'' for ``continuous-time'', but, interestingly, not for the ``discrete-time'' \cite{kaminer1993mixed}. Anyways, it suffices to only consider ``stationary state-feedback controllers'' here, since we have no access to the noise/disturbance/exogenous input. Hence, we may want to connect to this discrete-time result in \cite{kaminer1993mixed}, to justify that the results for LEQR is ``expected'', since we only have state, no noise observations (write this as a remark).}
%\\

%For notational convenience, we define the feasible set of mixed $\cH_{2}/\cH_{\infty}$ control design  as
%\#\label{equ:define_cK}
%\cK:=\big\{K\biggiven \|\cT(K)\|_{\infty}<{\gamma}\big\}. 
%\# 

%\issue{WE INTRODUCE THE NONCONVEXITY of $\cK$ HERE TOO} 

\subsubsection{Bounded Real Lemma}

There also exists a continuous-time Bounded Real Lemma \citep{zhou1996robust,rantzer1996kalman} that relates the $\cH_\infty$-norm bound  to the solution of a Riccati equation and an LMI.

%Though the constraint \eqref{equ:define_cK} is concise, it is hard to enforce over $K$ in policy optimization, since the constraint is characterized  in the frequency domain. 
%Interestingly, by using a significant  result in robust control theory, i.e., \emph{Bounded Real Lemma} \cite{zhou1996robust,rantzer1996kalman}, constraint \eqref{equ:define_cK} can be related to the solution of a Riccati equation and a Riccati inequality. 
%We formally introduce the lemmas for both continuous- and discrete- time settings 
%as follows.

\begin{lemma}[Continuous-Time Bounded Real Lemma]\label{lemma:cont_bounded_real_lemma}
Consider the continuous-time  transfer function $\cT(K)$ defined in \eqref{equ:mixed_design_transfer2}, which is recalled here as
\$
\renewcommand\arraystretch{1.3}
\cT(K):=\mleft[
\begin{array}{c|c}
  A-BK & D \\
  \hline
  (C^\top C+K^\top R K)^{1/2}& 0
\end{array}
\mright].
\$
Suppose $K$ is stabilizing, i.e., $(A-BK)$ is Hurwitz, 
then,  the  following conditions are equivalent: 
%consider a discrete-time dynamical system with transfer function $\cT(K)$ defined as
%\#\label{equ:dynamic_sys}
%\renewcommand\arraystretch{1.3}
%\cT(K):=\mleft[
%\begin{array}{c|c}
%  A-BK & W^{1/2} \\
%  \hline
%  (Q+K^\top R K)^{1/2}& \bm{0} 
%\end{array}
%\mright]. 
%\#
%XXXXXX
\begin{itemize}
		\item $\|\cT(K)\|_{\infty}<{\gamma}$, which, due to that $(A-BK)$ is  Hurwitz, further implies that $K\in\cK$; 
%		\item i.e., $(A-BK)$ being Hurwitz and $\|\cT(K)\|_{\infty}<{\gamma}$; 
%		\issue{XXXXXXXX} 
		\item The Riccati equation   
%		\eqref{equ:def_PK}-\eqref{equ:def_tPK} 
		\eqref{equ:cont_riccati}
		admits  a unique stabilizing  solution $P_K\geq  0$  such that the matrix $A-BK+\gamma^{-2} DD^\top P_K$ is Hurwitz; 
		\item There exists some  {$P> 0$}, such that 
%\#\label{equ:cont_equiva_set_cK_cond_1}
%\mleft[
%\begin{array}{cc}
%  (A-BK)^\top P+P(A-BK)+Q+K^\top R K  & P \\
%  P& -\frac{1}{\gamma}W^{-1} 
%\end{array}
%\mright]<0,
%\#
%or equivalently, 
\#\label{equ:cont_equiva_set_cK_cond2}
(A-BK)^\top P+P(A-BK)+C^\top C+K^\top R K+\gamma^{-2} P  D D^\top P<0. 
\#  
	\end{itemize}
\end{lemma}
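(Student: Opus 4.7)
The plan is to establish the three equivalences via the cycle (3)$\Rightarrow$(1), (1)$\Rightarrow$(2), and (2)$\Rightarrow$(3). This ordering is convenient because (3)$\Rightarrow$(1) is a clean dissipativity argument, (2)$\Rightarrow$(3) is a short perturbation step off of the known stabilizing Riccati solution, and only the Hamiltonian-spectral step (1)$\Rightarrow$(2) invokes heavy algebraic Riccati equation (ARE) machinery.

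For (3)$\Rightarrow$(1), I would use $V(x) = x^\top P x$ as a storage function along the closed-loop trajectories $\dot x = (A-BK)x + Dw$ with $z = (C-EK)x$. Assumption \ref{assum:coeff_matrices} gives $z^\top z = x^\top(C^\top C + K^\top R K)x$, so
\[
\dot V + z^\top z - \gamma^2 w^\top w = x^\top M x + 2x^\top PD w - \gamma^2 w^\top w,
\]
where $M$ is the LHS of \eqref{equ:cont_equiva_set_cK_cond2}. Completing the square in $w$ rewrites the right-hand side as $x^\top[M + \gamma^{-2}PDD^\top P\,]x - \gamma^2\|w - \gamma^{-2}D^\top P x\|^2$. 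Hypothesis (3) makes the bracketed expression negative definite. In particular, taking $w\equiv 0$ gives $\dot V \le -\epsilon\|x\|^2$ for some $\epsilon>0$, so $A-BK$ is Hurwitz. Integrating from $0$ to $\infty$ with $x(0)=0$ and using $V(\infty)\ge 0$ then yields $\|z\|_2^2 \le \gamma^2\|w\|_2^2 - \delta\|x\|^2_{L_2}$ for some $\delta>0$, whence $\|\cT(K)\|_\infty < \gamma$.

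For (2)$\Rightarrow$(3), I would take the ansatz $P = P_K + \epsilon Q$ with $Q>0$ chosen as the unique positive-definite solution of the Lyapunov equation
\[
A_{\mathrm{cl}}^\top Q + Q A_{\mathrm{cl}} = -I, \qquad A_{\mathrm{cl}} := A - BK + \gamma^{-2}DD^\top P_K,
\]
which exists because condition (2) guarantees $A_{\mathrm{cl}}$ is Hurwitz. Substituting into the LHS of \eqref{equ:cont_equiva_set_cK_cond2} and using that $P_K$ satisfies the Riccati equation \eqref{equ:cont_riccati}, the zeroth-order (in $\epsilon$) term cancels, the first-order term collapses to $\epsilon(A_{\mathrm{cl}}^\top Q + Q A_{\mathrm{cl}}) = -\epsilon I$, and the remaining $\epsilon^2\gamma^{-2}QDD^\top Q$ is $O(\epsilon^2)$. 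Hence for sufficiently small $\epsilon>0$ the full expression is strictly negative, delivering the required $P>0$.

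The main obstacle is (1)$\Rightarrow$(2), which is the classical existence-of-stabilizing-solution step. My plan is to introduce the Hamiltonian matrix
\[
H_\gamma := \begin{bmatrix} A-BK & \gamma^{-2}DD^\top \\ -(C^\top C + K^\top R K) & -(A-BK)^\top \end{bmatrix},
\]
and show that $\|\cT(K)\|_\infty < \gamma$ together with $A-BK$ Hurwitz forces $H_\gamma$ to have no eigenvalues on the imaginary axis: a pure imaginary eigenvalue of $H_\gamma$ at $j\omega$ would correspond to singularity of $\gamma^2 I - \cT^{\sim}(j\omega)\cT(j\omega)$, contradicting the strict $\cH_\infty$ bound. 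Standard ARE theory (e.g., \cite{zhou1996robust}) then guarantees that the stable invariant subspace of $H_\gamma$ has dimension equal to the state dimension and yields a symmetric stabilizing solution $P_K$ of \eqref{equ:cont_riccati} such that $A-BK+\gamma^{-2}DD^\top P_K$ is Hurwitz. Positive semidefiniteness $P_K \ge 0$ follows by rewriting \eqref{equ:cont_riccati} as a Lyapunov equation for $P_K$ driven by the Hurwitz matrix $A-BK+\gamma^{-2}DD^\top P_K$ with nonnegative forcing term $C^\top C + K^\top R K + \gamma^{-2}P_K DD^\top P_K$, and then integrating. Uniqueness of the stabilizing solution is standard. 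This closes the cycle and completes the proof.
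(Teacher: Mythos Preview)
Your proof is correct but organized differently from the paper. The paper's argument is entirely by citation: it obtains (1)$\Leftrightarrow$(2) directly from \cite[Corollary~13.24]{zhou1996robust} after writing down the associated Hamiltonian, and obtains (1)$\Leftrightarrow$(3) by rescaling to a surrogate $\tilde\cT(K)$ with $\|\tilde\cT(K)\|_\infty<1$, invoking the KYP Lemma \cite[Lemma~7.3]{dullerud2013course}, and Schur-complementing. Your cycle instead proves two of the three implications from scratch: (3)$\Rightarrow$(1) via a direct storage-function/dissipativity argument, and (2)$\Rightarrow$(3) via the perturbation $P=P_K+\epsilon Q$ with $Q$ solving a Lyapunov equation under the Hurwitz matrix $A-BK+\gamma^{-2}DD^\top P_K$. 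That perturbation is exactly the mechanism the paper deploys later in its implicit-regularization proofs (see $\bar P$ in \eqref{equ:def_bar_P_ct} within the proof of Theorem~\ref{thm:stability_update_ct}), so your route has the pleasant feature of foreshadowing the paper's central technical device. Only your (1)$\Rightarrow$(2) falls back on the same Hamiltonian/ARE machinery the paper cites. The paper's approach buys brevity; yours buys transparency and thematic consistency with the later arguments.

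Two minor bookkeeping points. In (3)$\Rightarrow$(1), the first display should have $M_0:=(A-BK)^\top P+P(A-BK)+C^\top C+K^\top RK$ (no quadratic term); after completing the square the bracket becomes $M_0+\gamma^{-2}PDD^\top P$, which is precisely the LHS of \eqref{equ:cont_equiva_set_cK_cond2}. Your text double-counts by calling $M$ the full LHS from the outset. And to extract the \emph{strict} bound $\|\cT(K)\|_\infty<\gamma$ from the dissipation inequality, the cleanest finish is to note that the strict LMI \eqref{equ:cont_equiva_set_cK_cond2} persists with $\gamma$ replaced by some $\gamma'<\gamma$, whence the nonstrict argument yields $\|\cT(K)\|_\infty\le\gamma'<\gamma$.
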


The three equivalent conditions in Lemma   \ref{lemma:cont_bounded_real_lemma} 
 will be frequently used in the  analysis. 
 Similarly, the unique stabilizing  solution to \eqref{equ:cont_riccati} for any $K\in\cK$, is also \emph{minimal}, if the pair   $(A-BK,D)$ is stabilizable, see 
\cite[Corollary $13.13$, page $339$] {zhou1996robust}, which is indeed the case since any $K\in\cK$ is stabilizing. Hence, it suffices to consider only stabilizing solution $P_K$ of the Riccati equation  \eqref{equ:cont_riccati} in this case.

\subsection{Landscape and Algorithms}\label{subsec:cont_alg}
Next we study the optimization landscape and policy-based algorithms for the continuous-time mixed $\cH_2/\cH_\infty$ design.

\subsubsection{Optimization Landscape}

As in the discrete-time setting, this problem is nonconvex, and enjoys no coercivity. Proofs of the following lemmas are provided in \S\ref{sec:proof_lemma:nonconvex} and \S\ref{sec:proof_lemma_mixed_design_no_coercivity}, respectively.  
 
\begin{lemma}[Nonconvexity of Continuous-Time Mixed $\cH_{2}/\cH_{\infty}$ Design]\label{lemma:nonconvex_Hinf_norm_set_cont}
 	The continuous-time mixed $\cH_{2}/\cH_{\infty}$ design  problem \eqref{equ:def_mixed_formulation_cont}  is nonconvex. 
% 	, for both continuous- and discrete-time settings. 
 \end{lemma}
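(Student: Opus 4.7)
The plan is to mirror the strategy used for the discrete-time case (Lemma \ref{lemma:nonconvex_Hinf_norm_set}): exhibit an explicit counterexample showing that the feasible set $\cK$ is not convex. Since any optimization with a nonconvex constraint set is itself nonconvex (regardless of the objective), this immediately establishes Lemma \ref{lemma:nonconvex_Hinf_norm_set_cont}. Concretely, I would construct two control gains $K_1, K_2 \in \cK$ and a scalar $\lambda \in (0,1)$ such that $K_\lambda := \lambda K_1 + (1-\lambda) K_2$ fails at least one of the two conditions defining $\cK$: either $A - B K_\lambda$ is not Hurwitz, or $A - B K_\lambda$ is Hurwitz but $\|\cT(K_\lambda)\|_\infty \ge \gamma$.

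The simplest route is to pick a low-dimensional single-input system for which the set of stabilizing state-feedback gains is already known to be nonconvex; then the $\cH_\infty$ constraint does not even need to be invoked to break convexity. With stable closed loops and sufficiently large $\gamma$, $K_1, K_2 \in \cK$ holds trivially while $A - B K_\lambda$ has an eigenvalue with nonnegative real part, putting $K_\lambda \notin \cK$. Alternatively, one can exploit nonconvexity of the sublevel sets of $K \mapsto \|\cT(K)\|_\infty$: by Lemma \ref{lemma:cont_bounded_real_lemma}, $K \in \cK$ is equivalent to the existence of $P > 0$ satisfying the strict Riccati inequality
\begin{equation*}
(A - BK)^\top P + P(A - BK) + C^\top C + K^\top R K + \gamma^{-2} P D D^\top P < 0,
\end{equation*}
which is quadratic (not jointly convex) in $(K, P)$. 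Choosing a SISO example where the closed-form frequency-domain expression $\|\cT(K)\|_\infty = \sup_\omega \sigma_{\max}[\cT(K)(j\omega)]$ can be evaluated in closed form, one finds parameters under which $\|\cT(K_1)\|_\infty, \|\cT(K_2)\|_\infty < \gamma$ but $\|\cT(K_\lambda)\|_\infty \ge \gamma$.

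The main obstacle is purely bookkeeping: one must choose $(A, B, C, D, E, R, \gamma)$ so that feasibility of $K_1$ and $K_2$ is transparent, yet infeasibility of $K_\lambda$ can be verified by a direct spectral or frequency-response computation. A scalar ($m = d = n = l = 1$) example with $E^\top [C \; E] = [0 \; R]$ enforced by hand suffices, since then $\cT(K)(j\omega) = D(C^2 + R K^2)^{1/2} / (j\omega - A + BK)$ and $\|\cT(K)\|_\infty = |D|\sqrt{C^2 + R K^2}/|A - BK|$, whose level sets in $K$ are clearly nonconvex. Selecting $K_1$ and $K_2$ symmetrically around the minimizer so that $K_\lambda$ lands near the pole of this expression closes the argument, completing the proof.
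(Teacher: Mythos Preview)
Your overall plan—produce $K_1,K_2\in\cK$ whose convex combination leaves $\cK$—is exactly the paper's strategy. Your first route (exploit the well-known nonconvexity of the continuous-time stabilizing set in dimension $m\ge 2$, then take $\gamma$ large) is valid and arguably cleaner than what the paper does.

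However, the scalar route you actually flesh out does \emph{not} work. With $m=d=1$ the stabilizing set $\{K:A-BK<0\}$ is a half-line, and on that half-line the map
\[
K\ \longmapsto\ \frac{|D|\sqrt{C^\top C+RK^{2}}}{|A-BK|}
\]
is unimodal: writing $a=A/B$ and differentiating, the numerator of the derivative is $-RKa-C^\top C$, which changes sign at most once on $(a,\infty)$. Hence every sublevel set $\{K>a:\|\cT(K)\|_\infty<\gamma\}$ is an interval. In particular, if $K_1,K_2$ both lie in the half-line $\{K>a\}$, so does any convex combination $K_\lambda$, and $K_\lambda$ can never be closer to the ``pole'' $K=a$ than both endpoints. (Also note that with $l=1$, Assumption~\ref{assum:coeff_matrices} forces $E C=0$ and $E\ne 0$, hence $C=0$, so the expression simplifies further.) The one-dimensional feasible set $\cK$ is therefore always convex, and your proposed symmetric placement around the minimizer cannot produce a counterexample.

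The paper instead works in dimension $3$: with $A=B=Q=R=I_3$ and $DD^\top=0.01\,I_3$, it exhibits explicit $K_1,K_2$ that are both Hurwitz-stabilizing and have $\|\cT(K_1)\|_\infty\approx 0.386$, $\|\cT(K_2)\|_\infty\approx 0.531$, while $K_3=\tfrac12(K_1+K_2)$ is still stabilizing but has $\|\cT(K_3)\|_\infty\approx 1.173$. So for any $\gamma\in(0.531,1.173)$, $K_1,K_2\in\cK$ and $K_3\notin\cK$. This demonstrates nonconvexity of the $\cH_\infty$-sublevel set itself (not merely of the stabilizing set). If you want to salvage your write-up, either carry out your first route with an explicit $2\times 2$ or $3\times 3$ example, or follow the paper and compute $\|\cT(\cdot)\|_\infty$ numerically at three concrete matrices.
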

 
 \begin{lemma}[No Coercivity of Continuous-Time  Mixed $\cH_{2}/\cH_{\infty}$ Design]\label{lemma:mixed_design_no_coercivity_cont}
	The cost function \eqref{equ:def_mixed_formulation_cont}  for continuous-time mixed $\cH_{2}/\cH_{\infty}$ design is not coercive. 
%	 Particularly, as $K\to \partial \cK$, where $\partial \cK$ is the boundary of the constraint set $\cK$, the cost $\cJ(K)$ does not necessarily  approach  infinity. 
\end{lemma}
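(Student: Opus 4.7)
My plan is to mirror the strategy used for the discrete-time case in Lemma \ref{lemma:mixed_design_no_coercivity}: exhibit an explicit (low-dimensional) example in which a sequence $\{K_n\} \subset \cK$ approaches the boundary $\partial\cK$ while the cost $\cJ(K_n) = \tr(P_{K_n}DD^\top)$ stays bounded. Concretely, I would work in the scalar case ($m=d=n=l=1$), with scalar parameters $a,b,c,d,r$ and $E=1$, $R=r>0$, $DD^\top = d^2$. Under this reduction, the continuous-time Riccati equation \eqref{equ:cont_riccati} collapses to the scalar quadratic
\[
\gamma^{-2} d^2 P_K^2 + 2(a-bK)P_K + (c^2 + K^2 r) = 0,
\]
whose two roots are
\[
P_K^{\pm} \;=\; \frac{-(a-bK) \pm \sqrt{(a-bK)^2 - \gamma^{-2} d^2 (c^2 + K^2 r)}}{\gamma^{-2} d^2}.
\]

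The smaller root $P_K^{-}$ is the stabilizing nonnegative solution (for $K$ with $a-bK<0$), which by Lemma \ref{lemma:cont_bounded_real_lemma} is exactly the $P_K$ that characterizes $K \in \cK$. The key structural observation driving the argument is that, unlike for LQR (where the policy-evaluation equation is linear in $P_K$ and thus coercive in $K$ by continuity-plus-blow-up near the stability boundary), here the boundary of $\cK$ along the $\cH_\infty$ direction is precisely the locus where the discriminant $(a-bK)^2 - \gamma^{-2} d^2 (c^2 + K^2 r)$ vanishes. On this locus the two roots collide at the \emph{finite} value $P_K = -(a-bK)/(\gamma^{-2} d^2)$, so $P_K^{-}$ admits a finite limit as $K \to \partial\cK$. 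Choosing $a,b,c,d,r,\gamma$ so that the interior of $\cK$ is nonempty (e.g.\ $a<0$ with $d$ small enough that the discriminant is strictly positive at $K=0$), a sequence $K_n \to K^\star \in \partial\cK$ then yields $\cJ(K_n) = d^2 P_{K_n}^{-} \to d^2 \cdot (-(a-bK^\star))/(\gamma^{-2} d^2) < \infty$.

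To finish, I would verify three things on the chosen example: (i) $K_n \in \cK$ for all $n$, i.e.\ $(a-bK_n)<0$ and the discriminant is strictly positive, using Lemma \ref{lemma:cont_bounded_real_lemma} to translate ``stabilizing real root'' into membership of $\cK$; (ii) $K^\star \notin \cK$, by showing that at the limit the stabilizing/antistabilizing roots coalesce so that the Hurwitz property of $A - BK^\star + \gamma^{-2} DD^\top P_{K^\star}$ is lost (equivalently, $\|\cT(K^\star)\|_\infty = \gamma$); and (iii) the resulting finite limit of $\tr(P_{K_n} DD^\top)$ explicitly contradicts coercivity. Numerical values (e.g.\ $a=-1$, $b=1$, $c=1$, $r=1$, $d=1$, $\gamma=2$, $K_n \nearrow K^\star$ along the curve where the discriminant vanishes) make all three verifications routine.

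The main obstacle I anticipate is not any estimate but rather bookkeeping at the boundary: one must ensure the sequence $K_n$ approaches $\partial\cK$ along the \emph{$\cH_\infty$} face (where the Riccati discriminant vanishes) rather than along the stability face (where $a-bK_n \to 0$), since on the latter $P_K^{-}$ itself could blow up and give a misleadingly coercive-looking behavior. This is handled by selecting parameters so that the discriminant vanishes strictly before the stability margin closes, guaranteeing that the approach is genuinely along the robustness boundary. Once this is set up, the same construction can be lifted to higher dimensions by block-diagonal embedding, completing the proof of non-coercivity.
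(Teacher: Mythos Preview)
Your proposal is correct and gives a clean, constructive argument. It differs in style from the paper's own proof, so a brief comparison is worthwhile.

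The paper's proof of Lemma~\ref{lemma:mixed_design_no_coercivity_cont} (in \S\ref{sec:proof_lemma_mixed_design_no_coercivity}) does \emph{not} build an explicit scalar example for the continuous-time cost \eqref{equ:form_J1}. Instead it argues abstractly: assuming $DD^\top>0$ and $\|K_n\|$ bounded, the Bounded Real Lemma guarantees $A-BK_n+\gamma^{-2}DD^\top P_{K_n}$ is Hurwitz for every $K_n\in\cK$; if $\lambda_{\max}(P_{K_n})\to\infty$ along a sequence approaching $\partial\cK$, the Hurwitz property would eventually fail, a contradiction. Hence $P_{K_n}$ (and thus $\cJ(K_n)$) stays bounded near $\partial\cK$. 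This is a dimension-free argument that avoids any explicit computation, but it is somewhat informal at the step ``large $P_{K_n}$ forces a non-Hurwitz closed-loop matrix,'' and it still implicitly assumes the existence of a bounded sequence reaching $\partial\cK$.

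Your route---reducing to the scalar Riccati quadratic and showing the stabilizing root $P_K^{-}$ has a finite limit as the discriminant vanishes---is exactly what the paper does for the \emph{discrete-time} costs \eqref{equ:form_J2}--\eqref{equ:form_J3}, transplanted to the continuous-time equation. It is more explicit and fully self-contained: you exhibit the sequence, verify $K_n\in\cK$ via the sign of the discriminant and the Hurwitz condition $a-bK_n+\gamma^{-2}d^2P_{K_n}^{-}=-\sqrt{\Delta}<0$, and compute the finite limit directly. Your attention to approaching along the $\cH_\infty$ face (discriminant $\to 0$ while $a-bK^\star<0$) rather than the stability face is precisely the right bookkeeping. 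The block-diagonal lift to higher dimensions is unnecessary---a single scalar counterexample already disproves coercivity---but harmless.
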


 The following lemma, whose proof is deferred to 
% \S\ref{sec:proof_lemma_differentiability} and 
\S\ref{sec:proof_lemma_differentiability_policy_grad_ct}, 
% respectively,   
 establishes the differentiability and the policy gradient form of  $\cJ(K)$ given by \eqref{equ:def_mixed_formulation_cont}. 

\begin{lemma}\label{lemma:differentiability_policy_grad_ct}
	The objective defined in \eqref{equ:def_mixed_formulation_cont} is differentiable 
%	, and thus continuous, 
	in $K$ for any $K\in\cK$, and the policy gradient of $\cJ(K)$ with respect to $K$ has the following form
	\$
	\nabla \cJ(K)={2[RK-B^\top P_K] \Lambda_K}, 
	\$
	where $\Lambda_K\in \RR^{m\times m}$ is the solution to the Lyapunov equation
	\#\label{equ:Lambda_Lya_def}
	\Lambda_K(A-BK+\gamma^{-2}  DD^\top P_K)^\top+(A-BK+\gamma^{-2}  DD^\top P_K)\Lambda_K+DD^\top=0. 
	\#
%	with $\cK$ being defined in \eqref{equ:def_cK_cont}.
%	, i.e.,  the $K$ such that $\cH_{\infty}$-norm $\|\cT(K)\|_{\infty}<1/\sqrt{\gamma}$. 
\end{lemma}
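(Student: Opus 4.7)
The plan is to establish both the differentiability claim and the closed-form expression via an implicit function theorem argument followed by a Lyapunov duality calculation, closely mirroring the discrete-time analysis of Lemma~\ref{lemma:differentiability_policy_grad}. Throughout, let $A_K:=A-BK+\gamma^{-2}DD^\top P_K$, which by Lemma~\ref{lemma:cont_bounded_real_lemma} is Hurwitz for every $K\in\cK$.

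First, I would view \eqref{equ:cont_riccati} as the zero set of the smooth map $F(K,P):=(A-BK)^\top P+P(A-BK)+C^\top C+K^\top RK+\gamma^{-2}PDD^\top P$. Its Fr\'echet derivative in $P$ at $(K,P_K)$ is the linear operator $\delta P\mapsto A_K^\top\,\delta P+\delta P\,A_K$ acting on symmetric matrices, which is bijective whenever $A_K$ is Hurwitz (its inverse is $Q\mapsto \int_0^\infty e^{A_K^\top t}Q\,e^{A_K t}\,dt$). The implicit function theorem then yields that $K\mapsto P_K$ is locally $C^1$, while continuity of the spectrum preserves $K'\in\cK$ for sufficiently small perturbations, so the parametric Riccati solution $P_{K'}$ is itself legitimate.

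Next, differentiating $F(K,P_K)=0$ along an arbitrary direction $\Delta K$, the two blocks $\gamma^{-2}(dP_K DD^\top P_K+P_K DD^\top dP_K)$ combine with $(A-BK)^\top dP_K+dP_K(A-BK)$ into $A_K^\top dP_K+dP_K A_K$, so the equation collapses to
\[
A_K^\top\,dP_K+dP_K\,A_K+\Delta K^\top E_K+E_K^\top \Delta K=0,\qquad E_K:=RK-B^\top P_K,
\]
a Lyapunov equation with unique symmetric solution $dP_K=\int_0^\infty e^{A_K^\top t}(\Delta K^\top E_K+E_K^\top\Delta K)e^{A_K t}\,dt$. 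Inserting this into $d\cJ=\tr(dP_K\,DD^\top)$ and exchanging the order of trace and integral, the factor $\int_0^\infty e^{A_K t}DD^\top e^{A_K^\top t}\,dt$ appears, which is precisely the unique positive semidefinite solution $\Lambda_K$ of \eqref{equ:Lambda_Lya_def}. Using symmetry of $\Lambda_K$ to merge the two summands gives $d\cJ=2\,\tr(\Lambda_K E_K^\top \Delta K)=\langle 2E_K\Lambda_K,\Delta K\rangle_F$, hence $\nabla\cJ(K)=2(RK-B^\top P_K)\Lambda_K$ as claimed.

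The main obstacle I anticipate is purely bookkeeping: verifying that the linearized Riccati operator above is genuinely invertible on the space of symmetric matrices (and that the implicit function theorem applies in the correct matrix-analytic sense), so that $dP_K$ is an honest symmetric perturbation rather than a formal object. Once that is rigorously in place, the remaining steps are a direct Lyapunov duality computation of the kind used throughout the LQR and mixed-design literature, and no additional ingredients beyond Lemma~\ref{lemma:cont_bounded_real_lemma} should be needed.
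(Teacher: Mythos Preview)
Your proposal is correct and follows essentially the same approach as the paper: implicit function theorem applied to the Riccati map (with invertibility of the linearization coming from $A_K$ being Hurwitz via Lemma~\ref{lemma:cont_bounded_real_lemma}), followed by differentiating \eqref{equ:cont_riccati} to obtain a Lyapunov equation for $dP_K$ and a trace duality with \eqref{equ:Lambda_Lya_def} to extract the gradient. The paper carries out the implicit function step via vectorization and Kronecker products, whereas you phrase it in Fr\'echet-derivative language, but the content is identical; the paper also explicitly verifies that $\cK$ is open using the strict LMI \eqref{equ:cont_equiva_set_cK_cond2}, which your spectrum-continuity remark covers implicitly.
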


%\begin{lemma}[Policy Gradient for Continuous-Time    Mixed Design]\label{lemma:policy_grad_ct}
%	For any $K\in\cK$, the gradient of $\cJ(K)$ w.r.t. $K$ has the following form
%	\$
%	\nabla \cJ(K)={2[RK-B^\top P_K] \Lambda_K}, 
%	\$
%	where $\Lambda_K\in \RR^{m\times m}$ is the solution to the Lyapunov equation
%	\#\label{equ:Lambda_Lya_def}
%	\Lambda_K(A-BK+\gamma^{-2}  DD^\top P_K)^\top+(A-BK+\gamma^{-2}  DD^\top P_K)\Lambda_K+DD^\top=0. 
%	\#
%\end{lemma}

Lemma \ref{lemma:differentiability_policy_grad_ct} further leads to the following proposition, which provides  the formula for the optimal controller for mixed $\cH_2/\cH_\infty$ control design. 

\begin{proposition}\label{coro:opt_control_form_cont}
	Suppose that the continuous-time mixed $\cH_2/\cH_\infty$ design admits a global optimal control gain solution $K^*\in\cK$, and $(A,C)$ is detectable, then one such  solution  has the form of $K^*=R^{-1}B^\top P_{K^*}$. Additionally, if the pair $\big(A-BK+\gamma^{-2}DD^\top P_{K},D\big)$ is controllable at some stationary point of $\cJ(K)$ such that $\nabla \cJ(K)=0$, then this is   the unique stationary point, and corresponds to the unique  global optimizer  $K^*$.  
%	Suppose that the continuous-time mixed $\cH_2/\cH_\infty$  design admits a control gain solution $K^*\in\cK$, and for any stationary point  $K\in\cK$ such that $\nabla \cJ(K)=0$, 
%	the pair $(A-BK+\gamma^{-2}DD^\top P_{K},D)$ is controllable.  Then, such a solution is unique, and has the form of 	$K^*=R^{-1}B^\top P_{K^*}$. 
\end{proposition}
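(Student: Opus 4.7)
The plan is to parallel the discrete-time strategy used for Proposition~\ref{coro:opt_control_form_discrete} in \S\ref{proof:coro_opt_control_form}: establish that some $P^\natural$ is matrix-wise minimal among $\{P_K : K\in\cK\}$ in the positive semi-definite order, and that this minimum is attained by a controller of the form $R^{-1}B^\top P^\natural$. Since $\cJ(K)=\tr(P_K DD^\top)$ is monotone in $P_K$ for $DD^\top \succeq 0$, this controller is then a global optimum of the asserted form.

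I would begin with a completion-of-squares identity. Writing $\delta_K := K - R^{-1}B^\top P_K$ and regrouping quadratic terms in \eqref{equ:cont_riccati}, one verifies that for every $K\in\cK$,
\[
A^\top P_K + P_K A + C^\top C - P_K\!\left(BR^{-1}B^\top - \gamma^{-2}DD^\top\right)\! P_K \;=\; -\,\delta_K^\top R\,\delta_K \;\le\; 0,
\]
so $P_K$ is a supersolution of the central (game) Riccati equation
\[
A^\top P + PA + C^\top C - P\!\left(BR^{-1}B^\top - \gamma^{-2}DD^\top\right)\! P \;=\; 0. \qquad(\star)
\]
Using that $\cK$ is nonempty (which is necessary for the optimum $K^*$ to exist) together with detectability of $(A,C)$, I would invoke classical results on indefinite (game) Riccati equations from the robust/$\cH_\infty$ control literature (e.g.,~\cite[Ch.~13]{zhou1996robust},~\cite{bacsar1995h}) to conclude that $(\star)$ admits a minimal positive semi-definite stabilizing solution $P^\natural \ge 0$, i.e., $A - BR^{-1}B^\top P^\natural + \gamma^{-2}DD^\top P^\natural$ is Hurwitz. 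Setting $K^\natural := R^{-1}B^\top P^\natural$, the Riccati equation \eqref{equ:cont_riccati} under $K^\natural$ reduces to $(\star)$, so $P_{K^\natural}=P^\natural$; combined with Lemma~\ref{lemma:cont_bounded_real_lemma} this yields $K^\natural\in\cK$.

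To establish the matrix-wise comparison $P_K \ge P^\natural$ for every $K\in\cK$, I would route the argument through the continuous-time zero-sum LQ dynamic game (the continuous analogue of \S\ref{subsec:connection_to_games}): $P^\natural$ is the quadratic value matrix of the saddle point, whereas $P_K$ is the upper-value matrix obtained when the minimizer commits to $u=-Kx$ and the maximizer optimizes over square-integrable disturbances, and the saddle value is always dominated by any upper value. This gives $\cJ(K)=\tr(P_K DD^\top) \ge \tr(P^\natural DD^\top)=\cJ(K^\natural)$, so the assumed global optimum $K^*$ can be taken as $K^\natural = R^{-1}B^\top P_{K^*}$.

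For the uniqueness claim, let $K$ be any stationary point with $\nabla\cJ(K) = 0$. By Lemma~\ref{lemma:differentiability_policy_grad_ct}, $[RK - B^\top P_K]\Lambda_K = 0$. Under the assumed controllability of $(A - BK + \gamma^{-2}DD^\top P_K,\,D)$, and since the closed-loop matrix of \eqref{equ:Lambda_Lya_def} is Hurwitz by Lemma~\ref{lemma:cont_bounded_real_lemma} (as $K\in\cK$), the Lyapunov equation \eqref{equ:Lambda_Lya_def} admits the positive definite solution $\Lambda_K \succ 0$. This forces $K = R^{-1}B^\top P_K$, so $P_K$ solves $(\star)$ and is stabilizing; uniqueness of the stabilizing solution of $(\star)$ under detectability of $(A,C)$ yields $P_K = P^\natural$, and hence $K = K^\natural = K^*$. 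I expect the main technical obstacle to be a clean justification of both the existence of the minimal stabilizing solution $P^\natural$ of the indefinite Riccati equation $(\star)$ and the ordering $P_K \ge P^\natural$: a purely algebraic subtraction $\Delta := P_K - P^\natural$ produces an indefinite residual $\Delta(BR^{-1}B^\top-\gamma^{-2}DD^\top)\Delta$ in the resulting Lyapunov-like inequality, which is precisely why I prefer to route the comparison through the game value rather than attempting a direct Riccati-comparison argument.
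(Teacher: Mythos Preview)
Your proposal is correct and follows essentially the same route as the paper: both establish $P_K \ge P_{K^*}$ for all $K\in\cK$ via the continuous-time zero-sum LQ game interpretation (the paper cites \cite[Theorem~4.8]{bacsar1995h} directly, while you make the completion-of-squares reduction to the central game Riccati equation $(\star)$ explicit), and both use controllability of $(A-BK+\gamma^{-2}DD^\top P_K,\,D)$ to force $\Lambda_K\succ 0$ and hence $RK=B^\top P_K$ at any stationary point. Your final step---arguing that $P_K$ must then be the \emph{unique} stabilizing solution of $(\star)$ to pin down $K=K^\natural$---is a useful detail that the paper's sketch leaves implicit.
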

%\begin{proof}
%By \cite[Lemma $3.18$ \emph{(iii)}]{zhou1996robust}, the solution to the Lyapunov equation \eqref{equ:Lambda_Lya_def} $\Lambda_K>0$, since $(A-BK+\gamma^{-2}DD^\top P_{K},D)$ is  controllable, or equivalently, $((A-BK+\gamma^{-2}DD^\top P_{K})^\top,D^\top)$ is observable. Thus,  the necessary optimality condition $\nabla \cJ(K)=0$ yields  that  $K^*=R^{-1}B^\top P_{K^*}$ is the unique stationary point,  which is thus the unique global optimizer. 
%\end{proof}

%Under this constraint, \issue{the stationary feedback control gain satisfying the following equation achieves the minimum value of $\cJ(K)$    (actually there is no reference I can find that explicitly gives this formula. One option is that we only say we are looking for some $K$ here. Then, by differentiability, we show that the only stationary point is the one below. Done. Thus, we put it later.). 

%Moreover, this is equivalently to require the following LMI hold, i.e., there exists a $P>0$ such that:
%\#\label{equ:LMI_cond_ct}
%\mleft[
%\begin{array}{cc}
%  (A-BK)^\top P+P(A-BK)+Q+K^\top R K  & P \\
%  P& -\frac{1}{\gamma}W^{-1} 
%\end{array}
%\mright]<0. 
%\# 
%The goal of the problem is to find the optimal $K$ such that 
%} 	
	
Proposition  \ref{coro:opt_control_form_cont}, proved in \S\ref{proof:coro_opt_control_form}, not only gives the form of one global optimal solution, but also  shows that under certain controllability conditions, the stationary point of $\cJ(K)$ is unique, and corresponds to that  global optimum. Note that the controllability condition is satisfied  if $DD^\top >0$.  
%for continuous-time LEQR, with $D=W^{1/2}>0$. 
Such a desired property motivates the development of first-order methods to solve this nonconvex optimization problem.

\subsubsection{Policy Optimization Algorithms}\label{subsec:PO_alg}

Consider three    policy-gradient  based algorithms as follows. 
\begin{flalign}
{\rm \textbf{Policy  Gradient:}}~\qquad\qquad~~~~~~~~ K'&=K-\eta \nabla\cJ(K)=K-2\eta [RK-B^\top P_K] \Lambda_K\label{eq:exact_pg_ct}\\
{\rm \textbf{Natural Policy Gradient:}}~~\qquad  K'&=K-\eta \nabla\cJ(K)\Lambda_K^{-1}=K-2\eta (RK-B^\top P_K) &\label{eq:exact_npg_ct}
 \\
 {\rm \textbf{Gauss-Newton:}}\qquad\qquad~~~\qquad K'&=K-\eta R^{-1}\nabla\cJ(K)\Lambda_K^{-1}=K-2\eta (K-R^{-1}B^\top P_K)
 &\label{eq:exact_gn_ct}
 \end{flalign}
with $\eta>0$ being the stepsize. The updates are designed to follow the updates for discrete-time settings in  \eqref{eq:exact_pg_ct}-\eqref{eq:exact_gn_ct}. 

\subsection{Theoretical Results}\label{subsec:cont_theory}

We  now   establish the convergence results on the   algorithms  proposed in \S\ref{subsec:PO_alg}. 

\subsubsection{Implicit Regularization}
 
As in  the discrete-time setting, we first show that both the natural PG update  \eqref{eq:exact_npg_ct} and the Gauss-Newton update  \eqref{eq:exact_gn_ct} enjoy the \emph{implicit regularization} property. 
%, i.e., if $K\in\cK$, there exists constant stepsize $\eta$ such that $K'$ also lies in $\cK$.   

%we can also show  that if the stepsize $\eta$ is chosen properly, the updated control gain $K'$ from   \eqref{eq:exact_npg_ct}-\eqref{eq:exact_gn_ct} remains  robustly stable.  
%In particular, we prove that if the closed-loop transfer function under control gain $K$ satisfies $\|\cT(K)\|_{\infty}<{\gamma}$, then $K'$ also satisfies that $\|\cT(K')\|_{\infty}<{\gamma}$.   

\begin{theorem}[Implicit Regularization for Continuous-Time Mixed Design]\label{thm:stability_update_ct}
	For any control gain $K\in\cK$, i.e., $(A-BK)$ being Hurwitz and $\|\cT(K)\|_\infty<{\gamma}$,  with $\|K\|<\infty$, 
	suppose that the stepsize  $\eta$ satisfies:
	\begin{itemize}
	\item Natural policy gradient \eqref{eq:exact_npg_ct}: $\eta\leq {1}/{(2\|R\|)}$,
	\item Gauss-Newton \eqref{eq:exact_gn_ct}: $\eta\leq {1}/{2}$. 
%	\item Gradient descent \eqref{eq:exact_pg}: XXXXX, 
	\end{itemize}	
% and  the control gain $K$ with $\|K\|<\infty$ lies in $\cK$, i.e., the  $\cH_\infty$-norm 
%  of the closed-loop transfer function 
%  $\|\cT(K)\|_{\infty}<{\gamma}$. 
%  ensures that the modified Bellman equation \eqref{equ:mod_Bellman_ct} has  a solution $P_{K}>0$.  
%  ; ii) $W^{-1}-\gamma P_{K}>0$; iii) $\rho\big((A-BK)^\top(I-\gamma P_{K}W)^{-1}\big)<1$.   
	 Then the $K'$ obtained from \eqref{eq:exact_npg_ct}-\eqref{eq:exact_gn_ct}   also lies in $\cK$. 
%	 , i.e., $\|\cT(K')\|_{\infty}< {\gamma}$.
	  Equivalently,   $K'$ is stabilizing, i.e., $(A-BK')$ is Hurwitz, and 
%	 is also stabilizing and 
	also enables the Riccati equation \eqref{equ:cont_riccati}  to admit  a stabilizing  solution $P_{K'}\geq 0$ such that  $A-BK'+\gamma^{-2} DD^\top P_{K'}$ is Hurwitz. 
%	 Equivalently, such a $K'$ 
%%	obtained from \eqref{eq:exact_gn_ct}-\eqref{eq:exact_npg_ct}
%	  satisfies that the closed-loop $\cH_\infty$-norm $\|\cT(K')\|_{\infty}<1/\sqrt{\gamma}$. 
%	 of the closed-loop transfer function $H(K')$ as defined in \eqref{equ:mixed_design_transfer_2}  is smaller than $1/\sqrt{\gamma}$. 
\end{theorem}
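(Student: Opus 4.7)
\vspace{6pt}
\noindent\textit{Proof proposal for Theorem \ref{thm:stability_update_ct}.}

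My plan is to mimic the LMI-plus-perturbation strategy used in the discrete-time case (Theorem \ref{thm:stability_update}), but adapted to the continuous-time Bounded Real Lemma (Lemma \ref{lemma:cont_bounded_real_lemma}). The key reduction is as follows: if I can exhibit some $P>0$ satisfying the strict Riccati inequality
\[
(A-BK')^\top P+P(A-BK')+C^\top C+K'^\top RK'+\gamma^{-2}PDD^\top P<0,
\]
then subtracting off the two non-negative terms $C^\top C+K'^\top RK'+\gamma^{-2}PDD^\top P\ge 0$ yields $(A-BK')^\top P+P(A-BK')<0$, so by Lyapunov theory $A-BK'$ is Hurwitz, and the third equivalent condition in Lemma \ref{lemma:cont_bounded_real_lemma} then certifies $K'\in\cK$. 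Therefore, the whole theorem reduces to constructing such a $P>0$ along each update direction.

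My candidate will be $P=P_K+\alpha\bar P$ for small $\alpha>0$, where $\bar P>0$ is the unique solution of the Lyapunov equation
\[
(A-BK+\gamma^{-2}DD^\top P_K)^\top\bar P+\bar P(A-BK+\gamma^{-2}DD^\top P_K)=-I,
\]
whose existence follows from the fact that $K\in\cK$ makes $A-BK+\gamma^{-2}DD^\top P_K$ Hurwitz (Lemma \ref{lemma:cont_bounded_real_lemma}). My first step is to plug $P=P_K$ (the $\alpha=0$ case) into the Riccati expression at $K'$, use the Riccati equation \eqref{equ:cont_riccati} satisfied by $P_K$ at $K$ to cancel $A^\top P_K+P_K A+C^\top C+\gamma^{-2}P_KDD^\top P_K$, and collect the remaining cross terms. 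For the Gauss-Newton update with $\eta=1/2$ I expect the residual to collapse to $-(K-K')^\top R(K-K')\le 0$ (by direct completion of squares using $RK'=B^\top P_K$). For the NPG update with $\eta\le 1/(2\|R\|)$, setting $F_K:=RK-B^\top P_K$ so that $K'=K-2\eta F_K$, the residual should simplify to $-4\eta F_K^\top(I-\eta R)F_K\le 0$, where $I-\eta R\succeq \tfrac12 I$ under the stated stepsize bound. Both cases give only a non-strict inequality, which is why the perturbation is needed.

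The second step is the perturbation itself. Expanding at $P=P_K+\alpha\bar P$, the linear-in-$\alpha$ correction to the Riccati expression is
\[
(A-BK')^\top\bar P+\bar P(A-BK')+\gamma^{-2}\bigl(P_KDD^\top\bar P+\bar PDD^\top P_K\bigr),
\]
and writing $A-BK'=(A-BK+\gamma^{-2}DD^\top P_K)+B(K-K')-\gamma^{-2}DD^\top P_K$, this correction equals $-I+\bar PB(K-K')+(K-K')^\top B^\top\bar P$ by the defining Lyapunov equation for $\bar P$. Since both update rules produce $\|K-K'\|=O(\eta)$ bounded independently of $\alpha$, choosing $\alpha$ small enough (after the stepsize is fixed) forces the full expression to be strictly negative definite, as the $O(\alpha^2)$ quadratic term $\gamma^{-2}\alpha^2\bar PDD^\top\bar P$ is a higher-order perturbation. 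Combined with the non-strict step above, this yields the desired strict Riccati inequality for $K'$ at the endpoint stepsize ($\eta=1/2$ for GN, $\eta=1/(2\|R\|)$ for NPG). Finally, to cover every smaller stepsize $\eta$ in the claimed range, I will use the convexity of the LMI \eqref{equ:cont_equiva_set_cK_cond2} in $K$ for a fixed $P$: the same $P$ certifies both $K$ and the endpoint $K'$, hence also every convex combination $K_\eta=K+2\eta(K'-K)$ on the line segment, by a quadratic-form convexity argument analogous to \eqref{equ:LMI_cond_4}.

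I anticipate the main technical obstacle to be the perturbation step in the second paragraph above: verifying that the specific Lyapunov equation for $\bar P$ is exactly the right one so that the cross terms in the linear-in-$\alpha$ expansion collapse to $-I$ plus a term that is bounded uniformly in $\alpha$. This is essentially a continuous-time analogue of the Kalman--Yakubovich--Popov style perturbation used in Theorem \ref{thm:stability_update}, and the success hinges on the fact that the continuous Riccati inequality is genuinely \emph{quadratic} in $P$, so a single linearization around $P_K$ captures the dominant behavior and the quadratic remainder is safely $O(\alpha^2)$.
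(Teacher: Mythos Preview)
Your proposal is correct and follows the paper's strategy exactly: perturb $P=P_K+\alpha\bar P$ with $\bar P$ solving the same Lyapunov equation, verify a non-strict inequality at $\alpha=0$, push it to strictness via the $-\alpha I$ coming from $\bar P$, and interpolate via convexity for intermediate stepsizes. The paper organizes the split slightly differently (terms involving $K$-vs-$K'$ versus the Riccati expression at $K$, labeled $\circled{1}$ and $\circled{2}$), but the content is identical.

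One point you should make explicit when writing out the details: the cross term in your linear-in-$\alpha$ correction, $\alpha\bigl[(K-K')^\top B^\top\bar P+\bar PB(K-K')\bigr]$, is the \emph{same} order in $\alpha$ as $-\alpha I$, so ``choosing $\alpha$ small enough'' is not by itself what kills it. The mechanism is that this cross term is absorbed into your $\alpha=0$ quadratic cushion, namely $-(K-K')^\top R(K-K')$ for Gauss--Newton and $-4\eta F_K^\top(I-\eta R)F_K$ for natural PG, via a Young/completion-of-squares inequality; what remains after absorption is $O(\alpha^2)$, and only then does $-\alpha I$ dominate. The paper makes exactly this move to bound its $\circled{1}$ term by $O(\alpha^2)$, so you should expect to need it as well.
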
  

The proof of Theorem \ref{thm:stability_update_ct} can be found in  \S\ref{sec:proof_thm_stability_update_ct},  
the key of which is the use of the Bounded Real Lemma, i.e., Lemma \ref{lemma:cont_bounded_real_lemma}, so that it suffices to find some $P>0$ that makes \eqref{equ:cont_equiva_set_cK_cond2} hold. Such a $P$ can then be constructed by perturbing $P_K$, the solution to the Riccati equation under   $K$. 
Note that it is not clear yet if the vanilla PG update \eqref{eq:exact_pg_ct} also enjoys the implicit regularization property. 

%Theorem \ref{thm:stability_update_ct} shows that the \emph{implicit regularization} property of the updates \eqref{eq:exact_npg_ct}-\eqref{eq:exact_gn_ct} exists in the continuous-time setting as well.  
%\issue{WE NEED SOME SIMILAR DISCUSSION ON THE DIFFERENCE FROM LQR, IN THIS CONTINUOUS-TIME SETTING.}

\subsubsection{Global Convergence}

Now we are ready to present the global convergence of the updates \eqref{eq:exact_npg_ct} and \eqref{eq:exact_gn_ct}. 

%Now we  present the  convergence results of the exact gradient methods for continuous-time mixed $\cH_2/\cH_{\infty}$  design.  
%We start by showing  the \emph{global} convergence to a class of stationary points  in general, and to  the \emph{global} optimum  under  additional conditions. 

\begin{theorem}[Global Convergence   for Continuous-Time  Mixed Design]\label{theorem:global_exact_conv_ct} Suppose that $K_0\in\cK$, $\|K_0\|<\infty$, and $(A,C)$ is detectable.    Then, under the 
%following 
stepsize choices  as in Theorem \ref{thm:stability_update_ct},  
%\begin{itemize}
%\item Gauss-Newton \eqref{eq:exact_gn_ct}: $\eta\in[0,1/2]$
%\item Natural policy gradient \eqref{eq:exact_npg_ct}: $[0,1/(2\|R\|)]$,
%\end{itemize}
both  updates  \eqref{eq:exact_npg_ct} and  \eqref{eq:exact_gn_ct}  converge to the global optimum $K^*=R^{-1}B^\top P_{K^*}$, 
%stationary points $K$ where $R K-B^\top P_{K}=\bm{0}$, 
in the sense that the average of $\{\|R K_n-B^\top P_{K_n}\|_F^2\}$ along iterations converges to zero with $O(1/N)$ rate. 
%In addition, if the pair $(A-BK+\gamma^{-2}DD^\top P_{K},D)$ is controllable at the stationary point $K$, then such a convergence is towards the unique \emph{global} optimal policy.   
\end{theorem}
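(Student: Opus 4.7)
The plan is to transport the three-stage argument used for the discrete-time Theorem \ref{theorem:global_exact_conv} (Section 4.2) into the continuous-time Lyapunov/Riccati framework: establish a continuous-time Cost Difference Lemma bounding $P_{K'} - P_K$ in terms of the gradient surrogate $RK - B^\top P_K$; invoke implicit regularization (Theorem \ref{thm:stability_update_ct}) to show matrix-wise monotonic decrease of $\{P_{K_n}\}$ along each update; and take traces to telescope and extract the $O(1/N)$ rate on the surrogate gradient norm.

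To derive the Cost Difference Lemma, I subtract the Riccati equation \eqref{equ:cont_riccati} at $K$ from that at $K'$, and after collecting terms around $F_{K'} := A - BK' + \gamma^{-2}DD^\top P_{K'}$ and setting $\Delta P := P_{K'}-P_K$, I expect the identity
\begin{align*}
F_{K'}^\top \Delta P + \Delta P F_{K'} &= \gamma^{-2}\Delta P\, DD^\top\Delta P + (K-K')^\top(RK-B^\top P_K)\\
&\quad + (RK-B^\top P_K)^\top(K-K') - (K-K')^\top R(K-K').
\end{align*}
Since $K'\in\cK$ by Theorem \ref{thm:stability_update_ct}, Lemma \ref{lemma:cont_bounded_real_lemma} gives that $F_{K'}$ is Hurwitz, and the Lyapunov-integral representation, combined with the PSD sign of the cross term $\gamma^{-2}\Delta P\, DD^\top\Delta P$, yields
$$P_{K'} - P_K \le \int_0^\infty e^{tF_{K'}^\top}\Big[(K-K')^\top R(K-K') - (K-K')^\top(RK-B^\top P_K) - (RK-B^\top P_K)^\top(K-K')\Big] e^{tF_{K'}}\,dt.$$
Substituting the two updates and using the identity $RK - B^\top P_K = R(K-R^{-1}B^\top P_K)$, the bracket collapses to $-(4\eta - 4\eta^2)(RK-B^\top P_K)^\top R^{-1}(RK-B^\top P_K) \leq 0$ for Gauss-Newton with $\eta\in(0,1/2]$, and is bounded above by $-2\eta(RK-B^\top P_K)^\top(RK-B^\top P_K) \leq 0$ for natural PG with $\eta\leq 1/(2\|R\|)$. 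In both cases $P_{K_{n+1}} \le P_{K_n}$ in the PSD order, hence $\{P_{K_n}\}$ converges monotonically to some $P_\infty\geq 0$; by detectability of $(A,C)$ and the minimality of the stabilizing solution $P_{K^*}$ of \eqref{equ:cont_riccati} (a standard consequence of Riccati theory), it will follow that $P_\infty = P_{K^*}$ and $K_n \to K^* = R^{-1}B^\top P_{K^*}$, consistent with Proposition \ref{coro:opt_control_form_cont}.

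The $O(1/N)$ rate will be obtained by taking traces. For Gauss-Newton, the cyclic property of the trace gives
$$\tr(P_{K_{n+1}}) - \tr(P_{K_n}) \le -(4\eta-4\eta^2)\,\tr\!\left[(RK_n-B^\top P_{K_n})^\top R^{-1}(RK_n-B^\top P_{K_n})\,Y_{K_{n+1}}\right],$$
where $Y_{K_{n+1}}>0$ is the unique solution of $F_{K_{n+1}}Y + YF_{K_{n+1}}^\top + I = 0$. Using $\tr(MN)\ge \sigma_{\min}(M)\tr(N)$ for PSD matrices, summing over $n=0,\ldots,N-1$, and using $\tr(P_{K_N})\ge 0$, will deliver the advertised $O(1/N)$ rate on $\|RK_n - B^\top P_{K_n}\|_F^2$, provided $\sigma_{\min}(Y_{K_n})$ is bounded away from $0$ along the trajectory. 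The analogous telescoping works for natural PG with $R^{-1}$ replaced by $I$.

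The main obstacle will be securing a uniform positive lower bound on $\sigma_{\min}(Y_{K_n})$: in the discrete-time proof this step was bypassed by retaining only the first term of the geometric series in $\Delta_K$, an option unavailable for an improper integral. My strategy is to exploit the PSD monotonicity $P_{K_n}\le P_{K_0}$ from the second stage together with the observation that Gauss-Newton writes $K_{n+1}$ as a convex combination of $K_n$ and $R^{-1}B^\top P_{K_n}$, and natural PG satisfies $K_{n+1}=(I-2\eta R)K_n + 2\eta B^\top P_{K_n}$ with $\|I-2\eta R\|\le 1-2\eta\sigma_{\min}(R)<1$ under the prescribed stepsize. These facts show $\|K_n\|$, and hence $F_{K_n}$, remains in a compact set determined by $K_0$; continuity of the Lyapunov-equation solution map on this compact set of Hurwitz matrices then yields the required uniform lower bound on $\sigma_{\min}(Y_{K_n})$, completing the proof.
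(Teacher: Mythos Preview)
Your proposal follows essentially the same three-stage route as the paper: a continuous-time Cost Difference Lemma obtained by subtracting the two Riccati equations and rewriting around the Hurwitz matrix $F_{K'}=A-BK'+\gamma^{-2}DD^\top P_{K'}$, matrix-wise monotonicity of $\{P_{K_n}\}$ under the prescribed stepsizes, and a telescoping trace argument. Your Cost Difference Lemma and the ensuing bracket computations for the two updates match the paper's exactly.

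The only substantive divergence is in the last step, where you need $\inf_n \sigma_{\min}(Y_{K_n})>0$ for $Y_{K_n}$ solving $F_{K_n}Y+YF_{K_n}^\top+I=0$. The paper does not rely on compactness of the trajectory inside the Hurwitz set; instead it invokes the explicit Lyapunov lower bound $\sigma_{\min}(Y)\geq 1/(2\|F\|)$ (Shapiro's inequality, packaged as a separate lemma), and then bounds $\|F_{K_n}\|$ via the Riccati equation and a numerical-radius estimate. Your compactness-plus-continuity phrasing is slightly imprecise as stated: you have shown $\{F_{K_n}\}$ is \emph{bounded} (via your clean contraction argument for $\|K_n\|$ and the monotone bound $P_{K_n}\le P_{K_0}$), but you have not shown its closure stays inside the Hurwitz region, so continuity of $F\mapsto Y(F)$ alone does not immediately give a uniform lower bound. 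The easiest patch is precisely the paper's: once you have $\sup_n\|F_{K_n}\|<\infty$, the Shapiro bound gives $\sigma_{\min}(Y_{K_n})\geq 1/(2\sup_n\|F_{K_n}\|)>0$ without any need to control the closure. With that substitution, your argument is complete and coincides with the paper's. Your explicit recursion bound on $\|K_n\|$ is in fact a cleaner way to obtain an $N$-independent constant than the paper's somewhat informal appeal to a finite-horizon bound $\cC_K^N$.
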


The proof of Theorem \ref{theorem:global_exact_conv_ct} is deferred to \S\ref{sec:proof_theorem:global_exact_conv_ct}. Note that the controllability  assumption made in Proposition  \ref{coro:opt_control_form_cont} is not required here. In other words, even if there might be multiple stationary points, the updates \eqref{eq:exact_npg_ct} and  \eqref{eq:exact_gn_ct} can still avoid the spurious ones and converge to the globally optimal  one $K^*$. This can be viewed as another implication of \emph{implicit regularization} of the natural PG and Gauss-Newton methods: always avoiding bad local minima and converging to the global optimal one. 
%has been , and is satisfied if $DD^\top>0$. Hence, the updates \eqref{eq:exact_npg_ct} and  \eqref{eq:exact_gn_ct} converge to the globally optimal policy in this case. 

%\issue{XXXXXXXXXXX TO BE CHANGED XXXXXXXXXXX}

% that the pair $(A-BK+\gamma^{-2}DD^\top P_{K},D)$ is controllable, is automatically satisfied for continuous-time LEQR problem where $DD^\top=W>0$, namely, Theorem \ref{theorem:global_exact_conv_ct} establishes the \emph{global}  convergence to the \emph{global} optimum. 
%\issue{
%\noindent XXXXXX
%Discussion on the global convergence results. Note that for general cases, i.e., when $DD^\top$ not $>0$, we can only show convergence to \emph{a class of stationary points} where $\|R K_n-B^\top P_{K_n}\|_F^2\to 0$. These points become \emph{a unique global optimum} when that pair is observable.
%\noindent XXXXXX
%}

Although there is no existing result on the global convergence of policy gradient for  \emph{continuous-time} LQR,  we believe that using similar techniques as in \cite{fazel2018global}, global linear rate can be achieved. In comparison, only sublinear rate can be shown for  global convergence for the continuous-time  mixed design.   But still, (super)-linear rates can be established locally around the optimum, whose  proof is provided in \S\ref{sec:proof_theorem:local_exact_conv_ct}. 

\begin{theorem}[Local (Super-)Linear  Convergence   for Continuous-Time  Mixed Design]\label{theorem:local_exact_conv_ct} 
Suppose that the  conditions  in Theorem \ref{theorem:global_exact_conv_ct} hold, and additionally  $DD^\top>0$ holds. Then, under the 
%following 
stepsize choices  as in Theorem \ref{thm:stability_update_ct}, both  updates  \eqref{eq:exact_npg_ct} and  \eqref{eq:exact_gn_ct}  converge to the optimal control gain $K^*$ with \emph{locally linear} rates,  in the sense that the objective $\{\cJ(K_n)\}$ defined in \eqref{equ:form_J1}  converges to $\cJ(K^*)$ with linear rate. In addition, if $\eta=1/2$,  the Gauss-Newton update \eqref{eq:exact_gn_ct} converges to   $K^*$ with locally \emph{Q-quadratic}  rate. 
\end{theorem}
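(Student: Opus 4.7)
\medskip
\noindent\textbf{Proof plan.}
The plan is to mirror the discrete-time argument of \S\ref{sec:proof_theorem:local_exact_conv}, with each ingredient tailored to the CT Riccati structure. First, I would establish a continuous-time cost difference lemma: for $K,K'\in\cK$, starting from the Riccati equation \eqref{equ:cont_riccati} for $P_K$, substituting $K'$, and completing squares gives
\begin{align*}
&(A-BK')^\top P_K + P_K(A-BK') + C^\top C + (K')^\top R K' + \gamma^{-2} P_K DD^\top P_K \\
&\qquad = (K'-K)^\top E_K^{c} + (E_K^{c})^\top (K'-K) + (K'-K)^\top R(K'-K),
\end{align*}
where $E_K^{c} := RK - B^\top P_K$ is the one-half policy-gradient residual. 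Subtracting from the Riccati equation of $P_{K'}$ and using the symmetric splitting $P_{K'}DD^\top P_{K'} - P_K DD^\top P_K = \tfrac12 (P_{K'}-P_K)DD^\top(P_K+P_{K'}) + \tfrac12 (P_K+P_{K'})DD^\top(P_{K'}-P_K)$, yields a Lyapunov equation for $\Delta P := P_{K'}-P_K$ whose closed-loop matrix is $A-BK'+\tfrac{\gamma^{-2}}{2}DD^\top(P_K+P_{K'})$. Hurwitzness of this averaged matrix, which follows from the convexity in $P$ of the strict LMI \eqref{equ:cont_equiva_set_cK_cond2} together with $K,K'\in\cK$, lets me read off upper and lower integral representations of $\Delta P$ analogous to \eqref{eq:CDL_upper}--\eqref{eq:CDL_lower}.

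Next, I would set $K'=K^*$ in the lower bound; since $E_{K^*}^{c}=0$ by Proposition \ref{coro:opt_control_form_cont}, completing squares against $R$ produces the local \emph{gradient-dominance} inequality
\begin{align*}
\cJ(K)-\cJ(K^*) \le \frac{\|\cW_{K,K^*}\|}{\sigma_{\min}(R)}\,\|E_K^{c}\|_F^2,
\end{align*}
where $\cW_{K,K^*}$ is the unique solution of a Lyapunov equation with the averaged closed-loop matrix at $K^*$ and forcing $DD^\top$. The hypothesis $DD^\top>0$ is essential: it forces $\cW_{K,K^*}$ to be positive definite and uniformly bounded on a neighborhood of $K^*$ by continuity of $P_K$ and of spectral radius. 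For both NPG and GN, plugging the explicit search direction into the \emph{upper} bound of the cost-difference lemma and completing squares (as in the proof of Theorem \ref{theorem:global_exact_conv_ct}) yields a per-step decrease $\tr(P_{K'}DD^\top)-\tr(P_K DD^\top) \le -c_\eta \|E_K^{c}\|_F^2$ for some $c_\eta>0$ depending only on $\eta$, $\|R\|$, and $\sigma_{\min}(DD^\top)$. Chaining the two bounds produces the uniform linear contraction
\begin{align*}
\cJ(K_{n+1})-\cJ(K^*) \le \Bigl(1-\frac{c_\eta\sigma_{\min}(R)}{\overline{\cW}_r}\Bigr)\bigl(\cJ(K_n)-\cJ(K^*)\bigr),
\end{align*}
once $K_n$ enters a suitable neighborhood of $K^*$; the matrix monotonicity $P_{K_{n+1}}\le P_{K_n}$ established in the proof of Theorem \ref{theorem:global_exact_conv_ct} together with the global convergence $K_n\to K^*$ confines the tail of the iterates to such a neighborhood, validating the uniform bound $\overline{\cW}_r$.

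For the Q-quadratic claim with $\eta=1/2$, the GN update simplifies to $K'=R^{-1}B^\top P_K$, and since $K^*=R^{-1}B^\top P_{K^*}$ by Proposition \ref{coro:opt_control_form_cont}, $\|K'-K^*\|_F \le \|R^{-1}\|\,\|B\|\,\|P_K-P_{K^*}\|_F$. Applying the cost-difference upper bound with $(K,K')$ replaced by $(K^*,K')$ and using $E_{K^*}^{c}=0$ yields $\cJ(K')-\cJ(K^*)\le c'\|K'-K^*\|_F^2 \le c''\|P_K-P_{K^*}\|_F^2$. The hypothesis $DD^\top>0$ together with the monotonicity $P_K\ge P_{K^*}$ gives
\begin{align*}
\cJ(K)-\cJ(K^*)=\tr\bigl((P_K-P_{K^*})DD^\top\bigr) \ge \frac{\sigma_{\min}(DD^\top)}{\sqrt{m}}\,\|P_K-P_{K^*}\|_F,
\end{align*}
so $\cJ(K')-\cJ(K^*)\le c'''\bigl(\cJ(K)-\cJ(K^*)\bigr)^2$ with a uniform $c'''$ on the neighborhood, proving Q-quadratic convergence.

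The main obstacle I anticipate is the cost-difference lemma itself: because the CT Riccati equation is quadratic in $P$, the Lyapunov equation for $\Delta P$ is not off-the-shelf, and the symmetric $\tfrac12$-splitting above is what produces a well-posed equation. The delicate step is certifying that the averaged closed-loop matrix $A-BK'+\tfrac{\gamma^{-2}}{2}DD^\top(P_K+P_{K'})$ is Hurwitz; this is handled by exploiting that the strict LMI in Lemma \ref{lemma:cont_bounded_real_lemma} is convex in $P$, so taking the average of the certificates $P_K$ and $P_{K'}$ (both of which satisfy \eqref{equ:cont_equiva_set_cK_cond2} for their respective controllers) gives a valid certificate for the required stability.
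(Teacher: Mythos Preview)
Your overall architecture mirrors the paper's proof: a cost-difference lemma, gradient dominance via the lower bound at $K'=K^*$, per-step decrease from Theorem~\ref{theorem:global_exact_conv_ct}, and Q-quadratic via $K'-K^*=R^{-1}B^\top(P_K-P_{K^*})$. The structural difference is in the cost-difference lemma itself. You use a \emph{symmetric} splitting of $P_{K'}DD^\top P_{K'}-P_KDD^\top P_K$, producing a single exact Lyapunov equation for $\Delta P$ with averaged closed-loop $A-BK'+\tfrac{\gamma^{-2}}{2}DD^\top(P_K+P_{K'})$. The paper instead uses the two \emph{asymmetric} splittings, yielding two identities---one with closed-loop $A-BK'+\gamma^{-2}DD^\top P_{K'}$ (always Hurwitz since $K'\in\cK$) and one with $A-BK'+\gamma^{-2}DD^\top P_K$ (an extra hypothesis)---and the sign of the residual term $\pm\gamma^{-2}\Delta P\,DD^\top\Delta P$ converts these into the upper and lower bounds \eqref{eq:CDL_upper_ct}--\eqref{eq:CDL_lower_ct}. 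The paper's version buys a globally valid upper bound with no additional stability check.

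There is a genuine gap in your Hurwitzness argument for the averaged matrix. You invoke convexity of the LMI \eqref{equ:cont_equiva_set_cK_cond2} in $P$ and that $P_K,P_{K'}$ are certificates ``for their respective controllers.'' But convexity of $\Phi_{K'}(P):=(A-BK')^\top P+P(A-BK')+C^\top C+(K')^\top RK'+\gamma^{-2}PDD^\top P$ only lets you average two certificates for the \emph{same} controller $K'$; here $P_K$ satisfies $\Phi_K(P_K)=0$, not $\Phi_{K'}(P_K)<0$, so nothing forces $\Phi_{K'}(\tfrac12(P_K+P_{K'}))<0$. Moreover, even a $P$ satisfying $\Phi_{K'}(P)<0$ does not by itself make $A-BK'+\gamma^{-2}DD^\top P$ Hurwitz---that conclusion in Lemma~\ref{lemma:cont_bounded_real_lemma} is specific to the \emph{stabilizing} Riccati solution. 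The correct route is exactly the continuity argument the paper uses for its lower bound: since $A-BK^*+\gamma^{-2}DD^\top P_{K^*}$ is Hurwitz and $K\mapsto P_K$ is continuous, there is a ball $\cB(K^*,r)$ on which your averaged matrix (or the paper's $A-BK^*+\gamma^{-2}DD^\top P_K$) stays Hurwitz with uniform margin, and global convergence from Theorem~\ref{theorem:global_exact_conv_ct} eventually traps the iterates there. With that replacement your plan goes through.
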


As for the discrete-time results, the  locally linear rates are caused by the fact that  the \emph{gradient dominance} property holds only locally for mixed design problems. Moreover, notice that the Gauss-Newton update here with stepsize $\eta=1/2$ resembles the \emph{policy iteration} algorithm for continuous-time LQR \citep{kleinman1968iterative}, where $P_K$ is the solution to a Lyapunov equation, instead of the  Riccati equation \eqref{equ:cont_riccati} in our problem. 
Hence, the local Q-quadratic rate here is largely expected as in \cite{kleinman1968iterative}. 
%This type of  globally sublinear and locally (super-)linear convergence resembles  the behavior  of (Quasi)-Newton methods for nonconvex optimization  \cite{nesterov2006cubic,ueda2010convergence}, and policy gradient  methods for zero-sum LQ games \cite{zhang2019policy}. 

\begin{remark}[Model-Free Algorithms]\label{remark:discuss_cont}
For continuous-time settings, the relationship between $\cH_2/\cH_\infty$ mixed design, risk-sensitive control (continuous-time LEQG), maximum entropy $\cH_\infty$ control, and zero-sum LQ  differential games have also been established in the literature \citep{mustafa1989relations,jacobson1973optimal}. Hence, model-free algorithms can also be developed from the perspective of LQ games, as we discussed in \S\ref{sec:discussion} for the  discrete-time setting. 
\end{remark}

%\issue{XXXX}

\clearpage

\section{Supplementary  Proofs}\label{sec:supp_proof}
In this section, we provide supplementary proofs for several results stated before.

\subsection{Proof of Lemmas   
%\ref{lemma:nonconvex_dis} and \ref{lemma:nonconvex_cont}
\ref{lemma:discrete_bounded_real_lemma} and \ref{lemma:cont_bounded_real_lemma}}\label{sec:proof_lemma_bounded_real_lemma}
%\issue{
%\begin{proof} 
As a surrogate, we also define a discrete-time transfer function $\tilde \cT(K)$  as 
%\#\label{equ:dynamic_sys_real_cont}
\#\label{equ:dynamic_sys_surro}
\renewcommand\arraystretch{1.3}
\tilde \cT(K):=\mleft[
\begin{array}{c|c}
  A-BK &  1/{\gamma} \cdot D \\
  \hline
  (C^\top C+K^\top R K)^{1/2}& 0
\end{array}
\mright]. 
\#
%\#
% a transfer function $\tilde \cT(K)$ as 
%\$
%\renewcommand\arraystretch{1.3}
%\tilde \cT(K):=\mleft[
%\begin{array}{c|c}
%  A-BK &  1/{\gamma} \cdot D \\
%  \hline
%  (C^\top C+K^\top R K)^{1/2}& \bm{0} 
%\end{array}
%\mright]. 
%\$
%First, since $Q>0$, the pair $(A-BK,Q+K^\top R K)$ is observable. 
%With the transfer function $\tilde \cT(K)$ defined in \eqref{equ:dynamic_sys_real},
First note that since $\rho(A-BK)<1$,  $\tilde \cT(K)$ is a proper and real rational stable transfer matrix. Thus, the discrete-time  Bounded Real Lemma \cite[Theorem $21.12$, page $539$]{zhou1996robust} can be applied. 
Note that the statement ($c$) in  \cite[Theorem $21.12$]{zhou1996robust} is equivalent to the second condition in the lemma. 
%conditions in the definition of $\cK$ in \eqref{equ:define_cK}. 
%i)-iii) in Theorem \ref{thm:stability_update}. 
In particular,   
the Riccati equation   \eqref{equ:discret_riccati}  
%\eqref{equ:def_PK}-\eqref{equ:def_tPK}
 here is identical to the equation in \cite[Theorem $21.12$ ($c$)]{zhou1996robust}, and the inequality condition in \cite[Theorem $21.12$ ($c$)]{zhou1996robust} translates to $I-\gamma^{-2} D^{\top}P_{K}D>0$. 
% , which is equivalent to condition $W^{-1}-\gamma P_{K}>0$. 
 Moreover, the stability argument in \cite[Theorem $21.12$ ($c$)]{zhou1996robust} translates to that the matrix $(I-\gamma^{-2} DD^\top P_{K})^{-1}(A-BK)$ is stable. 
In addition, by \cite[Proposition $1$]{ionescu1992computing}, if such a stabilizing solution $P_K$ exists, it must be unique. 
%, i.e., $\rho\big((A-BK)^\top(I-\gamma P_KW)^{-1}\big)<1$.  
Hence, by the equivalence between   ($a$) and ($c$) in \cite[Theorem $21.12$]{zhou1996robust}, we conclude that the second condition  is equivalent to  $\|\tilde \cT(K)\|_\infty<1$, which   is further equivalent to   $\|\cT(K)\|_\infty<{\gamma}$.

Moreover, by the equivalence between statements  ($c$) and ($d$) in \cite[Theorem $21.12$]{zhou1996robust}, 
%, for any $K\in\cK$, 
%the conditions i)-iii) in our theorem can be proved by showing 
the following linear matrix inequality (LMI) condition also holds equivalently: 
%, namely, statement ($d$) of  \cite[Theorem $21.12$]{zhou1996robust} holds: 
there exists some $P>0$, such that 
%\#\label{equ:LMI_cond}
%\mleft[
%\begin{array}{cc}
%  (A-BK')^\top P(A-BK')-P+Q+K'^\top R K'  & (A-BK')^\top P \\
%  P(A-BK')& P-\frac{1}{\gamma}W^{-1} 
%\end{array}
%\mright]<0.
%\# 
%Suppose 
$I-\gamma^{-2} D^\top P D>0$ and 
\$
%then by Schur complement, \eqref{equ:LMI_cond} is equivalent to the following inequality:
&(A-BK)^\top P(A-BK)-P+C^\top C+K^\top R K+(A-BK)^\top PD(\gamma^2I-D^\top P D)^{-1}D^\top P(A-BK)<0,
\$
which reduces to the third condition in the lemma. This proves Lemma \ref{lemma:discrete_bounded_real_lemma}. 

By definition of  $\cT(K)$ in \eqref{equ:mixed_design_transfer2}, since $K$ is stabilizing, we know that $\cT(K)$ is proper and real rational stable. Thus, the continuous-time  Bounded Real Lemma  \cite[Corollary $13.24$, page $352$]{zhou1996robust} can be applied. The statement ($iv$) in \cite[Corollary $13.24$]{zhou1996robust} is equivalent to the second condition in the lemma.   
%i)-iii) in Theorem \ref{thm:stability_update}. 
In particular,   the matrices $R$ and $H$ in \cite[Corollary $13.24$]{zhou1996robust} have the specific forms of $R:= \gamma^2\cdot I$ and 
\$
H:=\mleft[
\begin{array}{cc}
  (A-BK)^\top   & 1/\gamma^2\cdot D D^\top\\
  -(C^\top C+K^\top RK)& -(A-BK)
\end{array}
\mright], 
\$
so that the Riccati equation induced by $H$ is  identical to \eqref{equ:cont_riccati}. 
%By Assumption \ref{assum:coeff_matrices}, 
%Since $Q>0$, the pair $(A-BK,Q+K^\top R K)$ is observable. 
We also note that by \cite[Theorem $1$]{molinari1973stabilizing}, if such a stabilizing solution exists, it must be unique. 
Thus, the equivalence between the first two conditions follows by  the equivalence of statements $(i)$ and $(iv)$ in  \cite[Corollary $13.24$]{zhou1996robust}.

Define a surrogate transfer function $\tilde \cT(K)$ as in \eqref{equ:dynamic_sys_surro}. 
Then, $\|\cT(K)\|_\infty<{\gamma}$ is equivalent to $\|\tilde\cT(K)\|_{\infty}<1$. 
By the continuous-time KYP Lemma  \cite[Lemma $7.3$, page $212$]{dullerud2013course}, this is equivalent to the statement that there exists some $P>0$ such that 
\$
\mleft[
\begin{array}{cc}
  (A-BK)^\top P+P(A-BK)+C^\top C+K^\top R K  & 1/{\gamma}\cdot PD \\
 1/{\gamma}\cdot D^\top P& -I 
\end{array}
\mright]<0. 
\$
%which is equivalent to the LMI \eqref{equ:cont_equiva_set_cK_cond_1}.
Since $-I<0$, by Schur complement, it is also equivalent to \eqref{equ:cont_equiva_set_cK_cond2}, which  completes the proof of Lemma \ref{lemma:cont_bounded_real_lemma}. 
\hfill$\QED$

\subsection{Proof of Lemmas   
%\ref{lemma:nonconvex_dis} and \ref{lemma:nonconvex_cont}
\ref{lemma:nonconvex_Hinf_norm_set} and \ref{lemma:nonconvex_Hinf_norm_set_cont}}\label{sec:proof_lemma:nonconvex}
Recall that for both discrete-time and continuous-time settings, we consider the transfer function $\cT(K)$ defined in \eqref{equ:mixed_design_transfer2} of identical form. 
Consider the example with matrices $A,~B,~Q,~R$ all being $3\times 3$ identity matrices, $W=D^\top D=0.01\cdot I$. For the discrete-time setting, we choose 
\$
K_1=\mleft[
\begin{array}{ccc}
  1 & 0 & -1 \\
  -1 & 1 & 0 \\
  0 & 0 & 1
\end{array}
\mright],\quad 
K_2=\mleft[
\begin{array}{ccc}
  1 & -2 & 0 \\
  0 & 1 & 0 \\
  -1 & 0 & 1
\end{array}
\mright],
\$
and $K_3=(K_1+K_2)/2$, 
then all control gains  $K_1,~K_2,~K_3$  stabilize the system $(A,B)$, since  $\rho(A-BK_1),\rho(A-BK_2)=0<1,~\rho(A-BK_3)=0.8660<1$.  
Nonetheless, we  have $\|\cT(K_1)\|_\infty=0.4350$, $\|\cT(K_2)\|_\infty=0.7011$, while  $\|\cT(K_3)\|_\infty=1.6575$. Hence, the ${\gamma}$-lower-level set of $\cH_\infty$-norm of $\cT(K)$ is nonconvex for any ${\gamma}\in(0.7011,1.6575)$. 

Similarly, for the continuous-time setting, we choose 
\$
K_1=\mleft[
\begin{array}{ccc}
  2 & 0 & -1 \\
  -1 & 2 & 0 \\
  0 & 0 & 2
\end{array}
\mright],\quad 
K_2=\mleft[
\begin{array}{ccc}
  2 & -2 & 0 \\
  0 & 2 & 0 \\
  -1 & 0 & 2
\end{array}
\mright],
\$
and $K_3=(K_1+K_2)/2$, then $K_1,~K_2,~K_3$ all stabilize the system $(A,B)$, since  
\$
\max_{i\in[3]}~[\Re\lambda_{i}(A-BK_1)]=\max_{i\in[3]}~[\Re\lambda_{i}(A-BK_2)]=-1<0,\quad \max_{i\in[3]}~[\Re\lambda_{i}(A-BK_3)]=-0.134<0.
\$
However, $\|\cT(K_1)\|_\infty=0.3860$, $\|\cT(K_2)\|_\infty=0.5306$, while $\|\cT(K_3)\|_\infty=1.1729$. Therefore, $\cK$ is not convex when ${\gamma}\in(0.5306,1.1729)$, which completes the proof. 
\hfill$\QED$

\subsection{Proof of Lemmas  \ref{lemma:mixed_design_no_coercivity} and \ref{lemma:mixed_design_no_coercivity_cont}}\label{sec:proof_lemma_mixed_design_no_coercivity}
%\issue{XXXXXXXXXXXXXXXXXXX}
Note that $\|K\|$ may be unbounded  for  $K\in\cK$. We show via counterexamples that for $K$ with $\|K\|<\infty$, the cost does not necessarily goes to infinity as  $K$ approaches the boundary of $\cK$. Suppose $DD^\top >0$ is full-rank. 
For cost $\cJ(K)$ of form  \eqref{equ:form_J1}, it  remains finite as long as $P_K$ is finite. For continuous-time settings, by the Bounded Real Lemma, i.e., Lemma \ref{lemma:cont_bounded_real_lemma},   for any $K\in\cK$, $A-BK+\gamma^{-2}DD^\top P_K$ is always Hurwitz. Then, if  there is a sequence $\{K_n\}$ approaching $\partial \cK$ such that $\lambda_{\max}(P_{K_n})\to \infty$ as $n\to\infty$, then there must exist some $N>0$ such that for $n\geq N$, the real part of $A-BK+\gamma^{-2}DD^\top P_K$ is greater than $0$, causing a contradiction. Thus, $\cJ(K)$ in \eqref{equ:form_J1} is always finite. For discrete-time settings, by Lemma \ref{lemma:discrete_bounded_real_lemma}, $I-\gamma^{-2}D^\top P_K D>0$ always holds for $K\in\cK$. Thus, $\lambda_{\max}(P_{K})$ also has to be finite. 

For cost $\cJ(K)$ of the forms  \eqref{equ:form_J2} and \eqref{equ:form_J3},  with  $DD^\top >0$, it is finite if both $P_K$ is finite and $I-\gamma^{-2}D^\top P_K D>0$ is non-singular. The first condition is not violated  as already shown above. We now show via a $1$-dimensional example that the second condition is not violated either as $K\to\partial \cK$. In fact, the Riccati equation 
%  \eqref{equ:cont_riccati} and 
  \eqref{equ:discret_riccati}  that defines  $P_K$ becomes a quadratic equation for the $1$-dimensional case: 
\#\label{equ:quadratic_1_d_discrete}
%&{\rm \textbf{Continuous-time:}}\qquad\qquad\quad \gamma^{-2} D^2  P_K^2+2(A-BK) P_K+C^2+ R K^2=0,
%\label{equ:cont_P_solution}
%\\
%&{\rm \textbf{Discrete-time:}}\quad 
D^2P_K^2-[\gamma^2-(A-BK)^2\gamma^2 +(C^2+RK^2)D^2]P_K +(C^2+RK^2)\gamma^2=0.
%\label{equ:discrete_P_solution}
\#
Thus, it is possible that the condition for the \emph{existence} of  solutions to the quadratic equations is \emph{restricter} than the conditions on $P	_K$ in the Bounded Real Lemma. Specifically, the solutions have the following  form
\#
%&{\rm \textbf{Continuous-time:}}\qquad\quad P_K=\frac{-2(A-BK)\pm\sqrt{{[2(A-BK)]^2-4\gamma^{-2}D^2(C^2+RK^2)}}}{2\gamma^{-2}D^2},\label{equ:cont_P_solution}\\
%&{\rm \textbf{Discrete-time:}}\quad 
&P_K=\frac{\gamma^2-(A-BK)^2\gamma^2 +(C^2+RK^2)D^2}{2D^2}\label{equ:discrete_P_solution}\\
&\qquad\qquad\qquad\qquad\qquad\pm\frac{\sqrt{[\gamma^2-(A-BK)^2\gamma^2 +(C^2+RK^2)D^2]^2-4D^2(C^2+RK^2)\gamma^2}}{2D^2}.\notag
\#
%For \eqref{equ:cont_P_solution}, 
%%if $B^2<\gamma^{-2}D^2R$, then 
%%the leading term of 
%the discriminant $\blacktriangle:=[2(A-BK)]^2-4\gamma^{-2}D^2(C^2+RK^2)\geq 0$
%% is negative, and thus $\blacktriangle\geq 0$ 
% yields a closed set of $K$. Suppose the choices of $A,B,C,D,R,\gamma$ ensure that $\blacktriangle=0$ admits at least one solution. 
%  Moreover, the matrices  
%\$
%A-BK+\gamma^{-2}D^2P_K=\pm\sqrt{\blacktriangle}/2,
%\$
%where one of them is always stabilizing. 
%In addition, since $A-BK<0$, it follows that $P_K\geq 0$. 
%This means that as long as \eqref{equ:cont_P_solution} admits two solutions, one of them satisfies Bounded Real Lemma, and thus $K\in\cK$. 
%As a result, as $K$ approaches the boundary of $\{K\given \blacktriangle\geq 0\}$, which also constitutes the boundary $\partial\cK$, the stabilizing $P_K$ approaches $-(A-BK)/(\gamma^{-2}D^2)>0$, a finite value. The above argument can be numerically verified by choosing $A=1.75$, $B=2$, $C^2=1$, $R=1$, $D^2=0.01$, $\gamma=8.1081$. In this case, if $K\to 0.8832$, which is the point such that $\blacktriangle\to 0$, then $A-BK+\gamma^{-2}D^2P_K<0$, and $P_K\to 3.2525>0$, a finite value. 
%Similarly, for 
%For discrete-time solution \eqref{equ:discrete_P_solution}, 
Denote the discriminant of \eqref{equ:quadratic_1_d_discrete}  by $\blacklozenge$, and let $\blacklozenge=0$ admit at least one solution. 
Moreover, 
\$
1-\gamma^{-2}D^2P_K&=1-\frac{1-(A-BK)^2 +\gamma^{-2}(C^2+RK^2)D^2}{2}\pm\frac{\gamma^{-2}\sqrt{\blacklozenge}}{2}\\
&=\frac{1+(A-BK)^2}{2}-\frac{\gamma^{-2}(C^2+RK^2)D^2}{2}\pm\frac{\gamma^{-2}\sqrt{\blacklozenge}}{2},
\$
which, as $\blacklozenge\to 0$, can be greater than $0$ with small enough $D$ and large enough $\gamma$. 
Additionally, if the choices of $A,B,C,D,R,\gamma$ ensure that $(A-BK)(1-\gamma^{-2}P_KD^2)^{-1}<1$, then such a $K\in\cK$. 
This way, as $K$ approaches the boundary of $\{K\given \blacklozenge\geq 0\}$, it is also approaching $\partial \cK$, while the value of $P_K$ approaches $[\gamma^2-(A-BK)^2\gamma^2 +(C^2+RK^2)D^2]\cdot(2D^2)^{-1}$, a finite value. The above argument can be verified numerically by  choosing $A=2.75$, $B=2$, $C^2=1$, $R=1$, $D^2=0.01$, $\gamma=0.2101$. In this case, $1-\gamma^{-2}D^2P_K\to 0.2354>0$ and $(A-BK)(1-\gamma^{-2}P_KD^2)^{-1}\to 0.9998<1$ if $K\to 1.2573$, which is the value that makes $\blacklozenge\to 0$. However, the corresponding $P_K\to [\gamma^2-(A-BK)^2\gamma^2 +(C^2+RK^2)D^2]\cdot(2D^2)^{-1}=3.3752>0$, a finite value that also satisfies $1-\gamma^{-2}D^2P_K>0$. 
Hence, both the costs in \eqref{equ:form_J2} and \eqref{equ:form_J3} approach a finite value, which  completes the proof.
\hfill$\QED$

\subsection{Proof of Lemmas  \ref{lemma:differentiability_policy_grad}}\label{sec:proof_lemma_differentiability_policy_grad}
%\begin{proof}
%	The proof follows by using the implicit function theorem \cite{krantz2012implicit}.

%\vspace{3pt}
%~\\
%\noindent{\bf{Discrete-Time:}} 
%  	\vspace{3pt}

Note that $\cJ(K)$ defined in \eqref{equ:form_J2}  is differentiable with respect to  $P_K$, provided that 
%$I-\gamma^{-2} P_KDD^\top$ is non-singular, i.e.,  
$\det (I-\gamma^{-2} P_KDD^\top)> 0$. This holds for any $K\in\cK$ since  by Lemma \ref{lemma:discrete_bounded_real_lemma}
%\small
\$
I-\gamma^{-2} D^\top P_KD>0\Rightarrow \det (I-\gamma^{-2} D^\top P_KD)=\det (I-\gamma^{-2}  P_KDD^\top)>0. 
%\Rightarrow \det (I-\gamma P_KW)>0,
\$
\normalsize
where we have used Sylvester's determinant theorem that $\det (I+AB)=\det(I+BA)$. 
%the fact that $W^{-1}>0$ and $\det(AB)=\det A\cdot\det B$. 
Thus, it suffices to show that $P_K$ is differentiable with respect to $K$.

Recall that 
\#\label{equ:def_tP_discret}
\tP_K=P_K+P_KD(\gamma^2I-D^\top P_K D)^{-1}D^\top P_K=(I-\gamma^{-2}P_KDD^\top)^{-1}P_K, 
\#
where the second equation uses matrix inversion lemma, 
and define the operator $\Psi:\RR^{m\times m}\times \RR^{d\times m}\to \RR^{m\times m}$ as 
	\$
	\Psi(P_K,K):&=C^\top C+K^\top RK+(A-BK)^\top \tP_K(A-BK).  
%	\\
%	&=C^\top C+K^\top RK+(A-BK)^\top (P^{-1}_K-\gamma W)^{-1}(A-BK),
	\$ 
%	where we use \eqref{equ:def_tP_discret} to rewrite $\tP_K$.  
	Note that $\Psi$ is continuous with respect to both  $P_K$ and $K$, provided that $\gamma^2I-D^\top P_K D>0$. Also note that the Riccati equation \eqref{equ:discret_riccati} can be written as 
	\#\label{equ:Psi_Mod_Bellman}
	\Psi(P_K,K)=P_K.
	\# 
	Notice the fact 
%	relationship between  Kronecker product and matrix vectorization, it follows 
	that for any matrices $A$, $B$, and $X$ with proper dimensions
	\#\label{equ:prop_vect_product}
	\vect(AXB) =\big(B^\top \otimes A\big) \vect(X). 
	\#
	Thus, by vectorizing both sides of \eqref{equ:Psi_Mod_Bellman}, 
	we have
	\#\label{equ:vect_fixed_point}
	&\vect\big(\Psi(P_K,K)\big)=\vect(C^\top C+K^\top RK)+\vect\big((A-BK)^\top \tP_K(A-BK)\big)
	\\
	&\quad=\vect(C^\top C+K^\top RK)+\big[(A-BK)^\top \otimes (A-BK)^\top\big]\cdot\vect\big((I-\gamma^{-2}P_KDD^\top)^{-1}P_K\big)
	=\vect(P_K). \notag
	\#
	By defining $\tilde\Psi:\RR^{m^2}\times \RR^{dm}\to \RR^{m^2}$ as a new mapping such that $\tilde\Psi\big(\vect(P_K),\vect(K)\big):=\vect\big(\Psi(P_K,K)\big)$, the fixed-point equation \eqref{equ:vect_fixed_point} can be re-written as 
	\#\label{equ:vec_fixed_point}
	\tilde\Psi\big(\vect(P_K),\vect(K)\big)=\vect(P_K).
	\# 
	Since $\vect$ is a linear mapping, 
%	in order to show that $P_K$ is differentiable w.r.t. $K$, 
	it now suffices to show that $\vect(P_K)$ is differentiable with respect to $\vect(K)$.   To this end,  we apply the  implicit function theorem \citep{krantz2012implicit} on the fixed-point equation \eqref{equ:vec_fixed_point}. 
	To ensure the applicability, we first note that the set $\cK$ defined in \eqref{equ:define_cK} is an open set. In fact, by Lemma  \ref{lemma:discrete_bounded_real_lemma}, for any $K\in\cK$, there exists some $P>0$ such that 
	the two LMIs in \eqref{equ:discrete_equiva_set_cK_cond} hold. Since the inequality is strict, there must exists a small enough ball around $K$ such for any $K'$ in the ball, the LMIs still hold. Hence, the set $\cK$ is open by definition. 
	
	Moreover,  by the chain rule of  matrix differentials \cite[Theorem $9$]{magnus1985matrix}, we  know that 
	\small
	\#\label{equ:matrix_diff_Pt}
	\frac{\partial\vect\big((I-\gamma^{-2}P_KDD^\top)^{-1}P_K\big)}{\partial \vect^\top (P_K)}
%	&=(P_K\otimes I)\cdot\frac{\partial \vect[(I-\gamma^{-2}P_KDD^\top)^{-1}]}{\partial \vect^\top (P_K)}+[I\otimes (I-\gamma^{-2}P_KDD^\top)^{-1}]\cdot\frac{\partial\vect(P_K)}{\partial \vect^\top(P_K)}\notag\\
	=(P_K\otimes I)\cdot\frac{\partial \vect[(I-\gamma^{-2}P_KDD^\top)^{-1}]}{\partial \vect^\top (P_K)}+I\otimes (I-\gamma^{-2}P_KDD^\top)^{-1},
	\#
	\normalsize
	where $I$ denotes the identity matrix of proper dimension.
	
	Now we claim that 
   \#\label{equ:matrix_diff_trash_1}
	\frac{\partial \vect[(I-\gamma^{-2}P_KDD^\top)^{-1}]}{\partial \vect^\top (P_K)}=[(
	\gamma^{-2} DD^\top )\cdot (I-\gamma^{-2}P_KDD^\top)^{-1}]\otimes (I-\gamma^{-2}P_KDD^\top)^{-1}.
	\#
	To show this, we compare the element at the $[(j-1)m+i]$-th row and the $[(l-1)m+k]$-th column of both sides  of \eqref{equ:matrix_diff_trash_1} with $i,j,k,l\in[m]$, where  both sides are matrices of dimensions  $m^2\times m^2$. On the LHS,  notice  that    
	\$
	\frac{\partial (I-\gamma^{-2}P_KDD^\top)^{-1}}{\partial [P_K]_{k,l}}=(I-\gamma^{-2}P_KDD^\top)^{-1}\cdot \frac{\partial (\gamma^{-2} P_K DD^\top)}{\partial [P_K]_{k,l}}\cdot (I-\gamma^{-2}P_KDD^\top)^{-1},
	\$
	which follows from     $(F^{-1})'=-F^{-1}F'F^{-1}$ for some matrix function $F$. Also notice that 
	\$
	\frac{\partial (\gamma^{-2} P_K DD^\top)}{\partial [P_K]_{k,l}}=\gamma^{-2} \left[\begin{matrix}
		\rule[3pt]{30pt}{0.4pt}&0&\rule[3pt]{30pt}{0.4pt}\\
%		 & \vdots &  \\
		[DD^\top]_{l,1}&\cdots & [DD^\top]_{l,m}  \\  
%		& \vdots & \\
		\rule[3pt]{30pt}{0.4pt}&0&\rule[3pt]{30pt}{0.4pt}
	\end{matrix}\right]\leftarrow k\text{-th row},
	\$ 
	where only the $k$-th row is non-zero and is filled with the $l$-th row of $DD^\top$. Due to these two facts, we have
	\#\label{equ:matrix_diff_trash_2}
	&\bigg[\frac{\partial \vect[(I-\gamma^{-2} P_K DD^\top)^{-1}]}{\partial \vect^\top (P_K)}\bigg]_{(j-1)m+i,(l-1)m+k}=\frac{\partial [(I-\gamma^{-2} P_K DD^\top)^{-1}]_{i,j}}{\partial [P_K]_{k,l}}\notag\\
	&\quad =\gamma [(I-\gamma^{-2} P_K DD^\top)^{-1}]_{i,k}\cdot\sum_{q=1}^m [DD^\top]_{l,q} \cdot[(I-\gamma^{-2}P_K DD^\top)^{-1}]_{q,j}.
	\#
	On the right-hand side of \eqref{equ:matrix_diff_trash_1},  we have 
\#\label{equ:matrix_diff_trash_3}
	&\big[[(
	\gamma^{-2} DD^\top)\cdot (I-\gamma^{-2} P_K DD^\top)^{-1}]\otimes (I-\gamma^{-2} P_KDD^\top)^{-1}\big]_{(j-1)m+i,(l-1)m+k}\notag\\
	&\quad=[(
	\gamma^{-2} DD^\top)\cdot (I-\gamma^{-2} P_K DD^\top)^{-1}]_{j,l}\cdot [(I-\gamma^{-2} P_KDD^\top)^{-1}]_{i,k}\notag\\
	&\quad =[(
	\gamma^{-2} DD^\top)\cdot (I-\gamma^{-2} P_K DD^\top)^{-1}]_{l,j}\cdot [(I-\gamma^{-2} P_KDD^\top)^{-1}]_{i,k},
	\#
	where the first equation follows from the definition of Kronecker product, and the second one is due to that  the matrix 
	\#\label{equ:symetrix_trash_1}
	(\gamma^{-2} DD^\top)\cdot (I-\gamma^{-2} P_K DD^\top)^{-1}
%	=\gamma^{-2} DD^\top[I+ P_KD(\gamma^{-2}I-D^\top P_K D)^{-1}D^\top]
=D(\gamma^2 I-D^\top P_K D)^{-1}D^\top
	\# is symmetric. 
	Thus, \eqref{equ:matrix_diff_trash_2} and \eqref{equ:matrix_diff_trash_3} are identical for any $(i,j,k,l)$, which proves \eqref{equ:matrix_diff_trash_1}. 
	
	By substituting \eqref{equ:matrix_diff_trash_1} into  \eqref{equ:matrix_diff_Pt}, we have
\$
		&\frac{\partial\vect\big((I-\gamma^{-2} P_K DD^\top)^{-1}P_K\big)}{\partial \vect^\top (P_K)}=(P_K\otimes I)\cdot[(
	\gamma^{-2} DD^\top)\cdot (I-\gamma^{-2} P_K DD^\top)^{-1}]\otimes (I-\gamma^{-2} P_K DD^\top)^{-1}\\
	&\qquad\qquad\qquad\qquad\qquad\qquad\qquad+I\otimes (I-\gamma^{-2} P_K DD^\top)^{-1}\notag\\
	&\quad= [(
	\gamma^{-2} P_KDD^\top)\cdot (I-\gamma^{-2} P_K DD^\top)^{-1}]\otimes (I-\gamma^{-2} P_K DD^\top)^{-1}+I\otimes (I-\gamma^{-2} P_K DD^\top)^{-1}\notag\\
	&\quad= [I+(
	\gamma^{-2} P_KDD^\top)\cdot (I-\gamma^{-2} P_K DD^\top)^{-1}]\otimes (I-\gamma^{-2} P_K DD^\top)^{-1} \\
	&\quad=(I-\gamma^{-2} P_K DD^\top)^{-1}\otimes (I-\gamma^{-2} P_K DD^\top)^{-1},
	\$
	where the second equation uses the fact that $(A\otimes B)(C\otimes D)=(AC)\otimes(BD)$,  the third one uses $(A\otimes B)+(C\otimes B)=(A+C)\otimes B$, and the last one uses matrix inversion lemma. Hence, by   \eqref{equ:matrix_diff_Pt}, we can write the partial derivative of $  \tilde\Psi\big(\vect(P_K),\vect(K)\big)$ as
	\$
	&\frac{\partial \tilde\Psi\big(\vect(P_K),\vect(K)\big)}{\partial \vect^\top (P_K)}=\big[(A-BK)^\top \otimes (A-BK)^\top\big]\cdot \frac{\partial\vect\big((I-\gamma P_KDD^\top)^{-1}P_K\big)}{\partial \vect^\top (P_K)}\notag\\
	&\quad=\big[(A-BK)^\top \otimes (A-BK)^\top\big]\cdot  \big[(I-\gamma P_KDD^\top)^{-1}\otimes (I-\gamma P_KDD^\top)^{-1}\big]\notag\\
	&\quad=\big[(A-BK)^\top(I-\gamma P_KDD^\top)^{-1}\big] \otimes \big[(A-BK)^\top(I-\gamma P_KDD^\top)^{-1}\big]. 
	\$
	Therefore, the partial derivative 
	\small
\$
	&\frac{\partial  \big[\tilde\Psi\big(\vect(P_K),\vect(K)\big)-\vect(P_K)\big]}{\partial \vect^\top (P_K)}=\big[(A-BK)^\top(I-\gamma^{-2} P_K DD^\top)^{-1}\big] \otimes \big[(A-BK)^\top(I-\gamma^{-2} P_K DD^\top)^{-1}\big]-I, 
	\$
	\normalsize
	which is invertible, since the eigenvalues of $[(A-BK)^\top(I-\gamma^{-2} P_K DD^\top)^{-1}] \otimes [(A-BK)^\top(I-\gamma^{-2} P_K DD^\top)^{-1}]$ are the products of the eigenvalues of $(A-BK)^\top(I-\gamma^{-2} P_K DD^\top)^{-1}$, and  	 the matrix $(A-BK)^\top(I-\gamma^{-2} P_K DD^\top)^{-1}$ has spectral radius less than $1$ for all $K\in\cK$. Also, since $\tilde\Psi\big(\vect(P_K),\vect(K)\big)-\vect(P_K)$ is continuous with respect to both $\vect(P_K)$ and $\vect(K)$,   by the implicit function theorem \citep{krantz2012implicit}, we know that there exists an open  neighborhood around $\vect(P_K)$ and $\vect(K)$ (thus including $\vect(P_K)$ and $\vect(K)$), so that 	$\vect(P_K)$ is a continuously differentiable function with respect to $\vect(K)$, and  so is $P_K$ with respect to $K$, in the neighborhood. Note that this holds for any $K\in\cK$. This proves the differentiability of the objective $\cJ(K)$ at all $K\in\cK$. 
%	, and also implies its   continuity in $K$ at all $K\in\cK$. 
	
	Now we establish the form of the policy gradient. By Lemma \ref{lemma:discrete_bounded_real_lemma}, we know that 
	for any $K\in\cK$, $(A-BK)^\top(I-\gamma^{-2} P_KDD^\top)^{-1}$  is stable and $I-\gamma^{-2}D^\top P_K D>0$.   
%	the differentiability of the objective is guaranteed for those $K\in\cK$
%	, namely, for those $K$ that make $P_K> 0$ exist, $(A-BK)^\top(I-\gamma P_KW)^{-1}$  stable, and $W^{-1}-\gamma P_K>0$. 
	Therefore,  the expression  $\Delta_K$ in \eqref{equ:def_Delta} exists, and so does the expression for $\nabla \cJ(K)$. 
	We then verify the expressions by showing the form of the directional derivative $\nabla_{K_{ij}} \cJ(K)$, i.e., the derivative with respect to  each element $K_{ij}$ in the matrix $K$.  	By definition of $\cJ(K)$ in \eqref{equ:form_J2},  we have
	\small
	\#\label{equ:trash_1}
	&\nabla_{K_{ij}} \cJ(K)=-{\gamma^2}\tr\big\{(I-\gamma^{-2} P_KDD^\top)^{-\top}[\nabla_{K_{ij}}(I-\gamma^{-2} P_KDD^\top)]^{\top}\big\}\\
	&\quad =-{\gamma^2}\tr\big[(I-\gamma^{-2} P_KDD^\top)^{-1}\nabla_{K_{ij}}(I-\gamma^{-2} P_KDD^\top)\big] =\tr\big[(I-\gamma^{-2} P_KDD^\top)^{-1}\nabla_{K_{ij}}(P_K DD^\top)\big],\notag
	\#
	\normalsize
	where the first equality follows from the chain rule and the fact that $\nabla_X \log\det X=X^{-\top}$, and the second one follows from the fact that $\tr(A^\top B^\top)=\tr(BA)^\top=\tr(BA)=\tr(AB)$.
%	 and the third is because  $I$ does not   depend on $K$.
	  Furthermore, since $DD^\top$  is independent of $K$, and $\tr(ABC)=\tr(BCA)$, we  obtain from \eqref{equ:trash_1} and \eqref{equ:symetrix_trash_1} that
	\#\label{equ:trash_2}
	\nabla_{K_{ij}} \cJ(K)&=\tr\big[(I-\gamma^{-2} P_KDD^\top)^{-1}\nabla_{K_{ij}}P_K \cdot DD^\top\big]=\tr\big[\nabla_{K_{ij}}P_K\cdot DD^\top(I-\gamma^{-2} P_KDD^\top)^{-1}\big]\notag\\
	&=\tr\big[\nabla_{K_{ij}}P_K \cdot D(I-\gamma^{-2} D^\top P_K D)^{-1}D^\top\big]. 
	\#
	
	Now we establish the recursion of $\nabla_{K_{ij}} \cJ(K)$ using Riccati  equation \eqref{equ:discret_riccati}. Specifically, letting {$M:=D(I-\gamma^{-2} D^\top P_K D)^{-1}D^\top$},  we have from \eqref{equ:discret_riccati}, \eqref{equ:def_tP_discret}, and \eqref{equ:trash_2} that 
	\#\label{equ:trash_3}
	&\nabla_{K_{ij}} \cJ(K)=\tr\big(\nabla_{K_{ij}}P_K \cdot M\big)
%	=\nabla_{K_{ij}}\tr\big(P_K  M\big)
	\notag\\ 
	&\quad=(2RK M)_{ij}-\big[2B^\top \tP_K(A-BK)M\big]_{ij}+\tr\big[(A-BK)^\top (\nabla_{K_{ij}}\tP_K)(A-BK)M\big],
	\#
	where on the right-hand side of \eqref{equ:trash_3}, the first term is due to the fact that $\nabla_K\tr(K^\top RK M)=2RK M$ for any positive definite (and thus symmetric) matrix $M$, the second term is the gradient with $\tP_K$ fixed, and the third term is the gradient with $A-BK$ fixed. 
	
	In addition, 
%	from \eqref{equ:def_tPK}, we can establish the relationship between $\nabla_{K_{ij}}\tP_K$ and $\nabla_{K_{ij}}P_K$. In particular,  
	by taking the derivative on both sides of \eqref{equ:def_tP_discret}, we have
	\#\label{equ:rela_PK}
	&\nabla_{K_{ij}}\tP_K=\nabla_{K_{ij}}(I-\gamma^{-2}P_KDD^\top)^{-1}P_K+(I-\gamma^{-2}P_KDD^\top)^{-1}\nabla_{K_{ij}}P_K\notag\\
%	=-(P^{-1}_K-\gamma W)^{-1}\cdot\nabla_{K_{ij}}(P^{-1}_K-\gamma W)\cdot(P^{-1}_K-\gamma W)^{-1}\notag\\
	&\quad=(I-\gamma^{-2}P_KDD^\top)^{-1}\nabla_{K_{ij}}P_K\cdot\gamma^{-2}DD^\top(I-\gamma^{-2}P_KDD^\top)^{-1}P_K+(I-\gamma^{-2}P_KDD^\top)^{-1}\nabla_{K_{ij}}P_K\notag\\
	&\quad=(I-\gamma^{-2}P_KDD^\top)^{-1}\nabla_{K_{ij}}P_K\cdot[D(\gamma^2 I-D^\top P_K D)^{-1}D^\top P_K+I]\notag\\
	&\quad=(I-\gamma^{-2} P_K DD^\top)^{-1}\cdot \nabla_{K_{ij}}P_K\cdot (I-\gamma^{-2} DD^\top P_K)^{-1},
	\#
	where the second equation has used the fact that 
	\$
	\nabla_X (P^{-1})=-P^{-1}\cdot \nabla_X P \cdot P^{-1},
	\$
	the third one has used  \eqref{equ:symetrix_trash_1}, 
	and the last one has used the matrix inversion lemma. 
	Also, notice that $(I-\gamma^{-2} DD^\top P_K)^{-1}=(I-\gamma^{-2} P_KDD^\top)^{-\top}$. Thus, \eqref{equ:rela_PK} can be written as 
	\#\label{equ:trash_4}
	\nabla_{K_{ij}}\tP_K=(I-\gamma^{-2} P_K DD^\top)^{-1}\cdot \nabla_{K_{ij}}P_K\cdot (I-\gamma^{-2} P_KDD^\top)^{-\top}.
	\#
	Substituting \eqref{equ:trash_4} into \eqref{equ:trash_3} yields the following  recursion
		\$
	\nabla_{K_{ij}} \cJ(K)&=(2RK M)_{ij}-\big[2B^\top \tP_K(A-BK)M\big]_{ij}\\
	&\qquad+\tr\big[\nabla_{K_{ij}}P_K\cdot\underbrace{(I-\gamma^{-2} P_KDD^\top)^{-\top}(A-BK)M(A-BK)^\top (I-\gamma^{-2} P_K DD^\top)^{-1}}_{M_1}\big].\notag
	\$
	By performing recursion on $\tr\big(\nabla_{K_{ij}}P_K \cdot M_1\big)$, and combining all the $i,j$ terms into a matrix, we obtain the form of the gradient given in Lemma \ref{lemma:differentiability_policy_grad}.		
\hfill$\QED$

\subsection{Proof of Lemma   \ref{lemma:differentiability_policy_grad_ct}}\label{sec:proof_lemma_differentiability_policy_grad_ct}

Similarly as the proof in \S\ref{sec:proof_lemma_differentiability_policy_grad}, $\cJ(K)$ defined in \eqref{equ:def_mixed_formulation_cont} is always differentiable with respect to $P_K$, which makes it sufficient to show the differentiability of $P_K$  with respect to $K$. 	
	We proceed   by using the implicit function theorem, which requires the set $\cK$ considered to be open. 
	By  Lemma   \ref{lemma:cont_bounded_real_lemma}, requiring $K$ to be in $\cK$ is equivalent to requiring the  existence of $P> 0$ such that \eqref{equ:cont_equiva_set_cK_cond2} holds. Since the LHS of \eqref{equ:cont_equiva_set_cK_cond2} is continuous in $K$, and the LMI is strict,  for any $K\in\cK$, there must exist a small enough ball around $K$ such that for any $K'$ inside the ball, the LMI \eqref{equ:cont_equiva_set_cK_cond2} still holds. Hence, $\cK$ is an open set.  
	 
	  By \eqref{equ:prop_vect_product}, we have from  \eqref{equ:cont_riccati} that
	  \small
\#\label{equ:ct_diff_pf_trash_1}
	\psi(\vect(P_K),K):&=[I\otimes(A-BK)^\top +(A-BK)^\top\otimes I]\vect(P_K)+\vect(C^\top C+K^\top R K)+\vect(\gamma^{-2} P_K  D D^\top P_K)\notag\\
	&=0,
	\#
	\normalsize
	where we define the LHS of  \eqref{equ:ct_diff_pf_trash_1} to be $\psi(\vect(P_K),K)$. Since  $\vect(\cdot)$ is a linear mapping, it suffices to show that $\vect(P_K)$ is differentiable with respect to  $K$. 
	By  Theorem $9$ in \cite{magnus1985matrix}, we have
	\$
	&\frac{\partial \vect(\gamma^{-2} P_K  DD^\top  P_K)}{\partial \vect^\top (P_K)}=(P_K\otimes I)\cdot\frac{\partial\vect(\gamma^{-2} P_K  DD^\top)}{\partial \vect^\top(P_K)}\}+[I\otimes (\gamma^{-2} P_K  DD^\top)]\cdot \frac{\partial \vect(P_K)}{\partial \vect^\top(P_K)}\notag\\
	&\quad=(P_K\otimes I)\cdot[(\gamma^{-2}  DD^\top)\otimes I]+I\otimes (\gamma^{-2} P_K  DD^\top)=(\gamma^{-2} P_K  DD^\top)\otimes I+I\otimes (\gamma^{-2} P_K  DD^\top),
	\$
	where the first equation is due to the chain rule  of matrix differentials, the second equation is by definition, and the last one uses the fact that $(A\otimes B)(C\otimes D)=(AC)\otimes (BD)$.
	Hence, by definition of $\psi(\vect(P_K),K)$ in \eqref{equ:ct_diff_pf_trash_1}, we have 
	\#\label{equ:ct_diff_pf_trash_2}
	\frac{\partial \psi(\vect(P_K),K)}{\partial \vect^\top(P_K)}&=I\otimes(A-BK)^\top +(A-BK)^\top\otimes I+(\gamma^{-2} P_K  DD^\top)\otimes I+I\otimes (\gamma^{-2} P_K  DD^\top)\notag\\
	&=I\otimes(A-BK+\gamma^{-2}  DD^\top P_K)^\top +(A-BK+\gamma^{-2}  DD^\top P_K)^\top\otimes I,
	\#
	where the second equation follows from the facts that $(A\otimes B)+(C\otimes B)=(A+C)\otimes B$ and $(P_KDD^\top)^\top=DD^\top P_K$. 
	On the other hand, by Lemma  \ref{lemma:cont_bounded_real_lemma},  the matrix $A-BK+\gamma^{-2}  DD^\top P_K$ is Hurwitz. As a result, ${\partial \psi(\vect(P_K),K)}/{\partial \vect^\top(P_K)}$ from \eqref{equ:ct_diff_pf_trash_2} is invertible. By the implicit function theorem, we conclude that $\vect(P_K)$ is continuously differentiable with respect to $K$, and so is $P_K$ and thus $\cJ(K)$, at some open neighborhood of $\vect(P_K)$ and $K$ (including $\vect(P_K)$ and $K$).  
%	This  also implies the    continuity of $\cJ(K)$ in $K$ at all $K\in\cK$, which 
This completes the proof. 

Now we establish the form of the policy gradient. Define 
\$
\tilde \psi(P_K,K):=(A-BK)^\top P_K+P_K(A-BK)+C^\top C+K^\top RK+\gamma^{-2} P_K  DD^\top P_K. 
\$
Then we have
	\#\label{equ:PG_ct_pf_trash_1}
	\tilde\psi_{P_K}(P_K,K)dP_{K}=dP_{K}(A-BK)+(A-BK)^\top dP_K+\gamma^{-2} dP_KDD^\top P_K+\gamma^{-2} P_KDD^\top dP_K,
	\#
	where $\tilde\psi_{P_K}$ denotes the differential of $\tilde\psi$ with respect to ${P_K}$. Moreover, we have
	\#\label{equ:PG_ct_pf_trash_2}
	\tilde\psi_{K}(P_K,K)dK=P_{K}(-BdK)+(-BdK)^\top P_K+(dK)^\top RK+K^\top R dK.
	\#
	Since $dP_{K}=P'_{K}dK$ and 
	\$
	\tilde\psi_{P_K}(P_K,K)dP_{K}+\tilde\psi_{K}(P_K,K)dK=0, 
	\$ we combine \eqref{equ:PG_ct_pf_trash_1} and \eqref{equ:PG_ct_pf_trash_2} to obtain 
	\#\label{equ:PG_ct_pf_trash_2.5}
	&P'_{K}dK(A-BK)+(A-BK)^\top P'_{K}dK+\gamma^{-2} P'_{K}dKDD^\top P_K+\gamma^{-2} P_KDD^\top P'_{K}dK\\
	&\quad=P_{K}(BdK)+(BdK)^\top P_K-(dK)^\top RK-K^\top R dK=(P_KB-K^\top R)dK+(dK)^\top (B^\top P_K- RK).\notag
	\#
	
	On the other hand, we have
	\#\label{equ:PG_ct_pf_trash_3}
	\cJ'(K)dK=\tr(\nabla \cJ(K)^\top dK),
	\#
	where $\nabla \cJ(K)\in\RR^{d\times m}$,   while by definition of $\cJ(K)$, we also have
	\#\label{equ:PG_ct_pf_trash_4}
	\cJ'(K)dK=\tr(P'_KdK DD^\top),
	\#
	where from \eqref{equ:PG_ct_pf_trash_2.5}, $P'_KdK$ is the solution to the Lyapunov equation
	\#\label{equ:PG_ct_pf_trash_5}
	&P'_{K}dK(A-BK+\gamma DD^\top P_K)+(A-BK+\gamma DD^\top P_K)^\top P'_{K}dK\notag\\
	&\quad=(P_KB-K^\top R)dK+(dK)^\top (B^\top P_K-  RK). 
	\#
	Since $\Lambda_K$ is also the solution to a Lyapunov equation \eqref{equ:Lambda_Lya_def}, we multiply \eqref{equ:PG_ct_pf_trash_5} by $\Lambda_K$ and multiply \eqref{equ:Lambda_Lya_def} by $P'_{K}dK$, and then take trace on both sides of both equations, to obtain the following identity
	\#\label{equ:PG_ct_pf_trash_6}
%	&\Lambda_K P'_{K}dK(A-BK+\gamma^{-2} DD^\top P_K)+\Lambda_K(A-BK+\gamma^{-2} DD^\top P_K)^\top P'_{K}dK\notag\\
%	&\quad=\Lambda_K(P_KB-K^\top R)dK+\Lambda_K(dK)^\top (B^\top P_K-  RK)\\
%	&P'_{K}dK\Lambda_K(A-BK+\gamma^{-2} DD^\top P_K)^\top+P'_{K}dK(A-BK+\gamma^{-2} DD^\top P_K)\Lambda_K+P'_{K}dKW=0. \\
	-\tr\big[\Lambda_K(P_KB-K^\top R)dK+\Lambda_K(dK)^\top (B^\top P_K-  RK)\big]=\tr[P'_{K}dKDD^\top]=\cJ'(K)dK.
	\#
	By further equating \eqref{equ:PG_ct_pf_trash_3}, \eqref{equ:PG_ct_pf_trash_4}, and \eqref{equ:PG_ct_pf_trash_6}, we have
	\$
	\tr(\nabla \cJ(K)^\top dK)&=\tr\big[\Lambda_K(K^\top R -P_KB)dK+\Lambda_K(dK)^\top (R K-B^\top P_K)\big]\\
	&=\tr\big[2\Lambda_K( K^\top R -P_KB)dK\big].
	\$
	This gives the expression of the policy gradient $\nabla \cJ(K)=2[R K-B^\top P_K] \Lambda_K$, completing the proof of Lemma \ref{lemma:differentiability_policy_grad_ct}. 
\hfill$\QED$

\subsection{Proof of Propositions    \ref{coro:opt_control_form_discrete} and \ref{coro:opt_control_form_cont}}\label{proof:coro_opt_control_form}

\noindent{\bf{Discrete-Time:}} 
  	\vspace{3pt}
  	
The proof is based on a game-theoretic perspective on the problem. First, for any $K\in\cK$, by applying Theorem 3.7 in \cite{bacsar1995h}, with $A,~B,~D,~Q,~R,~\gamma$ therein being replaced by $A-BK,~0,~D,~Q+K^\top R K,~R,~\gamma$ here, we obtain that the Riccati equation in \eqref{equ:form_J1} corresponds to the generalized algebraic Riccati equation (3.52b) in \cite{bacsar1995h}, for this auxiliary game. By Lemma \ref{lemma:discrete_bounded_real_lemma}, the solution $P_K\geq 0$ satisfies (3.53) in \cite{bacsar1995h}. Recall that $P_K$ is the unique stabilizing solution to  \eqref{equ:form_J1}, and is thus also minimal if $(A-BK,D)$ is stabilizable \cite[Theorem $3.1$]{ran1988existence}, which is indeed the case since $K\in\cK$  is stabilizing. Hence, by  \cite[Theorem 3.7]{bacsar1995h} (ii)(iv), the controller and the disturbance that attain the upper-value of the game have,  respectively, the forms of $u_t=0$ and $w_t=(\gamma^2 I-D^\top P_{K}D)^{-1}D^\top P_{K}(A-BK)x_t$ for all $t$. Note that  $(A,Q^{1/2})$ being detectable   in \cite[Theorem 3.7]{bacsar1995h} is not used when applying (ii)(iv). This shows that in the original game with $A,~B,~D,~Q,~R,~\gamma$ (as defined in \cite[Chapter 3.7]{bacsar1995h}), and with a  fixed $K\in\cK$,  the maximizing disturbance has the form as $w_t$ above, and the value under the pair $(K, -(\gamma^2 I-D^\top P_{K}D)^{-1}D^\top P_{K}(A-BK))$ is indeed $x_0^\top P_{K}x_0$. By again applying \cite[Theorem 3.7]{bacsar1995h} to the original game, we know that the value is $x_0^\top P_{K^*}x_0$, and  is achieved by the optimal controller $u_t^*=-K^* x_t$ and the maximizing disturbance $w_t^*=[(\gamma^2 I-D^\top P_{K^*}D)^{-1}D^\top P_{K^*}(A-BK^*)]x_t$, with $K^*$ being defined in the proposition. By definition of the value of the game, we know that $x_0^\top P_{K}x_0\geq x_0^\top P_{K^*}x_0$ for any $K\in\cK$. As the above arguments hold for any $x_0$, we know that $P_{K}\geq P_{K^*}$. Finally, notice that for $K,K^*\in\cK$, if $P_K\geq P_{K^*}$, then $0<I-\gamma^{-2}D^\top P_KD\leq I-\gamma^{-2}D^\top P_{K^*}D$ (cf. Lemma \ref{lemma:discrete_bounded_real_lemma}). By $\det(I-\gamma^{-2}P_KDD^\top )=\det(I-\gamma^{-2}D^\top P_KD)$, we know that $\cJ(K)\geq \cJ(K^*)$ for any $K\in\cK$. This completes the proof for the first half of the proposition.  
  	
%\begin{proof}
%	Since the optimum of $\cJ(K)$ is assumed to be achieved by some $K^*\in\cK$, then 
For the second half of the proposition, note that 
%$\Delta_K$ is the solution to the Lyapunov equation
%\small
%\$
%\Delta_K=\big[(I-\gamma^{-2} P_KDD^\top)^{-\top}(A-BK)\big]\Delta_K\big[(A-BK)^\top(I-\gamma^{-2} P_KDD^\top)^{-1}\big]+D(I-\gamma^{-2}D^\top P_K D)^{-1}D^\top,
%\$
%\normalsize
%which always satisfies 
$\Delta_K\geq 0$ since $I-\gamma^{-2}D^\top P_K D>0$ for any $K\in\cK$ by Lemma \ref{lemma:discrete_bounded_real_lemma}. Also, since $(I-\gamma^{-2}D^\top P_K D)^{-1}\geq I$, we know that 
\#\label{equ:Delta_K_lower_bnd}
\Delta_K\geq \sum_{t=0}^\infty \big[(I-\gamma^{-2} P_KDD^\top)^{-\top}(A-BK)\big]^tDD^\top\big[(A-BK)^\top(I-\gamma^{-2} P_KDD^\top)^{-1}\big]^t.
\# 
By \cite[Lemma $21.2$]{zhou1996robust}, the RHS of \eqref{equ:Delta_K_lower_bnd} is always positive definite, since 
 $\big((I-\gamma^{-2}  P_KDD^\top)^{-\top}(A-BK),D\big)$ is controllable, i.e., $\big((A-BK)^\top(I-\gamma^{-2}  P_KDD^\top)^{-1},D^\top\big)$ is observable. Thus, $\Delta_K>0$ is full-rank. 
	By the necessary optimality condition $\nabla \cJ(K)=0$,  it follows that $K^*=(R+B^\top \tP_{K^*} B)^{-1}B^\top \tP_{K^*} A$ is the unique stationary point,  which is thus the unique global optimizer. This completes the proof of  Proposition    \ref{coro:opt_control_form_discrete}. 
 
\vspace{7pt}	
\noindent{\bf{Continuous-Time:}} 
  	\vspace{3pt}
  	
  	The proof is analogous to the above one, except that for proving the first half, one applies \cite[Theorem 4.8]{bacsar1995h} for the continuous-time setting. Note that $(A,C)$ being detectable is now  needed to apply \cite[Theorem 4.8]{bacsar1995h}. Note that the objective \eqref{equ:def_mixed_formulation_cont} is monotone in the eigenvalues of $P_K$, in that if $P_K\geq P_{K^*}$ for any $K\in\cK$, then $\cJ(K)\geq \cJ(K^*)$. For proving the second half,  by \cite[Lemma $3.18$ \emph{(iii)}]{zhou1996robust}, the solution to the Lyapunov equation \eqref{equ:Lambda_Lya_def} $\Lambda_K>0$, since $(A-BK+\gamma^{-2}DD^\top P_{K},D)$ is  controllable, or equivalently, the pair $((A-BK+\gamma^{-2}DD^\top P_{K})^\top,D^\top)$ is observable. Thus,  the necessary optimality condition $\nabla \cJ(K)=0$ yields  that  $K^*=R^{-1}B^\top P_{K^*}$ is the unique stationary point,  which is thus the unique global optimizer. This completes the proof of  Proposition    \ref{coro:opt_control_form_cont}. 
%\end{proof}
\hfill$\QED$

\subsection{Proof of Lemma \ref{lemma:cost_diff}}\label{sec:proof_lemma_cost_diff}
%\issue{XXXXXXXXXXXX 08.28 XXXXXXXXXXXXXX}
%\begin{proof}
%We need to modify the proof of Lemma 7 in \cite{fazel2018global} to show \eqref{eq:CDL1}. 

We start with 
the following helper lemma. 

\begin{lemma}\label{lemma:pt_p_relation}
Suppose that $K,K'\in\cK$. 
%\issue{$W^{-1}>\beta P_K$ and $W^{-1}>\beta P_{K'}$} 
%for some $K$ and $K'$. 
Then we have  that  $I-\gamma^{-2} P_{K'}DD^\top $ is invertible, and 
\begin{align}
\label{eq:linA1}
%(I-\beta P_{K'} W)^{-1} (P_K-\beta P_{K'} W P_{K'}) (I-\beta W P_{K'})^{-1}\le \tP_K.}\\
(I-\gamma^{-2} P_{K'}DD^\top)^{-1} (P_{K}-\gamma^{-2} P_{K'} DD^\top P_{K'}) (I-\gamma^{-2} DD^\top P_{K'})^{-1}\le \tP_{K}. 
\end{align}
\end{lemma}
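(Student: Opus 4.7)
The plan is to reduce \eqref{eq:linA1} to a one-line consequence of a variational characterization of $\tP_K$, rather than grinding through matrix algebra. First I would dispose of invertibility: since $K'\in\cK$, Lemma \ref{lemma:discrete_bounded_real_lemma} gives $I-\gamma^{-2}D^\top P_{K'}D>0$, and by Sylvester's determinant theorem $\det(I-\gamma^{-2}P_{K'}DD^\top)=\det(I-\gamma^{-2}D^\top P_{K'}D)>0$, so both $I-\gamma^{-2}P_{K'}DD^\top$ and its transpose $I-\gamma^{-2}DD^\top P_{K'}$ are invertible and the left-hand side of \eqref{eq:linA1} is well defined.

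Next I would establish the key variational identity
\begin{align*}
x^\top \tP_K x \;=\; \max_{v\in\RR^n}\,\Bigl\{(x+Dv)^\top P_K(x+Dv)-\gamma^2 v^\top v\Bigr\},\qquad x\in\RR^m.
\end{align*}
Because $K\in\cK$ yields $\gamma^2 I-D^\top P_K D>0$ by Lemma \ref{lemma:discrete_bounded_real_lemma}, the objective is strictly concave in $v$ with unique maximizer $v^\star=(\gamma^2 I-D^\top P_K D)^{-1}D^\top P_K x$; substituting $v^\star$ back and invoking the definition \eqref{equ:def_tP_K} of $\tP_K$ gives the identity.

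With the identity in hand, the matrix inequality follows by choosing a single well-designed test vector. For arbitrary $x\in\RR^m$, set $y:=(I-\gamma^{-2}DD^\top P_{K'})^{-1}x$ and $v:=\gamma^{-2}D^\top P_{K'}y\in\RR^n$. Then $Dv=\gamma^{-2}DD^\top P_{K'}y$, so $x+Dv=y$, and a direct computation gives $\gamma^2 v^\top v=\gamma^{-2}y^\top P_{K'}DD^\top P_{K'}y$. Plugging into the variational bound,
\begin{align*}
x^\top \tP_K x &\;\ge\; y^\top\bigl(P_K-\gamma^{-2}P_{K'}DD^\top P_{K'}\bigr)y \\
&= x^\top (I-\gamma^{-2}P_{K'}DD^\top)^{-1}\bigl(P_K-\gamma^{-2}P_{K'}DD^\top P_{K'}\bigr)(I-\gamma^{-2}DD^\top P_{K'})^{-1}x,
\end{align*}
where the equality uses $y=(I-\gamma^{-2}DD^\top P_{K'})^{-1}x$ and transposition. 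Since $x$ is arbitrary, \eqref{eq:linA1} follows.

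The main obstacle is spotting the right test vector $v$; once that is identified the rest is one-line algebra. As a cross-check I would re-derive the same inequality by a Schur-complement computation: after the substitution $w=\gamma^{-2}D^\top P_{K'}y$, the claim becomes nonnegativity of a quadratic form in $(y,w)$ with diagonal blocks $\tP_K-P_K$ and $D^\top\tP_K D+\gamma^2 I$ and off-diagonal $-\tP_K D$; using the identities $\tP_K D=\gamma^2 P_K D(\gamma^2 I-D^\top P_K D)^{-1}$ and $D^\top\tP_K D+\gamma^2 I=\gamma^4(\gamma^2 I-D^\top P_K D)^{-1}$, which follow from \eqref{equ:def_tP_K}, the Schur complement vanishes identically, confirming positive semidefiniteness and therefore \eqref{eq:linA1}.
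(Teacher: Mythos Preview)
Your proof is correct and takes a genuinely different route from the paper's. The paper proceeds by pure matrix algebra: it multiplies \eqref{eq:linA1} through by $(I-\gamma^{-2}P_{K'}DD^\top)$ on the left and its transpose on the right, expands, and shows the resulting expression equals $(P_K-P_{K'})D(\gamma^2 I-D^\top P_K D)^{-1}D^\top(P_K-P_{K'})\ge 0$ after verifying a cancellation identity via the matrix inversion lemma. Your approach instead recognizes $x^\top\tP_K x$ as the value of the inner maximization over disturbances, so any particular disturbance---here the one associated with $P_{K'}$---gives a lower bound. This is shorter and more conceptual: it makes transparent \emph{why} the inequality holds and ties directly to the zero-sum LQ game interpretation discussed in \S\ref{subsec:connection_to_games}. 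The paper's computation, on the other hand, identifies the slack in \eqref{eq:linA1} explicitly as a quadratic form in $P_K-P_{K'}$, which your argument does not surface (though your Schur-complement cross-check effectively shows the slack of the underlying block matrix is zero). Both arguments rely on the same hypotheses, namely $\gamma^2 I-D^\top P_K D>0$ and $\gamma^2 I-D^\top P_{K'}D>0$ from Lemma \ref{lemma:discrete_bounded_real_lemma}.
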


\begin{proof}
First, since $K,K'\in\cK$, by Lemma \ref{lemma:discrete_bounded_real_lemma}, $I-\gamma^{-2} D^\top P_{K'}D>0$ is invertible. Thus, $\det(I-\gamma^{-2} P_{K'}DD^\top)=\det(I-\gamma^{-2} D^\top P_{K'}D)\neq 0$, namely, $I-\gamma^{-2} P_{K'}DD^\top$ is invertible. 
Then the desired fact is equivalent to
\$
&P_{K}-\gamma^{-2} P_{K'} DD^\top P_{K'}\le (I-\gamma^{-2} P_{K'}DD^\top)\tP_K (I-\gamma^{-2} DD^\top P_{K'})\\
&\qquad=\tP_K-\gamma^{-2} P_{K'}DD^\top\tP_K-\gamma^{-2} \tP_K DD^\top P_{K'}+\gamma^{-4} P_{K'} DD^\top \tP_K DD^\top P_{K'}
\$
which can be further simplified as
\small
\#
\label{eq:linA2}
(\tP_K-P_K)-\gamma^{-2} P_{K'}DD^\top\tP_K-\gamma^{-2} \tP_K DD^\top P_{K'}+\gamma^{-2} P_{K'} DD^\top P_{K'}+\gamma^{-4} P_{K'} DD^\top \tP_K DD^\top P_{K'}\ge 0. 
\#
\normalsize 
By $
\tilde{P}_K=(I-\gamma^{-2} P_K DD^\top)^{-1} P_K$ and \eqref{equ:symetrix_trash_1}, we have 
\$
&\gamma^{-2} P_{K'}DD^\top\tP_K=\gamma^{-2} P_{K'}DD^\top(I-\gamma^{-2} P_K DD^\top)^{-1} P_K =P_{K'}D(\gamma^2 I-D^\top P_K D)^{-1}D^\top P_{K}. 
%\\
%&=\gamma^{-2}  P_{K'}(W^{-1}-\gamma^{-2}P_K)^{-1} P_K.  
\$ 
Thus, it follows that
\$
&(\tP_K-P_K)-\gamma^{-2} P_{K'}DD^\top \tP_K-\gamma^{-2} \tP_K DD^\top  P_{K'}\\
&\quad=(P_K-P_{K'})D(\gamma^2I-D^\top P_K D)^{-1}D^\top (P_K-P_{K'})-P_{K'}D(\gamma^2I-D^\top P_K D)^{-1}D^\top P_{K'}. 
%\\
%&\gamma^{-2} (P_K-P_{K'})(W^{-1}-\gamma^{-2} P_K)^{-1}(P_K-P_{K'})-\gamma^{-2} P_{K'}(W^{-1}-\gamma^{-2} P_K)^{-1}P_{K'}.
\$ 
Therefore, \eqref{eq:linA2} is equivalent to
\$
&(P_K-P_{K'})D(\gamma^2I-D^\top P_K D)^{-1}D^\top (P_K-P_{K'})-P_{K'}D(\gamma^2I-D^\top P_K D)^{-1}D^\top P_{K'}\\
&\qquad\quad+\gamma^{-2} P_{K'} DD^\top  P_{K'}+\gamma^{-4} P_{K'} DD^\top \tP_K DD^\top P_{K'}\ge 0. 
\$
Given the fact $\gamma^2I>D^\top P_K D$ and another fact that 
\#\label{equ:trash411}
-P_{K'}D(\gamma^2I-D^\top P_K D)^{-1}D^\top P_{K'}+\gamma^{-2} P_{K'} DD^\top  P_{K'}+\gamma^{-4} P_{K'} DD^\top \tP_K DD^\top P_{K'}=0,
\# 
we know that the above inequality holds and hence our lemma is true.
To show that \eqref{equ:trash411} holds, 
it suffices to apply the matrix inversion lemma, i.e.,
\$
(\gamma^2I-D^\top P_K D)^{-1}=\gamma^{-2}I+\gamma^{-4}D^\top (-\gamma^{-2}P_KDD^\top+I)^{-1} P_KD=\gamma^{-2}I+\gamma^{-4}D^\top \tP_K D,
%\\(W^{-1}-\beta P_K)^{-1}=W+\beta W \tP_K W=W+\beta W(P^{-1}_K-\beta W)^{-1} W.
\$ 
where the first equation uses the matrix inversion lemma. 
This completes the proof.
\end{proof}

%We start by developing the following helper lemma.

%We need to extend the proofs of the helping lemmas in \cite{fazel2018global} using several important linear algebra facts. One such fact is that
%given $W^{-1}-\beta P_K>0$, we have $\tP_K=P_K+\beta P_K(W^{-1}-\beta P_K)^{-1}P_K >P_K$. 
%A more useful fact is the following result. 

By definition of  $\tP_{K}$ in  \eqref{equ:def_tP_K} and the Riccati  equation \eqref{equ:discret_riccati}, we have
\small 
\#
&P_{K'}=C^\top C+(K')^\top R K'+(A-BK')^\top \tP_{K'} (A-BK')\label{equ:trashlm421}\\
&=C^\top C+(K')^\top R K'+(A-BK')^\top (I-\gamma^{-2} P_{K'}DD^\top)^{-1} P_{K'} (A-BK')\notag\\
%&=Q+(K')^\top R K'+(A-BK')^\top (I-\beta P_{K'}W)^{-1} P_{K'} (I-\beta W P_{K'}) (I-\beta P_{K'}W)^{-\top}(A-BK')-P_K\notag\\
&= C^\top C+(K')^\top R K'+(A-BK')^\top (I-\gamma^{-2} P_{K'}DD^\top)^{-1} ( P_{K'}-\gamma^{-2} P_{K'}DD^\top P_{K'})(I-\gamma^{-2} P_{K'}DD^\top)^{-\top}(A-BK')\notag\\
&=C^\top C+(K')^\top R K'+(A-BK')^\top (I-\gamma^{-2} P_{K'}DD^\top)^{-1} (P_{K}-\gamma^{-2} P_{K'}DD^\top P_{K'}) (I-\gamma^{-2} P_{K'}DD^\top)^{-\top}(A-BK')\notag\\
&\qquad+(A-BK')^\top (I-\gamma^{-2} P_{K'}DD^\top)^{-1} (P_{K'}-P_K) (I-\gamma^{-2} P_{K'}DD^\top)^{-\top}(A-BK').\notag
\#
\normalsize
   By \eqref{eq:linA1} in Lemma \ref{lemma:pt_p_relation}, we further  have
\$
P_{K'}-P_K&\le C^\top C+(K')^\top R K'+(A-BK')^\top  \tP_{K} (A-BK')-P_K\\
&+(A-BK')^\top (I-\gamma^{-2} P_{K'}DD^\top)^{-1} (P_{K'}-P_K) (I-\gamma^{-2} P_{K'}DD^\top)^{-\top}(A-BK').
\$
By induction, we can apply the above inequality iteratively to show that
\#\label{eq:CDL1}
&P_{K'}-P_K \leq  \sum_{t\geq 0} [(A-BK')^\top(I-\gamma^{-2} P_{K'}DD^\top)^{-1}]^t\big[C^\top C+(K')^\top RK' \notag\\
&\qquad\qquad\qquad+ (A-BK')^\top \tP_K (A-BK')-P_K\big][(I-\gamma^{-2} P_{K'}DD^\top)^{-\top}(A-BK')]^t. 
\#
On the other hand, we have
\#
&C^\top C+(K')^\top R K'+(A-BK')^\top \tP_K(A-BK')-P_K\notag\\
&=C^\top C+(K'-K+K)^\top R (K'-K+K)+(A-BK-B(K'-K))^\top \tP_K(A-BK-B(K'-K))-P_K\notag\\
&=(K'-K)^\top\left((R+B^\top \tP_K B)K-B^\top \tP_K A\right)+\left((R+B^\top \tP_K B)K-B^\top \tP_K A\right)^\top (K'-K)\notag\\
&\qquad+(K'-K)(R+B^\top \tP_K B)(K'-K),\label{equ:Q_diff_to_K_diff}
\#
which can be substituted into \eqref{eq:CDL1} to obtain the upper  bound in \eqref{eq:CDL_upper}.

For the lower bound \eqref{eq:CDL_lower}, note that the conditions in Lemma \ref{lemma:pt_p_relation} also hold here when the roles of $K$ and $K'$ are interchanged. Thus, we have
\$
(I-\gamma^{-2} P_{K}DD^\top)^{-1} (P_{K'}-\gamma^{-2} P_{K} DD^\top P_{K}) (I-\gamma^{-2} DD^\top P_{K})^{-1}\le \tP_{K'},
\$
which gives a lower bound on the RHS of \eqref{equ:trashlm421} directly as
\small
\#\label{equ:trashlm422}
&P_{K'}-P_K=C^\top C+(K')^\top R K'+(A-BK')^\top \tP_{K'} (A-BK')-P_K\notag\\
&\geq C^\top C+(K')^\top R K'+(A-BK')^\top \big[(I-\gamma^{-2} P_{K}DD^\top)^{-1} (P_{K'}-\gamma^{-2} P_{K} DD^\top P_{K}) (I-\gamma^{-2} DD^\top P_{K})^{-1}\big] (A-BK')-P_K\notag\\
%&= C^\top C+(K')^\top R K'+(A-BK')^\top \big[(I-\gamma^{-2} P_{K} DD^\top)^{-1} (P_{K}-\gamma^{-2} P_{K} DD^\top P_{K}) (I-\gamma^{-2} DD^\top  P_{K})^{-1}\big] (A-BK')-P_K\notag\\
%&\qquad\quad +(A-BK')^\top \big[(I-\gamma^{-2} P_{K} DD^\top)^{-1} (P_{K'}-P_{K}) (I-\gamma^{-2} DD^\top P_{K})^{-1}\big] (A-BK')\notag\\
&= C^\top C+(K')^\top R K'+(A-BK')^\top \underbrace{\big[(I-\gamma^{-2} P_{K} DD^\top)^{-1} (P_{K}-\gamma^{-2} P_{K} DD^\top P_{K}) (I-\gamma^{-2} DD^\top P_{K})^{-1}\big]}_{\tP_K} (A-BK')-P_K\notag\\
&\quad\qquad +(A-BK')^\top \big[(I-\gamma^{-2} P_{K} DD^\top)^{-1} (P_{K'}-P_{K}) (I-\gamma^{-2} DD^\top P_{K})^{-1}\big] (A-BK').
\#
\normalsize
Continuing  unrolling the RHS of \eqref{equ:trashlm422} and  substituting into  \eqref{equ:Q_diff_to_K_diff}, we obtain the desired lower bound in \eqref{eq:CDL_lower}, which completes the proof.  
\hfill$\QED$
%\end{proof}

\subsection{Proof of Theorem  \ref{thm:stability_update_ct}}\label{sec:proof_thm_stability_update_ct}

We first argue that it suffices to find some $P>0$ for $K'$, such that
\#\label{equ:LMI_cond_ct_2}
(A-BK')^\top P+P(A-BK')+C^\top C+K'^\top RK'+\gamma^{-2} P^\top DD^\top P<0. 
\# 
Denote the LHS of \eqref{equ:LMI_cond_ct_2} by $-M<0$, then such a $P>0$ yields 
\$
(A-BK')^\top P+P(A-BK')=-M-C^\top C-K'^\top RK'-\gamma^{-2} P^\top DD^\top P\leq -M<0,
\$
which implies that $(A-BK')$ is Hurwitz, i.e., $K'$ is  stabilizing. Thus, Lemma \ref{lemma:cont_bounded_real_lemma} can be applied to $K'$, which shows that  \eqref{equ:LMI_cond_ct_2} is equivalent to $\|\cT(K')\|_{\infty}<{\gamma}$. This means that $K'\in\cK$. Hence, we will focus on finding such a $P>0$ hereafter.

%By Lemma \ref{lemma:cont_bounded_real_lemma}, proving $\|\cT(K')\|_{\infty}<{\gamma}$ is equivalent to proving that the following LMI holds for some $P>0$:
%\#\label{equ:LMI_cond_ct_2}
%(A-BK')^\top P+P(A-BK')+C^\top C+K'^\top RK'+\gamma^{-2} P^\top DD^\top P<0. 
%\# 

We  first show that   the Gauss-Newton update \eqref{eq:exact_gn_ct} with stepsize $\eta=1/2$ enables \eqref{equ:LMI_cond_ct_2} to hold.  
% such that  the  conditions  \eqref{equ:LMI_cond_2} and \eqref{equ:LMI_cond_3} hold.  
Specifically, we have 
\#\label{equ:K_prime_GN_ct}
K'=K-R^{-1}(RK-B^\top P_K)=R^{-1}B^\top P_K.
\#
By Lemma \ref{lemma:cont_bounded_real_lemma}, the closed-loop system  $A-BK+\gamma^{-2} DD^\top P_K$ is stable since $K\in\cK$.  Hence, the following Lyapunov equation admits a solution $\bar P>0$:
 \#\label{equ:def_bar_P_ct}
 (A-BK+\gamma^{-2} DD^\top P_K)^\top \bar P+\bar P(A-BK+\gamma^{-2} DD^\top P_K)=-I. 
 \# 
Hence, we choose $P=P_{K}+\alpha\bar P>0$ as the candidate for some $\alpha>0$. The LHS of \eqref{equ:LMI_cond_ct_2} now can be written as 
 \#\label{equ:LHS_sep_c} 
 &(A-BK')^\top P+P(A-BK')+C^\top C+K'^\top RK'+\gamma^{-2} P^\top DD^\top P\notag\\
 &\quad=\underbrace{[B(K-K')]^\top P+P[B(K-K')]+K'^\top R K'-K^\top R K}_{\circled{1}}\notag\\
 &\qquad+\underbrace{(A-BK)^\top P+P(A-BK)+C^\top C+K^\top RK+\gamma^{-2} P^\top DD^\top P}_{\circled{2}}. 
 \#
 We now need to show that there exists some $\alpha>0$ such that $\circled{1}+\circled{2}<0$.  
 By substituting in $K'$ from  \eqref{equ:K_prime_GN_ct}, we can write $\circled{1}$ as 
 \$
 \circled{1}&=-[B^\top P-RK ]^\top R^{-1}[B^\top P-RK]+PBR^{-1}B^\top P-P_KBR^{-1}B^\top P-PBR^{-1}B^\top P_K+P_KBR^{-1}B^\top P_K\notag\\
 &\leq  PBR^{-1}B^\top P-P_KBR^{-1}B^\top P-PBR^{-1}B^\top P_K+P_KBR^{-1}B^\top P_K\notag\\
 &\leq  (P-P_K)BR^{-1}B^\top (P-P_K)=\alpha^2\bar PBR^{-1}B^\top \bar P=o(\alpha). 
 \$
 Moreover,  
 $\circled{2}$ can be written as 
 \#\label{equ:circle_2_res_cont}
 \circled{2}&=(A-BK)^\top P_K+P_K(A-BK)+C^\top C+K^\top RK+\gamma^{-2} P_K^\top DD^\top P_K\notag\\
 &\quad+\gamma^{-2} P^\top DD^\top P-\gamma^{-2} P_K^\top DD^\top P_K+\alpha(A-BK)^\top \bar P+\alpha \bar P(A-BK)\notag\\
% &=\alpha\gamma^{-2} \bar P^\top DD^\top P_K+\alpha\gamma^{-2} P_K^\top DD^\top\bar P+\alpha^2\gamma^{-2} \bar P^\top DD^\top\bar P+\alpha(A-BK)^\top \bar P+\alpha \bar P(A-BK)\notag\\
 &=-\alpha I+\alpha^2\gamma^{-2} \bar P^\top DD^\top\bar P=-\alpha I+o(\alpha), 
 \# 
 where the second equation has used the Riccati  equation \eqref{equ:cont_riccati}, and the third one has used the definition of $\bar P$ in \eqref{equ:def_bar_P_ct}. 
 Thus, there exists a small enough $\alpha>0$, such that $\circled{1}+\circled{2}<0$, namely, there exists some $P>0$ such that  \eqref{equ:LMI_cond_ct_2} holds for $K'$ obtained from \eqref{eq:exact_gn_ct} with stepsize $\eta=1/2$. 
 On the other hand,  since  such a $P$ makes  $\circled{2}<0$, it also makes the LMI  \eqref{equ:LMI_cond_ct_2} hold for $K$, i.e., 
\#\label{equ:LMI_cond_ct_3}
  (A-BK)^\top P+P(A-BK)+C^\top C+K^\top RK+\gamma^{-2} P^\top DD^\top P<0. 
\# 
By linearly combining \eqref{equ:LMI_cond_ct_2} and \eqref{equ:LMI_cond_ct_3} and the convexity of quadratic functions,  the LMI \eqref{equ:LMI_cond_ct_3} also holds for $K_{\eta}=K+2\eta(K'-K)=(1-2\eta)K+2\eta K'$ for any $\eta\in[0,1/2]$. 
%\#\label{equ:LMI_cond_ct_4}
%0>&2\eta\cdot[(A-BK')^\top P+P(A-BK')+C^\top C+K'^\top RK'+\gamma^{-2} P^\top DD^\top P]\notag\\
%&\quad+(1-2\eta)\cdot[(A-BK)^\top P+P(A-BK)+C^\top C+K^\top RK+\gamma^{-2} P^\top DD^\top P]\notag\\
%\geq&(A-BK_{\eta})^\top P+P(A-BK_{\eta})+C^\top C+K_{\eta}^\top RK_{\eta}+\gamma^{-2} P^\top DD^\top P, 
%\#
%where for any $\eta\in[0,1/2]$, we define  $K_{\eta}=K+2\eta(K'-K)=(1-2\eta)K+2\eta K'$ to be the interpolation between $K$ and $K'$, and the second inequality follows by the convexity of $K^\top R K$. 
% \eqref{equ:LMI_cond_ct_4} thus shows that for any stepsize $\eta\in[0,1/2]$, $K_\eta$ that lies between $K$ and $K'$ satisfies the LMI \eqref{equ:LMI_cond_ct_3}.   

 Similar techniques are used  for the natural PG update \eqref{eq:exact_npg_ct}.  
 Recall   that 
 \#\label{equ:restate_exact_npg_ct}
 K'=K-2\eta (RK-B^\top P_K).  
 \#
 As before, we  choose $P=P_K+\alpha\bar P$ for some $\alpha>0$. Then,   the term $\circled{2}$ in \eqref{equ:LHS_sep_c} is still $-\alpha I+o(\alpha)$. The term $\circled{1}$ in \eqref{equ:LHS_sep_c} can be written as 
 \#\label{equ:natural_ct_trash_1} 
\circled{1}&=(K'-K)^\top R(K'-R^{-1}B^\top P)+(K-R^{-1}B^\top P)^\top R(K'-K)\notag\\
%&=-2\eta(RK-B^\top P_K)^\top R(K'-R^{-1}B^\top P)-2\eta(K-R^{-1}B^\top P)^\top R(RK-B^\top P_K)\notag\\
%&=-2\eta(RK-B^\top P_K)^\top (RK-B^\top P)-2\eta(RK-B^\top P)^\top (RK-B^\top P_K)\notag\\
%&\quad+4\eta^2 (RK-B^\top P_K)^\top R (RK-B^\top P_K)\notag\\
&=-4\eta(RK-B^\top P_K)^\top (RK-B^\top P_K)+4\eta^2 (RK-B^\top P_K)^\top R (RK-B^\top P_K)\notag\\
&\quad+2\alpha\eta(RK-B^\top P_K)^\top(B^\top \bar P)+2\alpha\eta(B^\top \bar P)^\top (RK-B^\top P_K)\notag\\
&\leq -4\eta(RK-B^\top P_K)^\top (RK-B^\top P_K)+4\eta^2 (RK-B^\top P_K)^\top R (RK-B^\top P_K)\notag\\
&\quad+2\eta(RK-B^\top P_K)^\top(RK-B^\top P_K)+2\alpha^2\eta (B^\top \bar P)^\top(B^\top \bar P),
\#
where we have used the definition of $P$, and the inequality is due to the fact that 
\$
&\alpha(RK-B^\top P_K)^\top(B^\top \bar P)+\alpha(B^\top \bar P)^\top (RK-B^\top P_K)\\
&\quad\leq (RK-B^\top P_K)^\top(RK-B^\top P_K)+\alpha^2 (B^\top \bar P)^\top(B^\top \bar P). 
\$
In addition, if the stepsize  $\eta\leq1/(2\|R\|)$, 
then $\circled{1}$ can be further  bounded from \eqref{equ:natural_ct_trash_1} that
 \$
\circled{1}
&\leq -2\eta(RK-B^\top P_K)^\top (RK-B^\top P_K)+2\eta(RK-B^\top P_K)^\top(RK-B^\top P_K)+2\alpha^2\eta (B^\top \bar P)^\top(B^\top \bar P)\notag\\
&=2\alpha^2\eta (B^\top \bar P)^\top(B^\top \bar P)=o(\alpha). 
\$
As a result, there exists small enough $\alpha>0$ (and thus $P$) such that $\circled{1}+\circled{2}<0$. 
%In words, there exists some $P>0$ such that  
Hence, \eqref{equ:LMI_cond_ct_2} holds for $K'$ obtained from \eqref{equ:restate_exact_npg_ct} with   $\eta\leq 1/(2\|R\|)$. 
By Lemma \ref{lemma:cont_bounded_real_lemma}, this proves the first argument that $\|\cT(K')\|_{\infty}<{\gamma}$. 
Such a $K'$ also ensures the existence of the stabilizing solution $P_{K'}\geq 0$ to the Riccati equation \eqref{equ:cont_riccati}.  
%$\|\cT(K')\|_{\infty}<\sqrt{1/\gamma}$, 
This completes the proof. 
\hfill$\QED$

%\issue{to add: continuous-time results' proof}

\subsection{Proof of Theorem \ref{theorem:global_exact_conv_ct}}\label{sec:proof_theorem:global_exact_conv_ct}

We first introduce
% several helper lemmas that are used in the ensuing analysis.
%We start with 
the continuous-time \emph{cost difference lemma} that establishes the relationship between  $P_{K'}-P_K$ and $K'-K$. 
 
\begin{lemma}[Continuous-Time Cost Difference Lemma]\label{lemma:cost_diff_ct}
Suppose that both $K,K'\in\cK$. 
%Thus, both matrices $A-BK'+\gamma^{-2} DD^\top  P_{K'}$ and $A-BK+\gamma^{-2} DD^\top  P_{K}$ are Hurwitz \cite{zhou1996robust}. 
Then,  we have the  following upper bound:
\#
P_{K'}-P_K&\le \int_{0}^\infty  \eb^{(A-BK'+\gamma^{-2} DD^\top P_{K'})^\top \tau}\cdot [(RK-B^\top P_K)^\top(K'-K) +(K'-K)^\top (RK-B^\top P_K)\notag\\&\quad+(K'-K)^\top R(K'-K)]\cdot \eb^{(A-BK'+\gamma^{-2} DD^\top P_{K'})\tau}d\tau. \label{eq:CDL_upper_ct}
\# 
%where recall $E_K$ is defined in \eqref{equ:def_mu_Ek}.
%have finite costs and thus $P_K$ and $P_{K'}$ exist. Also, suppose $W^{-1}>\gamma P_K$, $W^{-1}>\gamma P_{K'}$, and thus both $I-\gamma P_{K} W$ and {$I-\gamma P_{K'} W$ are  invertible}. 
If additionally the matrix   $A-BK'+\gamma^{-2} DD^\top P_{K}$ is Hurwitz,  
then we  have the   lower bound:
%\#\label{eq:CDL1}
%&P_{K'}-P_K \leq  \sum_{t=0} [(A-BK')^\top(I-\gamma P_{K'}W)^{-1}]^t\big[Q+(K')^\top RK' \notag\\
%&\qquad\qquad\qquad+ (A-BK')^\top \tP_K (A-BK')-P_K\big][(I-\gamma P_{K'}W)^{-\top}(A-BK')]^t. 
%\#
%In addition, we have
%\begin{align}
%\label{eq:CDL2}
%&Q+(K')^\top R K'+(A-BK')^\top \tP_K(A-BK')-P_K\notag\\
%&\quad=2(K'-K)^\top E_K+(K'-K)^\top (R+B^\top \tP_{K} B)(K'-K),
%\end{align}
%where recall $E_K$ is defined in \eqref{equ:def_mu_Ek}. 
%Combining the above two inequalities, we have 
\#
P_{K'}-P_K&\ge \int_{0}^\infty \eb^{(A-BK'+\gamma^{-2} DD^\top P_{K})^\top \tau}\cdot [(RK-B^\top P_K)^\top(K'-K) +(K'-K)^\top (RK-B^\top P_K)\notag\\&\quad+(K'-K)^\top R(K'-K)]\cdot \eb^{(A-BK'+\gamma^{-2} DD^\top P_{K})\tau}d\tau. \label{eq:CDL_lower_ct}
\# 
\end{lemma}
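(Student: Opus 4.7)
The plan is to mimic the structure of the discrete-time Cost Difference Lemma (Lemma \ref{lemma:cost_diff}), but now exploiting the \emph{continuous-time} Riccati equation \eqref{equ:cont_riccati}. The basic idea is to subtract the Riccati equations satisfied by $P_K$ and $P_{K'}$, group the leading linear and quadratic terms in $\Delta K:=K'-K$ and $\Delta P:=P_{K'}-P_K$, and recognize the resulting identity as a Lyapunov equation for $\Delta P$ whose closed-loop matrix is Hurwitz (by Lemma \ref{lemma:cont_bounded_real_lemma}), so that $\Delta P$ admits an integral representation.

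First, I would subtract \eqref{equ:cont_riccati} written at $K$ from its counterpart at $K'$, using the elementary identities
\[
K'^{\top} R K' - K^{\top} R K = \Delta K^{\top}(RK) + (RK)^{\top}\Delta K + \Delta K^{\top} R\Delta K,
\]
$(A-BK')^{\top}-(A-BK)^{\top} = -\Delta K^{\top} B^{\top}$, and the \emph{asymmetric} decomposition
\[
P_{K'}DD^{\top} P_{K'} - P_K DD^{\top} P_K \;=\; \Delta P\, DD^{\top} P_{K'} + P_{K'} DD^{\top}\Delta P - \Delta P\, DD^{\top}\Delta P.
\]
Collecting terms and defining $A_{cl}':=A-BK'+\gamma^{-2}DD^{\top}P_{K'}$, a direct calculation yields the matrix identity
\[
(A_{cl}')^{\top}\Delta P + \Delta P\, A_{cl}' + N - \gamma^{-2}\Delta P\, DD^{\top}\Delta P = 0,
\]
where $N:=(RK-B^{\top}P_K)^{\top}\Delta K+\Delta K^{\top}(RK-B^{\top}P_K)+\Delta K^{\top} R\Delta K$ is exactly the matrix appearing inside the integrand of \eqref{eq:CDL_upper_ct}.

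Since $K'\in\cK$, Lemma \ref{lemma:cont_bounded_real_lemma} guarantees that $A_{cl}'$ is Hurwitz, so the Lyapunov-type equation above has the unique integral representation
\[
\Delta P \;=\; \int_{0}^{\infty} e^{(A_{cl}')^{\top}\tau}\bigl[N - \gamma^{-2}\Delta P\, DD^{\top}\Delta P\bigr] e^{A_{cl}'\tau}\, d\tau.
\]
Since $\gamma^{-2}\Delta P\, DD^{\top}\Delta P\ge 0$, dropping it strengthens the integrand, which produces the upper bound \eqref{eq:CDL_upper_ct}. For the lower bound \eqref{eq:CDL_lower_ct}, I would instead use the \emph{opposite} asymmetric decomposition
\[
P_{K'}DD^{\top} P_{K'} - P_K DD^{\top} P_K \;=\; \Delta P\, DD^{\top} P_{K} + P_{K} DD^{\top}\Delta P + \Delta P\, DD^{\top}\Delta P,
\]
which yields the Lyapunov identity with closed-loop matrix $\tilde A_{cl}':=A-BK'+\gamma^{-2}DD^{\top}P_{K}$ and a sign flip on the quadratic remainder:
\[
(\tilde A_{cl}')^{\top}\Delta P + \Delta P\,\tilde A_{cl}' + N + \gamma^{-2}\Delta P\, DD^{\top}\Delta P = 0.
\]
Under the additional hypothesis that $\tilde A_{cl}'$ is Hurwitz (exactly the extra assumption in the lemma's lower-bound part), the same integral representation applies, and now the nonnegative term $\gamma^{-2}\Delta P\, DD^{\top}\Delta P$ can be \emph{dropped from the other side} to yield the lower bound.

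The only subtle point is the algebraic bookkeeping in the two asymmetric decompositions of $P_{K'}DD^{\top}P_{K'}-P_K DD^{\top}P_K$; the choice of decomposition is what dictates whether the closed-loop matrix involves $P_{K'}$ or $P_K$, and correspondingly whether the residual quadratic term has a favorable sign for the upper or the lower bound. Everything else is routine: grouping terms into $A_{cl}$-form and invoking the uniqueness/integral representation of solutions to Lyapunov equations with Hurwitz closed-loop matrices.
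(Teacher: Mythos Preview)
Your proposal is correct and follows essentially the same approach as the paper: subtract the two Riccati equations, use the two asymmetric decompositions of $P_{K'}DD^\top P_{K'}-P_KDD^\top P_K$ to obtain Lyapunov-type identities with closed-loop matrices $A-BK'+\gamma^{-2}DD^\top P_{K'}$ and $A-BK'+\gamma^{-2}DD^\top P_K$, and exploit the sign of the residual $\gamma^{-2}\Delta P\,DD^\top\Delta P$ to get the upper and lower bounds respectively. The only cosmetic difference is that the paper introduces comparison solutions $\hat\Delta_P,\tilde\Delta_P$ and subtracts, whereas you write the integral representation of $\Delta P$ directly and bound the integrand; these are equivalent.
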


\begin{proof}
	First, by Lemma \ref{lemma:cont_bounded_real_lemma},  the matrices $A-BK'+\gamma^{-2} DD^\top P_{K'}$ and $A-BK+\gamma^{-2} DD^\top P_{K}$ are both Hurwitz. 
%	is also a Riccati equation. Hence, the matrices $A-BK'+\gamma^{-2} DD^\top P_{K'}$ and $A-BK+\gamma^{-2} DD^\top P_{K}$ are both closed-loop controllers, which are thus Hurwitz \cite{zhou1996robust}.
	 As a result, the integral on the RHS of \eqref{eq:CDL_upper_ct} is well defined. By subtracting two Riccati  equations \eqref{equ:cont_riccati}  corresponding to $K'$ and $K$, we have
\# 
&(A-BK'+\gamma^{-2} DD^\top P_{K'})^\top \Delta_P	+\Delta_P(A-BK'+\gamma^{-2} DD^\top P_{K'})+(RK-B^\top P_K)^\top(K'-K) \notag\\
&\quad+(K'-K)^\top (RK-B^\top P_K)+(K'-K)^\top R(K'-K)-\gamma^{-2} \Delta_P^\top  DD^\top\Delta_P \label{equ:cost_diff_pf_trash_1}\\
&=(A-BK'+\gamma^{-2} DD^\top P_K)^\top \Delta_P	+\Delta_P(A-BK'+\gamma^{-2} DD^\top P_K)+(RK-B^\top P_K)^\top(K'-K) \notag\\
&\quad+(K'-K)^\top (RK-B^\top P_K)+(K'-K)^\top R(K'-K)+\gamma^{-2} \Delta_P^\top  DD^\top\Delta_P=0, 
\label{equ:cost_diff_pf_trash_2}
\#
where we let $\Delta_P= P_{K'}-P_K$, and the   relationship follows from the facts below:
\$
&K'^\top R K'-K^\top RK= (K'-K)R(K'-K)+K^\top R(K'-K)+(K'-K)^\top RK\\
&\gamma^{-2} P_{K'}DD^\top P_{K'}-\gamma^{-2} P_{K}DD^\top P_{K}=\gamma^{-2} P_KDD^\top\Delta_P+\gamma^{-2} \Delta_P DD^\top P_K+\gamma^{-2} \Delta_P^\top  DD^\top \Delta_P\\ 
&\quad=\gamma^{-2} P_{K'}DD^\top\Delta_P+\gamma^{-2} \Delta_P DD^\top P_{K'}-\gamma^{-2} \Delta_P^\top  DD^\top\Delta_P. 
\$

From \eqref{equ:cost_diff_pf_trash_1}, we know that $\Delta_P\leq \hat{\Delta}_P$, where $\hat{\Delta}_P$ is the solution to the following Lyapunov equation 
\#\label{equ:cost_diff_pf_trash_3} 
&(A-BK'+\gamma^{-2} DD^\top P_{K'})^\top \hat{\Delta}_P	+\hat{\Delta}_P(A-BK'+\gamma^{-2} DD^\top P_{K'})+(RK-B^\top P_K)^\top(K'-K) \notag\\
&\quad+(K'-K)^\top (RK-B^\top P_K)+(K'-K)^\top R(K'-K)=0.
\#
This is due to the fact that \eqref{equ:cost_diff_pf_trash_1} subtracted from   \eqref{equ:cost_diff_pf_trash_3} yields
\#\label{equ:cost_diff_pf_trash_4}
&(A-BK'+\gamma^{-2} DD^\top P_{K'})^\top (\hat{\Delta}_P-\Delta_P)	+(\hat{\Delta}_P-\Delta_P)(A-BK'+\gamma^{-2} DD^\top P_{K'})+\gamma^{-2}  \Delta_P^\top  DD^\top \Delta_P=0.
\# 
Since $A-BK'+\gamma^{-2} DD^\top P_{K'}$ is stabilizing, $\hat{\Delta}_P-\Delta_P$ can be viewed as the unique solution to this Lyapunov equation \eqref{equ:cost_diff_pf_trash_4}. Moreover, since $\gamma^{-2} \Delta_P^\top  DD^\top\Delta_P\geq 0$, we obtain that $\hat{\Delta}_P\geq \Delta_P$. Note that the solution $\hat{\Delta}_P$ to \eqref{equ:cost_diff_pf_trash_3} has the form on the RHS of \eqref{eq:CDL_upper_ct}, which completes the proof for the upper bound. 

Similarly, if the matrix   $A-BK'+\gamma^{-2} DD^\top P_{K}$ is Hurwitz, then the RHS of \eqref{eq:CDL_lower_ct} is well defined, and so is the solution $\tilde \Delta_P$ to the following Lyapunov equation
\#\label{equ:cost_diff_pf_trash_5}
&(A-BK'+\gamma^{-2} DD^\top P_{K})^\top \tilde{\Delta}_P	+\tilde{\Delta}_P(A-BK'+\gamma^{-2} DD^\top P_{K})+(RK-B^\top P_K)^\top(K'-K) \notag\\
&\quad+(K'-K)^\top (RK-B^\top P_K)+(K'-K)^\top R(K'-K)=0.
\#
Note that $\tilde \Delta_P$ has the form of the RHS of \eqref{eq:CDL_lower_ct}. Subtracting \eqref{equ:cost_diff_pf_trash_5} from  \eqref{equ:cost_diff_pf_trash_2} yields the Lyapunov equation
\#\label{equ:cost_diff_pf_trash_6}
&(A-BK'+\gamma^{-2} DD^\top P_{K})^\top (\Delta_P-\tilde \Delta_P)	+(\Delta_P-\tilde \Delta_P)(A-BK'+\gamma^{-2} DD^\top P_{K})+\gamma^{-2} \Delta_P^\top  DD^\top \Delta_P=0.
\#
Hence, $\gamma^{-2} \Delta_P^\top  DD^\top \Delta_P\geq 0$ implies that the unique solution to \eqref{equ:cost_diff_pf_trash_6}, $\Delta_P-\tilde \Delta_P\geq 0$, which completes the proof of the lower bound. 
\end{proof}

As in the discrete-time setting, Lemma \ref{lemma:cost_diff_ct} also characterizes the ``Almost Smoothness'' of $P_K$ with respect to $	K$ \citep{fazel2018global}.  Now we are ready to analyze the updates \eqref{eq:exact_gn_ct} and \eqref{eq:exact_npg_ct}.  

We start by the following helper lemma that lower-bounds the solution to the Lyapunov equation,
\#\label{equ:lyapunov_DD}
(A-BK+\gamma^{-2} DD^\top P_K)\cM_K+\cM_K(A-BK+\gamma^{-2} DD^\top P_K)^\top +M =0, 
\#
 for any  matrix $M>0$. 

\begin{lemma}\label{lemma:lower_bnd_cont_lyap}
	{Suppose that $K\in\cK$, and there exists a constant $\cC_{K}>0$ such that $K^\top R K\leq \cC_{K}\cdot I$.} 
	Let $\cM_K>0$ be the unique solution to the Lyapunov equation  \eqref{equ:lyapunov_DD} 
	 with $M>0$. Then,  
	{
	\$
	\cM_K\geq \frac{\sigma_{\min}(M)}{4\omega_K}\cdot I,
	\$
	where  $\omega_K>0$ is defined as 
		\small
	\#\label{equ:def_omega_K}
	\omega_K:=\max\bigg\{\lambda_{\max}\big(-P_K^{-1/2}C^\top C P_K^{-1/2}+\gamma^{-2} P_K^{1/2}DD^\top P_K^{1/2})\big),-\lambda_{\min}\big(P_K^{-1/2}(-C^\top C-\cC_{K}\cdot I)P_K^{-1/2}\big)\bigg\}. 
	\#
	\normalsize
	}
\end{lemma}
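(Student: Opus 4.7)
The plan is to (i) reduce to the normalized case $M = I$, (ii) exploit the Riccati equation via a similarity transform to obtain a clean bound on the symmetric part of a transformed closed-loop matrix, and (iii) close with a Gronwall estimate, transferring everything back via the similarity.

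\emph{Reduction and similarity.} Since $M \succeq \sigma_{\min}(M)\, I$, the integral representation $\cM_K = \int_0^\infty e^{A_K^{\mathrm{cl}}\tau} M e^{A_K^{\mathrm{cl}\top}\tau}\,d\tau$, where $A_K^{\mathrm{cl}} := A - BK + \gamma^{-2} DD^\top P_K$, gives $\cM_K \succeq \sigma_{\min}(M)\, N_K$ with $N_K \succ 0$ the unique solution of $A_K^{\mathrm{cl}} N_K + N_K A_K^{\mathrm{cl}\top} = -I$. It therefore suffices to prove $N_K \succeq (4\omega_K)^{-1} I$. Rearranging \eqref{equ:cont_riccati} yields $A_K^{\mathrm{cl}\top} P_K + P_K A_K^{\mathrm{cl}} = -\Omega_K$ with $\Omega_K := C^\top C + K^\top R K - \gamma^{-2} P_K DD^\top P_K$. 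Pre- and post-multiplying by $P_K^{-1/2}$ shows that the similarity-transformed matrix $\tilde A := P_K^{1/2} A_K^{\mathrm{cl}} P_K^{-1/2}$ satisfies $\tilde A + \tilde A^\top = -\tilde\Omega_K$, where $\tilde\Omega_K := P_K^{-1/2} \Omega_K P_K^{-1/2}$.

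\emph{Key spectral bound and Gronwall.} The two quantities inside the maximum defining $\omega_K$ in \eqref{equ:def_omega_K} are tailored to sandwich the spectrum of $\tilde\Omega_K$: bounding $K^\top R K \preceq \cC_K I$ and discarding the nonpositive $-\gamma^{-2} P_K^{1/2} DD^\top P_K^{1/2}$ gives $\lambda_{\max}(\tilde\Omega_K) \leq \omega_K$, whereas discarding the nonnegative $P_K^{-1/2} K^\top R K P_K^{-1/2}$ gives $\lambda_{\min}(\tilde\Omega_K) \geq -\omega_K$, so $\|\tilde A + \tilde A^\top\| \leq \omega_K$. For $z(\tau) := e^{\tilde A^\top \tau} w$, the energy $V(\tau) := \|z(\tau)\|^2$ satisfies $\dot V = -z^\top \tilde\Omega_K z$, hence $|\dot V| \leq \omega_K V$; Gronwall then gives $V(\tau) \geq V(0)\,e^{-\omega_K \tau} \geq V(0)/2$ on $[0, \ln 2/\omega_K]$, so $\int_0^\infty \|z(\tau)\|^2\,d\tau \geq \|w\|^2/(4\omega_K)$. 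The similarity identities $e^{A_K^{\mathrm{cl}}\tau} = P_K^{-1/2} e^{\tilde A \tau} P_K^{1/2}$ and $e^{A_K^{\mathrm{cl}\top}\tau} = P_K^{1/2} e^{\tilde A^\top \tau} P_K^{-1/2}$ let me identify, with $w := P_K^{-1/2} v$, $v^\top N_K v = w^\top \tilde N_K w$ where $\tilde N_K := \int_0^\infty e^{\tilde A\tau} P_K e^{\tilde A^\top \tau}\,d\tau = P_K^{1/2} N_K P_K^{1/2}$, so the target $N_K \succeq (4\omega_K)^{-1} I$ becomes $\tilde N_K \succeq P_K/(4\omega_K)$.

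\emph{Main obstacle.} The delicate step is translating the clean Gronwall control of $\|z\|^2$ into control of the $P_K$-weighted integrand $z^\top P_K z$ appearing in $\tilde N_K$, since a naive bound $z^\top P_K z \geq \lambda_{\min}(P_K)\|z\|^2$ would introduce a spurious $\kappa(P_K)$ factor. The intended resolution is to replay the Lyapunov/Gronwall argument directly for the weighted energy $\widehat V(\tau) := z(\tau)^\top P_K z(\tau)$ using the companion identity $A_K^{\mathrm{cl}} P_K^{-1} + P_K^{-1} A_K^{\mathrm{cl}\top} = -P_K^{-1} \Omega_K P_K^{-1}$ (obtained by congruence of the Riccati with $P_K^{-1}$) and the same $\omega_K$-bound on $\|P_K^{-1/2}\Omega_K P_K^{-1/2}\|$; integrating the resulting differential inequality over the short window $[0, \ln 2/\omega_K]$ preserves the $P_K$-weight in the integrand and delivers $\tilde N_K \succeq P_K/(4\omega_K)$, hence the claim.
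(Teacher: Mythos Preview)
Your reduction to the case $M=I$ and your similarity computation are correct, and you correctly identify the spectral bound $\|\tilde A+\tilde A^\top\|=\|\tilde\Omega_K\|\le\omega_K$. The Gronwall step is also fine: it yields $\int_0^\infty e^{\tilde A\tau}e^{\tilde A^\top\tau}\,d\tau\succeq(4\omega_K)^{-1}I$. But when you pull this back through the similarity $e^{A_K^{\mathrm{cl}}\tau}=P_K^{-1/2}e^{\tilde A\tau}P_K^{1/2}$, what you obtain is $\int_0^\infty e^{A_K^{\mathrm{cl}}\tau}\,P_K^{-1}\,e^{A_K^{\mathrm{cl}\top}\tau}\,d\tau\succeq(4\omega_K)^{-1}P_K^{-1}$, not $N_K\succeq(4\omega_K)^{-1}I$. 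This is exactly the obstacle you flag, and your proposed resolution does not remove it.

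The trouble is that the ``companion identity'' $A_K^{\mathrm{cl}}P_K^{-1}+P_K^{-1}A_K^{\mathrm{cl}\top}=-P_K^{-1}\Omega_K P_K^{-1}$ is \emph{congruent} to the original identity $A_K^{\mathrm{cl}\top}P_K+P_K A_K^{\mathrm{cl}}=-\Omega_K$ (pre/post-multiply by $P_K^{-1}$), so it carries no new information. Concretely, for $\widehat V(\tau)=z(\tau)^\top P_K z(\tau)$ with $\dot z=\tilde A^\top z$ you compute $\dot{\widehat V}=z^\top(\tilde A P_K+P_K\tilde A^\top)z=(P_K^{1/2}z)^\top(A_K^{\mathrm{cl}}+A_K^{\mathrm{cl}\top})(P_K^{1/2}z)$; the differential inequality $|\dot{\widehat V}|\le\omega_K\widehat V$ you need is therefore equivalent to $\|A_K^{\mathrm{cl}}+A_K^{\mathrm{cl}\top}\|\le\omega_K$, and neither the Riccati nor its $P_K^{-1}$-congruent form controls the \emph{untransformed} symmetric part $A_K^{\mathrm{cl}}+A_K^{\mathrm{cl}\top}$ --- only the transformed one $P_K^{-1/2}\Omega_K P_K^{-1/2}$. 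Equivalently, using the companion identity on $y=P_K^{1/2}z$ gives Gronwall control of $y^\top P_K^{-1}y$, which is again the $P_K^{-1}$-weighted quantity, not $\|y\|^2$. The paper's route avoids this transfer issue by invoking an a~priori Lyapunov lower bound of the Shapiro type, $\cM_K\succeq\sigma_{\min}(M)/(2\|A_K^{\mathrm{cl}}\|)\,I$, and then bounding the operator norm via the numerical radius; your Gronwall-based route, as written, would need a genuine bound on $\|A_K^{\mathrm{cl}}+A_K^{\mathrm{cl}\top}\|$ (or on $\|A_K^{\mathrm{cl}}\|$) in terms of $\omega_K$, which is not supplied by the identities you invoke.
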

\begin{proof}
	By \cite{shapiro1974lyapunov}, the solution $\cM_K$ to the Lyapunov equation satisfies 
	\#\label{equ:lower_bnd_cont_lyap_trash_1}
	\cM_K\geq \frac{\sigma_{\min}(M)}{2\|A-BK+\gamma^{-2} DD^\top P_K\|} \cdot I=\frac{\sigma_{\min}(M)}{2\big\|P_K^{1/2}(A-BK+\gamma^{-2} DD^\top P_K)P_K^{-1/2}\big\|} \cdot I.
	\#
	On the other hand,  multiplying $P^{-1/2}_{K}$  on both sides of   the Riccati  equation  \eqref{equ:cont_riccati} yields  
%	\small
	\$
	&P_K^{-1/2}(A-BK+\gamma^{-2} DD^\top P_K)^\top P_K^{1/2}+P_K^{1/2}(A-BK+\gamma^{-2} DD^\top  P_K)P_K^{-1/2}\\
	&\quad=-P_K^{-1/2}(C^\top C+K^\top RK-\gamma^{-2} P_KDD^\top  P_K)P_K^{-1/2},
	\$
	\normalsize
	which further implies that
	\#
	&\lambda_{\max}\big(P_K^{-1/2}(A-BK+\gamma^{-2} DD^\top P_K)^\top P_K^{1/2}+P_K^{1/2}(A-BK+\gamma^{-2} DD^\top P_K)P_K^{-1/2}\big)\label{equ:lower_bnd_cont_lyap_trash_2}\\
	&\quad \leq \lambda_{\max}\big(P_K^{-1/2}(-C^\top C+\gamma^{-2} P_KDD^\top P_K)P_K^{-1/2}\big)=\lambda_{\max}\big(-P_K^{-1/2}C^\top CP_K^{-1/2}+\gamma^{-2} P_K^{1/2}DD^\top P_K^{1/2})\big)\notag
%	\leq \lambda_{\max}\big(-P_K^{-1/2}QP_K^{-1/2}+I\big)
	\\
	&\lambda_{\min}\big(P_K^{-1/2}(A-BK+\gamma^{-2} DD^\top P_K)^\top P_K^{1/2}+P_K^{1/2}(A-BK+\gamma^{-2} DD^\top P_K)P_K^{-1/2}\big)\notag\\
	&\quad \geq \lambda_{\min}\big(P_K^{-1/2}(-C^\top C-\cC_{K}\cdot I)P_K^{-1/2}\big),\label{equ:lower_bnd_cont_lyap_trash_3}
	\#
	by taking the largest and smallest eigenvalues on both sides, respectively.  Note that in \eqref{equ:lower_bnd_cont_lyap_trash_2}, the term $K^\top R K$ is dropped, while in 
	\eqref{equ:lower_bnd_cont_lyap_trash_3}, $K^\top R K$ is replaced by $\cC_{K}\cdot I$, and $\gamma^{-2} P_KDD^\top P_K$ is dropped. 
Let $\omega(X)$ be the \emph{numerical radius} of a   matrix $X$   defined as
\$
\omega(X):=\max\{\lambda_{\max}(X+X^\top)/2,-\lambda_{\min}(X+X^\top)/2\}. 
\$
Then \eqref{equ:lower_bnd_cont_lyap_trash_2} and \eqref{equ:lower_bnd_cont_lyap_trash_3} together yield 
\#\label{equ:lower_bnd_cont_lyap_trash_4}
&\omega\big(P_K^{-1/2}(A-BK+\gamma^{-2} DD^\top P_K)^\top P_K^{1/2}+P_K^{1/2}(A-BK+\gamma^{-2} DD^\top P_K)P_K^{-1/2}\big)\\
&\quad\leq \max\bigg\{\lambda_{\max}\big(-P_K^{-1/2}C^\top CP_K^{-1/2}+\gamma^{-2} P_K^{1/2}DD^\top P_K^{1/2})\big),-\lambda_{\min}\big(P_K^{-1/2}(-C^\top C-\cC_{K}\cdot I)P_K^{-1/2}\big)\bigg\}. \notag
\#
By the relationship between operator norm and numerical radius \citep{shebrawi2009numerical}, we also have
	\#\label{equ:lower_bnd_cont_lyap_trash_5}
	&\big\|P_K^{1/2}(A-BK+\gamma^{-2} DD^\top P_K)P_K^{-1/2}\big\|\notag\\
	&\quad\leq 2\omega\big(P_K^{-1/2}(A-BK+\gamma^{-2} DD^\top P_K)^\top P_K^{1/2}+P_K^{1/2}(A-BK+\gamma^{-2} DD^\top P_K)P_K^{-1/2}\big).
	\# 
Combining\eqref{equ:lower_bnd_cont_lyap_trash_1},   \eqref{equ:lower_bnd_cont_lyap_trash_4}, and \eqref{equ:lower_bnd_cont_lyap_trash_5} proves the desired result.
\end{proof}

\vspace{10pt}
\noindent{\textbf{Gauss-Newton:}}
\vspace{4pt} 
   
Recall that  the Gauss-Newton update has the form  $K'=K-2\eta(K-R^{-1}B^\top P_K)$. 
By Theorem \ref{thm:stability_update_ct}, $K'$ also lies in $\cK$ if $\eta\leq 1/2$.  
Then, by the upper bound \eqref{eq:CDL_upper_ct}, for any $\eta\in[0,1/2]$, 
\begin{align}\label{equ:monotone_p_ct}
&P_{K'}-P_K\le(-4\eta+2\eta)\int_{0}^\infty \eb^{(A-BK'+\gamma^{-2} DD^\top  P_{K'})^\top \tau}\left[ (RK-B^\top P_K)^\top R^{-1}(RK-B^\top P_K) \right]\notag\\
&\qquad\qquad\qquad\qquad\qquad\qquad\cdot\eb^{(A-BK'+\gamma^{-2} DD^\top  P_{K'})\tau}d\tau\le 0, 
\end{align} 
which implies the monotonic decrease of $P_K$ (matrix-wise) along the update.  Since $P_K$ is lower-bounded, such a monotonic sequence of $\{P_{K_n}\}$ along the iterations must converge to some $P_{K_{\infty}}\in\cK$. Now we show this $P_{K_{\infty}}$ is indeed $P_{K^*}$. 
Multiplying by any $M> 0$ on both sides of \eqref{equ:monotone_p_ct} and 
taking the trace  further implies that 
\begin{align}\label{equ:p_trace_upper_bnd_gn_ct}
&\tr(P_{K'}M)-\tr(P_KM)\notag\\
&\quad\le-2\eta\tr\left[ (RK-B^\top P_K)^\top R^{-1}(RK-B^\top P_K) \int_{0}^\infty \eb^{(A-BK'+\gamma^{-2} DD^\top  P_{K'}) \tau}M\eb^{(A-BK'+\gamma^{-2} DD^\top  P_{K'})^\top\tau}d\tau\right]\notag\\
%&\qquad\qquad\qquad\qquad\qquad\qquad\cdot\eb^{(A-BK'+\gamma^{-2} DD^\top  P_{K'})\tau}d\tau\notag\\
%&\quad \leq -2\eta \tr\bigg\{\left[ E_K^\top(R+B^\top \tP_K B)^{-1}E_K \right]\cdot\notag\\
%&\qquad\qquad\qquad\qquad
%%\underbrace{
%\sum_{t\ge 0} [(I-\gamma P_{K'}W)^{-\top}(A-BK')]^t W[(A-BK')^\top(I-\gamma P_{K'}W)^{-1}]^t
%%}_{\cM_{K',K'}}
%\bigg\}\notag\\
&\quad\leq\frac{-2\eta\sigma_{\min}(M)}{4\omega_K}
\tr\big[(RK-B^\top P_K)^\top R^{-1}(RK-B^\top P_K)\big]\notag\\
%\leq-2\eta\sigma_{\min}(W)\tr\big[E_K^\top(R+B^\top \tP_K B)^{-1}E_K \big]
&\quad\leq \frac{-\eta\sigma_{\min}(M)}{2\omega_K\sigma_{\max}(R)}
\tr\big[(RK-B^\top P_K)^\top (RK-B^\top P_K)\big]
%\notag\\
%&\quad \leq \frac{-2\eta\sigma_{\min}(W)}{\sigma_{\max}(R+B^\top \tP_{K_0} B)}\tr(E_K^\top E_K)
,
\end{align}
where 
%$\cM_{K',K'}\geq 0$ is a nonnegative definite matrix depending on $K'$, and 
the second inequality follows by applying Lemma \ref{lemma:lower_bnd_cont_lyap}, and $\omega_K$ is as defined in \eqref{equ:def_omega_K}. 
{Since for any finite $N> 0$, there exists some constant $\cC_{K}^N>0$ such that  $K_{n}^\top R K_{n}\leq \cC_{K}^N\cdot I$ for all $K_{n}$ with $n\leq N-1$.
By definition,  
$\omega_K$ can be uniformly upper bounded along the iteration as}
\small
\#\label{equ:def_bar_omega}
\overline{\omega}_{\cK}:=\max\bigg\{\lambda_{\max}\big(-P_{K_0}^{-1/2}C^\top CP_{K_0}^{-1/2}+\gamma^{-2} P_{K_0}^{1/2}DD^\top P_{K_0}^{1/2})\big),~-\lambda_{\min}\big(P_{K_{\infty}}^{-1/2}(-C^\top C-\cC_{K}^N\cdot I)P_{K_{\infty}}^{-1/2}\big)\bigg\},
\#
\normalsize
which is due to  the facts that the first and second terms in the $\max$ operator are increasing and decreasing with respect to $P_K$, respectively, and $P_{K_0}\geq P_{K_n}\geq P_{K_{\infty}}$ holds for all $n\geq 0$ from \eqref{equ:monotone_p_ct}.

From iterations $n=0$ to $N-1$, replacing $\omega_K$ by $\overline{\omega}_{\cK}$ and $M$ by identify  matrix $I$,  summing over both sides of \eqref{equ:p_trace_upper_bnd_gn_ct} and dividing by $N$, we  have
\$
\frac{1}{N}\sum_{n=0}^{N-1}\tr\big[(RK_{n}-B^\top P_{K_{n}})^\top (RK_{n}-B^\top P_{K_{n}})\big]\leq \frac{2\overline{\omega}_{\cK}\sigma_{\max}(R)\cdot \big[\tr(P_{K_0})-\tr(P_{K_{\infty}})\big]}{\eta\cdot N}.
\$
This shows that the sequence $\{\|RK_n-B^\top P_{K_n}\|_F^2\}$ converges to zero, namely, the sequence $\{K_n\}$ converges to the stationary point $K$ such that  $RK-B^\top P_{K}=0$, with sublinear $O(1/N)$ rate. By Proposition \ref{coro:opt_control_form_cont}, this is in fact towards the global optimum $K^*$.   
%Additionally, if $(A-BK+\gamma^{-2}DD^\top P_{K},D)$ is controllable at $K$, then by Proposition  \ref{coro:opt_control_form_cont}, such  convergence is towards 
%stationary point $K$ is unique and is indeed  the  
%unique \emph{global} 
%the global 
%optimizer  $K^*$.  

%Since $RK-B^\top P_K=\bm{0}$ gives the unique optimal solution $K^*=R^{-1}B^\top P_{K^*}$, this shows that the sequence 
%$\{K_n\}$ converges to the optimal control gain $K^*$ with sublinear $O(1/N)$ rate in the sense of the convergence of $\tr[(RK-B^\top P_K)^\top (RK-B^\top P_K)]$. 

\vspace{10pt}
\noindent{\textbf{Natural  Policy Gradient:}}
\vspace{4pt} 
 
Recall that the natural PG update follows $K'=K-2\eta E_K$. 
By Theorem \ref{thm:stability_update_ct}, $K'$ also lies in $\cK$ if $\eta\leq 1/(2\|R\|)$.  
Then, by  the upper bound in \eqref{eq:CDL_upper_ct}, we also have the monotonic decrease of $P_K$ (matrix-wise) along the iterations as 
\small
\begin{align*}
&P_{K'}-P_K
%\le\sum_{t\ge 0} [(A-BK')^\top(I-\gamma P_{K'}W)^{-1}]^t\left[-4\eta E_K^\top E_K+4\eta^2E_K^\top(R+B^\top \tP_K B)^{-1}E_K \right]\notag\\
%&\qquad\qquad\qquad\qquad\qquad\qquad\cdot[(I-\gamma P_{K'}W)^{-\top}(A-BK')]^t\notag\\
\le(-4\eta+2\eta)\int_{0}^\infty \eb^{(A-BK'+\gamma^{-2} DD^\top  P_{K'})^\top \tau}\left[ (RK-B^\top P_K)^\top  (RK-B^\top P_K) \right] \eb^{(A-BK'+\gamma^{-2} DD^\top  P_{K'})\tau}d\tau\\
&\quad\le 0.   
%\notag\\
%&\quad\le 0, 
\end{align*} 
\normalsize
%which also implies the monotonic decrease of $P_{K}$ along the update.  
As in \eqref{equ:p_trace_upper_bnd_gn_ct}, taking the trace of both sides   yields
\begin{align}\label{equ:p_trace_upper_bnd_ng_ct}
&\tr(P_{K'}M)-\tr(P_KM)
%\le(-4\eta+4\eta^2)\tr\bigg\{\sum_{t\ge 0} [(A-BK')^\top(I-\gamma P_{K'}W)^{-1}]^t\left[ E_K^\top(R+B^\top \tP_K B)^{-1}E_K \right]\notag\\
%&\qquad\qquad\qquad\qquad\qquad\qquad\cdot[(I-\gamma P_{K'}W)^{-\top}(A-BK')]^t W\bigg\}\notag\\
%&\quad \leq -2\eta \tr\bigg\{\left[ E_K^\top(R+B^\top \tP_K B)^{-1}E_K \right]\cdot\notag\\
%&\qquad\qquad\qquad\qquad
%%\underbrace{
%\sum_{t\ge 0} [(I-\gamma P_{K'}W)^{-\top}(A-BK')]^t W[(A-BK')^\top(I-\gamma P_{K'}W)^{-1}]^t
%%}_{\cM_{K',K'}}
%\bigg\}\notag\\
\leq\frac{-\eta\sigma_{\min}(M)}{2\overline{\omega}_{\cK}}
\tr\big[(RK-B^\top P_K)^\top (RK-B^\top P_K)\big]
%\leq-2\eta\sigma_{\min}(W)\tr\big[E_K^\top(R+B^\top \tP_K B)^{-1}E_K \big]
%\leq {-2\eta\sigma_{\min}(W)}\tr(E_K^\top E_K)\notag\\
%&\quad \leq \frac{-2\eta\sigma_{\min}(W)}{\sigma_{\max}(R+B^\top \tP_{K_0} B)}\tr(E_K^\top E_K)
,
\end{align} 
for any $M>0$, 
where $\overline{\omega}_{\cK}$ is also defined as in \eqref{equ:def_bar_omega}.  Summing over both sides of \eqref{equ:p_trace_upper_bnd_ng_ct} from $n=0$ to $n=N-1$ gives 
\$
\frac{1}{N}\sum_{n=0}^{N-1}\tr\big[(RK_{n}-B^\top P_{K_{n}})^\top (RK_{n}-B^\top P_{K_{n}})\big]\leq \frac{2\overline{\omega}_{\cK}\cdot \big[\tr(P_{K_0})-\tr(P_{K_{\infty}})\big]}{\eta\cdot N},
\$
where we have replaced $M$ in \eqref{equ:p_trace_upper_bnd_ng_ct} by $I$. 
This   completes the proof. 
\hfill$\QED$

\subsection{Proof of Theorem \ref{theorem:local_exact_conv_ct}}\label{sec:proof_theorem:local_exact_conv_ct}

By the lower bound \eqref{eq:CDL_lower_ct} from Lemma \ref{lemma:cost_diff_ct} and completion of   squares, we have that 
\small
\#\label{equ:p_diff_lower_bnd_ct}
&P_{K'}-P_K\geq \int_{0}^\infty \eb^{(A-BK'+\gamma^{-2} DD^\top  P_{K})^\top \tau}\cdot [(RK-B^\top P_K)^\top(K'-K) +(K'-K)^\top (RK-B^\top P_K)\notag\\
&\qquad\qquad\qquad\qquad+(K'-K)^\top R(K'-K)]\cdot \eb^{(A-BK'+\gamma^{-2} DD^\top  P_{K})\tau}d\tau \notag\\
&\quad\geq \int_{0}^\infty \eb^{(A-BK'+\gamma^{-2} DD^\top  P_{K})^\top \tau}\cdot \big[-(RK-B^\top P_K)^\top R^{-1}(RK-B^\top P_K)\big]\cdot \eb^{(A-BK'+\gamma^{-2} DD^\top  P_{K})\tau}d\tau.
\#
\normalsize
%where the second inequality follows by completing   squares. 
%
%By completing the squares, we have
%\#\label{equ:p_diff_lower_bnd_trash}
%&-2(K-K')^\top E_K+(K'-K)^\top (R+B^\top \tP_{K} B)(K'-K)\notag\\
%&=[K'-K+(R+B^\top \tP_{K} B)^{-1}E_K]^\top (R+B^\top \tP_{K} B) [K'-K+(R+B^\top \tP_{K} B)^{-1}E_K]\notag\\
%&\qquad-E_K^\top (R+B^\top \tP_{K} B)^{-1}E_K\notag\\ 
%&\geq -E_K^\top (R+B^\top \tP_{K} B)^{-1}E_K,
%\#
%which can be plugged into 
%\eqref{equ:p_diff_lower_bnd} to yield
%\$
%&P_{K'}-P_K\geq \sum_{t\ge 0} [(A-BK')^\top(I-\gamma P_{K}W)^{-1}]^t\big[ -E_K^\top (R+B^\top \tP_{K} B)^{-1}E_K\big]\cdot[(I-\gamma P_{K}W)^{-\top}(A-BK')]^t.
%\$
Multiplying $DD^\top>0$ and 
taking traces on both sides of \eqref{equ:p_diff_lower_bnd_ct} with  $K'=K^*$  gives 
\#\label{equ:trace_diff_lower_bnd_2_ct}
\tr(P_{K}DD^\top)-\tr(P_{K^*}DD^\top)&\leq \tr\left[ (RK-B^\top P_K)^\top R^{-1}(RK-B^\top P_K) \right]\cdot\|\cM_{K,K^*}\|
%\leq \frac{\tr\left( E_K^\top E_K \right)}{\sigma_{\min}(R)}\cdot\|\cM_{K,K^*}\|,
\#
where by a slight abuse of notation, we define $\cM_{K,K^*}$ as 
\$
\cM_{K,K^*}:=\int_{0}^\infty \eb^{(A-BK^*+\gamma^{-2} DD^\top  P_{K}) \tau}\cdot DD^\top \cdot \eb^{(A-BK^*+\gamma^{-2} DD^\top  P_{K})^\top\tau}d\tau. 
\$
%i.e., the solution to the  Lyapunov equation
%\$
%(I-\gamma P_{K}W)^{-\top}(A-BK^*)\cM_{K,K^*}(A-BK^*)^\top(I-\gamma P_{K}W)^{-1}+W=\cM_{K,K^*},
%\$
Note that  $A-BK^*+\gamma^{-2} DD^\top  P_{K^*}$ is Hurwitz. 
Let $\epsilon:=-\max_{i\in[m]}[\Re\lambda_{i}(A-BK^*+\gamma^{-2} DD^\top  P_{K^*})]>0$ be the largest real part of the eigenvalues of $A-BK^*+\gamma^{-2} DD^\top  P_{K^*}$.  
By the continuity of $P_K$, 
% from  Lemma \ref{lemma:differentiability_policy_grad_ct}, 
 there exists a ball $\cB(K^*,r)\subseteq\cK$  centered at  $K^*$ with radius $r>0$,  such that  for any $K\in \cB(K^*,r)$,  
\#\label{equ:linear_rate_trash_0_ct}
\max_{i\in[m]}~[\Re\lambda_{i}(A-BK^*+\gamma^{-2} DD^\top  P_{K})]\leq -\epsilon/2<0. 
\#

\vspace{10pt}
\noindent{\textbf{Gauss-Newton:}}
\vspace{4pt} 

Under $DD^\top>0$, which implies the observability condition in Theorem \ref{theorem:global_exact_conv_ct}, 
 we know that $\{K_n\}$ approaches $K^*=R^{-1}B^\top P_{K^*}$. 
From \eqref{equ:p_trace_upper_bnd_gn_ct}, \eqref{equ:def_bar_omega}, and  \eqref{equ:trace_diff_lower_bnd_2_ct}, with $M$ replaced  by $DD^\top$, we obtain that if some $K_n$ is close enough to $K^*$  such that  $K_n\in\cB(K^*,r)$, then letting $K=K_n$ and $K'=K_{n+1}$, we have 
\begin{align*}
\tr(P_{K'}DD^\top)-\tr(P_KDD^\top)\leq \frac{-\eta\sigma_{\min}(DD^\top)\sigma_{\min}(R)}{2\overline{\omega}_{\cK}\sigma_{\max}(R)\|\cM_{K,K^*}\|}
[\tr(P_{K}DD^\top)-\tr(P_{K^*}DD^\top)],
\end{align*}
which further implies that
\small
\begin{align}\label{equ:linear_rate_trash_1_ct}
\tr(P_{K'}DD^\top)-\tr(P_{K^*}DD^\top)\leq \bigg(1-\frac{\eta\sigma_{\min}(DD^\top)\sigma_{\min}(R)}{2\overline{\omega}_{\cK}\sigma_{\max}(R)\|\cM_{K,K^*}\|}\bigg)\cdot[\tr(P_{K}DD^\top)-\tr(P_{K^*}DD^\top)]. 
\end{align}
\normalsize
By \eqref{equ:linear_rate_trash_1_ct}, the sequence $\{\tr(P_{K_{n+p}}DD^\top)\}_{p\geq 0}$ decreases to $\tr(P_{K^*}DD^\top)$ starting from some $K_n\in \cB(K^*,r)$. By continuity, there must exist a $K_{n+p}$  close enough to $K^*$, such that the lower-level set $\{K\given \tr(P_KDD^\top)\leq \tr(K_{n+p}DD^\top)\}\subseteq \cB(K^*,r)$. Hence, starting from $K_{n+p}$, the iterates  will never leave $\cB(K^*,r)$. Thus,  by \eqref{equ:linear_rate_trash_0_ct}, $\cM_{K,K^*}$, as the unique  solution to the Lyapunov equation
\$
(A-BK^*+\gamma^{-2} DD^\top  P_{K})\cM_{K,K^*}+\cM_{K,K^*}(A-BK^*+\gamma^{-2} DD^\top  P_{K})^\top +DD^\top=0,
\$
must be uniformly bounded by  some constant $\overline{\cM}_{r}>0$ over  $\cB(K^*,r)$. Replacing   $\|\cM_{K,K^*}\|$ in \eqref{equ:linear_rate_trash_1_ct} by $\overline{\cM}_{r}$ gives the uniform local linear contraction of $\{\tr(P_{K_n}DD^\top)\}$.

In addition,  by the upper bound \eqref{eq:CDL_upper_ct} in Lemma \ref{lemma:cost_diff_ct} and $RK^*=B^\top P_{K^*}$, we have
\#\label{equ:trash_1_cont_q_quad}
\tr(P_{K'}DD^\top)-\tr(P_{K^*}DD^\top)&\leq \tr\Big\{\int_{0}^\infty  \eb^{(A-BK'+\gamma^{-2} DD^\top P_{K'})^\top \tau}\cdot [(K'-K^*)^\top R(K'-K^*)]\notag\\&\qquad\cdot \eb^{(A-BK'+\gamma^{-2} DD^\top P_{K'})\tau}d\tau \cdot DD^\top\Big\}. 
\#
For $\eta=1/2$, suppose that some $K=K_n\in\cB(K^*,r)$, then $K'=K_{n+1}=R^{-1}B^\top P_K$, which yields that
\#\label{equ:trash_2_cont_q_quad}
 K'-K^*=R^{-1}B^\top (P_{K} -P_{K^*})\Longrightarrow \|K'-K^*\|_F\leq c\cdot\|P_{K}-P_{K^*}\|_F,
% =\\
% &\quad=[(R+B^\top \tilde P_{K} B)^{-1}-(R+B^\top \tilde P_{K^*} B)^{-1}]B^\top\tilde P_{K} A+[(R+B^\top \tilde P_{K^*} B)^{-1}B^\top(\tP_{K}-\tP_{K^*})A]\notag\\
% &\quad=(R+B^\top \tilde P_{K} B)^{-1}B^\top (\tP_{K}-\tP_{K^*}) B (R+B^\top \tilde P_{K^*} B)^{-1} B^\top\tilde P_{K} A+[(R+B^\top \tilde P_{K^*} B)^{-1}B^\top(\tP_{K}-\tP_{K^*})A]\notag. 
\#
for some constant $c>0$. 
Combining    \eqref{equ:trash_1_cont_q_quad} and \eqref{equ:trash_2_cont_q_quad} gives  
\$
\tr(P_{K'}DD^\top)-\tr(P_{K^*}DD^\top)&\leq c'\cdot [\tr(P_{K}DD^\top)-\tr(P_{K^*}DD^\top)]^2,
\$
for some constant $c'$. Note that from some $p\geq 0$ such that  $K_{n+p}$  onwards never leaves $\cB(K^*,r)$, the constant $c'$ is uniformly bounded, which completes the Q-quadratic convergence rate of $\{\tr(P_{K_n}DD^\top)\}$ around $K^*$.

\vspace{10pt}
\noindent{\textbf{Natural  Policy Gradient:}}
\vspace{4pt} 

Combining \eqref{equ:p_trace_upper_bnd_ng_ct} and \eqref{equ:trace_diff_lower_bnd_2_ct} yields that 
\begin{align*}
\tr(P_{K'}DD^\top)-\tr(P_{K^*}DD^\top)\leq \bigg(1-\frac{\eta\sigma_{\min}(DD^\top)\sigma_{\min}(R)}{2\overline{\omega}_{\cK}\|\cM_{K,K^*}\|}\bigg)\cdot[\tr(P_{K}DD^\top)-\tr(P_{K^*}DD^\top)]. 
\end{align*} 
Using similar arguments as above, one can establish the local linear rate of $\{\tr(P_{K_n}DD^\top)\}$ with a different contraction factor. 
This completes the proof. 
\hfill$\QED$

\clearpage

\section{Auxiliary Results}\label{sec:aux_res}
In this section, we prove several auxiliary results used before. 

\begin{lemma}[Integral of Gaussian Random Variables]\label{lemma:integral_Gaussian}
	Suppose that $z\sim \cN(\bar{z},Z)$. Then,  for any positive semidefinite  matrix $P$ and scalar $\beta$ satisfying $I-\beta P Z>0$, it follows that
	\$
	\frac{2}{\beta}\log\EE\exp\bigg(\frac{\beta}{2}z^\top Pz\bigg)=\bar{z}^\top \tP \bar{z}-\frac{1}{\beta}\log\det(I-\beta PZ),
	\$
	where $\tP= P+\beta P(Z^{-1}-\beta P)^{-1}P$.
\end{lemma}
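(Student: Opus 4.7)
The plan is to do a direct Gaussian integral computation by completing the square in the exponent. Writing $z = \bar z + \zeta$ with $\zeta \sim \cN(\bm 0, Z)$, I first expand $z^\top P z = \bar z^\top P \bar z + 2\bar z^\top P \zeta + \zeta^\top P \zeta$, so that the quantity inside the expectation and the Gaussian density together contribute the exponent
\[
\frac{\beta}{2}\bar z^\top P \bar z \;+\; \beta\, \bar z^\top P \zeta \;-\; \frac{1}{2}\zeta^\top (Z^{-1}-\beta P)\zeta.
\]
The hypothesis $I - \beta P Z > 0$ is equivalent to $M := Z^{-1} - \beta P > 0$, so the quadratic form in $\zeta$ is negative definite and completing the square is legitimate.

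Next I would complete the square in $\zeta$, shifting to $\zeta' = \zeta - M^{-1}\beta P \bar z$. This turns the exponent into
\[
\frac{\beta}{2}\bar z^\top P \bar z + \frac{\beta^2}{2}\bar z^\top P M^{-1} P \bar z - \frac{1}{2}(\zeta')^\top M \zeta',
\]
and the integral of the Gaussian $\exp(-\tfrac{1}{2}(\zeta')^\top M \zeta')$ over $\RR^n$ is $(2\pi)^{n/2}\det(M)^{-1/2}$. After dividing by the normalizing constant $(2\pi)^{n/2}\det(Z)^{1/2}$ of the original density, the prefactor becomes $\det(MZ)^{-1/2} = \det(I - \beta P Z)^{-1/2}$, where I have used $\det(M)\det(Z) = \det(MZ) = \det(I-\beta P Z)$.

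Collecting terms, I obtain
\[
\EE\exp\!\Big(\tfrac{\beta}{2} z^\top P z\Big) = \det(I-\beta P Z)^{-1/2}\exp\!\Big(\tfrac{\beta}{2}\bar z^\top\big[P + \beta P(Z^{-1}-\beta P)^{-1}P\big]\bar z\Big),
\]
and the claim follows by applying $(2/\beta)\log(\cdot)$ to both sides and recognizing $\tilde P = P + \beta P(Z^{-1}-\beta P)^{-1}P$. No step is really an obstacle here; the only thing to watch is the bookkeeping for the determinants (using $\det(MZ) = \det(I-\beta P Z)$) and verifying that $M^{-1}$ exists, which is guaranteed by the hypothesis $I-\beta P Z > 0$ combined with $Z > 0$.
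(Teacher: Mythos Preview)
Your proposal is correct and follows exactly the same approach as the paper's proof: write out the Gaussian density, complete the square in the exponent, and use the normalization of the resulting Gaussian integral to read off both the quadratic term $\bar z^\top \tilde P\bar z$ and the $\log\det$ correction. In fact your write-up is more explicit than the paper's, which simply states ``by completing the squares in the exponent'' and cites the fact that a Gaussian density integrates to one.
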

\begin{proof}
	By definition, we have
	\$
	\EE\exp\bigg(\frac{\beta}{2}z^\top Pz\bigg)&=\frac{1}{(2\pi)^{n/2}(\det Z)^{1/2}}\int e^{\beta x^\top Px/2}e^{-(x-\bar{z})^\top Z^{-1} (x-\bar{z})/2} dx.
	\$
	By completing the squares in the exponent, and using the fact that 
	\$
	\frac{1}{(2\pi)^{n/2}(\det \Sigma)^{1/2}}\int  e^{-(x-\mu)^\top \Sigma^{-1} (x-\mu)/2} dx=1,
	\$
	for any Gaussian random variable following $\cN(\mu,\Sigma)$, 
	we obtain the desired result. 
\end{proof}

\begin{lemma}\label{lemma:optimal}
Let $\cJ^*$   be the minimum  of the limit in \eqref{eq:obj}, and suppose that the following  \emph{modified Riccati equation}  admits a stabilizing fixed-point solution  $P_{K^*}\geq  0$ such that $W^{-1}-\beta P_{K^*}>0$,  
\#
\left\{
                \begin{array}{ll}
P_{K^*}&=~~Q+(K^*)^\top RK^*+(A-BK^*)^\top \tP_{K^*}(A-BK^*)\\
\tP_{K^*}&=~~P_{K^*}+\beta  P_{K^*}(W^{-1}-\beta P_{K^*})^{-1}P_{K^*}\label{equ:def_tP}\\
K^*&=~~(R+B^\top\tP_{K^*} B)^{-1}B^\top \tP_{K^*} A 
                \end{array}
              \right..
\#
Then, we have $\cJ^*=- {\beta}^{-1}\log\det (I-\beta P_{K^*}W)$. 
Moreover, {among all controls that generate a well-defined objective}, 
%limit in \eqref{eq:obj}}, 
the optimal control  is LTI state-feedback  given by $\mu_t(x_{0:t},u_{0:t-1})=-K^*x_t$ for all $t\geq0$. 
\end{lemma}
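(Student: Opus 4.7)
The plan is a dynamic-programming derivation on a finite horizon, followed by passage to the time-averaged limit. I would set up the finite-horizon (risk-sensitive) value function in its multiplicative form, namely
\[
\tilde V_t^{(T)}(x) := \inf_{\mu_t,\ldots,\mu_{T-1}}\EE\bigg[\exp\!\Big(\tfrac{\beta}{2}\!\sum_{s=t}^{T-1}c(x_s,u_s)\Big)\,\Big|\,x_t=x\bigg],
\]
with terminal condition $\tilde V_T^{(T)}\equiv 1$. The risk-sensitive Bellman equation
\[
\tilde V_t^{(T)}(x) \;=\; \inf_{u}\; e^{\beta(x^\top Qx+u^\top Ru)/2}\,\EE_{w}\!\big[\tilde V_{t+1}^{(T)}(Ax+Bu+w)\big]
\]
is standard for multiplicative costs; among controls producing finite cost, it holds with minimization over current $u$ (a function of history), since the expectation of a product of nonnegative quantities factorizes.

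The key inductive claim is that $\tilde V_t^{(T)}(x)=\exp\!\big(\tfrac{\beta}{2}x^\top P_t^{(T)} x\big)\cdot C_t^{(T)}$ for some $P_t^{(T)}\ge 0$ with $W^{-1}-\beta P_t^{(T)}>0$ and some constant $C_t^{(T)}>0$. The inductive step is exactly where Lemma~\ref{lemma:integral_Gaussian} comes in: plugging the quadratic-exponential ansatz for $\tilde V_{t+1}^{(T)}$ into the Bellman recursion, the inner expectation over $w\sim\cN(0,W)$ produces
\[
e^{\beta(Ax+Bu)^\top \tilde P_{t+1}^{(T)}(Ax+Bu)/2}\,\big[\det(I-\beta P_{t+1}^{(T)}W)\big]^{-1/2}
\]
with $\tilde P_{t+1}^{(T)}=P_{t+1}^{(T)}+\beta P_{t+1}^{(T)}(W^{-1}-\beta P_{t+1}^{(T)})^{-1}P_{t+1}^{(T)}$. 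The infimum over $u$ is then a quadratic minimization (completion of squares), whose unique minimizer is the linear state-feedback $u_t^\star=-K_t^{(T)}x_t$ with $K_t^{(T)}=(R+B^\top \tilde P_{t+1}^{(T)}B)^{-1}B^\top \tilde P_{t+1}^{(T)}A$. Substituting the minimizer gives the backward Riccati recursion $P_t^{(T)}=Q+(K_t^{(T)})^\top RK_t^{(T)}+(A-BK_t^{(T)})^\top \tilde P_{t+1}^{(T)}(A-BK_t^{(T)})$ and $C_t^{(T)}=C_{t+1}^{(T)}\cdot\det(I-\beta P_{t+1}^{(T)}W)^{-1/2}$. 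Taking $(2/\beta)\log$, the additive finite-horizon value is $x^\top P_t^{(T)}x-(1/\beta)\sum_{s=t+1}^{T}\log\det(I-\beta P_s^{(T)}W)$, and integrating over $x_0\sim\cN(0,X_0)$ yields another bounded constant from Lemma~\ref{lemma:integral_Gaussian}.

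I would then pass to $T\to\infty$. The assumed existence of a stabilizing, admissible fixed point $P_{K^\star}$ of the modified Riccati equation together with standard monotonicity/convergence theory for discrete Riccati iterations (see e.g.\ the machinery behind the Bounded Real Lemma in \S\ref{sec:formulation0}) gives $P_t^{(T)}\to P_{K^\star}$ and $K_t^{(T)}\to K^\star$ as $T-t\to\infty$. Dividing by $T$ and using Ces\`aro, both the quadratic-in-$x_0$ term and the initial-distribution log-determinant are $O(1/T)$ and vanish, while the average of $-(1/\beta)\log\det(I-\beta P_s^{(T)}W)$ converges to $-(1/\beta)\log\det(I-\beta P_{K^\star}W)$. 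This yields the claimed formula for $\cJ^\star$ and shows that the infimum is achieved by the stationary LTI state-feedback $u_t=-K^\star x_t$. Finally, because the Bellman recursion's minimizer at each stage is unique and depends only on $x_t$ (not on past history), any admissible policy producing a well-defined cost is dominated by this stationary LTI feedback, proving the memoryless, stationary structure.

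The main obstacle will be step three: rigorously justifying the interchange of the infinite-horizon $\limsup$ in the original objective with the $\inf$ over policies, and showing that the stationary-feedback cost computed via the Riccati fixed point really coincides with the policy-class infimum. The concern is twofold---admissible controls may be history-dependent and time-varying, and the finite-horizon Riccati sequence must not only converge but do so with the correct boundary (stabilizing) selection. I would handle this by (i) exhibiting the stationary feedback $K^\star$ as an admissible policy achieving the claimed value (the upper bound), and (ii) using the finite-horizon DP optimality together with $C_0^{(T)}/T\to -(1/\beta)\log\det(I-\beta P_{K^\star}W)$ for the matching lower bound; the condition $W^{-1}-\beta P_{K^\star}>0$ from the lemma hypothesis guarantees well-posedness throughout.
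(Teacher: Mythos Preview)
Your proposal is correct and follows essentially the same route as the paper: finite-horizon risk-sensitive dynamic programming, the Gaussian-integral lemma to propagate the quadratic-exponential form, the resulting backward modified-Riccati recursion with linear minimizers, and then Ces\`aro averaging after invoking convergence of the Riccati iterates to the stabilizing fixed point. The only cosmetic difference is that the paper packages the lower bound via a monotone operator inequality $\cF_{u_{T-t-1}}(V_t)\ge \lambda_t+V_{t+1}$ applied to an arbitrary control sequence (rather than separately arguing upper/lower bounds), and it isolates the Riccati-recursion convergence as a standalone lemma rather than citing it as standard.
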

\begin{proof}
Recall that the original LEQG problem is defined as
\#\label{eq:obj}
\min_{\{\mu_t\}_{t\geq 0}}\quad \lim_{T\to\infty}~~\frac{1}{T}\frac{2}{\beta}\log\EE\exp\bigg[\frac{\beta}{2} \sum_{t=0}^{T-1}c(x_t, u_t) \bigg]~~~\text{with}~~~u_t=\mu_t(x_{0:t},u_{0:t-1}),
\#
where $x_{0:t}$ and $u_{0:t-1}$ denote the history of states from time $0$ to $t$ and actions from time $0$ to $t-1$, respectively. 	
Let $h_t:=(x_{0:t},u_{0:t-1})$. 
% denote the history of states and actions obtained until time $t$.   
For notational convenience, we define 
\$
V_t(x)=x^\top P^t x, \qquad [\cF_{u_t}(f)](x)=c(x,u_t)+\frac{2}{\beta}\log\EE\big\{\exp \big[\frac{\beta}{2}f{(Ax+Bu_t+w)}\big]\big\}
\$
where $P^t\geq  0$ is any nonnegative definite matrix,  $u_t$ is the control at time $t$ that is adapted to the $\sigma$-algebra generated by $h_t$, and $f:\RR^m\to \RR$ can be any function of $x$.  Also, we note that the expectation in $\cF_{u_t}(f)$ is taken over the randomness of $w$  given $x$ and $u_t$.  
Obviously, $\cF_{u_t}(f)$ is a monotone operator, i.e., if some $g$ satisfies $g(x)\geq f(x)$ for any $x$, then we have $[\cF_{u_t}(g)](x)\geq [\cF_{u_t}(f)](x)$ for any $x$.

Consider a $T$-stage    control sequence $(u_0,u_1,\cdots,u_{T-1})$, 
%and define 
%\$
%\tilde{V}_{u_{T-1}}(x):=[\cF_{u_{T-1}}(V)](x)-\frac{2}{\beta}\log\EE\big\{\exp \big[\frac{\beta}{2}{V(Ax+Bu_{T-1}+w_{T-1})}\big]\big\}=c(x,u_{T-1}).
%\$
%\remind{Can we show that the second term is non-expansive, which thus gives us a lower bound, using $V$.}
and notice that 
\$
&[\cF_{u_{T-1}}({V}_{0})](x)=c(x,u_{T-1})+\frac{2}{\beta}\log\EE\big\{\exp \big[\frac{\beta}{2}V_0(Ax+Bu_{T-1}+w_{T-1})\big]\big\}\\
&\quad=\frac{2}{\beta}\log\EE\big(\exp \big\{\frac{\beta}{2}[c(x,u_{T-1})+V_0(Ax+Bu_{T-1}+w_{T-1})]\big\}\biggiven x,u_{T-1}\big). 
\$
Keeping imposing operators $\cF_{u_{T-1}},\cdots \cF_{u_{0}}$  yields 
%\$
%&[\cF_{u_{T-3}}\cF_{u_{T-2}}(\tilde{V}_{u_{T-1}})](x)=c(x,u_{T-3})+\frac{2}{\beta}\log\EE\big\{\exp \big[\frac{\beta}{2}[\cF_{u_{T-2}}(\tilde{V}_{u_{T-1}})](Ax+Bu_{T-3}+w_{T-3})\big]\big\}\\
%&\quad=\frac{2}{\beta}\log\EE\big(\exp \big\{\frac{\beta}{2}[c(x,u_{T-3})+c(x_{T-2},u_{T-2})+c(x_{T-1},u_{T-1})]\big\}\biggiven x,u_{T-1},u_{T-2},u_{T-3}\big),
%\$
%where $x_{T-2}=Ax+Bu_{T-3}+w_{T-3}$ and $x_{T-1}=Ax+Bu_{T-2}+w_{T-2}$. 
%Thus, by keeping imposing operators $\cF_{u_{T-2}},\cdots \cF_{u_{0}}$, we have
\#\label{equ:lemma_11_trash_1}
[\cF_{u_{0}}\cdots \cF_{u_{T-1}}({V}_{0})](x)=\frac{2}{\beta}\log\EE\bigg\{\exp\bigg[\frac{\beta}{2} \sum_{t=0}^{T-1}c(x_t, u_t) +\frac{\beta}{2}V_0(x_T)\bigg]\bigggiven x_0=x,u_{T-1},\cdots,u_{0}\bigg\}.
\#
Note that the RHS of \eqref{equ:lemma_11_trash_1} can be viewed as a $T+1$-stage undiscounted LEQG problem with the first $T$ stages having cost $c(x_t,u_t)$ and the last stage having cost $V_0(x_T)$.
 
On the other hand, for $t=0,\cdots,T-1$, letting $\lambda_t=- {\beta}^{-1}\log\det (I-\beta P^tW)$ and 
\#\label{equ:tP_t_update}
\tP^{t}=P^t+\beta P^t(W^{-1}-\beta P^t)^{-1}P^t,
\# 
we have from Lemma \ref{lemma:integral_Gaussian} that 
\$
&\frac{2}{\beta}\log\EE\big\{\exp \big[\frac{\beta}{2}(Ax+Bu_{T-t-1}+w)^\top P^{t}(Ax+Bu_{T-t-1}+w)\big]\big\}\\
&\quad=(Ax+Bu_{T-t-1})^\top \tP^{t} (Ax+Bu_{T-t-1})+\lambda_t,
\$
from which we obtain that 
\#\label{equ:quadratic_form_T}
[\cF_{u_{T-t-1}}(V_{t})](x)&=c(x,u_{T-t-1})+(Ax+Bu_{T-t-1})^\top \tP^t (Ax+Bu_{T-t-1})+\lambda_t. 
\#
Note that \eqref{equ:quadratic_form_T} is a quadratic function of $u_{T-t-1}$, which has the minimizer
\#\label{equ:K_t_min}
u_{T-t-1}^*=-(R+B^\top\tP^t B)^{-1}B^\top \tP^t Ax=:-K_{t+1}x.
\#
Substituting \eqref{equ:K_t_min} back to \eqref{equ:quadratic_form_T}, we require $V_{t+1}(x)$ to be updated as $
V_{t+1}(x)=[\cF_{u_{T-t-1}^*}(V_{t})](x)-\lambda_t$, 
i.e., we require 
\#\label{equ:P_t_update}
P^{t+1}=Q+(K_{t+1})^\top RK_{t+1}+(A-BK_{t+1})^\top \tP^t(A-BK_{t+1}). 
\#
In particular, \eqref{equ:tP_t_update}, \eqref{equ:K_t_min}, and \eqref{equ:P_t_update} constitute the recursion updates of the modified Riccati equation, which also gives the following relation
\#\label{equ:unnamed_trash}
\lambda_t+V_{t+1}(x)&=\min_{u_t}~~\underbrace{c(x,u_{T-t-1})+\frac{2}{\beta}\log\EE\big\{\exp \big[\frac{\beta}{2}(Ax+Bu_{T-t-1}+w)^\top P^t(Ax+Bu_{T-t-1}+w)\big]\big\}}_{[\cF_{u_{T-t-1}}(V_{t})](x)},
\#
i.e., 
\#\label{equ:ineq_T_prop}
[\cF_{u_{T-t-1}}(V_t)](x)\geq \lambda_t+V_{t+1}(x). 
\#
 In addition, $\cF_{u_{T-t-1}}(V_t)$ also has the property that
\#\label{equ:linear_T_prop}
[\cF_{u_{T-t-1}}(V_t+c)](x)=c+ [\cF_{u_{T-t-1}}(V_t)](x),
\# 
for any constant $c$, 
since $c$ can be taken out of the expectation in $\cF_{u_{T-t-1}}(V)$. 
Combining  \eqref{equ:ineq_T_prop},  \eqref{equ:linear_T_prop}, and the monotonicity of $\cF_{u_t}$ yields 
\#\label{equ:recursion_operator}
[\cF_{u_{0}}\cdots \cF_{u_{T-1}}({V}_{0})](x)&\geq [\cF_{u_{0}}\cdots \cF_{u_{T-2}}(\lambda_0+V_{1})](x)=\lambda_0+[\cF_{u_{0}}\cdots \cF_{u_{T-2}}(V_{1})](x)\notag\\
&\geq  \sum_{t=0}^{T-1}\lambda_t+V_T(x). 
\#
Taking expectation over $x$ and dividing both sides of \eqref{equ:recursion_operator} by $T$, we obtain from  
\eqref{equ:lemma_11_trash_1} that
\#\label{equ:lemma_11_trash_2}
\frac{1}{T}\frac{2}{\beta}\log\EE\bigg\{\exp\bigg[\frac{\beta}{2} \sum_{t=0}^{T-1}c(x_t, u_t) +\frac{\beta}{2}V_0(x_T)\bigg]\bigggiven u_{T-1},\cdots,u_{0}\bigg\}\geq \frac{1}{T}\sum_{t=0}^{T-1}\lambda_t+\frac{1}{T}\EE[V_T(x)]. 
\# 
By letting $V_0(x)=0$ and taking limit $T\to\infty$ on both sides of \eqref{equ:lemma_11_trash_2}, the LHS converges to the objective of LEQG defined in \eqref{eq:obj} (which is assumed to exist for the studied control sequence $(u_0,u_1,\cdots,)$); while the RHS  converges to the value of $\lambda^*:=- {\beta}^{-1}\log\det (I-\beta P_{K^*}W)$, where $P_{K^*}$ is the unique stabilizing solution to the modified Riccati equation given in \eqref{equ:def_tP}, due to Lemma \ref{lemma:recursion_MRE} in \S\ref{sec:aux_res}. 
% that the recursion of \eqref{equ:tP_t_update}, \eqref{equ:K_t_min}, and \eqref{equ:P_t_update} 
%, which  is always upper-bounded by the LHS of \eqref{equ:lemma_11_trash_2},
%   converges to $(P_{K^*},K^*)$, the unique stabilizing fixed-point solution to \eqref{equ:def_tP}. 
Thus, the sequence $\{\lambda_{t}\}$ converges to $\lambda^*$ as $t\to\infty$, and so does the sequence $\{\sum_{t=0}^{T-1}\lambda_t/T\}$ as $T\to\infty$; while  $\EE[V_T(x)]/T$ vanishes to zero since $P^T$ converges to $P_{K^*}$ as $T\to\infty$. 
Hence, we obtain from \eqref{equ:lemma_11_trash_2} that 
\$
\lambda^*\leq \lim_{T\to\infty}~~\frac{1}{T}\frac{2}{\beta}\log\EE\exp\bigg[\frac{\beta}{2} \sum_{t=0}^{T-1}c(x_t, u_t) \bigg]~~~\text{for~any}~~~u_t=\mu_t(x_{0:t},u_{0:t-1}),
\$
where the equality can be obtained when $u_{t}^*=-(R+B^\top\tP_{K^*} B)^{-1}B^\top \tP_{K^*} Ax=-K^*x
$ for all $t\geq 0$. 
In other words, among all controls that achieve the $\lim$ of \eqref{equ:def_obj},  
% a well-defined limit of the general objective, 
 the optimal objective is $\cJ^*=\lambda^*$, and can be  achieved by the \emph{stationary linear state-feedback} control $K^*$ obtained from \eqref{equ:def_tP}, which completes the proof. 
%   of Lemma    \ref{lemma:optimal}. 
\end{proof}

% \subsection{Proof of Lemma  \ref{lemma:LEQR_obj_form_for_K}}\label{sec:proof_lemma_optimal}
%\begin{proof}
%\issue{STILL, WE NEED TO SHOW THAT THE SOLUTION TO THE RICCATI EQUATION EXISTS, FROM FINITE TO INFINITE. I RELEGATE THIS TO ANOTHER LEMMA IN THE APPENDIX, i.e., Lemma \ref{lemma:recursion_MRE}. } 

\begin{lemma}[Restatement of Lemma \ref{lemma:LEQR_obj_form_for_K}]\label{lemma:LEQR_obj_form_for_K_restate}
	For any LTI state-feedback controllers $u_t=-Kx_t$, such that  the Riccati  equation \eqref{equ:def_mod_Bellman_ori}  admits a solution $P_K\geq 0$ that: i) is stabilizing, i.e., $\rho\big((A-BK)^\top(I-\beta P_KW)^{-1}\big)<1$, and ii) satisfies $W^{-1}-\beta P_K>0$,   
%	that induces a finite objective value, suppose the \emph{modified Bellman equations} defined as
%	\#
%P_K&=Q+K^\top RK+(A-BK)^\top \tP_K(A-BK)\label{equ:def_PK}\\
%\tP_K&=P_K+\beta P_K(W^{-1}-\beta P_K)^{-1}P_K,\label{equ:def_tPK}
%\#	
%admits a stabilizing fixed-point solution such that: i) $P_K\geq 0$; ii) $W^{-1}-\beta P_K>0$; iii) $(A-BK)^\top(I-\beta P_KW)^{-1}$ is stable. 
 	 $\cJ(K)$ has the form of 
%\#\label{equ:obj_logdet_form}
\$
\cJ(K)=-\frac{1}{\beta}\log\det (I-\beta P_KW).
\$
%\#
%where $P_K$ is the solution to the modified Bellman equation that satisfies the second condition in Proposition \ref{prop:discrete_equiva_set_cK}. 
\end{lemma}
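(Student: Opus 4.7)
The plan is to compute the conditional moment generating function $U_T(x) := \EE\big[\exp\big(\tfrac{\beta}{2}\sum_{t=0}^{T-1}c(x_t,-Kx_t)\big)\given x_0 = x\big]$ in closed form via a forward induction on $T$, then integrate over $x_0 \sim \cN(\bm{0}, X_0)$ and pass to the limit $T \to \infty$. The main analytic tool is Lemma~\ref{lemma:integral_Gaussian}, whose key object $\tilde P = P + \beta P(Z^{-1}-\beta P)^{-1}P$ matches precisely the expression $\tP_K$ appearing in the modified Riccati equation~\eqref{equ:def_mod_Bellman_ori}, so the Riccati structure arises organically from iterated Gaussian integration.

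Concretely, I would propose the ansatz $U_T(x) = \exp\!\big[\tfrac{\beta}{2}(x^\top P_T^K x + \mu_T)\big]$ with base case $P_0^K = 0$, $\mu_0 = 0$, and verify it by induction on $T$. The step $T\to T+1$ uses the Markov property $U_{T+1}(x) = \exp(\tfrac{\beta}{2}c(x,-Kx))\cdot\EE[U_T(x_1)\given x_0=x]$ with $x_1 = (A-BK)x + w_0$, $w_0\sim\cN(\bm{0},W)$, and applies Lemma~\ref{lemma:integral_Gaussian} to evaluate the inner expectation; this yields the recursions
\$
P_{T+1}^K = Q + K^\top R K + (A-BK)^\top \tilde P_T^K (A-BK), \qquad \mu_{T+1} = \mu_T - \tfrac{1}{\beta}\log\det(I-\beta P_T^K W),
\$
with $\tilde P_T^K = P_T^K + \beta P_T^K(W^{-1}-\beta P_T^K)^{-1}P_T^K$, valid as long as $W^{-1} - \beta P_T^K > 0$ is preserved along the iteration. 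Integrating $U_T(x_0)$ against $x_0 \sim \cN(\bm{0},X_0)$ by Lemma~\ref{lemma:integral_Gaussian} once more produces
\$
\tfrac{2}{\beta T}\log \EE\exp\!\Big[\tfrac{\beta}{2}\textstyle\sum_{t=0}^{T-1}c(x_t,-Kx_t)\Big]= \tfrac{\mu_T}{T} - \tfrac{1}{\beta T}\log\det(I-\beta P_T^K X_0),
\$
so the claim reduces to showing $\mu_T/T \to -\tfrac{1}{\beta}\log\det(I-\beta P_K W)$ together with an $O(1/T)$ boundary correction.

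For the limit, I would establish the matrix-wise monotonicity $0 = P_0^K \leq P_1^K \leq \cdots \leq P_K$ by parallel induction, exploiting (i)~operator monotonicity of $P \mapsto \tilde P = (I-\beta PW)^{-1}P$ on $\{P : W^{-1} - \beta P > 0\}$ and (ii)~the fact that $P_K$ itself solves the same recursion by~\eqref{equ:def_mod_Bellman_ori}. This invariant propagates $W^{-1} - \beta P_T^K \geq W^{-1} - \beta P_K > 0$, keeping Lemma~\ref{lemma:integral_Gaussian} applicable at every step. Monotone convergence in the PSD order gives $P_T^K \to P_\infty^K \leq P_K$; passing to the limit in the recursion shows $P_\infty^K$ is a nonnegative-definite solution of~\eqref{equ:def_mod_Bellman_ori}, and uniqueness of the stabilizing solution (invoked via the auxiliary Lemma~\ref{lemma:recursion_MRE}) pins $P_\infty^K = P_K$. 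Continuity of $\log\det$ applied to the telescoped $\mu_T = -\tfrac{1}{\beta}\sum_{s=0}^{T-1}\log\det(I-\beta P_s^K W)$ combined with Ces\`aro averaging then yields $\mu_T/T \to -\tfrac{1}{\beta}\log\det(I-\beta P_K W)$; boundedness of $\{P_T^K\}$ kills the correction term, closing the proof.

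The main obstacle is the identification $P_\infty^K = P_K$: a priori the iterate starting from $0$ could converge to some other nonnegative root of~\eqref{equ:def_mod_Bellman_ori} rather than the specified stabilizing one. The monotone upper bound $P_T^K \leq P_K$ traps the limit below $P_K$, but promoting this to equality genuinely requires the hypothesis $\rho\big((A-BK)^\top(I-\beta P_K W)^{-1}\big) < 1$: it is precisely this stabilizing property that makes $P_K$ the unique nonnegative fixed point reached by monotone iteration from below, i.e.\ the content of Lemma~\ref{lemma:recursion_MRE}. A cleaner alternative is a backward induction with terminal cost $V(x) = x^\top P_K x$, which yields the exact identity $\EE\big[\exp(\tfrac{\beta}{2}\sum c + \tfrac{\beta}{2}x_T^\top P_K x_T)\given x_0=x\big] = \exp\!\big[\tfrac{\beta}{2}(x^\top P_K x + T\lambda)\big]$ in one shot from~\eqref{equ:def_mod_Bellman_ori}; the price paid there is a separate argument that the boundary term $x_T^\top P_K x_T$ is negligible in $\tfrac{1}{T}\log\EE$ as $T\to\infty$, which follows from uniform-in-$T$ boundedness of its exponential moments under the stabilizing condition.
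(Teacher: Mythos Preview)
Your proposal is correct and is essentially the same argument as the paper's: both compute the finite-horizon exponential cost via the Riccati-type recursion started at $P_0^K=0$ (the paper phrases this as backward composition of the operators $\cF_{u_t}$ with terminal $V_0=0$, you as a forward induction on $U_T$), then take $T\to\infty$ using monotone convergence $P_T^K\uparrow P_K$ and Ces\`aro averaging of the $\log\det$ increments. The only slip is the citation: the convergence-and-uniqueness result you need for the fixed-$K$ recursion is Lemma~\ref{lemma:recursion_MBE}, not Lemma~\ref{lemma:recursion_MRE} (which treats the recursion with the optimizing $K^{t+1}$ at each step).
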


\begin{proof}
%Moreover, we n
Note that   by  \cite[Proposition $1$]{ionescu1992computing}, the stabilizing solution of the Riccati equation \eqref{equ:def_mod_Bellman_ori}, if it exists, is unique. 
	The proof then proceeds in the similar vein as that for Lemma \ref{lemma:optimal}.  
	In particular,  
	we keep the updates \eqref{equ:tP_t_update} and \eqref{equ:P_t_update}, and replace \eqref{equ:K_t_min} by $u_{T-t-1}^*=-Kx$. These updates constitute the recursion of the Riccati equation  \eqref{equ:def_mod_Bellman_ori}. 
	Then, following the derivation from \eqref{equ:unnamed_trash}-\eqref{equ:recursion_operator}, but replacing the inequality by equality, we obtain that
	\#\label{equ:LEQR_obj_form_trash_1}
	\frac{1}{T}\frac{2}{\beta}\log\EE\bigg\{\exp\bigg[\frac{\beta}{2} \sum_{t=0}^{T-1}c(x_t, u_t) +\frac{\beta}{2}V_0(x_T)\bigg]\bigggiven u_{T-1},\cdots,u_{0}\bigg\}= \frac{1}{T}\sum_{t=0}^{T-1}\lambda_t+\frac{1}{T}\EE[V_T(x)].
	\#
	Taking $T\to\infty$ on both sides of \eqref{equ:LEQR_obj_form_trash_1}, the limit on the LHS reduces  to the definition of $\cJ(K)$, and the limit on the RHS converges to $-\beta^{-1}\log\det(I-\beta P_KW)$. This follows by: i) Lemma \ref{lemma:recursion_MBE} in \S\ref{sec:aux_res}, showing that  the recursion of  \eqref{equ:def_mod_Bellman_ori} starting from $P^0=0$ converges to   $P_K$, which further leads to the convergence of 
	\$
	\lim_{t\to\infty}~~\lambda_t=-\beta^{-1}\log\det(I-\beta P_KW),
	\$
	and ii) $\lim_{T\to\infty}\EE[V_T(x)]/T={0}$ due to the boundedness of $P_K$.  As a result, we obtain the desired form in \eqref{equ:obj_logdet_form}, which completes the proof  of Lemma  \ref{lemma:LEQR_obj_form_for_K}. 
%\end{proof}
%\hfill$\QED$
%\end{proof} 
\end{proof}

%\begin{lemma}[Monotonicity of $\tP_K$]\label{lemma:monotone^tP_from_P}
%	Suppose both $K,K'\in\cK$, if $P_{K'}\geq P_{K}$, then $\tP_{K'}\geq \tP_K$. 
%\end{lemma}
%\begin{proof}
%	Since $K,K'\in\cK$, both $\tP_{K'}$ and $\tP_K$ exist. By definition, we have
%	\$
%	sXX,
%	\$
%	which completes the proof. 
%\end{proof}

%\newlytyped{
\begin{lemma}[Recursion of Discrete-Time Modified Riccati Equation \eqref{equ:def_tP}]\label{lemma:recursion_MRE}
%	Suppose there exists a fixed-point solution $(P_{K^*},K^*)$ for the modified Riccati equation \eqref{equ:def^tP} such that $K^*\in\cK$.  
	Suppose the   modified Riccati equation \eqref{equ:def_tP} admits a stabilizing fixed-point solution $P_{K^*}$ such that $W^{-1}-\beta P_{K^*}>0$. 
	Let $P^0\geq 0$ be some  nonnegative definite matrix satisfying $P^0\leq Q$ and $W^{-1}-\beta P^0>0$. Then, starting from $P^0$, the iterate sequence $\{P^t\}$  from \eqref{equ:recursion_Riccati} below converges  to $P_{K^*}$, 
%	, with $P_{K^*}\geq 0$ being this stabilizing  solution of \eqref{equ:def_tP}, 
	which is  unique.  
%	the modified Riccati equation \eqref{equ:def^tP} admits a fixed-point solution,   
	\#
\left\{
                \begin{array}{ll}
\tP^{t}&=~~P^t+\beta P^t(W^{-1}-\beta P^t)^{-1}P^t\label{equ:recursion_Riccati}\\
K^{t+1}&=~~(R+B^\top\tP^t B)^{-1}B^\top \tP^t A \\
P^{t+1}&=~~Q+(K^{t+1})^\top RK^{t+1}+(A-BK^{t+1})^\top \tP^t(A-BK^{t+1})
                \end{array}
              \right..
\#
%Moreover, 	 
\end{lemma}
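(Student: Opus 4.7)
The plan is to interpret the three-line recursion as repeated application of a monotone Bellman-type operator whose unique stabilizing fixed point is $P_{K^*}$, and then invoke monotone convergence for symmetric matrices.

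First I would rewrite the recursion compactly as $P^{t+1} = T[P^t]$, where, after completing the square,
\[
T[P] \;=\; \min_{K}\bigl[Q + K^\top R K + (A-BK)^\top \tilde P (A-BK)\bigr], \qquad \tilde P := P + \beta P(W^{-1}-\beta P)^{-1}P,
\]
and the minimizer is precisely the $K^{t+1}$ produced by \eqref{equ:recursion_Riccati}. The stabilizing fixed point satisfies $T[P_{K^*}] = P_{K^*}$. Next I would establish that $T$ is monotone nondecreasing on $\mathcal{P} := \{P\ge 0 : W^{-1}-\beta P > 0\}$: monotonicity of $P\mapsto \tilde P$ in $P$ is cleanest via Lemma \ref{lemma:integral_Gaussian}, which identifies the quadratic form $(Ax+Bu)^\top\tilde P(Ax+Bu)$ (up to a $u$-independent constant) with $(2/\beta)\log \EE\exp\!\bigl(\tfrac{\beta}{2}(Ax+Bu+w)^\top P(Ax+Bu+w)\bigr)$; monotonicity then follows from monotonicity of $\exp$, $\log$, and expectation. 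Monotonicity of the outer $\min$ in its argument $\tilde P$ is immediate.

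Second, I would show $\{P^t\}$ is monotone nondecreasing and bounded above by $P_{K^*}$. The base step uses the hypothesis $P^0\le Q$: since $(K^1)^\top RK^1$ and $(A-BK^1)^\top\tilde P^0(A-BK^1)$ are positive semidefinite, $P^1 \ge Q \ge P^0$. Monotonicity of $T$ then propagates $P^{t+1} = T[P^t] \ge T[P^{t-1}] = P^t$. For the upper bound, $P^0\le Q \le P_{K^*}$ (the latter from the fixed-point equation of $P_{K^*}$, adding nonneg def terms to $Q$), and inductively $P^{t+1} = T[P^t] \le T[P_{K^*}] = P_{K^*}$. The sandwich $P^t \le P_{K^*}$ together with $W^{-1}-\beta P_{K^*} > 0$ keeps every iterate inside $\mathcal{P}$, so the recursion is well-defined throughout. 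Monotone convergence for symmetric matrices then yields $P^t \uparrow P^\infty$ for some $P^0 \le P^\infty \le P_{K^*}$, and continuity of $T$ on the interior of $\mathcal{P}$ gives $P^\infty = T[P^\infty]$.

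The main obstacle is identifying $P^\infty$ with $P_{K^*}$, rather than with some smaller spurious fixed point. The cleanest route I would take is to invoke the uniqueness of the stabilizing solution quoted from \cite{ionescu1992computing}: it suffices to verify that the limiting gain $K^\infty = (R+B^\top\tilde P^\infty B)^{-1}B^\top\tilde P^\infty A$ satisfies $\rho\bigl((A-BK^\infty)^\top(I-\beta P^\infty W)^{-1}\bigr)<1$. I would obtain this from the DP interpretation used in the proof of Lemma \ref{lemma:optimal}: $V_t(x)=x^\top P^t x$ is exactly the finite-horizon optimal cost matrix, and the argument surrounding \eqref{equ:lemma_11_trash_2} shows that the infinite-horizon value along the stabilizing optimal policy is $x^\top P_{K^*} x$; since $P^t\uparrow P^\infty$, no strictly smaller fixed point can be consistent with this optimal value. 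Combined with uniqueness, this forces $P^\infty = P_{K^*}$, completing the proof.
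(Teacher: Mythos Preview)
Your overall structure---monotone operator, sandwich $P^0\le P^t\le P_{K^*}$, monotone limit, then identification---matches the paper's. But two steps do not go through as written.

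\emph{Monotonicity via Lemma~\ref{lemma:integral_Gaussian}.} That lemma gives
\[
\bar z^\top \tilde P \bar z \;=\; \tfrac{2}{\beta}\log\EE\exp\!\Bigl(\tfrac{\beta}{2}z^\top P z\Bigr) \;+\;\tfrac{1}{\beta}\log\det(I-\beta PW),
\]
so the ``$u$-independent constant'' is $P$-dependent: it equals $\tfrac{1}{\beta}\log\det(I-\beta PW)$, which is \emph{decreasing} in $P$. The two terms move in opposite directions and the argument does not yield $P_1\ge P_2\Rightarrow \tilde P_1\ge\tilde P_2$. The conclusion is true but needs a different route; the paper simply observes (for $P>0$, which holds from $t\ge 1$ onward since $P^1\ge Q>0$) that $\tilde P=(P^{-1}-\beta W)^{-1}$, and monotonicity of $T$ follows by writing
\[
T[P]\;=\;Q+A^\top\bigl(P^{-1}-\beta W+BR^{-1}B^\top\bigr)^{-1}A.
\]

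\emph{Identification of the limit.} This is the real gap. You propose to show $P^\infty$ is stabilizing by appealing to the DP development in the proof of Lemma~\ref{lemma:optimal}; but that proof explicitly invokes Lemma~\ref{lemma:recursion_MRE} to conclude that the finite-horizon value converges to $x^\top P_{K^*}x$, so your argument is circular. Independently, the sentence ``no strictly smaller fixed point can be consistent with this optimal value'' is not a proof that $\rho\bigl((A-BK^\infty)^\top(I-\beta P^\infty W)^{-1}\bigr)<1$. The paper closes this gap by a structural fact about the Riccati equation itself, citing \cite{stoorvogel1994discrete}: among all fixed points that render $A-B(R+B^\top\tilde P B)^{-1}B^\top\tilde P A$ stable (a necessary condition for being stabilizing), the stabilizing one is \emph{minimal}. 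Since every fixed point dominates $Q\ge P^0$, the monotone increasing sequence $\{P^t\}$ must converge to this minimal stabilizing fixed point, i.e., to $P_{K^*}$. Without that minimality input (or an equivalent direct argument that the limit is stabilizing), your proof does not rule out convergence to a spurious non-stabilizing fixed point.
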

\begin{proof}
First note that if $P_K>0$, then  
 $\tP_K$ in \eqref{equ:def_tPK} can also be written as 
\#\label{equ:reform_tPK}
	\tP_K&=(I-\beta P_KW)^{-1}P_K=(P^{-1}_K-\beta W)^{-1},
\#
which further implies the monotonicity of $\tP_K$,  i.e.,  if $P_{K'}\geq P_K$ for some $K,K'$, then $\tP_{K'}\geq \tP_K$. 
If $P^t>0$, then the update in \eqref{equ:recursion_Riccati} can be written as 
\$
&P^{t+1}=\cF(P^t):=Q+A^\top \tP^t A-A^\top (\tP^t)^\top B(R+B^\top\tP^t B)^{-1}B^\top \tP^t A\\
&\quad=Q+A^\top \big((\tP^t)^{-1}+BR^{-1}B^\top\big)^{-1} A=Q+A^\top \big((P^t)^{-1}-\beta W+BR^{-1}B^\top\big)^{-1} A,
\$
where the second equation is basic substitution, the third one uses matrix inversion lemma, and the last one uses  \eqref{equ:reform_tPK}. Obviously, $\cF(P^t)$ is matrix-wise monotonically increasing with respect to $P^t$. 
Note that as a fixed point of $\cF(P)$, $P_{K^*}=\cF(P_{K^*})\geq Q$, while $P^0\leq Q$. Hence, by monotonicity, if $P^0>0$, then $P_{K^*}\geq \cF(P^0)=P^1$. By induction, $P_{K^*}\geq P^t$, for all $t\geq 0$. If $P^0\geq 0$, then \$
	\tP^{0}=P^0+\beta P^0(W^{-1}-\beta P^0)^{-1}P^0\geq 0,\quad P^1=Q+(K^1)^\top RK^1+(A-BK^1)^\top \tP^{0}(A-BK^1) \geq Q>0,
	\$ 
	and we can thus apply the argument above starting from $t\geq 1$; else if $P^0>0$, we can apply it directly from $t \geq 0$. 
In addition, 
since from \eqref{equ:recursion_Riccati},  $P^1\geq Q\geq P^0$, we have $P^2=\cF(P^1)\geq P^1=\cF(P^0)$. By induction, $P^{t+1}\geq P^{t}$ for all $t\geq 0$. 
Note that $W^{-1}-\beta P^t$ is well defined for all $t$, due to that $W^{-1}-\beta P_{K^*}>0$ by assumption.  
Since the sequence $\{P^t\}$ is monotone and upper-bounded, we conclude that  the recursion of \eqref{equ:recursion_Riccati} must converge to some $P^{\infty}$, which constitutes a fixed-point solution to the modified Riccati equation \eqref{equ:def_tP}.

In addition, by Lemma $3.1$ in \cite{stoorvogel1994discrete}, we know that the stabilizing fixed-point solution, i.e., the $P_{K^*}$ that makes $\rho\big((I-\beta P_{K^*}W)^{-\top}(A-BK^*)\big)<1$,  is unique. 
Also, by Lemma $3.8$ therein, any other fixed-point solution $P$ that makes $A-B(R+B^\top \tP B)^{-1}B^\top \tP A$ stable, which by Lemma $3.4$ therein is a necessary condition for $P$ to be a stabilizing solution,  
satisfies that $P\geq P_{K^*}$. Notice that any fixed-point solution is greater than $Q$, and thus greater than $P^0$. Hence, starting from $P^0$  converges to the minimal  fixed point, i.e., the unique stabilizing one, which completes the proof. 
\end{proof}
%}

\begin{lemma}[Recursion of Riccati Equation \eqref{equ:def_mod_Bellman_ori}]\label{lemma:recursion_MBE}
Suppose that control gain $K$  yields a stabilizing solution $P_K$, i.e., $\rho\big((A-BK)^\top(I-\beta P_KW)^{-1}\big)<1$, to the  Riccati equation \eqref{equ:def_mod_Bellman_ori}, such that $W^{-1}-\beta P_K>0$.  
 Then, such a solution is \emph{unique}, and \emph{minimal} among all Hermitian  solutions. In addition, 
%stabilizing  control gain $K$, suppose  there exists  fixed-point solutions  to the modified Bellman equation \eqref{equ:def_PK}-\eqref{equ:def^tPK} that satisfy 
if $P_K\geq 0$,  
%Then the stabilizing solution $P_K$  that makes  $(A-BK)^\top(I-\beta P_KW)^{-1}$  stable is unique. In addition, 
%that satisfies $W^{-1}-\beta P_K>0$ and $(A-BK)^\top(I-\beta P_KW)^{-1}$ is stable 
%where $P_K\geq 0$ is the fixed-point solution to the Modified Bellman equation \eqref{equ:def_PK}-\eqref{equ:def^tPK}, 
the following  recursion   starting from any $P^0$ such that $0\leq P^0\leq Q$ and  $W^{-1}-\beta P^0>0$ converges to  $P_K$,   
	\#
\left\{
                \begin{array}{ll}
\tP^{t}&=~~P^t+\beta P^t(W^{-1}-\beta P^t)^{-1}P^t\label{equ:recursion_Bellman}\\
P^{t+1}&=~~Q+K^\top RK+(A-BK)^\top \tP^t(A-BK)
                \end{array} 
              \right..
\#
%The convergent point $(P^{\infty},K_{\infty})$ is the \issue{unique?} fixed-point solution $(P,K^*)$ of the modified Riccati equation \eqref{equ:def^tP}. 
\end{lemma}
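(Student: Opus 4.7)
\textbf{Proof proposal for Lemma \ref{lemma:recursion_MBE}.}

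The plan is to mimic the proof of Lemma \ref{lemma:recursion_MRE}, but with $K$ held fixed throughout the recursion. The starting observation is that for any $P \geq 0$ with $W^{-1} - \beta P > 0$, the matrix inversion lemma gives $\tilde P = P + \beta P(W^{-1}-\beta P)^{-1} P = (P^{-1}-\beta W)^{-1}$ when $P > 0$, so that the iteration \eqref{equ:recursion_Bellman} can be compactly expressed as $P^{t+1} = \cG(P^t)$ with
\[
\cG(P) := Q + K^\top R K + (A-BK)^\top\big(P^{-1}-\beta W\big)^{-1}(A-BK),
\]
and $\cG$ is monotonically matrix-increasing on the open set $\{P > 0 : W^{-1}-\beta P > 0\}$. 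The fixed points of $\cG$ are exactly the positive-definite Hermitian solutions of \eqref{equ:def_mod_Bellman_ori}.

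For uniqueness and minimality of $P_K$, I would invoke the standard results on discrete-time Riccati equations \citep{stoorvogel1994discrete,ionescu1992computing}: since $P_K$ is stabilizing in the sense $\rho\big((A-BK)^\top(I-\beta P_K W)^{-1}\big)<1$, Lemma 3.1 of \cite{stoorvogel1994discrete} (or Proposition 1 of \cite{ionescu1992computing}) gives that such a stabilizing solution is unique, and the counterpart of Lemma 3.8 of \cite{stoorvogel1994discrete} yields that $P_K \leq P'$ for any other Hermitian solution $P'$, i.e.\ $P_K$ is the minimal Hermitian solution. This is essentially the argument used verbatim at the end of the proof of Lemma \ref{lemma:recursion_MRE}, applied now to the Riccati equation with fixed closed-loop matrix $(A-BK)$ and cost weight $Q+K^\top RK$.

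For the convergence of the recursion, I would establish monotonicity and an upper bound by parallel inductions. First, note $P^1 = Q+K^\top RK+(A-BK)^\top\tilde P^0(A-BK) \geq Q \geq P^0$, so $P^1 \geq P^0$; then the monotonicity of $\cG$ propagates this to $P^{t+1}\geq P^t$ for all $t\geq 0$, and in particular $P^t \geq Q > 0$ for $t\geq 1$, so $(P^t)^{-1}$ is well defined. For the upper bound, note $P_K \geq Q \geq P^0$ (since $P_K$ is a fixed point of $\cG$ and $\tilde P_K \geq 0$), and applying $\cG$ and its monotonicity inductively gives $P^t \leq P_K$ for all $t$. The main technical point to check along the way is that the iterates remain in the domain where the update is defined, i.e.\ that $W^{-1}-\beta P^t > 0$; this follows from $\beta P^t \leq \beta P_K < W^{-1}$, i.e.\ directly from the upper bound just established together with the hypothesis on $P_K$. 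Hence $\{P^t\}$ is a monotone, bounded sequence of symmetric matrices and converges to some Hermitian $P^\infty$ satisfying $P^\infty \leq P_K$; by continuity of $\cG$ on the domain, $P^\infty$ is a fixed point, i.e.\ a Hermitian solution of \eqref{equ:def_mod_Bellman_ori}. The minimality of $P_K$ then forces $P^\infty \geq P_K$, so $P^\infty = P_K$, completing the proof.

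The hardest step is really the minimality/uniqueness citation: one must verify that the hypotheses of \cite{stoorvogel1994discrete} (or an equivalent source) apply to the present form of the Riccati equation with the $\beta W$ indefinite correction — the rest of the argument is a routine monotone-convergence induction once $\cG$ is written in the clean $(P^{-1}-\beta W)^{-1}$ form.
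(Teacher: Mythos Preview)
Your proposal is correct and follows essentially the same route as the paper: write the recursion as a monotone map $\cG(P)=Q+K^\top RK+(A-BK)^\top(P^{-1}-\beta W)^{-1}(A-BK)$, show $P^0\leq P^1\leq\cdots\leq P_K$ by induction (with well-definedness of $W^{-1}-\beta P^t>0$ inherited from the upper bound), and conclude that the limit is a fixed point which, by minimality of $P_K$, must equal $P_K$. The one difference is that for the uniqueness/minimality step the paper invokes \cite[Theorem~3.1]{ran1988existence} (with the sign-reversal trick and the stabilizability of $(A-BK,W^{1/2})$) together with \cite{ionescu1992computing}, rather than the \cite{stoorvogel1994discrete} lemmas you propose to borrow from Lemma~\ref{lemma:recursion_MRE}; your caveat that the applicability of \cite{stoorvogel1994discrete} to this indefinite-correction Riccati equation needs checking is therefore well placed, and the paper's choice of \cite{ran1988existence} is precisely such an ``equivalent source.''
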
 
\begin{proof}
First, by \cite[Theorem $3.1$]{ran1988existence}, if one chooses $C=0$, $R<0$ therein, by reversing the sign in the theorem, and noticing that the pair $(A-BK,W^{1/2})$ is stabilizable since $W>0$, we know that there exists a maximal solution $-P^-\geq -P$ for any Hermitian solution $-P$ to the Riccati equation, which is also stabilizing. Hence, there exists a minimal solution $P^-$ that is also stabilizing. This conclusion can be directly applied to \eqref{equ:def_mod_Bellman_ori}, with $R=-1/\beta W^{-1}<0$ and since $1/\beta W^{-1}-P_K>0$. Also, by  \cite[Proposition $1$]{ionescu1992computing}, the stabilizing solution of the Riccati  equation \eqref{equ:def_mod_Bellman_ori}, if it exists, is unique. Thus, $P_K$,  as the unique stabilizing solution, is also minimal, i.e., $P_K\leq P$ for any solution to \eqref{equ:def_mod_Bellman_ori}. 

On the other hand, if $P^t>0$, we  rewrite the update \eqref{equ:recursion_Bellman} as
\$
P^{t+1}&=\cF_K(P^t):=Q+K^\top RK+(A-BK)^\top [(P^t)^{-1}-\beta W]^{-1}(A-BK),
\$
which is matrix-wise monotone with respect to $P^t$. Since $P_K \geq Q\geq P^0$, by induction, $\cF_K(P_K)=P_K\geq \cF_K(P^t)=P^{t+1}$ for all $t\geq 0$. If $P^0\geq 0$, then \$
	\tP^{0}=P^0+\beta P^0(W^{-1}-\beta P^0)^{-1}P^0\geq 0,\quad P^1=Q+(K^1)^\top RK^1+(A-BK^1)^\top \tP^{0}(A-BK^1) \geq Q>0,
	\$
	and the conclusion above also holds for $t\geq 1$; else it holds for $t \geq 0$. In addition,  since $P^1\geq Q\geq P^0$, by induction, $P^{t+1}\geq P^t$ for all $t\geq 0$. Note that the matrix $W^{-1}-\beta P^t$ is always invertible along the recursion, by the assumption that $W^{-1}-\beta P_K>0$. Therefore, by both the monotonicity and boundedness of the sequence $\{P^t\}$, the recursion \eqref{equ:recursion_Bellman} converges to some fixed-point solution to \eqref{equ:def_mod_Bellman_ori}. Since $P_K$ is the minimal one,  the sequence $\{P^t\}$ thus converges to $P_K$, which concludes the proof. 
\end{proof}

\clearpage

\section{Pseudocode}\label{sec:pseudo_code}
In this section, we provide the pseudocode  of the model-free algorithms mentioned in \S\ref{subsec:model_free}. 
Particularly, Algorithm \ref{alg:est_grad_corre} estimates the policy gradient $\nabla_K
  \cC(K,L)$ and the correlation matrix $\Sigma_{K,L}$ for any stabilizing  $(K,L)$; Algorithm \ref{alg:model_free_inner_NPG} finds an estimate of the maximizer $L(K)$ in \eqref{equ:minimizer_L_given_K} for a given $K$; Algorithm \ref{alg:model_free_outer_NPG} describes the updates of $K$ for finding an estimate of $K^*$.

\begin{algorithm}[!thpb]
	\caption{\textbf{Est($L$;$K$)}: Estimating  ${\nabla}_L
  \cC(K,L)$ and $ \Sigma_{K,L}$ at $L$ for given   $K$} 
	\label{alg:est_grad_corre}
	\begin{algorithmic}[1]
		\STATE Input: $K,L$, number of trajectories $m$, rollout length $\cR$,
                smooth parameter $r$, dimension $\tilde d=m_2 d$
%		\INPUT stuff
		\FOR{$i = 1, \cdots m$}
		\STATE Sample a policy $\widehat L_i = L+U_i$, with $U_i$ 
                drawn uniformly  over matrices with $\|U_i\|_F=r$
		\STATE Simulate $(K,\widehat L_i)$ for $\cR$ steps starting
                from $x_0\sim \cD$, and collect the empirical estimates $\widehat \cC_i$ and $\widehat \Sigma_i$ as:
\[
\widehat \cC_i = \sum_{t=1}^\cR c_t \, , \quad \widehat \Sigma_i = \sum_{t=1}^\cR x_t x_t^\top
\]
where $c_t$ and $x_t$ are the costs and states following  this trajectory
%		\STATE Update:
		\ENDFOR
		\STATE Return the estimates:
\[
\widehat{\nabla}_L
  \cC(K,L) = \frac{1}{m} \sum_{i=1}^m \frac{\tilde d}{r^2} \widehat \cC_i U_i
\, , \quad
\widehat \Sigma_{K,L} = \frac{1}{m} \sum_{i=1}^m \widehat \Sigma_i
\]
	\end{algorithmic}
\end{algorithm}

\begin{algorithm}[!hpb] 
	\caption{\textbf{Inner-NG($K$)}: Model-free updates  for estimating $L(K)$} 
	\label{alg:model_free_inner_NPG}
	\begin{algorithmic}[1]
		\STATE Input: $K$, number of iterations $\cT$,  initialization $L_0$ such that $(K,L_0)$ is stabilizing 
%		\INPUT stuff
		\FOR{$\tau = 0, \cdots, \cT-1$}
		\STATE Call \textbf{Est($L_\tau$;$K$)} to obtain the gradient  and the correlation matrix estimates:
		\$
		[\widehat{\nabla}_L \cC(K,L_\tau),\widehat \Sigma_{K,L_\tau}]=\textbf{Est}(L_\tau;K) 
		\$
		\STATE  Policy gradient update:
		$\quad\quad\quad\quad~
		L_{\tau+1} = L_\tau+\alpha \widehat{\nabla}_L \cC(K,L_\tau), 
		$\\
		or 
		natural PG update:
		$\qquad\qquad\quad
		L_{\tau+1} = L_\tau+\alpha \widehat{\nabla}_L \cC(K,L_\tau)\cdot \widehat \Sigma_{K,L_\tau}^{-1}
		$
		\ENDFOR
		\STATE Return the iterate  $L_{\cT}$
%\[ 
%\widehat{\nabla_K
%  C(K,L)} = \frac{1}{m} \sum_{i=1}^m \frac{d}{r^2} \widehat C_i U_i
%\, , \quad
%\widehat \Sigma_{K,L} = \frac{1}{m} \sum_{i=1}^m \widehat \Sigma_i .
%\]
	\end{algorithmic}
\end{algorithm}

\begin{algorithm}[!t]
	\caption{\textbf{Outer-NG}: Model-free  updates for estimating $K^*$} 
	\label{alg:model_free_outer_NPG}
	\begin{algorithmic}[1]
		\STATE Input: $K_0$, number of trajectories $m$, number of iterations $T$, rollout length $\cR$, 
                 parameter $r$, dimension $\tilde d=m_1d$
%		\INPUT stuff
		\FOR{$t = 0, \cdots, T-1$}
		\FOR{$i = 1, \cdots m$}
		\STATE Sample a policy $\widehat K_i = K_t+V_i$, with $V_i$ 
                drawn uniformly   over matrices with $\|V_i\|_F=r$
        \STATE Call \textbf{Inner-NG}($\widehat K_i$) to obtain the estimate of $L(\widehat K_i)$:
        \[
        \widehat{L(\widehat K_i)} = \textbf{Inner-NG}(\widehat K_i)
        \]
		\STATE Simulate $\big(\widehat K_i,\widehat{L(\widehat K_i)}\big)$ for $\cR$ steps starting
                from $x_0\sim \cD$, and collect the empirical estimates $\widehat C_i$ and $\widehat \Sigma_i$ as:
\[
\widehat \cC_i = \sum_{t=1}^\cR c_t \, , \quad \widehat \Sigma_i = \sum_{t=1}^\cR x_t x_t^\top
\] 
where $c_t$ and $x_t$ are the costs and states following  this trajectory 
%		\STATE Update:
		\ENDFOR
		\STATE Obtain the estimates of the gradient and the correlation  matrix:
\[
\widehat{\nabla}_K
   \cC(K_t,\widehat{L(K_t)}) = \frac{1}{m} \sum_{i=1}^m \frac{\tilde d}{r^2} \widehat \cC_i V_i
\, , \quad
\widehat \Sigma_{K_t,\widehat{L(K_t)}} = \frac{1}{m} \sum_{i=1}^m \widehat \Sigma_i 
\]
\STATE Policy gradient update:
		$\quad\quad\quad~
		K_{t+1} = K_t-\eta \widehat{\nabla}_K
   \cC(K_t,\widehat{L(K_t)}), 
		$\\
		or 
		natural PG update:
		$\qquad\qquad
		K_{t+1} = K_t-\eta \widehat{\nabla}_K
   \cC(K_t,\widehat{L(K_t)})\cdot \widehat \Sigma_{K_t,\widehat{L(K_t)}}^{-1}
		$
%  or natural NG 
%update $L_t$ by $$L_{t+1} = \PP_{\Omega}\bigg[L_t + \eta \widehat{\nabla_L
%  \tilde \cC(L_t)}\cdot \widehat \Sigma_{\widehat{K(L_t)},L_t}^{-1}\bigg]$$
		\ENDFOR
		\STATE Return the iterate $K_T$.	
		\end{algorithmic}
\end{algorithm}

\end{document}